\theoremstyle{plain}
\newtheorem{thm}{Theorem}[section]
\newtheorem{lem}[thm]{Lemma}
\newtheorem{prop}[thm]{Proposition}
\newtheorem{cor}[thm]{Corollary}
\newtheorem*{thm*}{Theorem}
\newtheorem*{thmA}{Theorem~A}
\newtheorem*{thmB}{Theorem~B}
\newtheorem*{propC}{Proposition~C}
\newtheorem*{thmD}{Theorem~D}
\newtheorem*{thmE}{Theorem~E}
\newtheorem*{thmF}{Theorem~F}
\newtheorem*{cor*}{Corollary}
\theoremstyle{definition}
\newtheorem{defn}[thm]{Definition}
\newtheorem*{dfn*}{Definition}
\newtheorem{rmk}[thm]{Remark}
\renewcommand{\o}{\circ}
\newcommand{\wt}{\widetilde}
\newcommand{\R}{\mathbb{R}}
\newcommand{\Z}{\mathbb{Z}}
\newcommand{\N}{\mathbb{N}}
\newcommand{\s}{\sigma}
\newcommand{\ra}{\rightarrow}
\newcommand{\cu}{\subseteq}
\newcommand{\G}{\Gamma}
\newcommand{\mbb}{\mathbb}
\newcommand{\mc}{\mathcal}
\newcommand{\mf}{\mathfrak}
\newcommand{\x}{\times}
\newcommand{\Om}{\Omega}
\newcommand{\acts}{\curvearrowright}
\newcommand{\wh}{\widehat}
\newcommand{\mscr}{\mathscr}
\begin{document}

\title{The Tits alternative for finite rank median spaces}
\author{Elia Fioravanti}
\address{Andrew Wiles Building, University of Oxford, Radcliffe Observatory Quarter, Woodstock Rd, Oxford OX2 6GG, United Kingdom}
\email{elia.fioravanti@maths.ox.ac.uk}

\begin{abstract}
We prove a version of the Tits alternative for groups acting on complete, finite rank median spaces. This shows that group actions on finite rank median spaces are much more restricted than actions on general median spaces. Along the way, we extend to median spaces the Caprace-Sageev machinery \cite{CS} and part of Hagen's theory of unidirectional boundary sets \cite{Hagen}.
\end{abstract}



\maketitle

\tableofcontents

\section{Introduction.}

${\rm CAT}(0)$ cube complexes provide an ideal setting to study non-positive curvature in a discrete context. On the one hand, their geometry is sufficiently rich to ensure that large classes of groups admit interesting actions on them. Right-angled Artin groups, hyperbolic or right-angled Coxeter groups, hyperbolic $3$-manifold groups, random groups at sufficiently low density all act geometrically on ${\rm CAT}(0)$ cube complexes \cite{Niblo-Reeves, Bergeron-Wise, Ollivier-Wise}, to name just a few examples. Moreover, every finitely generated group with a codimension one subgroup admits an action on a ${\rm CAT}(0)$ cube complex with unbounded orbits \cite{Sageev,Gerasimov,Niblo-Roller}.

On the other hand, the geometry of finite dimensional ${\rm CAT}(0)$ cube complexes is much better understood than general ${\rm CAT}(0)$ geometry, even with no local compactness assumption. For instance, groups acting properly on finite dimensional ${\rm CAT}(0)$ cube complexes are known to have finite asymptotic dimension \cite{Wright}, to satisfy the Von Neumann-Day dichotomy and, if torsion-free, even the Tits alternative \cite{Sageev-Wise, CS}. It is not known whether the same are true for general ${\rm CAT}(0)$ groups.

Three features are particularly relevant in the study of ${\rm CAT}(0)$ cube complexes. First of all, they are endowed with a metric of non-positive curvature. Secondly, the $1$-skeleton becomes a median graph when endowed with its intrinsic path-metric; this means that, for any three vertices, there exists a unique vertex that lies between any two of them. This property is closely related to the existence of hyperplanes and allows for a combinatorial approach that is not available in general ${\rm CAT}(0)$ spaces. Finally, cube complexes are essentially discrete objects, in that their geometry is fully encoded by the $0$-skeleton. In particular, the automorphism group of a cube complex is totally disconnected.

It is natural to wonder how much in the theory of ${\rm CAT}(0)$ cube complexes can be extended to those spaces that share the second feature: \emph{median spaces}. These provide a simultaneous generalisation of ${\rm CAT}(0)$ cube complexes and real trees; for an introduction, see e.g.~\cite{Nica-thesis, CDH, Bow4} and references therein. The class of median spaces is closed under ultralimits and also includes all $L^1$ spaces, so certain pathologies are bound to arise in this context. Bad behaviours seem however to be restricted to spaces of ``infinite rank''.

The notion of \emph{rank} of a median space was introduced in \cite{Bow1}; for ${\rm CAT}(0)$ cube complexes it coincides with the usual concept of dimension. It was recently shown that connected, finite rank median spaces are also endowed with a \emph{canonical} ${\rm CAT}(0)$ metric \cite{Bow4}. Moreover, any finite rank median space can be canonically completed to a connected median space (Corollary~\ref{X''}). Thus it seems that, in finite rank, the only true difference between ${\rm CAT}(0)$ cube complexes and general median spaces lies in the discreteness of the former. Most of the results of the present paper support this analogy.

For our purposes, it will be essential to restrict to median spaces of finite rank. Indeed, we wish to obtain a version of the Tits alternative, while every amenable group admits a proper action on an infinite rank median space \cite{Cherix-Martin-Valette,CDH}. 

Nontrivial examples of group actions on connected median spaces of finite rank are generally obtained by taking limits of actions on ${\rm CAT}(0)$ cube complexes of uniformly bounded dimension. This phenomenon is the higher-dimensional analogue of actions on real trees arising as limits of actions on simplicial trees. For instance, Casals-Ruiz and Kazachkov proved the following generalisation of Rips' Theorem: a finitely generated group $\G$ acts freely, essentially freely and co-specially on a finite rank median space if and only if $\G$ is a subgroup of a graph product of cyclic and surface groups \cite{Casals-Kazachkov}.

Note that one can alternatively take limits of spaces that only ``coarsely'' resemble ${\rm CAT}(0)$ cube complexes. Indeed, ultralimits of hierarchically hyperbolic spaces \cite{Bow3,HHS,HHS2} and coarse median spaces \cite{Bow1} have canonical median metrics of finite rank \cite{Zeidler}. This applies for instance to asymptotic cones of hyperbolic groups, cubulated groups, most irreducible 3-manifold groups and mapping class groups. In the latter case, this was observed already in \cite{Behrstock-Minsky,Behrstock-Drutu-Sapir, Behrstock-Drutu-Sapir2}.

One last example of connected finite rank median spaces is provided by Guirardel cores of pairs of actions of a group $\G$ on real trees \cite{Guirardel}. When $\G$ is a surface group and the two real trees arise from two transverse measured laminations $\mc{L}_1,\mc{L}_2$ on the surface $S$, this space is also known as the Culler-Levitt-Shalen core. The corresponding median space is an $\ell^1$ version of the singular Euclidean metric on $S$ arising from $\mc{L}_1$ and $\mc{L}_2$, lifted to the universal cover. Guirardel's construction generalises to actions of a group $\G$ on any (finite) number of real trees; it is likely that the median spaces arising this way can be studied along the lines of \cite{Hagen-Wilton}.
 
Our main result is the following version of the Tits alternative:

\begin{thmA}
Let $X$ be a complete, finite rank median space. Let $\G$ be a group with an isometric action $\G\acts X$. Suppose that $\G$ has no nonabelian free subgroups.
\begin{enumerate}
\item If the action is free, $\G$ is virtually finite-by-abelian. If moreover $X$ is connected or $\G$ is finitely generated, then $\G$ is virtually abelian.
\item If the action is (metrically) proper, $\G$ is virtually (locally finite)-by-abelian.
\item If all point stabilisers are amenable, $\G$ is amenable.
\end{enumerate}
\end{thmA}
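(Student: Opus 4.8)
The plan is to induct on the rank of $X$, running all three statements simultaneously via the same Caprace--Sageev-type trichotomy for the action $\G\acts X$: after passing to a finite-index subgroup, either $\G$ contains a nonabelian free subgroup, or (the essential core of) $X$ splits as a nontrivial product, or $\G$ fixes a point of $\overline X=X\cup\partial X$, where $\partial X$ is the Roller boundary. In the first case there is nothing to prove; the other two furnish a \emph{descent} that replaces $\G$ by a group acting on a complete median space of strictly smaller rank, at the cost of an extension whose kernel or quotient is finite, abelian, or (in the proper case) locally finite — and the three target classes (virtually finite-by-abelian, virtually (locally finite)-by-abelian, amenable) are each closed under the relevant operations.

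First I would reduce to the case where the action is \emph{essential}, passing via the median-space analogue of the essential core (developed in the earlier sections) to an essential, $\G$-invariant median subspace of no larger rank; freeness, properness, and amenability of point stabilisers are inherited, and if this reduction collapses the space we are in the base case. If the essential core is \emph{reducible}, i.e.\ its pocset of halfspaces decomposes into two nonempty transverse subfamilies, it splits as a product $X_1\times X_2$ of lower-rank median spaces, a finite-index subgroup $\G'\le\G$ preserves the factors, and the coordinate projections $\G'\to\mathrm{Isom}(X_i)$ inherit the hypotheses; the inductive hypothesis on each factor, together with closure of the three classes under finite extensions and under extensions of a member by a member, handles this case.

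If the essential core is \emph{irreducible}, I invoke the median-space version of the Caprace--Sageev rank-rigidity theorem \cite{CS}: an essential, irreducible action on a complete finite rank median space either admits a pair of \emph{strongly separated} halfspaces — in which case the double-skewering lemma produces two independent contracting isometries that play ping-pong and generate a nonabelian free subgroup — or it has a finite orbit in $\partial X$. Since $\G$ has no nonabelian free subgroup we are in the latter case; passing to a finite-index subgroup, $\G$ fixes some $\xi\in\partial X$. The Busemann (horofunction) cocycle at $\xi$, equivalently the homomorphism recording the $\G$-translation along the linearly ordered, $\G$-invariant family of halfspaces pointing towards $\xi$, yields a homomorphism $\G\to\R$ whose kernel $\G_0$ acts on the boundary median space (horospherical section) at $\xi$, which is complete, of strictly smaller rank, and still satisfies the relevant hypotheses. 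Applying the inductive hypothesis to $\G_0$ and noting that $\G/\G_0$ embeds in $\R$, hence is abelian, completes the inductive step.

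The base case is where the three hypotheses diverge: when the descent terminates — rank zero, or a global fixed point $x\in X$ after passing to a finite-index subgroup — one has $\G=\G_x$, trivial in the free case (using also that a group acting freely on a complete median space is torsion-free, as finite subgroups have bounded orbits hence fixed points), finite in the proper case, and amenable in the amenable-stabiliser case. Reassembling along the finitely many descent steps gives virtually finite-by-abelian in the free case; virtually (locally finite)-by-abelian in the proper case, the locally finite kernel arising because every finitely generated subgroup of it lies in all the Busemann kernels and is therefore finite; and amenable in the last case. Finally, if $X$ is connected the halfspace measure is atomless, ruling out the finite pieces picked up along the descent, while if $\G$ is finitely generated a finitely generated finite-by-abelian group is already virtually abelian (the integral alternating commutator pairing into a finite group vanishes on a finite-index subgroup); either way the conclusion upgrades to virtually abelian. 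I expect the main obstacle to be the irreducible case: proving the median-space rank-rigidity dichotomy — in particular extracting strongly separated halfspaces from an essential irreducible action with no boundary fixed point, which is where the extended Caprace--Sageev machinery and the median analogue of Hagen's unidirectional boundary sets \cite{Hagen} enter — and checking that the horospherical descent genuinely decreases the rank and preserves completeness and the hypotheses, all without any cocompactness or local compactness assumption.
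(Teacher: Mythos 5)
Your overall architecture --- either a nonabelian free subgroup or a fixed point in the Roller compactification, followed by real-valued characters at a fixed boundary point --- is the same family of argument the paper uses (Theorems~E and~F), but two steps of your induction do not work as stated. First, the descent: after a finite-index subgroup fixes $\xi\in\partial X$, you pass to the kernel of a single Busemann character and let it act on the lower-rank boundary space at $\xi$, claiming it ``still satisfies the relevant hypotheses''. It does not: freeness, properness and amenability of point stabilisers for $\G\acts X$ are \emph{not} inherited by the induced action on a component of $\partial X$ (stabilisers of boundary points can be huge even for free, proper actions on $X$), so the base case of your induction gives no control on the iterated kernel. The needed finiteness must be extracted from the action on $X$ itself; this is exactly the content of the paper's Theorem~F: the transfer characters of the at most $r$ minimal UBS classes at $\xi$ assemble into one homomorphism $K_\xi\to\R^r$, and a finitely generated subgroup of its kernel with unbounded orbits in $X$ would, by the median version of Sageev's skewering argument (Proposition~\ref{key point in Sageev}), nest a halfspace, producing a chain diverging to $\xi$ on which some minimal transfer character is nonzero (Proposition~\ref{kernel of chi}); bounded orbits then yield an orbit of size at most $2^r$ (Corollary~\ref{finite orbits}). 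Your sentence ``every finitely generated subgroup of it lies in all the Busemann kernels and is therefore finite'' is precisely this missing argument, asserted rather than proved; note also that the halfspaces pointing towards $\xi$ are not linearly ordered once the rank exceeds $1$, so there is no single canonical ``translation along a chain'' character without the UBS machinery.

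Second, the reassembly: even granting the descent, iterating extensions $1\to\G_{i+1}\to\G_i\to\R$ exhibits the top quotient only as solvable, and the classes ``virtually finite-by-abelian'' and ``virtually (locally finite)-by-abelian'' are not closed under extensions with abelian quotient (for instance $\Z\wr\Z$ is abelian-by-abelian but not virtually (locally finite)-by-abelian), so the first two bullets do not follow from your inductive scheme; only the amenable case survives, amenability being extension-closed. The paper avoids this by producing all the characters simultaneously at the single fixed point $\xi\in\partial X$, so the abelian quotient (a subgroup of $\R^r$) appears in one step and the kernel $N$ is locally elliptic in $X$; the three bullets are then read off directly from the hypotheses on point stabilisers in $X$. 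Smaller points: the strong-separation dichotomy you invoke requires Roller minimality (supplied by Proposition~C after passing to an invariant convex subset of a boundary component), not mere essentiality; and the upgrade to ``virtually abelian'' when $X$ is connected uses the canonical ${\rm CAT}(0)$ metric and Cartan's fixed point theorem to kill the finite kernel, rather than atomlessness of the halfspace measure.
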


Note that properness of the action cannot be used to conclude that $\G$ is virtually abelian like in part~$(1)$ of Theorem~A. Indeed, every locally finite, countable group admits a proper action on a simplicial tree; see Example~II.7.11 in \cite{BH}. An analogous construction can be used to obtain finitely generated examples: for every finite group $F$, the wreath product $F\wr\Z$ acts properly on the product of a tree and a line.

%
%
%
%

To the best of our knowledge, Theorem~A does not yield the Tits alternative for any new families of groups, or at least not any families that one may naturally be led to consider. Indeed, it seems that most groups acting nontrivially on a finite rank median space also act just as nontrivially on a finite dimensional ${\rm CAT}(0)$ cube complex. We are aware of only one exception to this pattern, namely the group $L$ constructed in Section~2 of \cite{Minasyan}. Indeed, $L$ acts with unbounded orbits on a real tree, but every action of $L$ on a finite dimensional ${\rm CAT}(0)$ cube complex must fix a point; the latter property follows from recent results on Thompson's group $V$ \cite{Kato,GenevoisV}.

However, when a group $\G$ does act nontrivially on a ${\rm CAT}(0)$ cube complex, the collection of $\G$-actions on finite rank median spaces tends to be much richer than the collection of cubical $\G$-actions. It is in the study of such actions that the techniques developed in this paper become most useful; see Theorems~B to~F below.

Theorem~A also sheds new light on the relationship between finite and infinite rank median spaces. It shows that actions on finite rank median spaces are a lot more restrictive than actions on general median spaces. Indeed, a discrete group $\G$ has the Haagerup property if and only if it admits a proper action on a median space \cite{Cherix-Martin-Valette,CDH}. Thus, every torsion-free amenable group $\G$ acts properly and freely on a median space, but the latter will never be finite rank if $\G$ is not virtually abelian.

In fact, there even exist groups $\G$ with the Haagerup property, such that every action of $\G$ on a complete, connected, finite rank median space has a global fixed point. A simple example is provided by irreducible lattices in $SL_2\R\x SL_2\R$; see \cite{Fioravanti3}.

Our proof of Theorem~A follows the same broad outline as the corresponding result for ${\rm CAT}(0)$ cube complexes, as it appears in \cite{CS,CFI}. Given an action $\G\acts X$, either $\G$ has a finite orbit within a suitable compactification of $X$, or the space $X$ exhibits a certain tree-like behaviour. In the former case, one obtains a ``big'' abelian quotient of $\G$; in the latter, one can construct many nonabelian free subgroups with a ping-pong argument.

We remark that the first proof of the Tits alternative for ${\rm CAT}(0)$ cube complexes was due to M.~Sageev and D.~T.~Wise \cite{Sageev-Wise} and follows a completely different strategy. It relies on the Algebraic Torus Theorem \cite{Dunwoody-Swenson} and a key fact is that, when a group $\G$ acts nontrivially on a ${\rm CAT}(0)$ cube complex, some hyperplane stabiliser is a codimension one subgroup of $\G$. Unfortunately, this approach is bound to fail when dealing with median spaces: there is an analogous notion of ``hyperplane'' (see Section~\ref{prelims}), but all hyperplane stabilisers could be trivial, even if the group $\G$ is one-ended. This happens for instance when a surface group acts freely on a real tree \cite{Morgan-Shalen}.

Many of the techniques of \cite{CS} have proven extremely useful in the study of ${\rm CAT}(0)$ cube complexes, for instance in \cite{Nevo-Sageev,Fernos,CFI,Kar-Sageev,Kar-Sageev2} to name just a few examples. We extend some of this machinery to median spaces, in particular what goes by the name of ``flipping'', ``skewering'' and ``strong separation''; see Theorems~B and~D below. We will exploit these results in \cite{Fioravanti3} to obtain a superrigidity result analogous to the one in \cite{CFI}.

To state the rest of the results of the present paper, we need to introduce some terminology. In \cite{Fioravanti1} we defined a compactification $\overline X$ of a complete, finite rank median space $X$; we refer to it as the \emph{Roller compactification} of $X$. For ${\rm CAT}(0)$ cube complexes, it consists precisely of the union of $X$ and its Roller boundary $\partial X$ in the usual sense; see \cite{BCGNW, Nevo-Sageev} for a definition. 

Roller compactifications of median spaces strongly resemble Roller compactifications of cube complexes. For instance, the space $\overline X$ has a natural structure of median algebra and $\partial X$ is partitioned as a union of median spaces (``components'' in our terminology), whose rank is strictly lower than that of $X$. These properties will be essential when extending the machinery of \cite{CS} as they enable proofs by induction on the rank. 

For ${\rm CAT}(0)$ cube complexes, our approach is slightly different from that of \cite{CS}, in that we work with Roller boundaries rather than visual boundaries. This is not due to any technical obstructions related to the latter. Rather, we believe that Roller boundaries provide slightly more transparent and elementary proofs even in the context of cube complexes. Indeed, many of our arguments work in any finite rank median algebra.

Let $\G$ be a group. We say that an isometric action $\G\acts X$ is \emph{Roller elementary} if $\G$ has at least one finite orbit in $\overline X$. An action $\G\acts X$ is \emph{Roller minimal} if $X$ is not a single point and $\G$ does not leave invariant any proper, closed, convex subset of $\overline X$; for the notion of convexity in the median algebra $\overline X$, see Section~\ref{prelims}. In ${\rm CAT}(0)$ cube complexes, Roller minimal actions are precisely essential actions (in the terminology of \cite{CS}) with no fixed point in the visual boundary.

Like ${\rm CAT}(0)$ cube complexes, median spaces are endowed with a canonical collection of halfspaces $\mscr{H}$. We say that $\G\acts X$ is \emph{without wall inversions} if there exists no $g\in\G$ such that $g\mf{h}=\mf{h}^*$ for some $\mf{h}\in\mscr{H}$. Here $\mf{h}^*$ denotes the complement $X\setminus\mf{h}$ of the halfspace $\mf{h}$. Actions without wall inversions are a generalisation of actions on $0$-skeleta of simplicial trees without edge inversions. We remark that, perhaps counterintuitively, every action on a \emph{connected} median space is automatically without wall inversions (see Proposition~\ref{all about halfspaces} below); this includes all actions on real trees.

We have the following analogue of the Flipping and Double Skewering Lemmata from \cite{CS}.

\begin{thmB}
Let $X$ be a complete, finite rank median space with a Roller minimal action $\G\acts X$ without wall inversions.
\begin{itemize}
\item For ``almost every'' halfspace $\mf{h}$, there exists $g\in\G$ with $\mf{h}^*\subsetneq g\mf{h}$ and $d(g\mf{h}^*,\mf{h}^*)>0$.
\item For ``almost all'' halfspaces $\mf{h},\mf{k}$ with $\mf{h}\cu\mf{k}$ there exists $g\in\G$ with $g\mf{k}\subsetneq\mf{h}\cu\mf{k}$ and $d(g\mf{k}, \mf{h}^*)>0$.
\end{itemize}
\end{thmB}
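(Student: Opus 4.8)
The plan is to prove the first item (Flipping) and then to deduce the second item (Double Skewering) from it by a short formal argument. Throughout I will use the basic structure of the Roller compactification recalled in the introduction and established in \cite{Fioravanti1}: $\overline X$ is a \emph{compact} median algebra, and every $\mathfrak h\in\mathscr{H}$ extends to a clopen halfspace $\widehat{\mathfrak h}$ of $\overline X$, with $\widehat{\mathfrak h}\cap X=\mathfrak h$ and $\overline X=\widehat{\mathfrak h}\sqcup\widehat{\mathfrak h^*}$ (for definiteness I allow myself to discard at the outset a possible $\mu$-null set of halfspaces where these normal forms degenerate).

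For the Flipping statement, I would fix $\mathfrak h$ and consider the family $\mathcal F=\{g\widehat{\mathfrak h^*}:g\in\G\}$ of clopen halfspaces of $\overline X$. First, Roller minimality forces $\bigcap_{g\in\G}g\widehat{\mathfrak h^*}=\emptyset$: this intersection is a closed, convex, $\G$-invariant subset of $\overline X$ contained in the proper subset $\widehat{\mathfrak h^*}$, hence empty. Second, convex subsets of a median algebra enjoy the Helly property with Helly number $2$: three pairwise-intersecting convex sets have a common point (namely the median of one point taken from each pairwise intersection), so by induction does any finite pairwise-intersecting family, and --- since the members of $\mathcal F$ are closed and $\overline X$ is compact --- so does $\mathcal F$ itself if it is pairwise intersecting. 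As $\bigcap\mathcal F=\emptyset$, the family $\mathcal F$ is therefore \emph{not} pairwise intersecting: there are $g_1,g_2\in\G$ with $g_1\widehat{\mathfrak h^*}\cap g_2\widehat{\mathfrak h^*}=\emptyset$, so, setting $g=g_1^{-1}g_2$ and applying $g_1^{-1}$, we obtain $g\widehat{\mathfrak h^*}\cap\widehat{\mathfrak h^*}=\emptyset$, i.e.\ $g\widehat{\mathfrak h^*}\cu\widehat{\mathfrak h}$, and intersecting with $X$, $g\mathfrak h^*\cu\mathfrak h$, equivalently $\mathfrak h^*\cu g\mathfrak h$. This inclusion is strict, since $g\mathfrak h^*=\mathfrak h$ would give $g\mathfrak h=\mathfrak h^*$, a wall inversion, which is excluded by hypothesis. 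All that then remains is to upgrade the separation $g\widehat{\mathfrak h^*}\cap\widehat{\mathfrak h^*}=\emptyset$ inside $\overline X$ to the metric inequality $d(g\mathfrak h^*,\mathfrak h^*)>0$ inside $X$; this is precisely the point at which ``almost every'' enters, and it is carried out using the measured structure of $\mathscr H$, possibly at the cost of a further $\mu$-null exceptional set.

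Granting the Flipping statement for almost every halfspace --- and noting that the ambient full-measure set of halfspaces may be taken invariant under $\mathfrak h\mapsto\mathfrak h^*$ --- I would deduce the Double Skewering statement as follows. Let $\mathfrak h\cu\mathfrak k$ with $\mathfrak h,\mathfrak k$ in that set. Flipping applied to $\mathfrak h$ yields $a\in\G$ with $a\mathfrak h^*\subsetneq\mathfrak h$ and $d(a\mathfrak h^*,\mathfrak h^*)>0$; Flipping applied to $\mathfrak k^*$ yields $b\in\G$ with $\mathfrak k\subsetneq b\mathfrak k^*$, hence $b\mathfrak k\subsetneq\mathfrak k^*\cu\mathfrak h^*$. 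Since $a\mathfrak h^*\cu\mathfrak h$ while $b\mathfrak k\cu\mathfrak h^*$, every point of $b\mathfrak k$ lies in $\mathfrak h^*$, so $d(a\mathfrak h^*,b\mathfrak k)\ge d(a\mathfrak h^*,\mathfrak h^*)>0$; in particular $a\mathfrak h^*\subsetneq(b\mathfrak k)^*=b\mathfrak k^*$ with positive gap, and, taking complements, $b\mathfrak k\subsetneq a\mathfrak h$. Therefore $g:=a^{-1}b$ satisfies $g\mathfrak k\subsetneq\mathfrak h$, and applying the isometry $a$ gives $d(g\mathfrak k,\mathfrak h^*)=d(b\mathfrak k,a\mathfrak h^*)>0$, which is the assertion.

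The step I expect to be the main obstacle is the metric refinement in the Flipping argument: promoting a purely combinatorial separation of halfspaces inside $\overline X$ to a strictly positive distance inside $X$. In a ${\rm CAT}(0)$ cube complex a strict inclusion of halfspaces automatically carries a gap of at least one edge, so this difficulty does not appear; in a median space two disjoint halfspaces may well be at distance zero, and controlling --- via the measure on $\mathscr H$ --- the set of halfspaces on which the promotion fails (together with the related bookkeeping of which halfspaces of $X$ are faithfully recorded in $\overline X$) is exactly what accounts for the ``almost every'' and forms the genuinely new, measure-theoretic input here; this is also where the ``strong separation'' circle of ideas from \cite{CS} and \cite{Hagen} is brought in. The remaining ingredients --- compactness of $\overline X$, the clopen halfspaces $\widehat{\mathfrak h}$, the nested/transverse/facing trichotomy, and the Helly property --- are elementary or already available from the earlier part of the paper.
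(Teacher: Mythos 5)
Your overall skeleton (Helly plus compactness of $\overline X$, Roller minimality to rule out an invariant intersection, absence of wall inversions for strictness, and a two-flip composition for double skewering) is the same as the paper's, and your deduction of the second bullet from the first is essentially correct (strictness of $g\mf{k}\subsetneq\mf{h}$ also needs the no-wall-inversion hypothesis, but that is minor). The gap is in the flipping argument, and it sits exactly where you predict the difficulty to be. First, your normal form is wrong: a halfspace $\mf{h}$ of $X$ does extend to a halfspace $\wt{\mf{h}}$ of $\overline X$, but $\wt{\mf{h}}$ is \emph{not} clopen in general, and this is not a null-set phenomenon -- already for $X=\R$ (indeed for any connected $X$) no halfspace has a clopen extension, so ``discarding a $\mu$-null set where the normal forms degenerate'' discards everything. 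Since your Helly-plus-compactness step requires closed sets, the family you must run it on is the family of \emph{closures} of the sets $g\wt{\mf{h}}^*$ in $\overline X$, which is what the paper does. Second, and more seriously, the conclusion that carries the content of the theorem, $d(g\mf{h}^*,\mf{h}^*)>0$, is never proved: your argument produces only $g\mf{h}^*\cap\mf{h}^*=\emptyset$, and in a median space disjoint convex sets can be at distance zero; the promised upgrade ``using the measured structure of $\mscr{H}$'' is not an argument, and it is also not where the exceptional null set actually comes from.

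The correct mechanism, which simultaneously explains ``almost every'', is the following and is the paper's proof. The null set is precisely $\mscr{H}^{\times}$, the non-thick halfspaces (the paper's real-tree example after the flipping theorem, with the singleton halfspace $\{0\}$, shows some non-thick halfspaces genuinely cannot be flipped even under Roller minimality, so this restriction is unavoidable). For $\mf{h}$ thick, argue in contrapositive: if no $g\in\G$ flips $\mf{h}$, then for each $g$ either $g\mf{h}^*=\mf{h}$ (a wall inversion, excluded) or $d(g\mf{h}^*,\mf{h}^*)=0$; since closed convex subsets of a complete median space are gate-convex and a pair of gates realises the distance, distance zero forces $g\overline{\mf{h}^*}\cap\overline{\mf{h}^*}\neq\emptyset$. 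Hence the closures of the $g\wt{\mf{h}}^*$ in $\overline X$ pairwise intersect; Helly and compactness give a nonempty, closed, convex, $\G$-invariant subset contained in the closure of $\wt{\mf{h}}^*$, and \emph{thickness} of $\mf{h}$ is exactly what makes this subset proper, contradicting Roller minimality. Equivalently, in your direct version: once you work with closures, the separation you extract is a disjointness of closed convex sets, and positivity of the distance is then automatic from the pair of gates -- no measure-theoretic bookkeeping and no strong-separation input is needed. Note also that thickness is needed even for your step ``the intersection is contained in a proper subset, hence empty'': if $\mf{h}$ is nowhere dense, the closure of $\wt{\mf{h}}^*$ can be all of $\overline X$, and Roller minimality then gives no contradiction.
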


Positivity of distances is automatic in cube complexes, but not in general median spaces. Theorem~B cannot be proved for \emph{all} halfspaces; counterexamples already appear in real trees, see Section~\ref{CS-like machinery}. The notion of ``almost every'' should be understood with respect to a certain measure on $\mscr{H}$; see Section~\ref{prelims} below for a definition.

For every complete, finite rank median space $X$, we can define a \emph{barycentric subdivision} $X'$; we study these in Section~\ref{splitting the atom}. The space $X'$ is again complete and median of the same rank. There is a canonical isometric embedding $X\hookrightarrow X'$ and every action on $X$ extends to an action without wall inversions on $X'$. Thus, the assumption that $\G\acts X$ be without wall inversions in Theorem~B is not restrictive. 

Roller minimality may seem a strong requirement, but the structure of Roller compactifications makes Roller minimal actions easy to come by:

\begin{propC}
Let $X$ be a complete, finite rank median space with an isometric action $\G\acts X$. If $\G$ fixes no point of $\overline X$, there exists a $\G$-invariant component $Z\cu\overline X$ and a closed, convex, $\G$-invariant subset $C\cu Z$ such that the action $\G\acts C$ is Roller minimal.
\end{propC}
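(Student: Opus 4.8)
The plan is to extract $Z$ and $C$ by a two-stage argument: first locate a $\G$-invariant component inside $\overline X$, then inside that component run a Zorn's-lemma / minimality argument over closed convex $\G$-invariant subsets, using the finiteness of rank as the device that prevents the argument from running away to infinity. Recall that $\partial X = \overline X \setminus X$ is partitioned into components, each a median space of rank strictly less than $\operatorname{rank}(X)$, and that $\G$ permutes this partition together with the decomposition of $X$ itself (thought of as the ``top'' component). So we first seek a component that is invariant under the whole group.

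First I would dispose of the case in which $X$ itself (the interior component) admits a nonempty closed convex $\G$-invariant proper subset: intersecting all such subsets, or taking a minimal one, one works one's way down. The clean way to organise this is downward induction on the rank. Suppose $\G$ fixes no point of $\overline X$. If the action $\G \acts X$ already leaves invariant no proper closed convex subset of $\overline X$, then $Z = \overline X$ (equivalently, its interior component) and $C = \overline X$, wait — more carefully, $C$ should be a subset of the chosen component $Z$, so in that degenerate case we take $Z$ to be the interior component $X$ and $C = X$ and we must check Roller minimality means no invariant proper closed convex subset of $\overline{C} = \overline X$, which is exactly the hypothesis; and $X$ is not a point since $\G$ fixes no point of $\overline X$. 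Otherwise, let $C_0 \subseteq \overline X$ be a proper, closed, convex, $\G$-invariant subset. The key structural input (from the cited companion paper and Section~\ref{prelims}) is that a closed convex subset of $\overline X$ is itself the Roller compactification of a complete finite rank median space, or is contained in $\partial X$ and hence meets only components of strictly smaller rank; in the first case we have strictly decreased the ``size'' of the space (its diameter of the convex core, or we pass to the convex hull of a minimal invariant set), in the second case we have strictly decreased the rank. Formally: run a transfinite descent on pairs (rank, then some secondary invariant such as the set of components met), which by finiteness of rank terminates, yielding a component $Z$ and a closed convex $\G$-invariant $C \subseteq \overline Z$ that is minimal among such; then $\G \acts C$ leaves invariant no proper closed convex subset of $\overline C$, and since $\G$ fixes no point, $C$ is not a single point — i.e.\ the action is Roller minimal.

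The step I expect to be the main obstacle is making the descent actually \emph{terminate} and produce something nonempty: an arbitrary nested intersection of closed convex $\G$-invariant subsets of $\overline X$ could in principle be empty, so one cannot naively apply Zorn's lemma to the poset ordered by inclusion. The resolution must use compactness of $\overline X$ — which is where the hypothesis that $X$ be \emph{complete} and of \emph{finite rank} is essential, since $\overline X$ is then compact (in the appropriate topology) — so that a chain of nonempty closed subsets has nonempty intersection, and convexity is preserved under intersection. One then needs that the minimal element so obtained is not all of $\overline X$ is \emph{not} what we want; rather, we want a minimal element of the poset of \emph{all} nonempty closed convex $\G$-invariant subsets, and the content of the proposition is that this minimal element $C$, sitting inside whichever component $Z$ its ``interior'' belongs to, gives a Roller minimal action. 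I would also need the small lemma that the component containing (the interior of) a $\G$-invariant convex set is itself $\G$-invariant, which follows from $\G$ permuting components and preserving $C$. The only genuinely delicate point is checking that $C$, viewed inside $Z$, satisfies the definition of Roller minimality literally as stated — that it leaves invariant no proper closed convex subset of $\overline{Z}$ (not merely of $\overline C$); this requires knowing that $\overline{C}$, computed intrinsically, embeds as a closed convex subset of $\overline Z$ compatibly, which is precisely the kind of statement established in Section~\ref{prelims} about convex subsets of Roller compactifications, and it is the one place where I would expect to have to invoke that structure theory with care.
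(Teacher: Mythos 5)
Your skeleton is the same as the paper's: apply Zorn's Lemma to the poset of nonempty, closed, convex, $\G$-invariant subsets of $\overline X$, using compactness of $\overline X$ to see that descending chains have nonempty (closed, convex, invariant) intersection, and then argue that a minimal such set $K$ yields a Roller minimal action unless it is a single point, which is excluded because $\G$ fixes no point of $\overline X$. The genuine gap is the step that produces the $\G$-invariant component $Z$ with $C\cu Z$. Your proposed lemma --- that ``the component containing (the interior of) a $\G$-invariant convex set is itself $\G$-invariant'' --- is not well-posed: the minimal set $K$ need not lie in a single component, it can meet many components of $\partial X$, and a closed convex subset of $\overline X$ has no evident ``interior component''; picking just any component of maximal rank among those met by $K$ does not help either, since several could be permuted by $\G$. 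Likewise, the transfinite descent on (rank, components met) is never made precise, and rank plays no inductive role in this proposition. The paper closes exactly this hole with a structural result from \cite{Fioravanti1} (Corollary~4.31 there): among the components of $\overline X$ meeting a nonempty closed convex subset $K$ there is a \emph{canonical} one, $Z$, of maximal rank; canonicity makes $Z$ automatically $\G$-invariant, and then $\overline Z\cap K$ is a nonempty, closed, convex, invariant subset of $K$, so minimality forces $K\cu\overline Z$. Setting $C:=K\cap Z$, this set is nonempty, convex, $\G$-invariant and closed in $Z$, and minimality again gives that $K$ equals the closure of $C$, which is identified with the intrinsic Roller compactification $\overline C$ (Lemma~4.8 in \cite{Fioravanti1}). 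Without an input of this kind your argument does not terminate in a component, so as written the proof is incomplete.

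A smaller slip: Roller minimality of $\G\acts C$ requires that $\G$ preserve no proper, closed, convex subset of $\overline C$ (the Roller compactification of $C$ as a median space in its own right), not of $\overline Z$. Once one knows that the closure of $C$ in $\overline X$ is canonically $\overline C$, your endgame is correct: any nonempty, closed, convex, $\G$-invariant subset of $\overline C$ is also one of $\overline X$, so minimality of $K=\overline C$ rules it out, and $C$ is not a point since $\G$ has no fixed point in $\overline X$. So the conclusion of your argument is fine; what is missing is the canonical maximal-rank component attached to $K$, which is the one place where the structure theory of Roller compactifications has to be invoked.
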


We remark that actions with a global fixed point in $\overline X$ have a very specific structure, see Theorem~F below.

Theorem~B allows us to construct free groups of isometries, as soon as we have ``tree-like'' configurations of halfspaces inside the median space $X$. More precisely, we need \emph{strongly separated} pairs of halfspaces. In the case of ${\rm CAT}(0)$ cube complexes, these were introduced in \cite{Behrstock-Charney} and used in \cite{CS} to characterise irreducibility. We prove:

\begin{thmD}
Let $X$ be a complete, finite rank median space admitting a Roller minimal action $\G\acts X$ without wall inversions. The median space splits as a nontrivial product if and only if no two halfspaces are strongly separated.
\end{thmD}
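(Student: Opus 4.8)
The plan is to prove both implications separately, borrowing the overall strategy from \cite{CS} but working systematically with the measured halfspace structure $\mscr{H}$ and the Roller compactification $\overline X$.

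For the easy direction, suppose $X$ splits as a nontrivial product $X\cong X_1\x X_2$. Then $\mscr{H}$ decomposes (up to measure zero) as a disjoint union $\mscr{H}_1\sqcup\mscr{H}_2$, where $\mscr{H}_i$ consists of halfspaces pulled back from the factor $X_i$. Any halfspace $\mf{h}\in\mscr{H}_1$ and any halfspace $\mf{k}\in\mscr{H}_2$ are transverse (each of the four intersections $\mf{h}\cap\mf{k}$, $\mf{h}\cap\mf{k}^*$, etc.\ is nonempty), so $\mf{h}$ and $\mf{k}$ are not strongly separated. Thus we must rule out strong separation of two halfspaces coming from the \emph{same} factor, say $\mf{h},\mf{h}'\in\mscr{H}_1$ with $\mf{h}\cap\mf{h}'$ nonempty; here one uses that any halfspace $\mf{k}\in\mscr{H}_2$ is transverse to both and, crucially, that $\mf{k}$ separates points of $\mf{h}\cap\mf{h}'$ from each other only trivially---one needs a wall from $\mscr{H}_2$ that "crosses between" $\mf{h}$ and $\mf{h}'$, which exists because the second factor is nontrivial and $\mf{h}\cap\mf{h}'$ has nonempty interior as a product region. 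This shows no two halfspaces are strongly separated. (A mild subtlety: one should check that Roller minimality of $\G\acts X$ descends to forbid the degenerate case where one factor carries no halfspaces, but that is exactly the statement that both factors are genuine median spaces of positive rank.)

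The substantive direction is the converse: assuming no two halfspaces are strongly separated, produce a nontrivial product splitting. Following \cite{CS}, I would look for a nontrivial, $\G$-invariant, measurable partition $\mscr{H}=\mscr{A}\sqcup\mscr{B}$ such that every halfspace in $\mscr{A}$ is transverse to every halfspace in $\mscr{B}$; the Sageev-type reconstruction (in the median-space form developed earlier in the paper, using the pocset/measured-walls structure of $\overline X$) then yields $X\cong X_{\mscr{A}}\x X_{\mscr{B}}$. To build this partition, fix a "generic" halfspace $\mf{h}_0$ (one of full measure, as in Theorem~B) and let $\mscr{B}$ be, roughly, the set of halfspaces transverse to $\mf{h}_0$ together with its "transversality-closure", and $\mscr{A}$ its complement. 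The key point is that failure of strong separation is an \emph{inheritance} property: if $\mf{h}\cu\mf{k}$ are nested and \emph{not} strongly separated, there is a halfspace $\mf{m}$ transverse to both, and one propagates transversality along chains. One shows that the relation "eventually becomes transverse" is in fact transitive on a full-measure subset of $\mscr{H}$, using finite rank to bound chains of pairwise nested, pairwise non-transverse halfspaces---this is where finite rank enters decisively, mirroring Helly-type arguments in \cite{CS}. Invariance of the partition under $\G$ follows because strong separation is a $\G$-invariant notion, and $\G$-minimality (via Theorems~B and~D's hypotheses, i.e.\ the Flipping/Skewering Lemma) guarantees that the partition classes are genuinely nonempty and that neither carries measure zero.

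The main obstacle I anticipate is the step showing that "eventual transversality" is a transitive equivalence relation of full measure---equivalently, that in the absence of any strongly separated pair, the diameter (in the sense of longest chains of pairwise nested non-transverse halfspaces) is globally bounded, not just locally. In cube complexes this is a finite combinatorial argument; in a finite rank median space one must run it at the level of the measured halfspace structure, controlling null sets carefully and invoking the fact from \cite{Fioravanti1} that components of $\partial X$ have strictly smaller rank (so an induction on rank is available). A secondary technical point is verifying that the Sageev-style reconstruction from a transverse partition actually returns a \emph{complete, finite rank} median space isometric to $X$ rather than merely a median algebra with a wall structure; this should follow from the general correspondence between complete finite rank median spaces and their spaces of measured walls, but the details of matching metrics across the two factors need care.
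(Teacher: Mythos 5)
Your easy direction (product $\Rightarrow$ no strongly separated pair) is essentially the paper's: by Corollary~\ref{products} every halfspace lives in one factor, halfspaces from different factors are transverse (hence not even disjoint), and two halfspaces from the same factor admit a common transversal coming from the other factor; no discussion of interiors or of ``walls crossing between'' them is needed, and your worry about the reconstruction step is moot since Corollary~\ref{products} already produces the isometric splitting. The problem is the converse. You propose to build a transverse measurable partition directly from the hypothesis, via a ``transversality closure'' of a generic halfspace, resting on the claim that non-strong-separation lets you propagate transversality along chains and that ``eventual transversality'' is transitive up to null sets. That propagation claim is precisely the crux, and nothing in your sketch supports it. Concretely, your inheritance lemma (``if $\mf{h}\cu\mf{k}$ are nested and not strongly separated, there is a halfspace transverse to both'') is false: strong separation requires disjoint closures, so nested or closure-touching pairs are never strongly separated and the hypothesis hands you no transversal at all for them --- in a rank-one space no two halfspaces are transverse, yet almost all pairs fail to be strongly separated. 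Issues of exactly this kind (positive distance between closures, thickness) are why the paper works with thick halfspaces, Lemma~\ref{slight reformulation of SS} and the positivity clauses in Corollary~\ref{double skewering}. Moreover you assign Roller minimality only the cosmetic role of making the partition classes nonempty or non-null (Corollary~\ref{products} does not even require positive measure); in fact it is the engine of the argument, since flipping/double skewering is what produces the halfspaces one needs, and without it the statement fails.

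The paper's proof runs differently and you would need its ingredients anyway. It proves the contrapositive in the stronger nested form (irreducibility gives, around every thick halfspace $\mf{h}$, a strongly separated pair $\mf{h}'\cu\mf{h}\cu\mf{h}''$) by contradiction, following Proposition~5.1 of \cite{CS}: Corollary~\ref{double skewering} is used to build nested thick halfspaces $\mf{h}_n'\subsetneq\mf{h}\subsetneq\mf{h}_n''$ together with thick $\mf{k}_n$ transverse to $\mf{h}_{n-1}'$ and $\mf{h}_{n-1}''$ and lying in $\mscr{H}(\mf{h}_n''^*|\mf{h}_n')$; these $\mf{k}_n$ are pairwise transverse, contradicting finite rank. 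A transverse partition of $\mscr{H}$ does appear, but only inside the thick analogue of Lemma~5.2 of \cite{CS} (one of the four sectors of two thick transverse halfspaces contains a thick halfspace): there the partition is $\mc{H}'\sqcup(\mscr{H}\setminus(\mc{H}'\cup\mscr{H}^{\times}))\sqcup\mscr{H}^{\times}$, and its transversality is forced by the specific sector hypothesis together with a double-skewering characterization of the inseparable family $\mc{H}'$ --- not by a generic closure operation applied to a single halfspace. So either adopt that contradiction scheme, or supply an actual proof of your transitivity claim; as written, it is a genuine gap rather than a technical point.
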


Following well-established techniques \cite{CS}, Theorems~B and~D yield:

\begin{thmE}
Let $X$ be a complete, finite rank median space with an isometric action $\G\acts X$. Either $\G$ contains a nonabelian free subgroup or the action is Roller elementary.
\end{thmE}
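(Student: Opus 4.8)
The plan is to run the standard Caprace--Sageev dichotomy, using Proposition~C to reduce to a Roller minimal action and then Theorems~B and~D to either split off a factor or produce strong separation. First I would dispose of the case where $\G$ has a finite orbit in $\overline X$: this is precisely the definition of a Roller elementary action, so there is nothing to prove. Hence assume $\G$ fixes no point of $\overline X$ (after passing to a finite-index subgroup, which is harmless for the conclusion ``contains a nonabelian free subgroup''). By Proposition~C there is a $\G$-invariant component $Z\cu\overline X$ and a closed, convex, $\G$-invariant $C\cu Z$ on which $\G$ acts Roller minimally. Passing further to the barycentric subdivision $C'$ (which is again complete, finite rank median of the same rank, carries an induced $\G$-action that is still Roller minimal, and on which the action is automatically without wall inversions), we may assume $\G\acts C$ is Roller minimal and without wall inversions. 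A nonabelian free subgroup of isometries of $C$ is in particular one for $\G$, so it suffices to produce free subgroups in this setting.

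Now I would split into two cases according to Theorem~D. If \emph{some} two halfspaces of $C$ are strongly separated, I would build a free group by ping-pong. The idea: by Theorem~B (double skewering) one finds $g\in\G$ skewering a strongly separated pair $\mf{h}\cu\mf{k}$, i.e. $g\mf{k}\subsetneq\mf{h}\cu\mf{k}$ with $d(g\mf{k},\mf{h}^*)>0$; iterating and using that strong separation is inherited along the nested chain, one gets a halfspace $\mf{h}_0$ together with $\gamma\in\G$ such that $\gamma\mf{h}_0\subsetneq\mf{h}_0$ and $\gamma\mf{h}_0^*$, $\mf{h}_0^*$ are strongly separated, so $\gamma$ is a ``rank-one''-type isometry with attracting/repelling pair of halfspaces with positive gaps. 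Finding a second such element whose axis is ``transverse'' (its halfspace pair crosses, or is separated from, that of $\gamma$) and taking high powers $\gamma^N,\delta^N$, one verifies the ping-pong inequalities using positivity of the distances $d(\gamma^N\mf{h}_0^*,\mf{h}_0^*)>0$ to get genuinely disjoint (not merely nested) domains; this is where the ``$d(\cdot,\cdot)>0$'' refinements in Theorem~B are essential, as in cube complexes one would only need combinatorial disjointness. Standard arguments (cf.\ \cite{CS}) then give $\langle\gamma^N,\delta^N\rangle$ free of rank $2$.

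If instead \emph{no} two halfspaces of $C$ are strongly separated, Theorem~D says $C$ splits as a nontrivial product $C\cong C_1\x C_2$. The isometry group of a finite product of median spaces virtually respects the product decomposition (the factors can be grouped by the equivalence relation generated by ``isometric after reordering'', and the rank drops, so this terminates), so a finite-index subgroup $\G_0\leq\G$ acts on each factor $C_i$, which is again a complete, finite rank median space of \emph{strictly smaller rank}. I would then induct on the rank: if for some $i$ the induced action $\G_0\acts C_i$ is not Roller elementary, apply the inductive hypothesis to get a nonabelian free subgroup; if every factor action is Roller elementary, then $\G_0$ has a finite orbit in $\overline{C_i}$ for each $i$, and since $\overline{C_1\x C_2}$ receives a $\G_0$-equivariant map to $\overline{C_1}\x\overline{C_2}$ (one takes the product of Roller compactifications at the level of the partition into components), $\G_0$ — hence $\G$ — has a finite orbit in $\overline C\subseteq\overline X$, contradicting Roller minimality. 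The base case (rank $0$ or $1$, i.e. a point or a real tree/line) is classical: a group acting on a real tree without a fixed end and without a finite orbit in the tree or its boundary contains a free group by the usual Tits-alternative argument for trees.

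The main obstacle I expect is the product step: reducing an action that splits as a product to honest actions on the factors requires knowing that $\G$ virtually preserves the splitting, and more delicately that the induced factor actions are again the sort of action to which the machinery (Proposition~C, Theorems~B and~D) applies — in particular that finite-orbit-in-each-factor genuinely forces a finite orbit in $\overline C$. Getting the compatibility of the Roller compactification with products right (the map $\overline{C_1\x C_2}\to\overline{C_1}\x\overline{C_2}$, and which components of the boundary are hit) is the technical heart; the ping-pong case, by contrast, is a fairly routine adaptation of \cite{CS} once Theorem~B is in hand.
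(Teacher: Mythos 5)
Your outline matches the paper's broad strategy, but the step you call ``fairly routine'' is actually where the real work lies, and as written it has a genuine gap. In the irreducible case you produce one skewering element $\gamma$ with a strongly separated pair and then say one finds ``a second such element whose axis is transverse''; you give no argument for why such a second element exists, and it does not follow from what you have at that point. Roller minimality plus irreducibility plus a strongly separated pair is \emph{not} enough: the infinite dihedral group acting on $\R$ is Roller minimal (no proper closed convex invariant subset of $\overline{\R}$ exists), $\R$ is irreducible of rank one, any two halfspaces with disjoint closures are strongly separated, and yet the group is virtually cyclic. The missing hypothesis is Roller \emph{nonelementarity} of the action on $C$ (which you do have, since a finite orbit in $\overline C\cu\overline X$ would make the original action Roller elementary, but which you never invoke). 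The paper's proof spends most of Section~\ref{facing triples} converting nonelementarity into the configuration you need: it proves that a Roller nonelementary action admits a thick \emph{facing triple} (Proposition~\ref{facing 3-ples exist}), by induction on rank, using compactness of $\overline C$ to find limit points $\xi,\eta$ of the orbit $g^n\wt{\mf{h}}$, the lemma that lineal spaces only admit Roller elementary actions (Lemma~\ref{intervals vs elementarity}) to find a point off $I(\xi,\eta)$, and strong separation to control transversality; only then does double skewering upgrade the triple to a facing $4$-tuple on which the ping-pong of Proposition~\ref{free subgroups acting freely} runs. Your proposal omits all of this, and your assessment that the product step is the ``technical heart'' while ping-pong is routine inverts the actual difficulty.

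Two further points need repair. First, your reduction passes to the barycentric subdivision of $C$ and asserts the induced action is ``still Roller minimal''; this is false in general: the swap action on a two-point median space is Roller minimal, but its barycentric subdivision has an invariant midpoint. The paper orders the reductions the other way: subdivide first (Roller nonelementarity transfers by Lemma~\ref{RNE for X'} and Proposition~\ref{properties of X'}), then extract the Roller minimal piece via Corollary~\ref{Roller elementary vs strongly so 2}, which--unlike Proposition~C alone--also records that the restricted action stays Roller nonelementary. Second, in your product case the contradiction you draw from a finite orbit in $\overline C$ is with Roller minimality; but finite orbits do not contradict Roller minimality (the paper's example $\Z^2\rtimes\Z/4\Z$ acting on the standard cubulation of $\R^2$ is Roller minimal with finite boundary orbits). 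The correct contradiction is with Roller nonelementarity of the original action, using that $\overline C$ is canonically identified with a closed subset of $\overline X$ (and Lemma~\ref{Roller for products} already gives $\overline{C_1\x C_2}=\overline{C_1}\x\overline{C_2}$, so the compatibility you worry about is not an issue). Both of these are fixable, but the absent facing-triple argument is a substantive missing idea rather than a detail.
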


The last step in the proof of Theorem~A consists of the study of Roller elementary actions; we employ the same strategy as the appendix to \cite{CFI}. To this end, we need to extend the notion of \emph{unidirectional boundary set (UBS)} to median spaces. 

In ${\rm CAT}(0)$ cube complexes, UBS's were introduced in \cite{Hagen}. Up to a certain equivalence relation, they define the simplices in Hagen's simplicial boundary and provide a useful tool to understand Tits boundaries, splittings and divergence \cite{Hagen, Behrstock-Hagen}. They can be thought of as a generalisation of embedded cubical orthants. 

We introduce UBS's in median spaces and use them to prove:

\begin{thmF}
Let $X$ be complete and finite rank. Suppose that $\G\acts X$ fixes a point in the Roller boundary of $X$. A finite-index subgroup $\G_0\leq\G$ fits in an exact sequence
\[1\longrightarrow N\longrightarrow \G_0 \longrightarrow\R^r,\]
where $r=\text{rank}(X)$ and every finitely generated subgroup of $N$ has an orbit in $X$ with at most $2^r$ elements. If $X$ is connected, every finitely generated subgroup of $N$ fixes a point.
\end{thmF}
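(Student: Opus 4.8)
The plan is to mimic the treatment of Roller elementary actions in the appendix to \cite{CFI}, replacing the combinatorics of hyperplanes by the measured halfspace structure $(\mscr{H},\mu)$ of $X$. Fix a basepoint $x\in X$ and let $\sigma_x,\sigma_\xi\subseteq\mscr{H}$ be the ultrafilters associated to $x$ and to the fixed point $\xi\in\partial X$. Set
\[\mscr{U}\ :=\ \sigma_\xi\setminus\sigma_x,\]
the set of halfspaces containing $\xi$ but not $x$. Since $\xi$ lies in $\partial X$ rather than in $X$, the set $\mscr{U}$ has infinite measure, and it is a UBS in the sense introduced in this paper: the orientation coming from $\sigma_\xi$ is exactly what makes it unidirectional. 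Because every $g\in\G$ fixes $\xi$, we have $g\mscr{U}=\sigma_\xi\setminus\sigma_{gx}$, hence $\mscr{U}\triangle g\mscr{U}=\sigma_\xi\cap(\sigma_x\triangle\sigma_{gx})$ has measure at most $d(x,gx)<\infty$; thus $\G$ preserves $\mscr{U}$ up to finite measure (and the choice of basepoint $x$ is irrelevant for the same reason).

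Next I would invoke the structure theory of UBS's to decompose $\mscr{U}=\mscr{U}_1\sqcup\dots\sqcup\mscr{U}_k$, up to finite measure, into minimal UBS's, with $k\le\text{rank}(X)=r$; this is the median analogue of the fact that the minimal UBS's of a UBS span a simplex of dimension $<\dim X$ in Hagen's simplicial boundary \cite{Hagen}. The group $\G$ permutes the finitely many classes $[\mscr{U}_i]$, so a finite-index subgroup $\G_0\le\G$ stabilises each of them. For $g\in\G_0$ put
\[\rho_i(g)\ :=\ \mu(\mscr{U}_i\setminus g\mscr{U}_i)-\mu(g\mscr{U}_i\setminus\mscr{U}_i)\ =\ \int\bigl(\mathbbm{1}_{\mscr{U}_i}-\mathbbm{1}_{g\mscr{U}_i}\bigr)\,d\mu .\]
This is finite since the integrand is supported on the finite-measure set $\mscr{U}_i\triangle g\mscr{U}_i$, it is independent of the representative of $[\mscr{U}_i]$ because $\mu$ is $\G$-invariant, and it is additive for the same reason: this is precisely the computation that turns a Busemann cocycle into a homomorphism on a point stabiliser. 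Padding with zeros if $k<r$, we obtain a homomorphism $\rho=(\rho_1,\dots,\rho_r)\colon\G_0\to\R^r$, and we set $N:=\ker\rho$, so that $1\to N\to\G_0\to\R^r$ is exact.

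It remains to control a finitely generated subgroup $H=\langle g_1,\dots,g_s\rangle\le N$, and this is where I expect the main obstacle to lie. The key input is a rigidity statement for minimal UBS's: because each $\mscr{U}_i$ is chain-like and each $g_j$ acts on it order- and measure-preservingly (up to finite measure), the vanishing $\rho_i(g_j)=0$ should force $g_j$ to fix, as a set, every halfspace of $\mscr{U}_i$ lying outside a subset of finite measure. In the tree case this is just the triviality of the order automorphisms of a ray, and for ${\rm CAT}(0)$ cube complexes a minimal UBS is $\omega$-ordered and hence rigid for free; the measured version is the place where the non-discreteness of median spaces genuinely has to be dealt with, and it will require the care developed in the UBS section. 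Granting it, set $\mscr{F}:=\bigcup_{i,j}\{\mf{h}\in\mscr{U}_i:g_j\mf{h}\ne\mf{h}\}$, a set of finite measure. Then every $g_j$, and hence every $h\in H$, restricts to the identity on $\mscr{U}\setminus\mscr{F}$; a short computation with the ultrafilters $\sigma_x$ and $\sigma_{hx}$, using $h\xi=\xi$ to locate the halfspaces that separate $x$ from $hx$, shows that all such halfspaces lie in $\mscr{F}\cup h\mscr{F}$ (together with the reflected sets), so that $d(x,hx)\le C\,\mu(\mscr{F})$ for a constant independent of $h$. Thus $H$ has a bounded orbit in $X$.

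Finally, a bounded orbit in a complete, finite rank median space yields a finite orbit of cardinality at most $2^r$ by the standard structure of bounded subsets of such spaces (the minimal invariant circumcentre configuration is a cube of dimension at most $r$). When $X$ is connected, the closed convex hull of the $H$-orbit is a bounded, complete, connected, finite rank median space, hence carries Bowditch's canonical, $H$-invariant ${\rm CAT}(0)$ metric \cite{Bow4}, and $H$ fixes its unique circumcentre. This gives the last sentence of the statement and completes the argument.
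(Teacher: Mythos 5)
Your first half is essentially the paper's construction: the UBS $\s_\xi\setminus\s_x$, the decomposition into at most $r$ classes of minimal UBS's, the finite-index subgroup preserving these classes, and the transfer characters $\chi_{\Om_i}(g)=\wh\nu(g^{-1}\Om_i\setminus\Om_i)-\wh\nu(\Om_i\setminus g^{-1}\Om_i)$ assembled into a homomorphism to $\R^r$ with kernel $N$ -- all of this is Section~\ref{stab xi} of the paper (Proposition~\ref{main prop on UBS's} and the discussion following it, with the homomorphism property quoted from Cornulier). The final reductions are also available as stated: bounded orbit implies an orbit of at most $2^r$ points by Corollary~\ref{finite orbits}, and the connected case follows from Theorem~\ref{CAT(0) metric} plus Cartan, exactly as in the paper's proof of Theorem~A.

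The genuine gap is the step you yourself flag and then grant: the ``rigidity statement'' that $\chi_{\Om_i}(g)=0$ forces $g$ to fix, as sets, all halfspaces of $\Om_i$ outside a set of finite measure. This is not proved, and it is the entire content of the theorem in the hard cases: already for $X$ a complete real tree it amounts to excluding parabolic-like isometries fixing an end with vanishing Busemann character, which is precisely the kind of statement one is trying to establish; note also that it is a claim about a single isometry, whereas the theorem is only about \emph{finitely generated} subgroups of $N$, so any correct proof must use finite generation somewhere, and your route uses it only to take the finite union $\mscr{F}$. There is a secondary leak in the same step: your bound $d(x,hx)\le C\,\wh\nu(\mscr{F})$ needs $h$ to fix almost all of the full set $\s_\xi\setminus\s_x$, but Proposition~\ref{main prop on UBS's}(3) only says that $\Om\triangle(\Om_1\cup\dots\cup\Om_k)$ consists of halfspaces at \emph{bounded distance} from $x$, which is not the same as finite measure, so the leftover part is not controlled by your $\mscr{F}$. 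The paper avoids all of this by a different mechanism (Proposition~\ref{kernel of chi}): if a finitely generated subgroup of $\ker\chi_\xi$ had no orbit with at most $2^r$ points, then all orbits are unbounded (Corollary~\ref{finite orbits}) and the Sageev-type argument of Proposition~\ref{key point in Sageev} -- which is where finite generation enters -- produces $g$ and $\mf{h}$ with $g\mf{h}\subsetneq\mf{h}$; after arranging $\xi\in\wt{\mf{h}}$, the chain $(g^{nr}\mf{h})$ diverges to $\xi$, a tail of its inseparable closure is a minimal UBS by Proposition~\ref{main prop on UBS's}(2), and on it $\chi(g^r)\geq\wh\nu(\mscr{H}(\mf{h}^*|g^r\mf{h})\setminus\{g^r\mf{h}\})>0$, contradicting $g\in\ker\chi_\xi$. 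To repair your proposal you would either have to prove the rigidity claim (and control the leftover set), or replace that step by this skewering argument.
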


A more careful study of UBS's will be carried out in \cite{Fioravanti3}, where we show that Roller elementarity is equivalent to the vanishing of a certain cohomology class. In particular we prove that, if $\G$ is a topological group acting with continuous orbits, then $\G_0<\G$ is open and the homomorphism $\G_0\ra\R^r$ in Theorem~F is continuous.

{\bf Structure of the paper.} In Section~\ref{preliminaries} we review the basic theory of median spaces and median algebras, with a special focus on our previous results \cite{Fioravanti1}. We use halfspaces to characterise when a finite rank median space splits as a nontrivial product. We study barycentric subdivisions and a similar construction that allows us to canonically embed finite rank median spaces into connected ones. Section~\ref{elliptic isometries} is concerned with groups of elliptic isometries. In Section~\ref{stab xi} we study UBS's and prove Theorem~F; we also anticipate the proof of Theorem~A, relying on Theorem~E. In Section~\ref{CS-like machinery} we introduce Roller elementarity and Roller minimality; we prove Proposition~C and Theorems~B and~D. Finally, Section~\ref{facing triples} is devoted to constructing free groups of isometries; we prove Theorem~E there.

{\bf Acknowledgements.} The author warmly thanks Brian Bowditch, Pier\-re-Emmanuel Caprace, Indira Chatterji, Thomas Delzant, Cornelia Dru\c tu, Talia Fern\'os, Mark Hagen for many helpful conversations and Anthony Genevois for his comments on an earlier version. The author expresses special gratitude to Cornelia Dru\c tu for her excellent supervision and to Talia Fern\'os for her encouragement to pursue this project. The author also wishes to thank the anonymous referee for their valuable comments.

This work was undertaken at the Mathematical Sciences Research Institute in Berkeley during the Fall 2016 program in Geometric Group Theory, where the author was supported by the National Science Foundation under Grant no.~DMS-1440140 and by the GEAR Network. Part of this work was also carried out at the Isaac Newton Institute for Mathematical Sciences, Cambridge, during the programme ``Non-positive curvature, group actions and cohomology'' and was supported by EPSRC grant no.~EP/K032208/1. The author was also supported by the Clarendon Fund and the Merton Moussouris Scholarship.

\section{Preliminaries.}\label{preliminaries}

\subsection{Median spaces and median algebras.}\label{prelims}

Let $X$ be a metric space. A finite sequence of points $(x_k)_{1\leq k\leq n}$ is a \emph{geodesic} if $d(x_1,x_n)=d(x_1,x_2)+...+d(x_{n-1},x_n)$. The interval $I(x,y)$ between $x,y\in X$ is the set of points lying on a geodesic from $x$ to $y$. We say that $X$ is a \emph{median space} if, for all $x,y,z\in X$, the intersection $I(x,y)\cap I(y,z)\cap I(z,x)$ consists of a single point, which we denote by $m(x,y,z)$. In this case, the map $m\colon X^3\ra X$ endows $X$ with a structure of \emph{median algebra}, see e.g.~\cite{CDH, Bow1, Roller} for a definition and various results.

In a median algebra $(M,m)$, the \emph{interval} $I(x,y)$ between $x,y\in M$ is the set of points $z\in M$ with $m(x,y,z)=z$; this is equivalent to the definition above if $M$ arises from a median space. A subset $C\cu M$ is \emph{convex} if $I(x,y)\cu C$ whenever $x,y\in C$. Every collection of pairwise intersecting convex subsets of a median algebra has the finite intersection property; this is Helly's Theorem, see e.g.~Theorem~2.2 in \cite{Roller}.

A \emph{halfspace} is a convex subset $\mf{h}\cu M$ whose complement $\mf{h}^*:=M\setminus\mf{h}$ is also convex. We will refer to the unordered pair $\mf{w}:=\{\mf{h},\mf{h}^*\}$ as a \emph{wall}; we say that $\mf{h}$ and $\mf{h}^*$ are the \emph{sides} of $\mf{w}$. The wall $\mf{w}$ separates subsets $A\cu M$ and $B\cu M$ if $A\cu\mf{h}$ and $B\cu\mf{h}^*$ or vice versa. The wall $\mf{w}$ is \emph{contained} in a halfspace $\mf{k}$ if $\mf{h}\cu\mf{k}$ or $\mf{h}^*\cu\mf{k}$. We say, with a slight abuse of terminology, that $\mf{w}$ is contained in $\mf{k}_1\cap...\cap\mf{k}_k$ if $\mf{w}$ is contained in $\mf{k}_i$ for each $i$.

The sets of halfspaces and walls of $M$ are denoted $\mscr{H}(M)$ and $\mscr{W}(M)$ respectively, or simply $\mscr{H}$ and $\mscr{W}$. Given subsets $A,B\cu M$, we write $\mscr{H}(A|B)$ for the set of halfspaces with $B\cu\mf{h}$ and $A\cu\mf{h}^*$ and we set $\s_A:=\mscr{H}(\emptyset|A)$; we will not distinguish between $x\in M$ and the singleton $\{x\}$. If $A,B$ are convex and disjoint, we have $\mscr{H}(A|B)\neq\emptyset$, see e.g.~Theorem~2.7 in \cite{Roller}. We will refer to the sets $\mscr{H}(x|y)$, $x,y\in M$, as \emph{halfspace-intervals}.

A \emph{pocset} $(\mscr{P},\preceq,*)$ consists of a poset $(\mscr{P},\preceq)$ equipped with an order-reversing involution $*$, such that every element $a\in\mscr{P}$ is incomparable with $a^*\in\mscr{P}$. If $E\cu\mscr{P}$, we write $E^*:=\{a^*\mid a\in E\}$. Ordering $\mscr{H}$ by inclusion, we obtain a pocset where the involution is given by taking complements. 

We say that $a,b\in\mscr{P}$ are \emph{transverse} if any two elements in the set $\{a,a^*,b,b^*\}$ are incomparable. Halfspaces $\mf{h},\mf{k}\in\mscr{H}$ are transverse if and only if the intersections $\mf{h}\cap\mf{k}$, $\mf{h}\cap\mf{k}^*$, $\mf{h}^*\cap\mf{k}$, $\mf{h}^*\cap\mf{k}^*$ are all nonempty subsets of $M$. We say that two walls are \emph{transverse} if they arise from transverse halfspaces. 

A subset $\s\cu\mscr{P}$ is a \emph{partial filter} if there do not exist $a,b\in\s$ with $a\preceq b^*$. A partial filter $\s$ is an \emph{ultrafilter} if $\mscr{P}=\s\cup\s^*$.

\begin{lem}\label{Zorn}
Let $(\mscr{P},\preceq,*)$ be a pocset. Let $\underline{\s}\cu\mscr{P}$ be a partial filter and let $\overline{\s}\cu\mscr{P}$ contain $\mscr{P}\setminus(\underline{\s}\cup\underline{\s}^*)$. If $\underline{\s}\cu\overline{\s}$, then there exists an ultrafilter $\s$ with $\underline{\s}\cu\s\cu\overline{\s}$. 
\end{lem}
\begin{proof}
By Zorn's Lemma, there exists a maximal partial filter $\s$ satisfying $\underline{\s}\cu\s\cu\overline{\s}$. We now show that $\mscr{P}=\s\cup\s^*$, hence $\s$ is an ultrafilter. 

Suppose for the sake of contradiction that there exists $a\not\in\s\cup\s^*$. By maximality of $\s$, neither $\s\cup\{a\}$ nor $\s\cup\{a^*\}$ can be a partial filter. Thus, there exist $b_1,b_2\in\s$ with $a\preceq b_1^*$ and $a^*\preceq b_2^*$. In particular $b_2\preceq a\preceq b_1^*$, contradicting the fact that $\s$ is a partial filter.
\end{proof}

According to the terminology above, a subset $\s\cu\mscr{H}$ is a partial filter if and only if it consists of pairwise intersecting halfspaces. It is an ultrafilter if and only if it moreover contains a side of every wall of $M$. For every $x\in M$, the subset $\s_x\cu\mscr{H}$ is an ultrafilter. Note that, by the proof of Lemma~\ref{Zorn}, ultrafilters are precisely maximal sets of pairwise intersecting halfspaces.

 A subset $\Om\cu\mscr{H}$ is said to be \emph{inseparable} if it contains all $\mf{j}\in\mscr{H}$ such that $\mf{h}\cu\mf{j}\cu\mf{k}$, for $\mf{h},\mf{k}\in\Om$. The \emph{inseparable closure} of a subset $\Om\cu\mscr{H}$ is the smallest inseparable set containing $\Om$; it coincides the union of the sets $\mscr{H}(\mf{k}^*|\mf{h})$ as $\mf{h},\mf{k}$ vary in $\Om$.

The set $\{-1,1\}$ has a unique structure of median algebra. Considering its median map separately in all coordinates, we endow $\{-1,1\}^k$ with a median-algebra structure for each $k\in\N$; we will refer to it as a \emph{$k$-hypercube}. The \emph{rank} of $M$ is the maximal $k\in\N$ such that we can embed a $k$-hypercube into $M$. By Proposition~6.2 in \cite{Bow1} this is the same as the maximal cardinality of a set of pairwise-transverse halfspaces. Note that $M$ has rank zero if and only if it consists of a single point. The following is immediate from Ramsey's Theorem \cite{Ramsey}:

\begin{lem}\label{Ramsey}
If $M$ has finite rank and $\s_1,\s_2\cu\mscr{H}$ are two ultrafilters, every infinite subset of $\s_1\setminus\s_2$ contains an infinite subset that is totally ordered by inclusion.
\end{lem}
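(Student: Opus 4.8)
The plan is to reduce the statement to the infinite Ramsey theorem by establishing the following dichotomy: any two halfspaces lying in $\sigma_1\setminus\sigma_2$ are either transverse or comparable under inclusion. First I would fix an infinite subset $E\cu\sigma_1\setminus\sigma_2$ and extract the two pieces of information encoded by membership in the two ultrafilters. Since $E\cu\sigma_1$ and $\sigma_1$ is a maximal set of pairwise-intersecting halfspaces, the elements of $E$ pairwise intersect. On the other hand, $\sigma_2$ is an ultrafilter, hence contains a side of every wall; since no element of $E$ lies in $\sigma_2$, we get $\mf{h}^*\in\sigma_2$ for every $\mf{h}\in E$, and therefore the complements $\{\mf{h}^*\mid\mf{h}\in E\}$ pairwise intersect as well.

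Next I would run the case analysis on a pair $\mf{h},\mf{k}\in E$. In the pocset $\mscr{H}$, either $\mf{h}$ and $\mf{k}$ are transverse, or some side of the wall $\{\mf{h},\mf{h}^*\}$ is contained in some side of $\{\mf{k},\mf{k}^*\}$, i.e.\ one of $\mf{h}\cu\mf{k}$, $\mf{h}\cu\mf{k}^*$, $\mf{h}^*\cu\mf{k}$, $\mf{h}^*\cu\mf{k}^*$ holds. The inclusion $\mf{h}\cu\mf{k}^*$ forces $\mf{h}\cap\mf{k}=\emptyset$, and $\mf{h}^*\cu\mf{k}$ forces $\mf{h}^*\cap\mf{k}^*=\emptyset$; by symmetry the same applies to $\mf{k}\cu\mf{h}^*$ and $\mf{k}^*\cu\mf{h}$. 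All four of these are excluded by the previous paragraph, so the only surviving possibilities are $\mf{h}\cu\mf{k}$, $\mf{k}\cu\mf{h}$, or $\mf{h}$ and $\mf{k}$ transverse. Hence the $2$-colouring of the pairs of $E$ by ``transverse'' versus ``$\subseteq$-comparable'' is exhaustive.

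Finally I would invoke the infinite Ramsey theorem to get an infinite monochromatic subset $E'\cu E$. If every pair in $E'$ were transverse, then $E'$ would be an infinite family of pairwise-transverse halfspaces; but by Proposition~6.2 in \cite{Bow1} the maximal cardinality of such a family equals $\text{rank}(M)$, which is finite by hypothesis, a contradiction. Therefore every pair in $E'$ is comparable, i.e.\ $E'$ is totally ordered by inclusion, which is what we wanted.

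I do not expect a genuine obstacle here: the argument is a short, clean application of Ramsey's theorem. The only point requiring care is the exhaustiveness of the dichotomy in the middle step, which is precisely where both ultrafilter hypotheses are used, together with the standard fact that finite rank rules out infinite pairwise-transverse families of halfspaces.
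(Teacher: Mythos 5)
Your argument is correct and is exactly the one the paper has in mind: the paper states the lemma as ``immediate from Ramsey's Theorem,'' and your write-up simply fills in the details, namely that membership of $\mf{h},\mf{k}$ in $\s_1$ and of $\mf{h}^*,\mf{k}^*$ in $\s_2$ rules out the inclusions $\mf{h}\cu\mf{k}^*$ and $\mf{h}^*\cu\mf{k}$, so any pair in $\s_1\setminus\s_2$ is either transverse or comparable, and an infinite monochromatic set in the transverse colour is impossible in finite rank by Proposition~6.2 of \cite{Bow1}. No gaps; this matches the intended proof.
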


If $C\cu M$ and $x\in M$, we say that $y\in C$ is a \emph{gate} for $(x,C)$ if $y\in I(x,z)$ for all $z\in C$. The set $C$ is \emph{gate-convex} if a gate exists for every point of $M$; in this case, gates are unique and define a \emph{gate-projection} $\pi_C\colon M\ra C$. If $C$ is gate-convex, we have $\mscr{H}(x|\pi_C(x))=\mscr{H}(x|C)$ for every $x\in M$. Every interval $I(x,y)$ is gate-convex with projection $z\mapsto m(x,y,z)$. Gate-convex sets are always convex, but the converse does not hold in general. We record a few more properties of gate-convex subsets in the following result; see \cite{Fioravanti1} for proofs.

\begin{prop}\label{all about gates}
Let $C,C'\cu M$ be gate-convex.
\begin{enumerate}
\item The sets $\{\mf{h}\in\mscr{H}(M)\mid\mf{h}\cap C\neq\emptyset,~\mf{h}^*\cap C\neq\emptyset\}$, ${\{\pi_C^{-1}(\mf{h})\mid\mf{h}\in\mscr{H}(C)\}}$ and $\mscr{H}(C)$ are all naturally in bijection.
\item There exists a \emph{pair of gates}, i.e.~a pair $(x,x')$ of points $x\in C$ and $x'\in C'$ such that $\pi_C(x')=x$ and $\pi_{C'}(x)=x'$. In particular, we have ${\mscr{H}(x|x')=\mscr{H}(C|C')}$.
\item If $C\cap C'\neq\emptyset$, we have $\pi_C(C')=C\cap C'$ and $\pi_C\o\pi_{C'}=\pi_{C'}\o\pi_C$. In particular, if $C'\cu C$, we have $\pi_{C'}=\pi_{C'}\o\pi_C$.
\end{enumerate}
\end{prop}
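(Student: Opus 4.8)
The plan is to establish the three items in turn, using as the single main tool the identity $\mscr{H}(x\mid\pi_C(x))=\mscr{H}(x\mid C)$ of the gate-projection, together with the separation theorem for disjoint convex sets (Theorem~2.7 in \cite{Roller}) and the two elementary facts that $\pi_C$ restricts to the identity on $C$ and that $\mscr{H}(A\mid B)$ and $\mscr{H}(B\mid A)$ correspond under $\mf{h}\mapsto\mf{h}^*$. I would open with one preliminary observation: \emph{a halfspace $\mf{h}$ crossing $C$} (i.e.\ with $\mf{h}\cap C\neq\emptyset\neq\mf{h}^*\cap C$) \emph{never separates a point $x$ from $\pi_C(x)$}. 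Indeed, if $x\in\mf{h}^*$ and $\pi_C(x)\in\mf{h}$ then $\mf{h}\in\mscr{H}(x\mid\pi_C(x))=\mscr{H}(x\mid C)$, so $C\cu\mf{h}$, contradicting $\mf{h}^*\cap C\neq\emptyset$; the case $x\in\mf{h}$, $\pi_C(x)\in\mf{h}^*$ is symmetric. Equivalently, $x\in\mf{h}\iff\pi_C(x)\in\mf{h}$ for every such $\mf{h}$.

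For item~(1): restriction $\mf{h}\mapsto\mf{h}\cap C$ carries a halfspace of $M$ crossing $C$ to a proper halfspace of $C$, since $\mf{h}\cap C$ and $\mf{h}^*\cap C$ are complementary convex subsets of $C$. This map is injective because the preliminary observation gives $\pi_C^{-1}(\mf{h}\cap C)=\{x\mid\pi_C(x)\in\mf{h}\}=\mf{h}$, so $\mf{h}$ is recovered from $\mf{h}\cap C$; it also shows that $\mf{g}\mapsto\pi_C^{-1}(\mf{g})$ is a well-defined two-sided inverse once we know restriction is onto. For surjectivity, given a proper halfspace $\mf{g}$ of $C$, the sets $\mf{g}$ and $C\setminus\mf{g}$ are disjoint and convex in $C$, hence in $M$ (as $C$ is convex in $M$), so the separation theorem yields $\mf{h}\in\mscr{H}(M)$ with $C\setminus\mf{g}\cu\mf{h}$ and $\mf{g}\cu\mf{h}^*$; then $\mf{h}^*$ crosses $C$ and $\mf{h}^*\cap C=\mf{g}$. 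This gives the bijections between the halfspaces crossing $C$, the family $\{\pi_C^{-1}(\mf{g})\mid\mf{g}\in\mscr{H}(C)\}$, and $\mscr{H}(C)$. (If one prefers, convexity of $\pi_C^{-1}(\mf{g})$ also follows from $\pi_C$ being a median morphism, which I would record separately as it is useful elsewhere.)

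For item~(2) I would construct the pair of gates explicitly, avoiding any limiting argument (none is available since $M$ carries no finiteness hypothesis here). Pick any $x\in C'$ and set $y:=\pi_C(x)\in C$, $x':=\pi_{C'}(y)\in C'$; I claim $\pi_C(x')=y$. Since $y\in C$, it is enough to check that no halfspace $\mf{h}$ contains both $x'$ and a point $z\in C$ while $y\in\mf{h}^*$. Suppose one does. From $x'\in\mf{h}$, $y\in\mf{h}^*$ and $x'=\pi_{C'}(y)$ (so $\mscr{H}(y\mid x')=\mscr{H}(y\mid C')$) we get $C'\cu\mf{h}$, whence $x\in\mf{h}$. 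From $x\in\mf{h}$, $y\in\mf{h}^*$ and $y=\pi_C(x)$ (so, dualizing, $\mscr{H}(y\mid x)=\mscr{H}(C\mid x)$) we get $C\cu\mf{h}$; but $y\in C$, contradicting $y\in\mf{h}^*$. Hence $(y,x')$ is a pair of gates. Finally, combining $\mscr{H}(y\mid x')=\mscr{H}(y\mid C')$ with $\mscr{H}(y\mid x')=\mscr{H}(C\mid x')$ (the latter by dualizing $\mscr{H}(x'\mid y)=\mscr{H}(x'\mid C)$, which holds since $y=\pi_C(x')$) shows every element of $\mscr{H}(y\mid x')$ lies in $\mscr{H}(C\mid C')$; the reverse inclusion is trivial, so $\mscr{H}(y\mid x')=\mscr{H}(C\mid C')$.

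For item~(3), assume $C\cap C'\neq\emptyset$. Part~(a): for $w\in C'$ and $z\in C\cap C'$ one has $\pi_C(w)\in I(w,z)\cu C'$ by convexity of $C'$, so $\pi_C(w)\in C\cap C'$; the inclusion $C\cap C'\cu\pi_C(C')$ is clear since $\pi_C$ fixes $C\cap C'$ pointwise. Part~(b): for $x\in M$ the point $\pi_C(\pi_{C'}(x))$ lies in $C\cap C'$ by~(a), and for each $z\in C\cap C'$ we have $\pi_{C'}(x)\in I(x,z)$ and $\pi_C(\pi_{C'}(x))\in I(\pi_{C'}(x),z)$, hence $\pi_C(\pi_{C'}(x))\in I(x,z)$ by the standard nesting property of intervals ($b\in I(a,c)$ and $d\in I(b,c)$ imply $d\in I(a,c)$). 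Thus $\pi_C\o\pi_{C'}(x)$ is a gate for $(x,C\cap C')$; by symmetry so is $\pi_{C'}\o\pi_C(x)$, and uniqueness of gates forces $\pi_C\o\pi_{C'}=\pi_{C'}\o\pi_C$ (and shows $C\cap C'$ is gate-convex with this common map as its gate-projection); the case $C'\cu C$ is the special case $C\cap C'=C'$. I expect the only delicate point to be the halfspace chase in~(2): the three points must be introduced in the order $x$, $y=\pi_C(x)$, $x'=\pi_{C'}(y)$ so that the two gate identities chain correctly, and the contradiction is closed by invoking $y\in C$; everything else is routine unwinding of the definitions of gate, convexity and halfspace.
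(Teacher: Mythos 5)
The paper itself contains no argument for this proposition --- it is quoted from \cite{Fioravanti1} (``see \cite{Fioravanti1} for proofs'') --- so there is no in-text proof to compare against. Judged on its own, your proof is correct and uses exactly the tools the preliminaries make available: the gate identity $\mscr{H}(x\mid\pi_C(x))=\mscr{H}(x\mid C)$, separation of disjoint convex sets (Theorem~2.7 in \cite{Roller}), and the description of intervals by halfspaces. Items (1) and (3) are fine as written, and the explicit construction of the pair of gates in (2) (project $x\in C'$ to $C$, then back to $C'$, and verify the gate property by a halfspace chase) is sound and rightly avoids any metric or limiting argument.

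One small repair in the chase of item~(2): under the paper's convention, $\mscr{H}(C\mid x)$ consists of halfspaces containing $x$ with $C$ in the \emph{complement}, so from $x\in\mf{h}$, $y=\pi_C(x)\in\mf{h}^*$ and $\mscr{H}(y\mid x)=\mscr{H}(C\mid x)$ you obtain $C\cu\mf{h}^*$, not $C\cu\mf{h}$; the contradiction is then with the assumed point $z\in C\cap\mf{h}$, rather than with $y\in\mf{h}^*$. This is only a slip in reading off the side of the wall (you apply the same dualization correctly a few lines later when proving $\mscr{H}(y\mid x')=\mscr{H}(C\mid C')$), and the argument goes through verbatim once corrected. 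You might also make explicit the fact used when you say ``it is enough to check that no halfspace contains both $x'$ and a point $z\in C$ while $y\in\mf{h}^*$'', namely that $y\in I(x',z)$ as soon as every halfspace containing $x'$ and $z$ contains $y$; this follows from convexity of intervals together with the same separation theorem you already invoke in item~(1).
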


A \emph{topological median algebra} is a median algebra endowed with a Hausdorff topology so that $m$ is continuous. Every median space $X$ is a topological median algebra, since $m\colon X^3\ra X$ is $1$-Lipschitz if we consider the $\ell^1$ metric on $X^3$. Every gate-convex subset of a median space is closed and convex; the converse holds if $X$ is complete. Gate-projections are $1$-Lipschitz. If ${C,C'\cu X}$ are gate-convex and $X$ is complete, points $x\in C$ and $x'\in C'$ form a pair of gates if and only if $d(x,x')=d(C,C')$, see Lemma~2.9 in \cite{Fioravanti1}; this holds in particular when $C'$ is a singleton. 

Let $X$ be a complete, finite rank median space. The following is Proposition~B in \cite{Fioravanti1}.

\begin{prop}\label{all about halfspaces}
Every halfspace is either open or closed (possibly both). If $\mf{h}_1\supsetneq ... \supsetneq\mf{h}_k$ is a chain of halfspaces with $\overline{\mf{h}_1^*}\cap\overline{\mf{h}_k}\neq\emptyset$, we have $k\leq 2\cdot\text{rank}(X)$.
\end{prop}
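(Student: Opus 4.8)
The plan is to prove the two statements separately, with the first (every halfspace is open or closed) feeding into the second.

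For the first assertion, start from the dichotomy that a convex subset of a complete median space which is closed is automatically gate-convex, whereas an arbitrary halfspace $\mf{h}$ need not be. The key is to examine the boundary $\partial\mf{h}=\overline{\mf{h}}\cap\overline{\mf{h}^*}$. I would show that if a point $x$ lies in $\partial\mf{h}$, then $x$ cannot belong to both the interior of $\mf{h}$ and the interior of $\mf{h}^*$; and more precisely that $\partial\mf{h}$ is contained in exactly one of $\mf{h},\mf{h}^*$ — this uses convexity of $\mf{h}$ and of $\mf{h}^*$ together with the fact that intervals are gate-convex. The cleanest route is: given $x\in\overline{\mf{h}^*}$ and $y\in\mf{h}$, the gate-projection of $y$ to $\overline{I(x,y)}$ realises the point of $\mf{h}^*$ (or its closure) nearest $y$; playing this against points approaching $x$ from within $\mf{h}^*$, one sees that a sequence in $\mf{h}$ converging to $x$ would force $x\in\mf{h}$ by convexity, so $\partial\mf{h}$ lies on one fixed side. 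Hence $\mf{h}$ is either open (if $\partial\mf{h}\cu\mf{h}^*$) or closed (if $\partial\mf{h}\cu\mf{h}$), possibly both if $\partial\mf{h}=\emptyset$. This is essentially Proposition~B of \cite{Fioravanti1}, which we are entitled to cite, but I sketch it here for self-containedness.

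For the second assertion, suppose $\mf{h}_1\supsetneq\dots\supsetneq\mf{h}_k$ with a point $p\in\overline{\mf{h}_1^*}\cap\overline{\mf{h}_k}$. Using the first part, each $\mf{h}_i$ is open or closed. I would partition the indices according to whether $\mf{h}_i$ is closed or $\mf{h}_i^*$ is closed (at least one holds; if both, assign arbitrarily, say to the ``closed $\mf{h}_i$'' class). Consider first a maximal run of consecutive indices $i<i+1<\dots<i+m$ all of whose $\mf{h}_j$ are closed. Then $\overline{\mf{h}_i}\supseteq\dots\supseteq\overline{\mf{h}_{i+m}}$, and since $p\in\overline{\mf{h}_k}\cu\overline{\mf{h}_{i+m}}\cu\mf{h}_{i+m}\cu\dots\cu\mf{h}_i$ actually lies in $\mf{h}_i$, while $p\in\overline{\mf{h}_1^*}$ need not help directly. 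The productive move is instead to separate into points: pick $p\in\overline{\mf{h}_k}$ and note we may also choose a gate/pair-of-gates witness $q\in\overline{\mf{h}_1^*}$ with $\mscr{H}(q|p)\supseteq\{\mf{h}_1,\dots,\mf{h}_k\}$ after replacing the chain by its images under the relevant gate-projections (the chain still has length $k$ and the endpoints are realised). Now run the chain through the barycentric picture: between consecutive distinct halfspaces in a descending chain there is, by the interval structure, a ``half-step'' point, and the closed halfspaces among the $\mf{h}_i$ contribute at most $\mathrm{rank}(X)$ of them while the open ones contribute at most $\mathrm{rank}(X)$ more — precisely because a strictly descending chain of closed convex sets all containing a fixed point $p$ and all contained in the closure of a set avoiding $q$ embeds, via the pairwise-transversality count of Proposition~6.2 of \cite{Bow1} applied after suitable perturbation, into a structure of cardinality at most $\mathrm{rank}(X)$; the same bound applies to the complementary open halfspaces by passing to complements. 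Summing the two contributions gives $k\leq 2\,\mathrm{rank}(X)$.

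The main obstacle is the last step: turning a \emph{chain} (totally ordered, not transverse) into something the rank bound can see. The rank of a median space bounds antichains of transverse halfspaces, not chains, so a naive application fails — indeed chains can be arbitrarily long in general. What saves us is the topological constraint $\overline{\mf{h}_1^*}\cap\overline{\mf{h}_k}\neq\emptyset$: I expect the right technical lemma is that if $\mf{h}\supsetneq\mf{k}$ are both closed and $\overline{\mf{h}^*}\cap\overline{\mf{k}}\neq\emptyset$, then there is a point in $\mf{h}\setminus\mf{k}$ witnessing a genuine ``gap,'' and one cannot stack more than $\mathrm{rank}(X)$ such genuine gaps because each gap, localised near the common boundary point, produces a distinct direction (transverse pair) in a tangent-like finite-rank median algebra at that point. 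Making ``tangent-like'' precise is where the real work lies; alternatively, one argues directly that $k+1$ strictly descending closed halfspaces with the boundary-intersection property would let us build a $(k{+}1)$-element totally-ordered-but-``independent'' configuration contradicting Helly's theorem combined with the finite-rank hypothesis. I would first try to extract exactly the statement needed from \cite{Fioravanti1}, since this is quoted there as Proposition~B, and only reconstruct the argument if a self-contained proof is wanted.
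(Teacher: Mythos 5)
The first thing to note is that the paper does not prove this proposition at all: it is imported verbatim, with the one-line attribution ``The following is Proposition~B in \cite{Fioravanti1}''. So your closing fallback --- extract the statement from \cite{Fioravanti1} and cite it --- is exactly what the paper does, and nothing more is on offer there.

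As a self-contained reconstruction, however, your sketch has genuine gaps at both of its load-bearing points. In the first part, the pivotal step ``a sequence in $\mf{h}$ converging to $x$ would force $x\in\mf{h}$ by convexity'' is false: convex subsets of a median space need not be closed, and whether limits of points of $\mf{h}$ stay in $\mf{h}$ is precisely the dichotomy to be proved, not something convexity gives for free; note also that your argument never invokes completeness or finite rank, which the statement genuinely requires. In the second part, the heart of the matter is exactly the step you leave open: the rank only bounds families of \emph{pairwise transverse} halfspaces, whereas the $\mf{h}_i$ form a chain, so one must manufacture on the order of $k/2$ transverse walls (or embed the configuration into something of size controlled by the rank, e.g.\ via the structure of intervals in complete finite-rank median spaces) out of the nesting together with $\overline{\mf{h}_1^*}\cap\overline{\mf{h}_k}\neq\emptyset$. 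The ``tangent-like finite-rank median algebra'' and the ``suitable perturbation'' that are supposed to do this are never constructed, and ``Helly's theorem combined with the finite-rank hypothesis'' is not an argument; your proposed split ``closed ones $\leq r$, open ones $\leq r$'' is itself unproved, since two nested closed halfspaces through the common boundary point are not transverse and yield no contradiction by themselves. A smaller but symptomatic slip: from $p\in\overline{\mf{h}_k}$ you cannot conclude $\{\mf{h}_1,\dots,\mf{h}_k\}\cu\mscr{H}(q|p)$, because $p$ need not lie in $\mf{h}_k$ when $\mf{h}_k$ is open. So, beyond the citation of \cite{Fioravanti1}, nothing is actually established by the sketch.
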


\begin{lem}\label{intersections of halfspaces}
Let $\mc{C}\cu\mscr{H}$ be totally ordered by inclusion and suppose that the halfspaces in $\mc{C}$ are at uniformly bounded distance from a point $x\in X$. The intersection of all halfspaces in $\mc{C}$ is nonempty. 
\end{lem}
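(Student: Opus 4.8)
The plan is to dispose of the trivial case and then run a two-stage argument: first I would locate a point $y$ in the intersection of the \emph{closures} $\overline{\mf h}$, $\mf h\in\mc C$, and then upgrade it to a point of $\bigcap_{\mf h\in\mc C}\mf h$ using the chain bound of Proposition~\ref{all about halfspaces}. If $\mc C$ has a minimum the intersection is that minimum and there is nothing to prove, so I assume it does not; then every $\mf h\in\mc C$ has infinitely many elements of $\mc C$ strictly below it, since a nonempty finite set of elements of $\mc C$ below $\mf h$ would have a minimum, which (as $\mc C$ is a chain) would be the minimum of $\mc C$.

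For the first stage, let $R$ bound the distances $d(x,\mf h)$, $\mf h\in\mc C$. As $X$ is complete, each $\overline{\mf h}$ is closed and convex, hence gate-convex, so I set $x_{\mf h}:=\pi_{\overline{\mf h}}(x)$, with $d(x,x_{\mf h})=d(x,\mf h)\leq R$. When $\mf k\cu\mf h$ in $\mc C$, Proposition~\ref{all about gates}(3) gives $x_{\mf k}=\pi_{\overline{\mf k}}(x_{\mf h})$ and, since $x_{\mf k}\in\overline{\mf k}\cu\overline{\mf h}$, also $x_{\mf h}\in I(x,x_{\mf k})$; so $\mf h\mapsto d(x,\mf h)$ is non-increasing along $\mc C$ and bounded, and I set $L:=\sup_{\mf h\in\mc C}d(x,\mf h)$. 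I then distinguish two cases. If $L$ is attained at some $\mf h_*\in\mc C$, then for each $\mf k\cu\mf h_*$ in $\mc C$ the point $x_{\mf k}$ lies on a geodesic from $x$ to $x_{\mf h_*}$ at distance $L=d(x,x_{\mf h_*})$ from $x$, whence $x_{\mf k}=x_{\mf h_*}=:y$, and $y\in\overline{\mf h}$ for every $\mf h\in\mc C$. If $L$ is not attained, I pick $\mf h_n\in\mc C$ with $d(x,\mf h_n)>L-\tfrac1n$, replace each by the smallest of $\mf h_1,\dots,\mf h_n$, and pass to a subsequence to arrange $\mf h_1\supsetneq\mf h_2\supsetneq\cdots$ (the inclusions cannot be eventually non-strict, or $L$ would be attained); then $x_{\mf h_n}\in I(x,x_{\mf h_m})$ for $m\geq n$ gives $d(x_{\mf h_n},x_{\mf h_m})=d(x,\mf h_m)-d(x,\mf h_n)\to 0$, so $(x_{\mf h_n})$ is Cauchy and, by completeness, converges to some $y$. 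This $y$ lies in each closed set $\overline{\mf h_n}$, and any $\mf k\in\mc C$ with $\mf h_n\not\cu\mf k$ for all $n$ would satisfy $\mf k\subsetneq\mf h_n$ for all $n$ (as $\mc C$ is a chain), hence $d(x,\mf k)\geq d(x,\mf h_n)\to L$, contradicting that $L$ is unattained; so once more $y\in\overline{\mf h}$ for every $\mf h\in\mc C$.

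For the second stage, fix $\mf h\in\mc C$ and choose $2\,\text{rank}(X)+2$ elements of $\mc C$ strictly below it, say $\mf h\supsetneq\mf l_1\supsetneq\cdots\supsetneq\mf l_{2\,\text{rank}(X)+2}$. This is a chain of halfspaces of length $2\,\text{rank}(X)+3$, so Proposition~\ref{all about halfspaces} forces $\overline{\mf h^*}\cap\overline{\mf l_{2\,\text{rank}(X)+2}}=\emptyset$; since $\mf h^*\cu\overline{\mf h^*}$, this yields $\overline{\mf l_{2\,\text{rank}(X)+2}}\cu X\setminus\mf h^*=\mf h$, and therefore $y\in\overline{\mf l_{2\,\text{rank}(X)+2}}\cu\mf h$. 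As $\mf h\in\mc C$ was arbitrary, $y\in\bigcap_{\mf h\in\mc C}\mf h$, which is thus nonempty.

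I expect the first stage to be the delicate point: the chain $\mc C$ need not admit a countable cofinal subchain, so one cannot naively take a descending sequence and pass to the limit, and the dichotomy on whether the supremum $L$ is attained is exactly what guarantees that the resulting point lands in \emph{every} closure $\overline{\mf h}$ rather than only in the closures of a possibly non-cofinal family. The second stage is routine once Proposition~\ref{all about halfspaces} is in hand, and it is precisely there that the finite rank hypothesis is used.
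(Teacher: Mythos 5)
Your argument is correct, and its skeleton is the one the paper uses: gate-project $x$ to the closures $\overline{\mf h}$, extract a Cauchy sequence of projections whose limit lies in all the closures, and then invoke the chain bound of Proposition~\ref{all about halfspaces} to pass from $\bigcap\overline{\mf h}$ to $\bigcap\mf h$. The genuine difference is how the countability issue is handled. The paper simply cites Lemma~2.27 of \cite{Fioravanti1} to produce a countable cofinal descending subchain $\mf h_{n+1}\subsetneq\mf h_n$ and then argues by contradiction: if the limit point missed some $\mf h_n$, it would lie in $\overline{\mf h_n^*}\cap\overline{\mf h_m}$ for all large $m,n$, violating Proposition~\ref{all about halfspaces}. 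You instead make this step self-contained by running the attained/unattained dichotomy on $L=\sup_{\mf h\in\mc C}d(x,\mf h)$: when $L$ is attained the gates stabilise at a single point, and when it is not, your chosen sequence is automatically coinitial in $\mc C$ precisely because any $\mf k$ below all of them would realise $L$. This uses the uniform-boundedness hypothesis in an essential way, which is a reasonable trade: the paper's cited lemma does the cofinality extraction for you, while your version avoids the external reference. Your second stage is the contrapositive form of the same finite-rank argument (a chain of length $>2\,\text{rank}(X)$ below $\mf h$ forces $\overline{\mf l}\cu\mf h$), so it buys nothing new but is equally valid; note you only need $2\,\text{rank}(X)$ elements below $\mf h$, not $2\,\text{rank}(X)+2$. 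One phrasing slip in Case~A: the betweenness you established is $x_{\mf h_*}\in I(x,x_{\mf k})$, not that $x_{\mf k}$ lies on a geodesic from $x$ to $x_{\mf h_*}$; the computation $d(x,x_{\mf k})=d(x,x_{\mf h_*})+d(x_{\mf h_*},x_{\mf k})\leq L=d(x,x_{\mf h_*})$ still gives $x_{\mf k}=x_{\mf h_*}$, so the conclusion is unaffected.
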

\begin{proof}
It suffices to consider the case when $\mc{C}$ does not have a minimum. By Lemma~2.27 in \cite{Fioravanti1}, we can find a cofinal subset $\{\mf{h}_n\}_{n\geq 0}\cu\mc{C}$ with $\mf{h}_{n+1}\subsetneq\mf{h}_n$; in other words, every $\mf{k}\in\mc{C}$ contains a halfspace of the form $\mf{h}_n$. 

Let $x_n$ be the gate-projection of $x$ to $\overline{\mf{h}_n}$. By Proposition~\ref{all about gates}, the sequence $(x_n)_{n\geq 0}$ is Cauchy; hence it converges to a point $\overline x\in X$, which lies in $\overline{\mf{h}_n}$ for all $n\geq 0$. If $\overline x$ did not lie in every $\mf{h}_n$, there would exist $N\geq 0$ with $\overline x\in\overline{\mf{h}_n}\setminus\mf{h}_n$ for all $n\geq N$. In particular, $\overline{\mf{h}_n^*}\cap\overline{\mf{h}_m}\neq\emptyset$ for all $m,n\geq N$ and this would violate Proposition~\ref{all about halfspaces}.
\end{proof}

Endowing $\R^n$ with the $\ell^1$ metric, we obtain a median space. Like $\R^n$, a rich class of median spaces also has an analogue of the $\ell^{\infty}$ metric (see \cite{Bow5}) and of the $\ell^2$ metric (see \cite{Bow4}). We record the following result for later use.

\begin{thm}[\cite{Bow4}]\label{CAT(0) metric}
If $X$ is connected, it admits a bi-Lipschitz-equivalent ${\rm CAT}(0)$ metric that is canonical in the sense that:
\begin{itemize}
\item every isometry of the median metric of $X$ is also an isometry for the ${\rm CAT}(0)$ metric;
\item the ${\rm CAT}(0)$ geodesic between $x$ and $y$ is contained in $I(x,y)$; in particular, subsets that are convex for the median metric are also convex for the ${\rm CAT}(0)$ metric.
\end{itemize}
\end{thm}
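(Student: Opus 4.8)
The plan is to build the ${\rm CAT}(0)$ metric $\rho$ directly from the measured halfspace structure $(\mscr{H},\mu)$ that already encodes the median metric via $d(x,y)=\mu(\mscr{H}(x|y))$, in analogy with the passage from the $\ell^1$ metric to the $\ell^2$ metric on a ${\rm CAT}(0)$ cube complex. Since $X$ has rank $r$, no more than $r$ halfspaces can be pairwise transverse; combining this with Lemma~\ref{Ramsey} and Proposition~\ref{all about halfspaces}, the walls crossing an interval $I(x,y)$ form a measured pocset of rank $\le r$ with tightly controlled nesting, and one can present $I(x,y)$, with the median metric, as a ``piecewise-cubical'' convex set whose local model is a convex subset of $(\R^n,\ell^1)$ with $n\le r$. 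On each such piece I would define $\rho$ by the corresponding $\ell^2$ product metric. Because $\|v\|_2\le\|v\|_1\le\sqrt r\,\|v\|_2$ for $v\in\R^n$ with $n\le r$, this yields $\rho\le d\le\sqrt r\,\rho$ on $I(x,y)$, with constants independent of $x$ and $y$.

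Next I would globalize $\rho$. One checks that the locally defined $\rho$ depends only on the pair $(m,\mu)$ and not on the auxiliary coordinatizations, so the local pieces are mutually consistent; since $X$ is connected (hence geodesic for $d$), this lets one define $\rho$ on all of $X$ as the associated length metric. By construction, $\rho$-rectifiable paths can be pushed into $d$-intervals, so the $\rho$-geodesic from $x$ to $y$ lies in $I(x,y)$; in particular $d$-convex subsets are $\rho$-convex. The uniform estimate $\rho\le d\le\sqrt r\,\rho$ passes to the length metrics, so $(X,\rho)$ is bi-Lipschitz equivalent to $(X,d)$, and in particular complete.

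It then remains to verify that $(X,\rho)$ is ${\rm CAT}(0)$. The point is that $(X,\rho)$ is assembled from flat Euclidean cells glued along $\rho$-convex subsets in a ``right-angled'' combinatorial pattern, with transversality of walls playing the role of the flag condition in Gromov's link criterion; this makes $(X,\rho)$ locally ${\rm CAT}(0)$, and since it is complete and simply connected (loops contract because $d$-intervals are contractible and $X$ is geodesic), a Cartan--Hadamard-type local-to-global argument promotes this to global ${\rm CAT}(0)$. Alternatively, one can check the ${\rm CAT}(0)$ four-point inequality directly, using the median $m(x,y,z)$ as a natural anchor for comparison triangles together with $\rho$-convexity of intervals, or realize $X$ as a limit of finite-rank ${\rm CAT}(0)$ cube complexes carrying uniformly bi-Lipschitz ${\rm CAT}(0)$ metrics and invoke closure of ${\rm CAT}(0)$ under such limits. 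Canonicity is then automatic: an isometry of $(X,d)$ preserves $m$, hence $(\mscr{H},\mu)$, hence $\rho$, which was built from these data alone, and we arranged that $\rho$-geodesics lie inside $d$-intervals.

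The main obstacle is the ${\rm CAT}(0)$ verification in the absence of local compactness: the cells here are genuinely infinite-dimensional convex subsets of $\ell^1$-spaces rather than finite cubes, so the cube-complex link criterion and the standard local-to-global theorem cannot be quoted verbatim and must be adapted to length spaces built from (possibly infinite-dimensional) flat pieces. A secondary difficulty is the consistency step in the globalization: proving that the locally defined $\ell^2$ metrics genuinely agree on overlaps and assemble into a length metric is exactly where finiteness of the rank is indispensable.
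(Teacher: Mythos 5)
You are not actually being asked to reproduce an argument from this paper: Theorem~\ref{CAT(0) metric} is imported wholesale from Bowditch \cite{Bow4}, and the paper contains no proof of it whatsoever. So there is no internal proof to compare against; what you have sketched is an attempted proof of Bowditch's theorem itself, which is the content of a separate and substantial paper, and your sketch has to be judged on its own.

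Judged that way, there are genuine gaps, and you have correctly located one of them yourself. The foundational step of your plan -- presenting each interval $I(x,y)$ as a ``piecewise-cubical'' convex set whose local models are convex subsets of $(\R^n,\ell^1)$, and defining $\rho$ piece by piece as an $\ell^2$ product metric -- is not available in a general connected, finite rank median space. Connectedness is equivalent to the halfspace measure $\wh{\nu}$ having no atoms, so there are no clopen halfspaces, no cubes, and no canonical cell structure at all: the measured wall structure can vary continuously (think of asymptotic cones of coarse median spaces), and nothing guarantees that an interval decomposes into finitely many flat pieces glued along faces. Without that decomposition the ``local $\ell^2$ metric'' is undefined, the consistency-on-overlaps step has no content, and the claimed estimate $\rho\le d\le\sqrt r\,\rho$ has nothing to apply to. Second, even granting some local construction, the ${\rm CAT}(0)$ verification -- which you defer -- is the entire difficulty: there is no link/flag criterion for non-discrete, non-locally-compact assemblies of flat pieces, and ``check the four-point inequality anchored at the median'' is not an argument, it is a restatement of the goal. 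The one route you mention in passing that does work -- exhaust the median algebra by finite subalgebras, realize these as finite ${\rm CAT}(0)$ cube complexes with variable edge lengths carrying canonical CAT(0) metrics, and prove that these metrics converge to a limit independent of the exhaustion (finite rank giving the uniform bi-Lipschitz control) -- is essentially Bowditch's actual strategy, but the convergence, the independence of choices, and hence the canonicity and isometry-invariance of the limit metric are exactly the hard content of \cite{Bow4} and are not supplied by your outline. As it stands, the proposal is a plausible plan together with an accurate list of the two obstructions that would have to be overcome, not a proof.
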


A \emph{pointed measured pocset (PMP)} is a 4-tuple $\left(\mscr{P},\mscr{D},\eta,\s\right)$, where $\mscr{P}$ is a pocset, $\s\cu\mscr{P}$ is an ultrafilter, $\mscr{D}$ is a $\s$-algebra of subsets of $\mscr{P}$ and $\eta$ is a measure defined on $\mscr{D}$. Let $\overline{\mc{M}}\left(\mscr{P},\mscr{D},\eta\right)$ be the set of all ultrafilters ${\s'\cu\mscr{P}}$ with $\s'\triangle\s\in\mscr{D}$, where we identify sets with $\eta$-null symmetric difference. We endow this space with the extended metric $d(\s_1,\s_2):=\eta(\s_1\triangle\s_2)$. The set of points at finite distance from $\s$ is a median space, which we denote $\mc{M}\left(\mscr{P},\mscr{D},\eta,\s\right)$; see Section~2.2 in \cite{Fioravanti1}.

Let $X$ be a complete, finite rank median space. In \cite{Fioravanti1} we constructed a semifinite measure $\wh{\nu}$ defined on a $\s$-algebra $\wh{\mscr{B}}\cu 2^{\mscr{H}}$ such that $\wh{\nu}(\mscr{H}(x|y))=d(x,y)$, for all $x,y\in X$. There, we referred to elements of $\wh{\mscr{B}}$ as \emph{morally measurable sets}, but, for the sake of simplicity, we will just call them \emph{measurable sets} here. Note that this measure space is different from the ones considered in \cite{CDH}. 

Every inseparable subset of $\mscr{H}$ lies in $\wh{\mscr{B}}$; in particular, all ultrafilters on $\mscr{H}$ are measurable. If $C,C'\cu X$ are convex (or empty), the set $\mscr{H}(C|C')$ is measurable and $\wh{\nu}(\mscr{H}(C|C'))=d(C,C')$. A halfspace is an atom for $\wh{\nu}$ if and only if it is clopen. The space $X$ is connected if and only if $\wh{\nu}$ has no atoms, in which case $X$ is geodesic. We say that a halfspace $\mf{h}$ is \emph{thick} if both $\mf{h}$ and $\mf{h}^*$ have nonempty interior; $\wh{\nu}$-almost every halfspace is thick. We denote by $\mscr{H}^{\x}$ the set of non-thick halfspaces. See \cite{Fioravanti1} for proofs.

Picking a basepoint $x\in X$, we can identify $X\simeq\mc{M}(\mscr{H},\wh{\mscr{B}},\wh{\nu},\s_x)$ isometrically by mapping each $y\in X$ to the ultrafilter $\s_y\cu\mscr{H}$, see Corollary~3.12 in \cite{Fioravanti1}. In particular, $X$ sits inside the space $\overline{\mc{M}}(\mscr{H},\wh{\mscr{B}},\wh{\nu})$, which we denote by $\overline X$. If $I\cu X$ is an interval, we have a projection $\pi_I\colon\overline X\ra I$ that associates to each ultrafilter $\s\cu\mscr{H}$ the only point of $I$ that is represented by the ultrafilter $\s\cap\mscr{H}(I)$. We give $\overline X$ the coarsest topology for which all the projections $\pi_I$ are continuous. Defining
\[m(\s_1,\s_2,\s_3):=(\s_1\cap\s_2)\cup(\s_2\cap\s_3)\cup(\s_3\cap\s_1),\]
we endow $\overline X$ with a structure of topological median algebra. We have:

\begin{prop}[\cite{Fioravanti1}]
The topological median algebra $\overline X$ is compact. The inclusion $X\hookrightarrow\overline X$ is a continuous morphism of median algebras with dense, convex image.
\end{prop}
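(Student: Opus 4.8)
The plan is to treat the three assertions --- compactness, density of the image, and the fact that $X \hookrightarrow \overline X$ is a median morphism --- somewhat separately, since they rely on different features of the construction $\overline{\mc M}(\mscr H, \wh{\mscr B}, \wh\nu)$. For the median-morphism claim, recall that a point $y \in X$ is identified with the ultrafilter $\s_y \cu \mscr H$, and the median on $\overline X$ is the set-theoretic formula $m(\s_1,\s_2,\s_3) = (\s_1\cap\s_2)\cup(\s_2\cap\s_3)\cup(\s_3\cap\s_1)$. So I would verify directly that $\s_{m(x,y,z)} = (\s_x\cap\s_y)\cup(\s_y\cap\s_z)\cup(\s_z\cap\s_x)$ as subsets of $\mscr H$: a halfspace $\mf h$ contains $m(x,y,z)$ in its complement's complement --- i.e.\ $\mf h \in \s_{m(x,y,z)}$ --- exactly when $\mf h$ contains at least two of $x,y,z$, which is the standard characterization of halfspaces separating a point from a median (this is elementary from convexity of $\mf h$ and $\mf h^*$ together with $m(x,y,z) \in I(x,y)\cap I(y,z)\cap I(z,x)$). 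Continuity of the inclusion is then immediate from the definition of the topology on $\overline X$ as the coarsest one making all $\pi_I$ continuous: for an interval $I \cu X$, the composite $X \hookrightarrow \overline X \xrightarrow{\pi_I} I$ is just the gate-projection onto $I$, which is $1$-Lipschitz, hence continuous. Convexity of the image is inherited from convexity of $X$ as a subset of itself together with the fact that intervals in $\overline X$ between two points of $X$ coincide (via the projections) with intervals in $X$.

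For density, I would argue that given $\s \in \overline X$ and a basic open neighbourhood determined by finitely many intervals $I_1,\dots,I_n \cu X$ and $\eps > 0$, one can find $y \in X$ with $\pi_{I_j}(\s_y)$ close to $\pi_{I_j}(\s)$ for each $j$; since each $\pi_{I_j}(\s)$ is already a point of $X$ and the $I_j$ are gate-convex, a suitable point in the (nonempty, by Helly) intersection of balls around these projections does the job --- more carefully, one uses that $\overline X$ is at finite "symmetric difference" from the $\s$-algebra $\wh{\mscr B}$ in the weak sense and approximates $\s$ by ultrafilters $\s_y$ agreeing with $\s$ on a large-measure inseparable subset. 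I expect density to be essentially a restatement of the fact that $X = \mc M(\mscr H,\wh{\mscr B},\wh\nu,\s_x)$ consists of the finite-distance points, which are dense in the closure $\overline{\mc M}$ by construction.

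The main obstacle --- and the step I would spend the most care on --- is compactness of $\overline X$. Here I would not try to produce a metric; instead I would realize $\overline X$ as a closed subspace of a product of compact spaces and apply Tychonoff. Concretely, the topology on $\overline X$ embeds it into $\prod_{I} I$ over all intervals $I \cu X$ (or a cofinal family thereof), where each factor $I$ is itself a complete, bounded, finite-rank median space; one must check each such $I$ is compact (it is gate-convex, closed, bounded, and finite rank --- compactness of bounded closed finite-rank median spaces should be available from \cite{Fioravanti1}, or one reduces to the fact that an interval is a "cube-like" object and uses the finite-rank bound from Proposition~\ref{all about halfspaces}). Then the image of $\overline X$ in this product is the set of compatible families $(p_I)$ satisfying $\pi_{I}(p_J) = p_I$ whenever $I \cu J$, which is an intersection of closed conditions, hence closed, hence compact. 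The delicate points are: (i) confirming that the coarsest topology making all $\pi_I$ continuous really does agree with the subspace topology from this product (it does, by definition of the initial topology); (ii) checking that the consistency relations cut out exactly $\overline X$ rather than something larger --- i.e.\ that a consistent family of interval-projections reconstructs a genuine ultrafilter $\s' \cu \mscr H$ with $\s'\triangle\s_x \in \wh{\mscr B}$; and (iii) the compactness of the individual interval factors, which is where finite rank is genuinely used. Once compactness is in hand, the Hausdorff property of $\overline X$ follows because distinct ultrafilters differ on some wall and hence are separated by some $\pi_I$, and continuity of the median $m$ on $\overline X$ is checked coordinatewise through the projections $\pi_I$, reducing to continuity of the median on each interval.
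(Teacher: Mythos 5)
You should note at the outset that the paper you are looking at does not prove this proposition at all: it is imported from \cite{Fioravanti1}, so the only meaningful comparison is with what a complete proof must contain. Your treatment of the algebraic half of the statement is essentially fine: the identity $\s_{m(x,y,z)}=(\s_x\cap\s_y)\cup(\s_y\cap\s_z)\cup(\s_z\cap\s_x)$ does follow from convexity of halfspaces, continuity of $X\hookrightarrow\overline X$ is immediate because $\pi_I$ restricted to $X$ is the $1$-Lipschitz gate-projection, and convexity of the image should be argued (rather than asserted via ``intervals between points of $X$ coincide with intervals in $X$'', which is just a restatement of the claim) by observing that if $\s_x\cap\s_y\cu\s\cu\s_x\cup\s_y$ then $\s\triangle\s_x\cu\mscr{H}(x|y)\cup\mscr{H}(y|x)$ has finite measure, so $\s$ is a point of $X$ by the identification $X\simeq\mc{M}(\mscr{H},\wh{\mscr{B}},\wh{\nu},\s_x)$, which is where completeness enters.

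The genuine gaps are in density and compactness. For density, the mechanism you propose fails: the balls around the projections $p_j=\pi_{I_j}(\s)$ need not pairwise intersect, so Helly gives nothing, and the point you actually need is typically far from every $p_j$ rather than near them --- take $X=\R$, $\s=+\infty$, $I_1=[0,1]$, $I_2=[10,11]$, so $p_1=1$, $p_2=11$, and the admissible points are $y\geq 11-\eps$. Your fallback, that $X$ is ``dense in the closure by construction'', misreads the definition: $\overline X$ is defined as the set of ultrafilters with measurable symmetric difference from $\s_x$, not as a closure of $X$, so density is a real statement; proving it amounts to producing, for each finite family of intervals, a point of $X$ whose ultrafilter agrees with $\s$ on all walls crossing those intervals, and one must check that the modified collection of halfspaces is still pairwise intersecting and at finite distance from $\s_x$. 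For compactness, the Tychonoff plan is the right shape, but the two steps you yourself flag as delicate are exactly the content and are not supplied: compactness of intervals is a nontrivial finite-rank fact that has to be imported from \cite{Fioravanti1}, and reconstructing from a compatible family $(p_I)$ a genuine ultrafilter $\s'$ with $\s'\triangle\s_x\in\wh{\mscr{B}}$ is not routine --- the poset of intervals is not directed (the convex hull of three points of a tripod lies in no interval), so nested-compatibility alone does not obviously determine a consistent choice of side for every wall, and the measurability of $\s'\triangle\s_x$, as well as injectivity and the Hausdorff property modulo null symmetric differences, need separate arguments. As it stands the proposal is a plausible outline of the proof in \cite{Fioravanti1}, not a proof.
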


We call $\overline X$ the \emph{Roller compactification} of $X$ and $\partial X:=\overline X\setminus X$ the \emph{Roller boundary}. We remark that, in general, $X\hookrightarrow\overline X$ is not an embedding and $\partial X$ is not closed in $\overline X$. 

If $C\cu X$ is convex, the closure of $C$ in $X$ coincides with the intersection of $X$ and the closure of $C$ in $\overline X$. If $C\cu X$ is closed and convex, the closure of $C$ inside $\overline X$ is canonically identified with the Roller compactification $\overline C$. The median map of $\overline X$ and the projections $\pi_I\colon\overline X\ra I$ are $1$-Lipschitz with respect to the extended metric on $\overline X$. 

Looking at pairs of points of $\overline X$ at finite distance, we obtain a partition of $\overline X$ into \emph{components}; each component is a median space with the restriction of the extended metric of $\overline X$. The subset $X\cu\overline X$ always forms an entire component of $\overline X$.

\begin{prop}[\cite{Fioravanti1}]\label{components}
Each component $Z\cu\partial X$ is a complete median space with $\text{rank}(Z)\leq\text{rank}(X)-1$. Moreover, $Z$ is convex in $\overline X$ and the inclusion $Z\hookrightarrow\overline X$ is continuous. The closure of $Z$ in $\overline X$ is canonically identified with the Roller compactification $\overline Z$ and there is a gate-projection $\pi_Z\colon\overline X\ra\overline Z$ that maps $X$ into $Z$.
\end{prop}

Every halfspace $\mf{h}\in\mscr{H}$ induces a halfspace $\wt{\mf{h}}$ of $\overline X$ with $\wt{\mf{h}}\cap X=\mf{h}$; thus, we can identify $\mscr{H}$ with a subset of $\mscr{H}(\overline X)$. 

\begin{prop}[\cite{Fioravanti1}]\label{halfspaces of components}
Every thick halfspace of a component $Z\cu\partial X$ is of the form $\wt{\mf{h}}\cap Z$, for some $\mf{h}\in\mscr{H}$.
\end{prop}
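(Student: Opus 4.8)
The plan is to extend the given halfspace $\mf{k}$ of $Z$ outward, first to the Roller compactification $\overline Z$, then to $\overline X$, and to recognise the resulting halfspace of $\overline X$ as one induced by $X$.

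First I would use that $Z$ is a convex, dense sub-median-space of its Roller compactification $\overline Z$, so that $\mf{k}$ is the trace on $Z$ of a halfspace $\wt{\mf{k}}\in\mscr{H}(\overline Z)$. By Proposition~\ref{components}, $\overline Z$ is gate-convex in $\overline X$, with gate-projection $\pi_Z\colon\overline X\to\overline Z$ restricting to the identity on $\overline Z$ and mapping $X$ into $Z$; hence part~(1) of Proposition~\ref{all about gates} applies and yields a halfspace $\mf{H}:=\pi_Z^{-1}(\wt{\mf{k}})\in\mscr{H}(\overline X)$ with $\mf{H}\cap\overline Z=\wt{\mf{k}}$, so that $\mf{H}\cap Z=\mf{k}$; moreover $\mf{H}\cap X=(\pi_Z|_X)^{-1}(\mf{k})$ and $\mf{H}^*\cap X=(\pi_Z|_X)^{-1}(\mf{k}^*)$.

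Next comes a uniqueness remark: any two halfspaces of $\overline X$ with the same nonempty proper trace on $X$ coincide. Indeed, if $\mf{H}\cap X=\wt{\mf{h}}\cap X=\mf{h}$ and $\xi\in\mf{H}\setminus\wt{\mf{h}}$, then choosing $a\in\mf{h}$ and $b\in\mf{h}^*$ the median $m(a,b,\xi)$ lies in $I(a,b)\subseteq X$ by convexity of $X$ in $\overline X$, in $\mf{H}$ (as $a,\xi\in\mf{H}$) and in $\wt{\mf{h}}^*$ (as $b,\xi\in\wt{\mf{h}}^*$), forcing $m\in\mf{h}\cap\mf{h}^*=\emptyset$; symmetrically one gets $\wt{\mf{h}}\subseteq\mf{H}$. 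Consequently, once we know that $\mf{H}$ meets $X$ on both sides, we may set $\mf{h}:=\mf{H}\cap X\in\mscr{H}$ (a halfspace of $X$ since $X$ is convex in $\overline X$) and conclude $\wt{\mf{h}}\cap Z=\mf{H}\cap Z=\mf{k}$, as desired.

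It remains to prove the one substantial point: that $\mf{H}$ meets $X$ on both sides, i.e.\ that $\pi_Z(X)$ is contained neither in $\mf{k}$ nor in $\mf{k}^*$. Since $\mf{k}$ is \emph{thick}, both $\mf{k}$ and $\mf{k}^*$ have nonempty interior for the median metric of $Z$, so it suffices to show that $\pi_Z(X)$ is dense in $Z$. Here $\pi_Z(X)$ is convex, being the gate-projection of the convex set $X$; if it were not dense, its metric closure would be a proper gate-convex subset of the complete median space $Z$, hence $\pi_Z(X)\subseteq\mf{k}_0$ for some $\mf{k}_0\in\mscr{H}(Z)$ with $\mf{k}_0^*\neq\emptyset$, and pulling $\mf{k}_0$ back as above and invoking the gate property of $\pi_Z$ against a point of $\wt{\mf{k}_0}^*\cap Z$ would give $X\subseteq\mf{H}_0:=\pi_Z^{-1}(\wt{\mf{k}_0})$, so that $\mf{H}_0$ crosses $\overline Z$ and yet has empty trace on $X$ on one side. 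Ruling this out — equivalently, establishing density of $\pi_Z(X)$ in $Z$ — is what I expect to be the main obstacle; I would attack it through the description of $\overline X$ and of its components in terms of ultrafilters and the measure $\wh{\nu}$ from \cite{Fioravanti1}, showing that the gate-projection of $X$ onto a component has dense (indeed, plausibly full) image. This is precisely where thickness enters, and the proposition genuinely fails for non-thick halfspaces.
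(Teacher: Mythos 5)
Your soft steps are fine: extending $\mf{k}$ to a halfspace of $\overline Z$, pulling it back under the gate-projection $\pi_Z$ via part~(1) of Proposition~\ref{all about gates}, and the uniqueness argument showing that a halfspace of $\overline X$ with nonempty proper trace on $X$ must equal $\wt{\mf{h}}$ for $\mf{h}=\mf{H}\cap X$ are all correct (the median computation using convexity of $X$ in $\overline X$ is exactly right), as is the observation that membership in a halfspace crossing $\overline Z$ is preserved by $\pi_Z$. But the argument is not a proof: the one step you defer --- that $\pi_Z(X)$ meets both $\mf{k}$ and $\mf{k}^*$, which you propose to extract from density of $\pi_Z(X)$ in $Z$ --- is where all the content of the proposition lives, and you offer only a plan ("attack it through the ultrafilter/measure description") rather than an argument. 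Worse, this missing statement is essentially \emph{equivalent} to the proposition: if every thick halfspace of $Z$ is of the form $\wt{\mf{h}}\cap Z$, then picking $x\in\mf{h}$, $x'\in\mf{h}^*$ and using the gate property gives $\pi_Z(x)\in\mf{k}$, $\pi_Z(x')\in\mf{k}^*$; and conversely (using that $\wh\nu$-almost every halfspace of $Z$ is thick, so a point at positive distance from the closed convex set $\overline{\pi_Z(X)}$ would be separated from it by a thick halfspace) the density you need follows from the proposition. So the proposal reformulates the statement rather than proving it. Note also that density of $\pi_Z(X)$ in $Z$ is a genuinely metric assertion: topological density of $X$ in $\overline X$ and continuity considerations give you nothing here, since the median-metric topology on the component $Z$ is finer than the topology it inherits from $\overline X$; establishing it really does require the measure-theoretic structure theory of $\wh\nu$ on $\mscr{H}$.

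Finally, be aware that the present paper does not prove this proposition at all --- it is quoted from \cite{Fioravanti1}, where the proof is carried out with precisely the kind of analysis of canonical ultrafilters, the measure $\wh\nu$, and halfspaces crossing a component that you postpone. So your write-up cannot be credited as an independent proof until that core lemma is actually established.
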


Any two points of $\overline X$ are separated by a halfspace of the form $\wt{\mf{h}}$. To every $\xi\in\overline X$, we can associate a \emph{canonical ultrafilter} $\s_{\xi}\cu\mscr{H}$ representing $\xi$; it satisfies $\mf{h}\in\s_{\xi}\Leftrightarrow\xi\in\wt{\mf{h}}$.

\subsection{Products.}

Given median spaces $X_1,X_2$, we can consider the product $X_1\x X_2$, which is itself a median space with the $\ell^1$ metric, i.e.
\[d_{X_1\x X_2}\left((x_1,x_2),(x'_1,x'_2)\right):=d_{X_1}(x_1,x'_1)+d_{X_2}(x_2,x'_2).\]
The space $X_1\x X_2$ is complete if and only if $X_1$ and $X_2$ are. We say that subsets $A,B\cu\mscr{H}$ are \emph{transverse} if $\mf{h}$ and $\mf{k}$ are transverse whenever $\mf{h}\in A$ and $\mf{k}\in B$. We have the following analogue of Lemma~2.5 in \cite{CS}.

\begin{prop}\label{products}
The following are equivalent for complete, finite rank median spaces:
\begin{enumerate}
\item $X$ splits as a product $X_1\x X_2$, where each $X_i$ has at least two points;
\item there is a measurable, $*$-invariant partition $\mscr{H}=\mscr{H}_1\sqcup\mscr{H}_2$, where the $\mscr{H}_i$ are nonempty and transverse;
\item there is measurable, $*$-invariant partition $\mscr{H}=\mscr{H}_1\sqcup\mscr{H}_2\sqcup\mscr{K}$, where the $\mscr{H}_i$ are nonempty and transverse, while $\mscr{K}$ is null.
\end{enumerate}
\end{prop}
\begin{proof}
If $X=X_1\x X_2$, part~$(1)$ of Proposition~\ref{all about gates} provides subsets $\mscr{H}(X_1)$ and $\mscr{H}(X_2)$ of $\mscr{H}(X)$. They are nonempty, disjoint, $*$-invariant, transverse and measurable. Every $\mf{h}\in\mscr{H}(X)$ must split a fibre $X_1\x\{*\}$ or $\{*\}\x X_2$ nontrivially and thus lies either in $\mscr{H}(X_1)$ or in $\mscr{H}(X_2)$.

We conclude by proving that $(3)$ implies $(1)$, since $(2)$ trivially implies $(3)$. Let $\wh{\mscr{B}}_i$ be the $\s$-algebra of subsets of $\mscr{H}_i$ that lie in $\wh{\mscr{B}}$; fixing $x\in X$, we simply write $\mc{M}_i$ for $\mc{M}(\mscr{H}_i,\wh{\mscr{B}}_i,\wh{\nu},\s_x\cap\mscr{H}_i)$. Define a map $\iota\colon X\ra\mc{M}_1\x\mc{M}_2$ by intersecting ultrafilters on $\mscr{H}$ with each $\mscr{H}_i$. Since $\mscr{K}$ is null, this is an isometric embedding. Given ultrafilters $\s_i\cu\mscr{H}_i$, the set $\s_1\sqcup\s_2$ consists of pairwise intersecting halfspaces and is therefore contained in an ultrafilter $\s\cu\mscr{H}$; see Lemma~\ref{Zorn}. Recall that $\s\in\wh{\mscr{B}}$ as $X$ has finite rank. We conclude that $\iota$ is surjective and $X\simeq\mc{M}_1\x\mc{M}_2$.

We are left to show that each $\mc{M}_i$ contains at least two points. We construct points $x_i,x_i'\in X$ such that $\wh{\nu}(\mscr{H}(x_i|x_i')\cap\mscr{H}_i)>0$. Pick any $\mf{h}_i\in\mscr{H}_i$; by Proposition~\ref{all about halfspaces}, replacing $\mf{h}_i$ with its complement if necessary, we can assume that there exists $x_i\not\in\overline{\mf{h}_i}$. Let $x_i'$ be the gate-projection of $x_i$ to $\overline{\mf{h}_i}$. None of the halfspaces in $\mscr{H}(x_i|x_i')$ is transverse to $\mf{h}_i$, thus $\mscr{H}(x_i|x_i')\setminus\mscr{K}$ is contained in $\mscr{H}_i$ and has positive measure.
\end{proof}

In particular, $\text{rank}(X_1\x X_2)=\text{rank}(X_1)+\text{rank}(X_2)$. We say that $X$ is \emph{irreducible} if it cannot be split nontrivially as a product $X_1\x X_2$.  We remark that in parts~$(2)$ and~$(3)$ of Proposition~\ref{products}, the sets $\mscr{H}_i$ are \emph{not} required to have positive measure, but simply to be nonempty. The following is immediate from Proposition~\ref{products}:

\begin{lem}\label{Roller for products}
If $X=X_1\x X_2$, we have $\overline X=\overline X_1\x\overline X_2$.
\end{lem}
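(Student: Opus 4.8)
The plan is to construct the obvious candidate isomorphism
\[\Phi\colon\overline X_1\x\overline X_2\longrightarrow\overline X,\qquad\Phi(\s_1,\s_2):=\s_1\sqcup\s_2,\]
and to check, in order, that it is a bijection, an isometry of the extended metrics, a morphism of median algebras, and a homeomorphism. All of the input is already in Corollary~\ref{products} and its proof: writing $X=X_1\x X_2$, we obtain a measurable, $*$-invariant partition $\mscr{H}=\mscr{H}_1\sqcup\mscr{H}_2$ with $\mscr{H}_i=\mscr{H}(X_i)$ nonempty, transverse and measurable (there is no null leftover, since every halfspace of $X$ splits a fibre nontrivially), together with the identifications $\overline X=\overline{\mc M}(\mscr{H},\wh{\mscr B},\wh\nu)$ and $\overline X_i=\overline{\mc M}(\mscr{H}_i,\wh{\mscr B}_i,\wh\nu)$, where $\wh{\mscr B}_i=\{E\cu\mscr{H}_i:E\in\wh{\mscr B}\}$.

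First I would check $\Phi$ is a well-defined bijection. Given ultrafilters $\s_i\cu\mscr{H}_i$, transversality of $\mscr{H}_1$ and $\mscr{H}_2$ makes $\s_1\sqcup\s_2$ a set of pairwise-intersecting halfspaces that contains a side of every wall of $\mscr{H}$, hence an ultrafilter on $\mscr{H}$ — this is exactly the surjectivity step of Corollary~\ref{products}. Moreover $(\s_1\sqcup\s_2)\triangle\s_x=\big(\s_1\triangle(\s_x\cap\mscr{H}_1)\big)\sqcup\big(\s_2\triangle(\s_x\cap\mscr{H}_2)\big)$ lies in $\wh{\mscr B}$, each summand lying in $\wh{\mscr B}_i\cu\wh{\mscr B}$, so $\Phi$ takes values in $\overline X$ and is compatible with the $\wh\nu$-null identifications on both sides. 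The inverse is $\s\mapsto(\s\cap\mscr{H}_1,\s\cap\mscr{H}_2)$: each $\s\cap\mscr{H}_i$ is pairwise-intersecting and contains a side of every wall of $\mscr{H}_i$, hence is an ultrafilter on $\mscr{H}_i$, while $(\s\cap\mscr{H}_i)\triangle(\s_x\cap\mscr{H}_i)=(\s\triangle\s_x)\cap\mscr{H}_i\in\wh{\mscr B}$ because $\mscr{H}_i\in\wh{\mscr B}$.

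Next, the extended metric on $\overline X$ is $\wh\nu$ of the symmetric difference, and $\wh\nu$ restricted to subsets of $\mscr{H}_i$ is precisely the measure defining $\overline X_i$; since $\mscr{H}=\mscr{H}_1\sqcup\mscr{H}_2$ is a disjoint union,
\[d_{\overline X}\big(\Phi(\s_1,\s_2),\Phi(\s_1',\s_2')\big)=\wh\nu\big((\s_1\triangle\s_1')\sqcup(\s_2\triangle\s_2')\big)=\wh\nu(\s_1\triangle\s_1')+\wh\nu(\s_2\triangle\s_2'),\]
so $\Phi$ is an isometry for the $\ell^1$ extended metrics and, restricted to $X_1\x X_2$, is the isometry of Corollary~\ref{products}. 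For the median structure, the formula $m(\alpha,\beta,\gamma)=(\alpha\cap\beta)\cup(\beta\cap\gamma)\cup(\gamma\cap\alpha)$ on $\overline X$ is evaluated inside $\mscr{H}_1$ and $\mscr{H}_2$ separately, so $m_{\overline X}\circ(\Phi\x\Phi\x\Phi)=\Phi\circ(m_{\overline X_1}\x m_{\overline X_2})$; thus $\Phi$ is an isomorphism of median algebras.

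Finally I would compare the topologies. Since the median map of $X$ acts coordinatewise, every interval of $X$ has the form $I_1\x I_2$ with $I_i$ an interval of $X_i$; and under $\mscr{H}_i\cong\mscr{H}(X_i)$ one has $\mscr{H}(I_1\x I_2)\cap\mscr{H}_i=\mscr{H}(I_i)$, whence $\pi_{I_1\x I_2}\circ\Phi=\big(\pi_{I_1}\circ\mathrm{pr}_1,\,\pi_{I_2}\circ\mathrm{pr}_2\big)$. As the topology of $\overline X$ is the initial topology for the family of all $\pi_I$, and all these $I$ are such products, $\Phi$ pulls it back to the initial topology on $\overline X_1\x\overline X_2$ for the family $\{\pi_{I_1}\circ\mathrm{pr}_1\}\cup\{\pi_{I_2}\circ\mathrm{pr}_2\}$ (letting one factor be a single point recovers each of the two subfamilies), which by transitivity of initial topologies is the product of the topologies of $\overline X_1$ and $\overline X_2$. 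Hence $\Phi$ is a homeomorphism, proving $\overline X\cong\overline X_1\x\overline X_2$ as topological median algebras with the $\ell^1$ extended metric. The only mildly delicate points are the bookkeeping identifications — that $(\mscr{H}_i,\wh{\mscr B}_i,\wh\nu|_{\mscr{H}_i})$ really is the measured pocset data of $\overline X_i$, that $\mscr{H}_i$ is measurable, and that $\mscr{H}(I_1\x I_2)$ splits as claimed — but all of these are contained in, or immediate from, the proof of Corollary~\ref{products} together with part~1 of Proposition~\ref{all about gates}, so there is no real obstacle, consistent with the lemma being flagged as immediate.
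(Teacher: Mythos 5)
Your argument is correct and is exactly the intended one: the paper gives no separate proof, declaring the lemma immediate from Corollary~\ref{products}, and your map $\Phi$ (union of ultrafilters on the transverse decomposition $\mscr{H}=\mscr{H}_1\sqcup\mscr{H}_2$, with inverse given by intersecting with each $\mscr{H}_i$) just spells out that verification, including the metric, median and topological identifications. The bookkeeping points you flag are indeed the only content, and they are handled as you say.
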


We can also use Proposition~\ref{products} to characterise isometries of products. The following is an analogue of Proposition~2.6 in \cite{CS} (also compare with \cite{Foertsch-Lytchak}, when $X$ is connected and locally compact).

\begin{prop}\label{isometries of products}
If $X$ is complete and finite rank, there exists a canonical decomposition $X= X_1\x ... \x X_k$, where each $X_i$ is irreducible. Every isometry of $X$ permutes the factors $X_i$; in particular, the product of the isometry groups of the factors has finite index in $\text{Isom}~X$.
\end{prop}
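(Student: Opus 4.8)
The plan is to derive everything from Corollary~\ref{products}, which matches up nontrivial product decompositions of $X$ with $*$-invariant measurable partitions of $\mscr{H}$ into two nonempty, pairwise-transverse parts. First I would establish existence: starting from $X$, split repeatedly, applying Corollary~\ref{products} to any factor that decomposes nontrivially. Since $\text{rank}(X_1\x X_2)=\text{rank}(X_1)+\text{rank}(X_2)$ and every median space with at least two points has rank $\geq 1$, this terminates after at most $\text{rank}(X)$ steps, yielding $X=X_1\x\dots\x X_k$ with every $X_i$ irreducible. By the proof of Corollary~\ref{products} this comes with a partition $\mscr{H}=\mscr{H}(X_1)\sqcup\dots\sqcup\mscr{H}(X_k)$ into $*$-invariant, measurable, pairwise-transverse, nonempty pieces (each of positive measure, since $\wh{\nu}(\mscr{H}(x_i|x_i'))=d(x_i,x_i')>0$ for any $x_i\neq x_i'$ in $X_i$, read inside $X$).

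For canonicity, let $X=Y_1\x\dots\x Y_m$ be a second decomposition into irreducibles, with partition $\mscr{H}=\mscr{H}(Y_1)\sqcup\dots\sqcup\mscr{H}(Y_m)$. For each fixed $i$, the sets $\mscr{H}(X_i)\cap\mscr{H}(Y_j)$, $j=1,\dots,m$, partition $\mscr{H}(X_i)$ into $*$-invariant measurable pieces that are pairwise transverse (a halfspace of $\mscr{H}(Y_j)$ is transverse to one of $\mscr{H}(Y_{j'})$ when $j\neq j'$). Were two of these pieces nonempty, bundling them into two nonempty transverse measurable $*$-invariant parts and applying Corollary~\ref{products} to $X_i$ would split $X_i$ nontrivially, against irreducibility. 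Hence each $\mscr{H}(X_i)$ lies in a single $\mscr{H}(Y_{j(i)})$; the same reasoning applied to the $Y_j$ forces $i\mapsto j(i)$ to be a bijection with $\mscr{H}(X_i)=\mscr{H}(Y_{j(i)})$. So the partition $\{\mscr{H}(X_1),\dots,\mscr{H}(X_k)\}$ depends only on $X$, and hence so does the decomposition $X=X_1\x\dots\x X_k$, up to reordering (and, once a basepoint of $X$ is fixed as in Corollary~\ref{products}, on the nose, each $X_i$ being then literally the median space $\mc{M}(\mscr{H}(X_i),\dots,\s_x\cap\mscr{H}(X_i))$).

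For the statement about isometries, note that an isometry $g$ of $X$ induces a $\wh{\nu}$-preserving automorphism $\mf{h}\mapsto g\mf{h}$ of the pocset $\mscr{H}$ — it sends $\mscr{H}(C|C')$ to $\mscr{H}(gC|gC')$, which has the same measure $d(C,C')$ for convex $C,C'$ — hence preserves transversality and so the canonical partition: $g\,\mscr{H}(X_i)=\mscr{H}(X_{\tau(i)})$ for a permutation $\tau=\tau_g\in\text{Sym}(k)$. Under the isometry $X\simeq\mc{M}_1\x\dots\x\mc{M}_k$ from the proof of Corollary~\ref{products} (with $\mc{M}_i$ the median space associated to $(\mscr{H}(X_i),\wh{\nu})$, isometric to $X_i$), $g$ sends a tuple $(\s_1,\dots,\s_k)$ of ultrafilters to the tuple whose $\tau(i)$-th entry is $g\s_i$, and the restriction $g|_{\mscr{H}(X_i)}\colon\mscr{H}(X_i)\to\mscr{H}(X_{\tau(i)})$ is a $\wh{\nu}$-preserving pocset isomorphism, inducing an isometry $X_i\to X_{\tau(i)}$. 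Thus $g$ permutes the factors, $\tau\colon\text{Isom}(X)\to\text{Sym}(k)$ is a homomorphism, and its kernel $\G_0=\bigcap_i\text{Stab}(\mscr{H}(X_i))$ has index $\leq k!$; by the previous sentence applied with $\tau=\id$, $\G_0$ is exactly the image of $\text{Isom}(X_1)\x\dots\x\text{Isom}(X_k)$ in $\text{Isom}(X)$ (the reverse inclusion being immediate from $d=\sum_i d_{X_i}$), which proves the proposition.

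I expect the delicate point to be the identification just used: turning ``$g$ permutes the classes $\mscr{H}(X_i)$'' into ``$g$ permutes the factors $X_i$'' requires carefully tracking the basepoint ultrafilters in the pointed construction $\mc{M}(\cdot)$ and checking that a $\wh{\nu}$-preserving pocset isomorphism between two of the $\mscr{H}(X_i)$ induces an isometry of the corresponding factors, compatibly with the action of $g$ on $X\cu\overline X$. Everything else is bookkeeping on top of Corollary~\ref{products}.
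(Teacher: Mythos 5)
Your proposal is correct and follows essentially the same route as the paper: existence by iterated splitting bounded by the rank, and the permutation of factors by intersecting the halfspace partition $\{\mscr{H}(X_i)\}$ with its image under an isometry and invoking irreducibility via Corollary~\ref{products}. The extra detail you supply (the uniqueness comparison of two decompositions and the explicit identification of $\ker\tau$ with the product of the factor isometry groups) is exactly what the paper compresses into ``and the result follows.''
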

\begin{proof}
The existence of such a splitting follows from the observation that, in any nontrivial product, factors have strictly lower rank. By Proposition~\ref{products}, this corresponds to a transverse decomposition $\mscr{H}=\mscr{H}_1\sqcup ...\sqcup \mscr{H}_k$, where we can identify $\mscr{H}_i=\mscr{H}(X_i)$. Given $g\in\text{Isom}~X$, the decompositions
\[\mscr{H}_i=\bigsqcup_{j=1}^k\mscr{H}_i\cap g\mscr{H}_j\]
are transverse and each piece is measurable and $*$-invariant. Since $X_i$ is irreducible, we must have $\mscr{H}_i\cap g\mscr{H}_j=\emptyset$ for all but one $j$, again by Proposition~\ref{products}, and the result follows.
\end{proof}

\subsection{Splitting the atom.}\label{splitting the atom}

In this section we describe two constructions that allow us to embed median spaces into ``more connected'' ones. We will only consider complete, finite rank spaces.

Given a median space $X$, let $\mscr{A}\cu\mscr{H}$ be the set of atoms of $\wh{\nu}$. The idea is to split every atom into two ``hemiatoms'' of half the size. We thus obtain a new measured pocset $(\mscr{H}',\mscr{B}',\nu')$, whose associated median space generalises barycentric subdivisions of cube complexes. We now describe this construction more in detail.

As a set, $\mscr{H}'$ consists of $\mscr{H}\setminus\mscr{A}$, to which we add two copies $\mf{a}_+,\mf{a}_-$ of every $\mf{a}\in\mscr{A}$. We have a projection $p\colon\mscr{H}'\ra\mscr{H}$ with fibres of cardinality one or two. We give $\mscr{H}'$ a structure of poset by declaring that $\mf{j}\subsetneq\mf{j}'$ if $p(\mf{j})\subsetneq p(\mf{j}')$, or $\mf{j}=\mf{a}_-$ and $\mf{j}'=\mf{a}_+$ for some $\mf{a}\in\mscr{A}$. We promote this to a structure of pocset by setting $\mf{j}^*=\mf{j}'$ if $p(\mf{j})^*=p(\mf{j}')\not\in\mscr{A}$ and, in addition, $(\mf{a}_-)^*=(\mf{a}^*)_+$, $(\mf{a}_+)^*=(\mf{a}^*)_-$ if $\mf{a}\in\mscr{A}$. Observe that each intersection between $\mscr{A}$ and a halfspace-interval is at most countable; thus $\mscr{A}$ and all its subsets are measurable. In particular $\mscr{B}':=\{E\cu\mscr{H}'\mid p(E)\setminus\mscr{A}\in\wh{\mscr{B}}\}$ is a $\s$-algebra of subsets of $\mscr{H}'$, on which we can define the measure
\[\nu'(E):=\wh{\nu}\left(p(E)\setminus\mscr{A}\right)+\frac{1}{2}\cdot\sum_{\substack{\mf{a}\in\mscr{A} \\ \mf{a}_+\in E}}\wh{\nu}\left(\{\mf{a}\}\right)+\frac{1}{2}\cdot\sum_{\substack{\mf{a}\in\mscr{A} \\ \mf{a}_-\in E}}\wh{\nu}\left(\{\mf{a}\}\right).\]
If $F\cu\mscr{H}$ is measurable, we have $\nu'(p^{-1}(F))=\wh{\nu}(F)$. Given $z\in X$, we set $X':=\mc{M}(\mscr{H}',\mscr{B}',\nu',p^{-1}(\s_z))$; note that this does not depend on the choice of $z$. Taking preimages under $p$ of ultrafilters on $\mscr{H}$, we obtain an isometric embedding $X\hookrightarrow X'$.

\begin{lem}\label{cubes in X'}
For each $x\in X'\setminus X$ there exist canonical subsets $C(x)\cu X$, $\wh{C}(x)\cu X'$ and isomorphisms of median algebras
\[\iota_x\colon\{-1,1\}^k\ra C(x),\]
\[\hat{\iota}_x\colon\{-1,0,1\}^k\ra\wh{C}(x).\]
Here $1\leq k\leq r:=\text{rank}(X)$ and the map $\hat{\iota}_x$ extends $\iota_x$ taking $(0,...,0)$ to $x$. Moreover, $C(x)$ is gate-convex in $X$ and $\wh{C}(x)$ is gate-convex in $X'$.
\end{lem}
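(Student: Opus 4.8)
The plan is to understand a point $x\in X'\setminus X$ by looking at which atoms of $\wh\nu$ it "splits", i.e.\ at the finite set of atoms $\mf a\in\mscr A$ for which the canonical ultrafilter $\s_x\cu\mscr H'$ contains $\mf a_+$ but $p(\s_x)\ni\mf a$ in a way not pulled back from $\s_x$'s projection to $X$—more precisely, the atoms $\mf a$ such that $\s_x$ contains exactly one of the two hemiatoms $\mf a_\pm$ while the other hemiatom $\mf a_\mp$ would also give an ultrafilter "at finite distance". First I would show that this set $S(x):=\{\mf a\in\mscr A\mid \mf a_+\in\s_x,\ \mf a_-\notin\s_x,\ \text{or vice versa, and }\mf a\text{ is }p(\s_x)\text{-relevant}\}$ is nonempty (otherwise $\s_x=p^{-1}(\s_y)$ for the point $y\in X$ obtained by projecting, so $x\in X$) and has cardinality $k$ with $1\le k\le r$. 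The bound $k\le r$ should follow because the atoms in $S(x)$ are pairwise transverse in $\mscr H'$: if two of them, say $p^{-1}(\mf a),p^{-1}(\mf b)$, were nested then in $\mscr H$ we'd have $\mf a\subsetneq\mf b$ (or a complement), and I'd argue that then $\s_x$ cannot contain "opposite-side" hemiatoms for both without separating two points of $X$ by a halfspace containing a wall of $X$ between them plus an atom—forcing nesting to propagate and contradict that $\mf a,\mf b$ are clopen and both "split" by $x$. By Proposition~\ref{all about halfspaces} (or directly the rank characterisation via pairwise-transverse halfspaces, Bowditch's Proposition~6.2), $k\le r$.

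Next I would build $\hat\iota_x$. Enumerate $S(x)=\{\mf a^{(1)},\dots,\mf a^{(k)}\}$, oriented so that $\mf a^{(i)}_+\in\s_x$. For each $\eps\in\{-1,0,1\}^k$ define an ultrafilter $\s_\eps\cu\mscr H'$ by starting from $\s_x$ and, for each coordinate $i$: if $\eps_i=1$ keep $\mf a^{(i)}_+$ and add $\mf a^{(i)}_-$ (forcing the "outer" side too—i.e.\ take $\{\mf a^{(i)}_+,\mf a^{(i)}_-\}\setminus\{(\mf a^{(i)})^*_\pm\}$ appropriately); if $\eps_i=-1$ take the complementary choice $(\mf a^{(i)})^*_+,(\mf a^{(i)})^*_-$; if $\eps_i=0$ keep exactly the split choice of $\s_x$. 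One must check each $\s_\eps$ is a genuine ultrafilter (a side of every wall, pairwise intersecting)—this is where the transversality of the $\mf a^{(i)}$ is used, so that flipping one does not disturb consistency at another—and that $\s_\eps\triangle\s_x$ is $\nu'$-finite, with $d(\s_\eps,\s_x)=\tfrac12\sum_{i:\eps_i\neq 0}\wh\nu(\{\mf a^{(i)}\})$; hence $\s_\eps\in X'$. Set $\hat C(x):=\{\s_\eps\}$ and $\hat\iota_x(\eps):=\s_\eps$. That $\hat\iota_x$ is a median isomorphism onto its image follows from the explicit formula for the median on $\overline X$ (intersections/unions of ultrafilters) applied coordinatewise: on the coordinates in $S(x)$ the three-fold expression reproduces the median of $\{-1,0,1\}^k$, and off $S(x)$ all three ultrafilters agree, so nothing else moves. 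For $\iota_x$, restrict to $\eps\in\{-1,1\}^k$; then each $\s_\eps$ contains no split hemiatom, so it is $p^{-1}$ of an ultrafilter on $\mscr H$, i.e.\ a point of $X$; set $C(x):=\iota_x(\{-1,1\}^k)\cu X$ and $\hat\iota_x|_{\{-1,1\}^k}=\iota_x$. The "$0$ maps to $x$" clause is immediate from $\s_{(0,\dots,0)}=\s_x$.

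Finally, gate-convexity. $\hat C(x)$ is a finite median subalgebra of $\overline X$ isomorphic to a cube $\{-1,0,1\}^k$ (itself a median algebra, in fact the $1$-skeleton-with-centres of a $k$-cube, which is gate-convex in any cube it embeds in); the general principle I would invoke is that a subset of a median algebra which is isomorphic to a finite cube and is "locally convex / combinatorially convex" in the sense that it is closed under medians with ambient points is gate-convex—concretely, the gate of $\xi\in X'$ is the vertex of $\hat C(x)$ obtained by taking, on each relevant coordinate, the side of the corresponding hemiwall that $\s_\xi$ selects (and $0$ if $\s_\xi$ is itself split there). One checks $\pi(\xi)$ so defined lies in $I(\xi,\eta)$ for every $\eta\in\hat C(x)$ by comparing ultrafilters coordinatewise. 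The same argument inside $X$ (using only the coordinates, which come from genuine walls of $X$ once restricted to $\pm1$) gives gate-convexity of $C(x)$ in $X$; alternatively, $C(x)=\hat C(x)\cap X$ and an intersection of a gate-convex subset of $X'$ with $X$ is gate-convex in $X$ because $X$ is a component (hence convex, with its own gate-projection, in $\overline{X'}$ via Proposition~\ref{components}-type reasoning) and gate-projections compose (Proposition~\ref{all about gates}(3)). Canonicity of all the data is built in: $S(x)$, the orientations, and hence $\s_\eps$ depend only on $\s_x$.

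The main obstacle I anticipate is the verification that each $\s_\eps$ is actually an ultrafilter representing a point at finite distance—i.e.\ that flipping a single hemiwall $\mf a^{(i)}_\pm$ past $\s_x$ is consistent with all the other halfspaces in $\s_x$ and does not accidentally violate the pairwise-intersecting condition with some \emph{non-atomic} $\mf h\in\mscr H'$ comparable to $p^{-1}(\mf a^{(i)})$. This is exactly the point where one needs that the atoms in $S(x)$ are "minimal" among walls that $x$ separates in the relevant direction—if some $\mf h\subsetneq \mf a^{(i)}$ with $\mf h\in\s_x$, then flipping to $(\mf a^{(i)})^*$ would clash with $\mf h$, so such $\mf h$ cannot be in $\s_x$; establishing this minimality carefully (and that it holds for \emph{both} orientations, giving the full $\{-1,1\}$ range in that coordinate) is the crux, and it is essentially a consequence of $\mf a^{(i)}$ being an atom together with $x\notin X$ forcing the hemiatom split. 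Once that local analysis is pinned down, the median-algebra and gate-convexity statements are formal.
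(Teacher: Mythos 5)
Your proposal follows essentially the same route as the paper's proof: your set $S(x)$ of walls of $X$ ``split'' by $x$ is exactly the paper's $W(x)$ (the condition $\mf{a}_+\in\s_x$, $\mf{a}_-\notin\s_x$ is equivalent to $\mf{a}_+\in\s_x$ and $(\mf{a}^*)_+\in\s_x$), the cube vertices are obtained by reorienting $\s_x$ at these walls (you do it directly in $\mscr{H}'$, the paper passes through the ultrafilter $p(\s_x)\setminus\{\mf{a}_1^*,\dots,\mf{a}_k^*\}$ on $\mscr{H}$ and then flips -- the same points), and gate-convexity is established by the same projection, reading off on each split wall the side selected by $\s_\xi$, with $0$ when $\s_\xi$ is itself split there; this is the paper's map $\s'\mapsto\s'\cap p^{-1}(H(x))$. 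Two corrections are needed. First, your justification that the walls in $S(x)$ are pairwise transverse is muddled as written; the clean argument is the same upward-closure clash you invoke later for the ``crux'': if $\mf{a}\subsetneq\mf{b}$ and both walls are split, then $\mf{a}_+\in\s_x$ together with $\mf{a}_+\subsetneq\mf{b}_-$ forces $\mf{b}_-\in\s_x$, contradicting $(\mf{b}^*)_+=(\mf{b}_-)^*\in\s_x$; the identical mechanism shows that flipping a split wall never conflicts with any non-atomic halfspace of $\s_x$, which disposes of your anticipated obstacle. Second, your alternative argument for gate-convexity of $C(x)$ is false: $X$ is not convex in $X'$ (let alone gate-convex or a ``component'' of $\overline{X'}$), since $\text{Hull}_{X'}(X)=X'$ -- indeed $x$ itself lies in the interval between opposite vertices of $C(x)$ -- so one cannot obtain $C(x)$ by intersecting $\wh{C}(x)$ with a gate-convex copy of $X$; your primary argument, restricting the projection to $X$ where no ultrafilter is split at an atom, is the correct one and is what the paper does.
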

\begin{proof}
Let $\s\cu\mscr{H}'$ be an ultrafilter representing $x$. Since $x\not\in X$ the set 
\[ W(x):=\left\{\{\mf{a},\mf{a}^*\}\in\mscr{W}(X)\mid \mf{a}\in\mscr{A} ~~\text{and}~~ \mf{a}_+\in\s ~~\text{and}~~ (\mf{a}^*)_+\in\s\right\}\] 
is nonempty. Any two walls in $W(x)$ are transverse, so $k:=\# W(x)\leq r$. Choose halfspaces $\mf{a}_1,...,\mf{a}_k\in\mscr{H}$ representing all walls in $W(x)$. The set $p(\s)\setminus\{\mf{a}_1^*,...,\mf{a}_k^*\}$ is an ultrafilter on $\mscr{H}$ and it represents a point $q\in X$. This will be the point $\iota_x(1,...,1)$ in $C(x)$. To construct $q'\in C(x)$, simply replace $\mf{a}_i\in\s_q\cu\mscr{H}$ with $\mf{a}_i^*$ whenever the $i$-th coordinate of $q'$ is $-1$; the result is an ultrafilter on $\mscr{H}$ representing $q'\in X$.

To construct a point $u\in\wh{C}(x)\cu X'$, consider the point $u'\in C(x)$ obtained by replacing all the zero coordinates of $u$ with $1$'s. Whenever the $i$-th coordinate of $u$ is $0$, we replace $(\mf{a}_i)_-\in p^{-1}(\s_{u'})$ with $(\mf{a}_i^*)_+$, obtaining an ultrafilter on $\mscr{H}'$ that represents the point $u$. 

We are left to check that $C(x)$ and $\wh{C}(x)$ are gate-convex. Let $H(x)\cu\mscr{H}$ be the set of halfspaces corresponding to the walls of $W(x)$. We define a map $\pi\colon X'\ra\wh{C}(x)$ as follows: given an ultrafilter $\s'\cu\mscr{H}'$, the intersection $\s'\cap p^{-1}(H(x))$ determines a unique point of $\wh{C}(x)$ and we call this $\pi(\s')$. Note that the restriction of $\pi$ to $X$ takes values in $C(x)$. It is straightforward to check that $\pi$ and $\pi|_X$ are gate-projections.
\end{proof}

When $x\in X$, we set $C(x)=\wh{C}(x)=\{x\}$. 

\begin{lem}\label{halfspaces of X'}
Every halfspace of $X'$ arises from an element of $\mscr{H}'$.
\end{lem}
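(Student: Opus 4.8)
The plan is to show that the natural map $\mathscr{H}'\to\mathscr{H}(X')$, $\mf{j}\mapsto\wt{\mf{j}}$, is surjective. Recall that $X'=\mc{M}(\mscr{H}',\mscr{B}',\nu',p^{-1}(\s_z))$ was constructed exactly as a space of ultrafilters on the pocset $\mscr{H}'$; so points of $X'$ are (finite-distance classes of) ultrafilters on $\mscr{H}'$, and every $\mf{j}\in\mscr{H}'$ induces a halfspace $\wt{\mf{j}}=\{\s\in X'\mid\mf{j}\in\s\}$ with complement $\wt{\mf{j}^*}$. I would argue that a general halfspace $\mf{k}\in\mscr{H}(X')$ coincides, up to $\nu'$-null symmetric difference (equivalently, as a halfspace after passing to the metric identification), with one of these $\wt{\mf{j}}$. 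Concretely: pick a pair of points $x_-,x_+\in X'$ with $x_-\in\mf{k}^*$, $x_+\in\mf{k}$, and (using Proposition~\ref{all about gates} applied to the gate-convex interval $I(x_-,x_+)$, as in the proof of Corollary~\ref{products}) reduce to showing that every halfspace separating $x_-$ from $x_+$ contains one of the $\wt{\mf{j}}$. After replacing $x_\pm$ by a pair of gates for $(\mf{k}^*,\mf{k})$ we may assume $\mscr{H}(x_-|x_+)=\mscr{H}(\mf{k}^*|\mf{k})$.

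The key step is the following separation statement inside $X'$: for $\xi,\eta\in X'$ with $\xi\neq\eta$ (in the metric sense), there exists $\mf{j}\in\mscr{H}'$ with $\xi\in\wt{\mf{j}^*}$ and $\eta\in\wt{\mf{j}}$, and moreover one can choose $\mf{j}$ so that $\wt{\mf{j}}\subseteq\mf{k}$ when $\mf{k}$ is any halfspace containing $\eta$ but not $\xi$. This is just the fact, already recorded in the excerpt for $\overline X$ (``any two points of $\overline X$ are separated by a halfspace of the form $\wt{\mf{h}}$''), transported to the pocset model: if $\s_\xi,\s_\eta\subseteq\mscr{H}'$ are the canonical ultrafilters and $d(\xi,\eta)=\nu'(\s_\xi\triangle\s_\eta)>0$, then $\s_\eta\setminus\s_\xi$ is nonempty (in fact non-null), and any $\mf{j}\in\s_\eta\setminus\s_\xi$ does the job; inseparability of $\s_\eta\setminus\s_\xi$ together with the hypothesis $\mf{k}\in\mscr{H}(\xi|\eta)$ lets one choose $\mf{j}$ with $\wt{\mf{j}}\subseteq\mf{k}$, because $\mf{k}\cap X'$ separates $\xi$ from $\eta$ and hence must contain a side of every wall in a suitable sub-pocset. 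I would then invoke the standard fact (Helly / the structure of convex sets in median algebras, Theorem~2.7 in \cite{Roller}) that a convex set equals the intersection of the halfspaces containing it, applied to $\mf{k}$: since every wall separating a point of $\mf{k}^*$ from a point of $\mf{k}$ refines to a wall of the form $\{\wt{\mf{j}},\wt{\mf{j}^*}\}$ sitting on the correct side, $\mf{k}$ is a union (equivalently, up to null sets, equals a single one) of such $\wt{\mf{j}}$; measurability of $\mf{k}$ forces it to agree with one $\wt{\mf{j}}$ off a $\nu'$-null set, which is all that is needed.

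The main obstacle, and the point deserving care, is the passage from ``every wall of $X'$ separating two points is refined by a wall coming from $\mscr{H}'$'' to ``the halfspace $\mf{k}$ itself equals some $\wt{\mf{j}}$ (mod null)'' — i.e.\ ruling out that $\mf{k}$ is a proper union of infinitely many $\wt{\mf{j}}$'s with no maximal element. Here finite rank is essential: if $\mf{k}$ strictly contained a cofinal chain $\wt{\mf{j}_1}\subsetneq\wt{\mf{j}_2}\subsetneq\cdots$ all lying in $\mf{k}$, the complementary chain $\wt{\mf{j}_n^*}$ would be a descending chain in $\mf{k}^*$; taking $x_+\in\mf{k}$ and $x_-\in\mf{k}^*$ in a pair of gates and applying Proposition~\ref{all about halfspaces} (together with Lemma~\ref{intersections of halfspaces}, to control that the chain stays near $x_-$) bounds the length of such a chain, so $\mscr{H}(x_-|x_+)$ restricted to preimages in $\mscr{H}'$ has a minimal element, which is the desired $\mf{j}$. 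Since $\nu'(p^{-1}(F))=\wh\nu(F)$ and $\mscr{A}$ is $\wh\nu$-measurable, the bookkeeping with the split atoms $\mf{a}_\pm$ introduces no new difficulty: a wall of $X'$ that ``is'' an old wall of $X$ is handled exactly as in $X$, while a wall that separates the two hemiatoms $\mf{a}_-,\mf{a}_+$ is visibly $\{\wt{(\mf{a}_+)},\wt{(\mf{a}_+)^*}\}$ by construction. Assembling these pieces gives $\mscr{H}(X')=\{\wt{\mf{j}}\mid\mf{j}\in\mscr{H}'\}$, as claimed.
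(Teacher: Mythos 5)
Your argument assumes the lemma at its crucial point. What must be shown is that an \emph{abstract} halfspace $\mf{k}\in\mscr{H}(X')$ --- a convex set with convex complement, with no a priori relation to the pocset $\mscr{H}'$ --- cannot be ``new''. Your key separation step asserts that if $\mf{k}$ contains $\eta$ but not $\xi$, then some $\mf{j}\in\s_\eta\setminus\s_\xi$ can be chosen with $\wt{\mf{j}}\cu\mf{k}$, justified only by the phrase that $\mf{k}$ ``must contain a side of every wall in a suitable sub-pocset''. Nothing you cite rules out that $\mf{k}$ is transverse to \emph{every} $\wt{\mf{j}}$ with $\mf{j}\in\s_\eta\setminus\s_\xi$; excluding exactly this possibility is the content of the lemma, so the step is circular as written. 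The subsequent passage is also not sound: a union of halfspaces $\wt{\mf{j}}$ is not a halfspace; ``equal up to a $\nu'$-null set'' is not meaningful for subsets of $X'$, since $\nu'$ is a measure on $\mscr{H}'$, not on $X'$; and even a mod-null identification would be weaker than the statement, whose later uses (computing $\text{rank}(X')$, showing the extended action has no wall inversions, part~5 of Proposition~\ref{properties of X'}) require the exact list of halfspaces of $X'$. The proposed reduction to a pair of gates for $(\mf{k}^*,\mf{k})$ is likewise shaky: halfspaces need not be gate-convex, and $\mscr{H}(\mf{k}^*|\mf{k})=\{\mf{k}\}$, so demanding $\mscr{H}(x_-|x_+)=\mscr{H}(\mf{k}^*|\mf{k})$ for actual points is generally impossible outside the discrete setting.

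The missing mechanism is something that forces $\mf{k}$ to be determined by its trace on $X$, and this is where the paper's proof uses the canonical cubes of Lemma~\ref{cubes in X'}, which your proposal never invokes. Since $\wh{C}(x)$ is the hull of $C(x)\cu X$, one gets $\text{Hull}_{X'}(X)=X'$, hence every $\mf{k}\in\mscr{H}(X')$ meets $X$ in a halfspace $\mf{h}$ of $X$ (a point you do not verify). If $\mf{k}$ cuts some cube $\wh{C}(x)$ nontrivially, then, because $\wh{C}(x)$ is gate-convex, part~1 of Proposition~\ref{all about gates} forces $\mf{k}$ to be one of the two hemiatom halfspaces $\wt{\mf{h}_\pm}$ of an atom $\mf{h}$; and if two distinct extensions $\mf{k}\neq\mf{k}'$ of the same $\mf{h}$ cut no cube, then any $z\in\mf{k}\triangle\mf{k}'$ lies outside $X$ and its whole cube $\wh{C}(z)$, hence $C(z)\cu X$, sits inside $\mf{k}\triangle\mf{k}'$, contradicting $\mf{k}\cap X=\mf{k}'\cap X$. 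Your finite-rank/Dilworth remarks at the end address a different question (existence of minimal elements among separating halfspaces of $\mscr{H}'$) and do not bridge the gap, since a minimal separating $\mf{j}$ need not satisfy $\wt{\mf{j}}=\mf{k}$, nor even $\wt{\mf{j}}\cu\mf{k}$.
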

\begin{proof}
Observe that $\text{Hull}_{X'}(X)=X'$ since, for every $x\in X'$, the hull of $C(x)$ in $X'$ is $\wh{C}(x)$. Thus, every halfspace of $X'$ intersects $X$ in a halfspace of $X$. Given $\mf{h}\in\mscr{H}$, we consider $\mscr{F}(\mf{h}):=\left\{\mf{k}\in\mscr{H}(X')\mid \mf{k}\cap X=\mf{h}\right\}$; note that $\mscr{F}(\mf{h})\neq\emptyset$ by Lemma~6.5 in \cite{Bow1}. If $\mf{h}\in\mscr{A}$, we can construct halfspaces of $X'$ corresponding to $\mf{h}_+,\mf{h}_-\in\mscr{H}'$. For instance, $\mf{h}_+$ corresponds to the set of ultrafilters on $\mscr{H}'$ that contain $\mf{h}_+$; this is well-defined as $\mf{h}_+$ has positive measure. Thus, we only need to show that $\#\mscr{F}(\mf{h})=1$ if $\mf{h}\in\mscr{H}\setminus\mscr{A}$ and $\mscr{F}(\mf{h})=\{\mf{h}_-,\mf{h}_+\}$ if $\mf{h}\in\mscr{A}$.

If, for some $\mf{k}\in\mscr{H}(X')$ and $x\in X'$, both $\mf{k}\cap\wh{C}(x)$ and $\mf{k}^*\cap\wh{C}(x)$ are nonempty, there exists $\mf{h}\in\mscr{A}$ such that $\mf{k}\in\{\mf{h}_-,\mf{h}_+\}$, by part~$(1)$ of Proposition~\ref{all about gates}. Thus, we can suppose that, for every $x\in X'$, we either have $\wh{C}(x)\cu\mf{k}$ or $\wh{C}(x)\cu\mf{k}^*$. Suppose, for the sake of contradiction, that the same is true of $\mf{k}'\in\mscr{H}(X')$, with $\mf{k}'\neq\mf{k}$ and $\mf{k}'\cap X=\mf{k}\cap X$. Let $z\in\mf{k}\triangle\mf{k}'$ be a point; observe that $z\not\in X$ and, by our assumptions, the hypercube $\wh{C}(z)$ is entirely contained in $\mf{k}\triangle\mf{k}'$. This implies that $C(z)\cu\mf{k}\triangle\mf{k}'$, violating the fact that $\mf{k}'\cap X=\mf{k}\cap X$.
\end{proof}

Given a subgroup $\G\leq\text{Isom}~X$, we say that the action $\G\acts X$ is \emph{without wall inversions} if $g\mf{h}\neq\mf{h}^*$ for every $g\in\G$ and $\mf{h}\in\mscr{H}$. We denote by $a(X)$ the supremum of the $\wh{\nu}$-masses of the elements of $\mscr{A}$.

\begin{prop}[Properties of $X'$]\label{properties of X'}
\begin{enumerate}
\item The median space $X'$ is complete and $\text{rank}(X')=\text{rank}(X)$.
\item There is an isometric embedding $X\hookrightarrow X'$ and $\text{Hull}_{X'}(X)=X'$.
\item Every isometry of $X$ extends canonically to an isometry of $X'$ yielding $\text{Isom}~X\hookrightarrow\text{Isom}~X'$. Moreover, the induced action $\text{Isom}~X\acts X'$ is without wall inversions. 
\item We have $a(X')\leq\frac{1}{2}\cdot a(X)$.
\item The inclusion $X\hookrightarrow X'$ canonically extends to a monomorphism of median algebras $\overline X\hookrightarrow\overline{X'}$. For every $\xi\in\overline{X'}\setminus\overline X$ there exists a canonical cube $\{-1,0,1\}^k\hookrightarrow\overline{X'}$ centred at $\xi$, with $\{-1,1\}^k\hookrightarrow\overline{X}$ and $1\leq k\leq\text{rank}(X)$.
\end{enumerate}
\end{prop}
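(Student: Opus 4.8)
\textbf{The plan} is to transport all the structure along the surjection $p\colon\mscr{H}'\ra\mscr{H}$, using the identity $\nu'(p^{-1}(F))=\wh\nu(F)$ together with two elementary features of $p$: it is a morphism of pocsets commuting with $*$, and it \emph{reflects} the order, since by construction $\mf j\subsetneq\mf j'$ in $\mscr{H}'$ holds precisely when $p(\mf j)\subsetneq p(\mf j')$ or $\{\mf j,\mf j'\}=\{\mf a_-,\mf a_+\}$ for some $\mf a\in\mscr{A}$ --- in particular the only comparable pair inside a fibre of $p$ is such a pair $\mf a_-\subsetneq\mf a_+$. Throughout I use Lemma~\ref{halfspaces of X'} to identify $\mscr{H}(X')$ with $\mscr{H}'$.

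\emph{Parts (1) and (2).} If $\mf j,\mf j'\in\mscr{H}'$ are transverse they cannot share a fibre of $p$, so $p(\mf j)\neq p(\mf j')$, and since $p$ reflects the order the four halfspaces $p(\mf j),p(\mf j)^{*},p(\mf j'),p(\mf j'^{*})$ are pairwise incomparable; thus $p$ carries a pairwise-transverse family of $\mscr{H}'$ bijectively onto one of equal size in $\mscr{H}$, giving $\text{rank}(X')\leq\text{rank}(X)$, while the reverse inequality is the injective median morphism $X\hookrightarrow X'$. That embedding and the identity $\text{Hull}_{X'}(X)=X'$ are already recorded at the start of the proof of Lemma~\ref{halfspaces of X'} (the hull of each cube $C(x)$ is $\wh C(x)$). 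Completeness of $X'=\mc{M}(\mscr{H}',\mscr{B}',\nu',p^{-1}(\s_z))$ I would deduce from the general behaviour of the $\mc{M}$-construction over a finite-rank pocset in \cite{Fioravanti1}; the inputs are $\text{rank}(\mscr{H}')<\infty$ (just shown) together with the fact that a chain in $\mscr{H}'$ whose members stay within bounded distance of a point has nonempty intersection, which follows from Lemma~\ref{intersections of halfspaces} applied to the image chain in $\mscr{H}$.

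\emph{Part (3).} An isometry $g$ of $X$ preserves $\wh\nu$, hence preserves $\mscr{A}$, so I let $g'$ act as $g$ on $\mscr{H}\setminus\mscr{A}$ and by $\mf a_{\pm}\mapsto(g\mf a)_{\pm}$ on doubled atoms; using $(\mf a_-)^{*}=(\mf a^{*})_+$ and $(\mf a_+)^{*}=(\mf a^{*})_-$ one checks $g'$ is a pocset automorphism, and it preserves $\nu'$ because the formula for $\nu'$ is symmetric in the two halves of each atom. Then $g'$ induces an isometry of $X'$, the map $g\mapsto g'$ is a group homomorphism (it is manifestly natural), and it is injective since $g'$ restricted to $X\cu X'$ is $g$. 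That the induced action is without wall inversions I would argue as follows: no isometry of $X$ inverts a non-atom halfspace, since by Proposition~\ref{all about halfspaces} exactly one of its two sides is open while an isometry is a homeomorphism, and applying $p$ this also rules out inversions of the lifted non-atom walls of $X'$; for the two walls $W_1^{\mf a}=\{\mf a_-,(\mf a^{*})_+\}$ and $W_2^{\mf a}=\{\mf a_+,(\mf a^{*})_-\}$ attached to an atom $\mf a$, a short case-check on whether $g\mf a$ equals $\mf a$, $\mf a^{*}$, or neither shows $g'$ never maps either wall to itself with its sides exchanged --- when $g\mf a=\mf a^{*}$ it interchanges $W_1^{\mf a}$ and $W_2^{\mf a}$, so the atomic inversion present in $X$ is resolved in $X'$.

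\emph{Parts (4) and (5), and the main obstacle.} From the formula for $\nu'$ one has $\nu'(\{\mf j\})>0$ exactly when $\mf j=\mf a_{\pm}$ for some $\mf a\in\mscr{A}$, and then $\nu'(\{\mf j\})=\tfrac12\wh\nu(\{\mf a\})$; so the atoms of $\nu'$ are precisely these, each of half the mass of the atom of $\wh\nu$ it came from, whence $a(X')\leq\tfrac12 a(X)$. For (5), the assignment $\s\mapsto p^{-1}(\s)$ sends an ultrafilter on $\mscr{H}$ to one on $\mscr{H}'$ (it meets every wall of $X'$ in exactly one side), descends to a map $\overline X=\overline{\mc{M}}(\mscr{H},\wh{\mscr B},\wh\nu)\ra\overline{\mc{M}}(\mscr{H}',\mscr{B}',\nu')=\overline{X'}$ because $\nu'\o p^{-1}=\wh\nu$, is injective for the same reason, and is a morphism of median algebras because the median map of $\overline{\mc{M}}$ is the Boolean expression $m(\s_1,\s_2,\s_3)=(\s_1\cap\s_2)\cup(\s_2\cap\s_3)\cup(\s_3\cap\s_1)$, with which $p^{-1}$ commutes. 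The cube statement I would prove exactly as Lemma~\ref{cubes in X'}, run at the level of Roller compactifications: for $\xi\in\overline{X'}$ with canonical ultrafilter $\s\cu\mscr{H}'$, set $W(\xi):=\{\{\mf a,\mf a^{*}\}\mid \mf a\in\mscr{A},\ \mf a_+\in\s,\ (\mf a^{*})_+\in\s\}$; the walls in $W(\xi)$ are pairwise transverse in $X$, so $k:=\#W(\xi)\leq\text{rank}(X)$, and $W(\xi)=\emptyset$ exactly when $\s=p^{-1}(\s')$ for an ultrafilter $\s'$ on $\mscr{H}$, i.e.\ when $\xi\in\overline X$. Choosing $\mf a_1,\dots,\mf a_k\in\mscr{A}$ representing the walls of $W(\xi)$ and independently switching each $\mf a_i$ among its ``$+1$'', ``$0$'' and ``$-1$'' states inside $\s$ produces a median embedding $\{-1,0,1\}^k\hookrightarrow\overline{X'}$ with $(0,\dots,0)\mapsto\xi$; the $2^k$ vertices with all coordinates in $\{-1,1\}$ give fully ``decided'' ultrafilters, hence lie in $\overline X$, and span the sub-cube $\{-1,1\}^k\hookrightarrow\overline X$. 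The routine part is the pocset bookkeeping of (1), (3) and (4); the two steps that need genuine care are the appeal to the completeness criterion of \cite{Fioravanti1} in (1), and --- exactly as in Lemma~\ref{cubes in X'} --- the verification in (5) that the walls of $W(\xi)$ are pairwise transverse in $X$ and that $W(\xi)=\emptyset$ characterises $\overline X$ inside $\overline{X'}$, which is where the specific recipe for splitting each atom into nested halves $\mf a_-\subsetneq\mf a_+$ is really used.
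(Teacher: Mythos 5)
Most of your proposal tracks the paper's own proof. Parts (2) and (4) are indeed already done/immediate; your part (3) is exactly the intended argument (the paper's one-line justification is precisely that $g\mf{h}=\mf{h}^*$ forces $\mf{h}$ to be clopen, hence an atom, by Proposition~\ref{all about halfspaces}, so only the doubled-atom walls need the case-check you carry out); your part (5) is the paper's "run Lemma~\ref{cubes in X'} again, now that Lemma~\ref{halfspaces of X'} identifies $\mscr{H}(X')$ with $\mscr{H}'$", and your rank argument via $p$ is a fine way of making precise what the paper attributes to Lemma~\ref{halfspaces of X'}.

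The one genuine gap is the completeness of $X'$, which the paper singles out as the only nontrivial point of part~(1). You propose to quote a "general completeness criterion for the $\mc{M}$-construction" from \cite{Fioravanti1} with inputs "finite rank" and "bounded chains have nonempty intersection", but no such criterion is available there (nor invoked here), and those hypotheses are not the relevant ones: completeness is a measure-theoretic issue, namely upgrading an $L^1$-type limit of ultrafilters to a genuine ultrafilter up to a null set, and neither finite rank nor Lemma~\ref{intersections of halfspaces} addresses that. The paper's argument is short and direct: given a Cauchy sequence of ultrafilters $\s_n\cu\mscr{H}'$, pass to a subsequence with $\nu'(\s_n\triangle\s_{n+1})\leq 2^{-n}$, note that $\underline{\s}:=\liminf\s_n$ and $\overline{\s}:=\limsup\s_n$ lie in $\mscr{B}'$ with $\overline{\s}\setminus\underline{\s}=\limsup(\s_n\triangle\s_{n+1})$ null by Borel--Cantelli, and use Zorn's Lemma to extend the pairwise-intersecting family $\underline{\s}$ to an ultrafilter $\s$ squeezed between $\underline{\s}$ and $\overline{\s}$; this $\s$ is measurable and is the limit. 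Your write-up should either reproduce this argument or point to an actual statement proving completeness of $\mc{M}(\mscr{H}',\mscr{B}',\nu',\cdot)$; as it stands, the step is not justified.
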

\begin{proof}
We have already shown part~$(2)$ and part~$(4)$ is immediate. Part~$(5)$ can be proved like Lemma~\ref{cubes in X'} since we now have Lemma~\ref{halfspaces of X'}. If ${g\in\text{Isom}~X}$ and $\mf{h}\in\mscr{H}$ satisfy $g\mf{h}=\mf{h}^*$, Proposition~\ref{all about halfspaces} implies that the halfspace $\mf{h}$ is clopen; as clopen halfspaces are atoms, part~$(3)$ immediately follows. By Lemma~\ref{halfspaces of X'}, the only nontrivial statement in part~$(1)$ is the completeness of $X'$, which we now address.

Let $\s_n\cu\mscr{H}'$ be ultrafilters corresponding to a Cauchy sequence in $X'$. The sets $\underline{\s}:=\liminf\s_n$ and $\overline{\s}:=\limsup\s_n$ lie in $\mscr{B}'$. Any two halfspaces in $\underline{\s}$ intersect and $\overline{\s}$ contains the complement of $\underline{\s}\cup\underline{\s}^*$. Lemma~\ref{Zorn} implies that $\underline{\s}$ is contained in an ultrafilter $\s\cu\mscr{H}'$ with $\s\cu\overline{\s}$. If we show that $\nu'(\overline{\s}\setminus\underline{\s})=0$, it follows that $\s\in\mscr{B}'$ and the points of $X'$ represented by $\s_n$ converge to the point of $X'$ represented by $\s$. Note that it suffices to show that a subsequence of $(\s_n)_{n\geq 0}$ converges; in particular, we can assume that $\nu'(\s_n\triangle\s_{n+1})\leq\frac{1}{2^n}$ for all $n\geq 0$. In this case, $\overline{\s}\setminus\underline{\s}=\limsup\left(\s_n\triangle\s_{n+1}\right)$ has measure zero by the Borel-Cantelli Lemma. 
\end{proof}

We will refer to $X'$ as the \emph{barycentric subdivision} of $X$. Indeed, if $X$ is the $0$-skeleton of a ${\rm CAT}(0)$ cube complex, $X'$ is the $0$-skeleton of the usual barycentric subdivision. 

A variation on this construction allows us to embed median spaces into connected ones. We define a sequence $(X_n)_{n\geq 0}$ by setting $X_0:=X$ and $X_{n+1}:=X_n'$. These come equipped with compatible isometric embeddings $X_m\hookrightarrow X_n$, for $m<n$; thus, we can consider $\varinjlim X_n$ and call $X''$ its completion. It is a complete median space by Proposition~2.21 in \cite{CDH}.

We have projections $\mscr{H}(X_{n+1})\ra\mscr{H}(X_n)$ with fibres of cardinality one or two; setting, $\mscr{H}'':=\varprojlim\mscr{H}(X_n)$ we still have a projection $p_{\infty}\colon\mscr{H}''\ra\mscr{H}$. It is one-to-one on $p_{\infty}^{-1}(\mscr{H}\setminus\mscr{A})$, while, if $\mf{h}\in\mscr{A}$, the preimage $p_{\infty}^{-1}(\mf{h})$ is canonically in one-to-one correspondence with $\{-,+\}^{\N}$. Let $\mbb{P}$ the standard product probability measure on $\{-,+\}^{\N}$, defined on the $\s$-algebra $\mscr{P}$. For every $E\cu\mscr{H}''$ with $p_{\infty}(E)\setminus\mscr{A}\in\wh{\mscr{B}}$ and $E\cap p_{\infty}^{-1}(\mf{h})\in\mscr{P}$ for all $\mf{h}\in\mscr{A}$, we can set
\[\nu''(E):=\wh{\nu}\left(p(E)\setminus\mscr{A}\right)+\sum_{\mf{h}\in\mscr{A}}\mbb{P}\left(E\cap p_{\infty}^{-1}(\mf{h})\right).\]
This defines a measure on a $\s$-algebra $\mscr{B}''\cu 2^{\mscr{H}''}$. Each point of $X_n$ is represented by a $\mscr{B}''$-measurable ultrafilter on $\mscr{H}''$ and the measure $\nu''$ applied to symmetric differences of such ultrafilters recovers the metric of $X_n$.

Every point of $x\in X''$ is represented by a $\mscr{B}''$-measurable ultrafilter on $\mscr{H}''$. Indeed, we can find points $x_k\in X_k$ converging to $x$ and these are represented by $\mscr{B}''$-measurable ultrafilters $\s_k\cu\mscr{H}''$. The argument in the proof of part~$(1)$ of Proposition~\ref{properties of X'} shows that $\liminf\s_k$ can be completed to a $\mscr{B}''$-measurable ultrafilter $\s\cu\mscr{H}''$ and $\nu''(\s\triangle\s_k)\ra 0$.

Retracing our proofs of Lemmata~\ref{cubes in X'} and~\ref{halfspaces of X'}, it is not hard to show that $\text{Hull}_{X''}(X)=X''$ and that, for every $x\in X''$, there exists a monomorphism $\iota_x^{\infty}\colon [-1,1]^k\hookrightarrow X''$, $1\leq k\leq\text{rank}(X)$, with gate-convex image satisfying $\iota_x^{\infty}(0,...,0)=x$ and $(\iota_x^{\infty})^{-1}(X)=\{-1,1\}^k$. One can also identify all halfspaces of $X''$: they correspond either to elements of $\mscr{H}\setminus\mscr{A}$ or to pairs consisting of an element of $\mscr{A}$ and an element of $\mscr{H}([-1,1])$. In particular, $\text{rank}(X'')=\text{rank}(X)$. Since $(\mscr{H}(X''),\nu'')$ has no atoms, we have obtained:

\begin{cor}\label{X''}
There exists a connected, complete median space $X''$ of the same rank as $X$ such that $X\hookrightarrow X''$ isometrically and every isometry of $X$ extends canonically to an isometry of $X''$, yielding $\text{Isom}~X\hookrightarrow\text{Isom}~X''$.
\end{cor}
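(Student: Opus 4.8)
The plan is to assemble the corollary from the objects constructed in the discussion immediately preceding it: the tower $X = X_0 \hookrightarrow X_1 \hookrightarrow X_2 \hookrightarrow \cdots$ with $X_{n+1} := X_n'$, the direct limit $\varinjlim X_n$ and its completion $X''$. First, $X''$ is a complete median space by Proposition~2.21 in \cite{CDH}, being the completion of a median space, and the composition $X = X_0 \hookrightarrow X_n \hookrightarrow \varinjlim X_n \hookrightarrow X''$ is a canonical isometric embedding, the individual maps being compatible by construction. For $\text{rank}(X'') = \text{rank}(X)$ I would retrace the proofs of Lemmata~\ref{cubes in X'} and~\ref{halfspaces of X'} exactly as indicated before the corollary: this gives $\text{Hull}_{X''}(X) = X''$ (each $x \in X''$ lies in the gate-convex image of a cube $\iota_x^{\infty}\colon[-1,1]^k\hookrightarrow X''$ whose $\{-1,1\}^k$-corners lie in $X$) and the identification of every halfspace of $X''$ with an element of $\mscr{H}'' = \varprojlim\mscr{H}(X_n)$. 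Under $p_{\infty}\colon\mscr{H}''\ra\mscr{H}$, two halfspaces with equal image necessarily come from a single atom of $\wh\nu$ and are then nested, whereas halfspaces with distinct images are transverse in $\mscr{H}''$ precisely when their images are transverse in $\mscr{H}$; hence a pairwise-transverse family in $\mscr{H}''$ injects into one in $\mscr{H}$, giving $\text{rank}(X'')\leq\text{rank}(X)$, and the reverse inequality is immediate from the embedding.

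For connectedness I would invoke the criterion recalled in Section~\ref{prelims}: a complete, finite rank median space is connected if and only if the measure on its pocset of halfspaces is atomless. A single element of $\mscr{H}(X'')\cong\mscr{H}''$ either maps injectively to an element of $\mscr{H}\setminus\mscr{A}$, which is $\wh\nu$-null since $\mscr{A}$ is exactly the set of atoms of $\wh\nu$, or it is a single point of a fibre $p_{\infty}^{-1}(\mf{h})\cong\{-,+\}^{\N}$, which is null for the atomless product measure $\mbb{P}$. By the definition of $\nu''$ no halfspace of $X''$ is an atom, so $X''$ is connected (and hence geodesic).

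Finally, for the extension of isometries, Proposition~\ref{properties of X'}(3) supplies for each $n$ a canonical injective homomorphism $\text{Isom}~X_n\hookrightarrow\text{Isom}~X_{n+1}$ compatible with the embedding $X_n\hookrightarrow X_{n+1}$. Iterating, every $g\in\text{Isom}~X$ determines a compatible sequence $(g_n)_{n\geq 0}$ with $g_0 = g$ and $g_{n+1}|_{X_n} = g_n$; these glue to an isometry of $\varinjlim X_n$, which extends uniquely and continuously to an isometry $\bar g$ of $X''$, preserving the continuous median map because $\varinjlim X_n$ is dense in $X''$. The assignment $g\mapsto\bar g$ is a group homomorphism, and it is injective since $\bar g|_X = g$, so it yields $\text{Isom}~X\hookrightarrow\text{Isom}~X''$. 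The only step demanding genuine care — though it is routine given Lemmata~\ref{cubes in X'} and~\ref{halfspaces of X'} — is checking that the halfspace pocset and measure of $X''$ are indeed the predicted $(\mscr{H}'',\mscr{B}'',\nu'')$, on which both the rank computation and the atomlessness (hence connectedness) rest; everything else is formal manipulation of direct limits and metric completions.
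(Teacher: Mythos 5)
Your proposal follows essentially the same route as the paper, whose proof of this corollary is exactly the discussion preceding it: the tower of barycentric subdivisions, completion of the direct limit, retracing Lemmata~\ref{cubes in X'} and~\ref{halfspaces of X'} to control $\mathrm{Hull}_{X''}(X)$, halfspaces and rank, atomlessness of $\nu''$ for connectedness, and levelwise extension of isometries via Proposition~\ref{properties of X'}(3) followed by extension to the completion. The only cosmetic difference is that you identify $\mscr{H}(X'')$ with $\mscr{H}''$ itself, whereas the paper describes the halfspaces over an atom via $\mscr{H}([-1,1])$ (the fibre $\{-,+\}^{\N}$ only surjects onto these cuts); this does not affect the rank bound or the atomlessness argument, and you correctly flag this identification as the one step requiring care.
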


\begin{cor}\label{finite orbits}
Let $\G$ be a group of isometries of $X$ and $r:=\text{rank}(X)$. The action $\G\acts X$ has bounded orbits if and only if it has an orbit of cardinality at most $2^r$.
\end{cor}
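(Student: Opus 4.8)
The plan is to reduce the statement about $X$ to the connected space $X''$ from Corollary~\ref{X''}, where the bounded-orbit case forces a global fixed point, and then pull that fixed point back to a small orbit in $X$ via the cube structure of $X''\setminus X$. One direction is trivial: an orbit of cardinality at most $2^r$ is certainly bounded. So assume $\G\acts X$ has bounded orbits and fix $x\in X$; then $\G x$ is a bounded subset of $X$, hence also a bounded subset of $X''$ under the isometric embedding $X\hookrightarrow X''$ of Corollary~\ref{X''}, and $\G$ acts on $X''$ by isometries extending the action on $X$.

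Next I would invoke the fact (established in Section~\ref{elliptic isometries}, which we are allowed to cite — or, failing a direct reference, the standard center-of-a-bounded-set argument: a complete, finite rank median space has a well-defined circumcenter or, more robustly, its Roller compactification $\overline{X''}$ is compact so $\G$ has a fixed point in $\overline{X''}$, and boundedness of an orbit pushes that fixed point into $X''$ itself) that a group of isometries of a complete, finite rank median space with a bounded orbit fixes a point. Here the point is that $X''$ is connected, so by Theorem~\ref{CAT(0) metric} it carries a canonical ${\rm CAT}(0)$ metric for which all median isometries of $X''$ are isometries and for which $\G x$ is still bounded; the ${\rm CAT}(0)$ circumcenter of $\overline{\G x}$ is then a $\G$-fixed point $\xi\in X''$. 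Call this fixed point $\xi$.

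If $\xi\in X$ we are done, since then $\{\xi\}$ is a $\G$-orbit of cardinality $1\le 2^r$. Otherwise $\xi\in X''\setminus X$, and here I would use the cube structure described just before Corollary~\ref{X''}: there is a canonical monomorphism $\iota_\xi^{\infty}\colon[-1,1]^k\hookrightarrow X''$ with $1\le k\le r$, gate-convex image $\wh C(\xi)$, $\iota_\xi^{\infty}(0,\dots,0)=\xi$, and $(\iota_\xi^{\infty})^{-1}(X)=\{-1,1\}^k$; in particular the $2^k$ vertices of this cube lie in $X$. The construction of $\wh C(\xi)$ is canonical — it depends only on the walls of $X''$ straddling $\xi$ — so any isometry of $X''$ fixing $\xi$ must preserve $\wh C(\xi)$, hence permute its $2^k$ vertices. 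Thus $\G$ preserves the vertex set $\{-1,1\}^k\subseteq X$, which is a union of $\G$-orbits of total size $2^k\le 2^r$; picking any one orbit gives an orbit in $X$ of cardinality at most $2^r$, completing the proof.

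The main obstacle I anticipate is the ``bounded orbit $\Rightarrow$ fixed point'' step for $X''$ and the canonicity/$\G$-equivariance of the cube $\wh C(\xi)$. For the former, one must be careful that $X''$ need not be locally compact, so one cannot simply take a circumcenter in $X''$ directly; the cleanest route is to work in $\overline{X''}$ (compact) to get a fixed point $\xi\in\overline{X''}$, then argue that a bounded orbit forces $\xi$ into the component $X''$ rather than $\partial X''$ — and then, if $\xi\in X''\setminus X$, to run the cube argument. For the latter, one needs that the assignment $\xi\mapsto\wh C(\xi)$ made in the proof of Lemma~\ref{cubes in X'} (and its $X''$-analogue) genuinely depends only on $\xi$ and the median structure, not on auxiliary choices; this is already implicit in the word ``canonical'' in Lemma~\ref{cubes in X'} and in the statement preceding Corollary~\ref{X''}, so the remaining work is just to observe that equivariance is then automatic.
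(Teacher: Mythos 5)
Your proposal is correct and follows essentially the same route as the paper's proof: pass to the connected space $X''$ of Corollary~\ref{X''}, obtain a $\G$-fixed point there via the canonical ${\rm CAT}(0)$ metric of Theorem~\ref{CAT(0) metric} and the Cartan circumcenter argument, and then use the canonical cube $\iota_\xi^{\infty}\colon[-1,1]^k\hookrightarrow X''$ centred at the fixed point, whose at most $2^r$ vertices lie in $X$ and are permuted by $\G$. One small caution: you could not actually cite Section~\ref{elliptic isometries} for the fixed-point step, since Theorem~\ref{Sageev's theorem} itself relies on this corollary, but your direct ${\rm CAT}(0)$ circumcenter argument is exactly what the paper does, so no circularity arises.
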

\begin{proof}
if $\G\acts X$ has bounded orbits, so does the action $\G\acts X''$ arising from Corollary~\ref{X''}. Theorem~\ref{CAT(0) metric} and Cartan's Theorem imply that $\G$ fixes a point $x\in X''$. We showed above that $x$ is the centre of a canonical cube with vertices in $X$; these vertices are at most $2^r$ and provide a finite orbit for $\G\acts X$.
\end{proof}

\section{Groups of elliptic isometries.}\label{elliptic isometries}

Let $X$ be a complete median space of finite rank $r$. This section is devoted to the following result; in the case of ${\rm CAT}(0)$ cube complexes, this is well-known and due to M.~Sageev (see Theorem~5.1 in \cite{Sageev}). 

\begin{thm}\label{Sageev's theorem}
Let $\G$ be a finitely generated group of elliptic isometries of $X$. There exists a finite orbit for $\G\acts X$ and, if $X$ is connected, a global fixed point.
\end{thm}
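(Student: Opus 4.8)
The plan is to induct on the rank $r$ of $X$. The base case $r=0$ is trivial, since then $X$ is a single point. For the inductive step, I would first reduce to the case where $\G\acts X$ has unbounded orbits: if all orbits are bounded, Corollary~\ref{finite orbits} immediately gives an orbit of cardinality at most $2^r$, and in the connected case the ${\rm CAT}(0)$ metric from Theorem~\ref{CAT(0) metric} together with Cartan's fixed point theorem gives a global fixed point (exactly as in the proof of Corollary~\ref{finite orbits}). So assume some, hence every (after passing through the generating set), orbit is unbounded.

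The key point is then to produce a $\G$-fixed point $\xi\in\partial X$, which will let me descend to a component $Z\cu\partial X$ of strictly smaller rank by Proposition~\ref{components}. To find such a $\xi$, I would argue that since $\G$ is generated by finitely many elliptic isometries $g_1,\dots,g_n$, each $g_i$ fixes a point $x_i\in X$; the orbit $\G\cdot x_1$ is unbounded, so one can extract an unbounded sequence and pass to a limit in the compact space $\overline X$. The careful part is ensuring the limit point lies in $\partial X$ and is genuinely $\G$-invariant rather than merely having a bounded $\G$-orbit in $\overline X$. The cleanest route is: use that each generator is elliptic to control, via gate-projections to the fixed-point sets $\Fix(g_i)$, the set of halfspaces $\mscr{H}(x|g_i x)$ for $x\in X$; an unbounded orbit forces an infinite descending chain of halfspaces each almost-fixed by all generators, and Lemma~\ref{intersections of halfspaces} (or rather, taking the canonical ultrafilter of a limit point) produces $\xi\in\partial X$ with $\s_\xi$ stable under each $g_i$ up to a $\wh\nu$-null set — but since stabilizers of points in $\overline X$ are determined by their canonical ultrafilters, a genuine fixed point can be extracted, possibly after replacing $\xi$ by $\pi_Z(\xi)$ for the component $Z$ through it. I would make this precise using that $\G$ fixing a bounded set in $\overline X$ (which it does, by compactness and finite generation, applied to the convex hull of a $\G$-orbit closure) together with finite rank forces a fixed component via Proposition~\ref{components} and a minimal-set argument like Proposition~C.

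Once $\G$ fixes $\xi\in\partial X$, let $Z$ be the component containing $\xi$; it is $\G$-invariant, complete, median, and $\text{rank}(Z)\leq r-1$ by Proposition~\ref{components}. Apply the inductive hypothesis to $\G\acts Z$: we get a finite $\G$-orbit in $Z$, say of a point $z_0$. Its gate-projection preimage or the orbit itself already witnesses a finite orbit, but I want a finite orbit \emph{in $X$}, so I would instead project back: the gate-projection $\pi_Z\colon\overline X\ra\overline Z$ maps $X$ into $Z$, but we need the reverse. Here the right move is to note that the stabilizer $\G_{z_0}$ has finite index in $\G$ and acts on $X$ fixing a point of $\overline X$ (namely $z_0$), so by Theorem~F the relevant finite-index subgroup has a finite orbit in $X$ of size at most $2^r$; combined with finitely many cosets this bounds a $\G$-orbit in $X$. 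For the connected case, Theorem~\ref{CAT(0) metric} gives that a finitely generated subgroup with a finite orbit in a connected median space fixes the ${\rm CAT}(0)$-circumcenter of that orbit, which lies in $X$; so $\G$ fixes a point. \textbf{Main obstacle:} the delicate step is passing from ``bounded $\G$-orbit in the compactum $\overline X$'' to ``genuine $\G$-fixed point in $\partial X$'' — i.e.\ showing that finite generation plus ellipticity plus finite rank rules out a $\G$-action on $\overline X$ that is bounded-orbit but fixed-point-free on the boundary; this is where one must exploit that each generator, being elliptic, contributes only a ``thin'' set of separating halfspaces, so their joint almost-fixed chain has a literal intersection point.
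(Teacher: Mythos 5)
Your reduction to the unbounded-orbit case is where the argument breaks down, and the gap is precisely the step you flag as delicate. The paper does not try to produce a $\G$-fixed point in $\partial X$ at all: it proves (Proposition~\ref{key point in Sageev}) that a finitely generated group with unbounded orbits must contain $g\in\G$ and $\mf{h}\in\mscr{H}$ with $g\mf{h}\subsetneq\mf{h}$, via a Sageev-style counting argument with the measure $\wh\nu$ on halfspaces (a long geodesic from $x$ to $gx$ forces a positive-measure set of halfspaces to recur in $r+1$ of the sets $g_j^{-1}\mscr{H}(y_j|y_{j+1})$, and non-transversality then yields strict nesting), and then observes (Lemma~\ref{nesting vs fixed points}) that $g\mf{h}\subsetneq\mf{h}$ forces $g$ to be non-elliptic because $d(g^r\mf{h},\mf{h}^*)>0$ by Proposition~\ref{all about halfspaces}. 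This immediately contradicts ellipticity, so orbits are bounded and Corollary~\ref{finite orbits} (plus Cartan in the connected case) finishes. Your sketch never engages with this counting mechanism; instead you assert that unboundedness plus elliptic generators yields ``an infinite descending chain of halfspaces each almost-fixed by all generators'' and hence a genuine fixed point in $\partial X$. No argument is given for this, and the surrounding remarks are not usable as stated: every orbit closure in the compactum $\overline X$ is trivially contained in the invariant closed convex set $\overline X$, the extended metric makes ``bounded set in $\overline X$'' meaningless across components, and the minimal-invariant-convex-set dichotomy (Proposition~\ref{Roller elementary vs strongly so}) may return a Roller minimal action on a subset of $X$ itself, so the rank need not drop and your induction does not get off the ground.

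There is a second, independent error at the end of your plan: Theorem~F does not give a finite orbit in $X$ for the finite-index subgroup $\G_0$ fixing $\xi\in\partial X$; it gives finite orbits only for finitely generated subgroups of the kernel $N$ of the homomorphism to $\R^r$. To conclude along those lines you would additionally have to show that the transfer characters $\chi_{\Om}$ vanish on elliptic isometries (true, since $\chi_{\Om}(g^n)=n\chi_{\Om}(g)$ while $|\chi_{\Om}(g^n)|$ is controlled by the bounded displacement $d(x,g^nx)$), so that $\G_0\leq\ker\chi_\xi$ and Proposition~\ref{kernel of chi} applies — a step your proposal omits. Even with that repaired, the whole route is far more roundabout than needed: the paper's two-step argument (nested halfspace from unbounded orbits, nesting implies non-elliptic) settles the theorem without any boundary analysis, and your unbounded case, if it could be completed, would only be re-deriving a contradiction that the paper reaches directly.
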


We start with the following observation, compare the proof of Theorem~5.1 in \cite{Sageev}.

\begin{prop}\label{key point in Sageev}
If $\G$ is finitely generated and acts with unbounded orbits on $X$, there exist $g\in \G$ and $\mf{h}\in\mscr{H}$ such that $g\mf{h}\subsetneq\mf{h}$.
\end{prop}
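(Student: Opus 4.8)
The plan is to run the argument Sageev used for cube complexes, adapted to the median-space setting via the measure-theoretic tools of Section~\ref{preliminaries}. Suppose toward a contradiction that for every $g \in \G$ and every $\mf{h} \in \mscr{H}$ we have $g\mf{h} \not\subsetneq \mf{h}$; combined with the corresponding statement for $g^{-1}$ this means no element of $\G$ ``pushes'' a halfspace strictly inside itself, so for all $g$ and all $\mf{h}$ the halfspaces $g\mf{h}$ and $\mf{h}$ are either equal, transverse, or ``facing'' (i.e.\ $g\mf{h} \subseteq \mf{h}^*$ or $\mf{h} \subseteq (g\mf{h})^*$ up to taking complements). Fix a finite generating set $S = \{g_1, \dots, g_m\}$, fix a basepoint $x \in X$, and choose a halfspace $\mf{h}_0 \in \mscr{H}(y|x)$ for some $y$ in an unbounded orbit, far from $x$. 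The idea is that, because no generator can nest $\mf{h}_0$ deeper into itself, repeatedly applying generators can only produce halfspaces that ``escape sideways,'' and one controls the resulting configuration using finite rank.

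The key steps, in order, are as follows. First, I would make precise the trichotomy above: assuming the proposition fails, for each $g \in \G$ and $\mf{h} \in \mscr{H}$, the pair $\{g\mf{h}, \mf{h}\}$ is not ``properly nested'' in either direction. Second, I would pick a point $x$ and argue by unboundedness that the orbit $\G x$ contains points arbitrarily far away; choosing $z \in \G x$ with $d(x,z)$ large and using that $\wh{\nu}(\mscr{H}(x|z)) = d(x,z)$, I obtain a chain (by Lemma~\ref{Ramsey}, inside $\s_z \setminus \s_x$) of halfspaces $\mf{h}_1 \supsetneq \mf{h}_2 \supsetneq \cdots \supsetneq \mf{h}_N$ with $x \notin \mf{h}_i$ and $z \in \mf{h}_i$, where $N$ can be taken as large as we like (longer than $2r$, say, even longer than any quantity built from $m$ and $r$). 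Third — the heart of the argument — I would track how the generators act on such a long chain: writing $z = g x$ with $g = s_{i_\ell} \cdots s_{i_1}$ a word of length $\ell$, one decomposes the chain according to which prefix of the word first ``creates'' each halfspace, and the no-nesting hypothesis forces each generator to move the relevant halfspace to a transverse or facing position rather than a deeper nested one. Since at most $r$ halfspaces can be pairwise transverse and, by Proposition~\ref{all about halfspaces}, chains realized between a point and its image have length bounded in terms of $r$, a sufficiently long chain yields a contradiction: some generator must have nested a halfspace into itself after all.

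The main obstacle I anticipate is the bookkeeping in the third step: in a cube complex Sageev's argument uses combinatorial geodesics and the fact that hyperplanes crossed by a geodesic are distinct and crossed exactly once, whereas here I need the measure $\wh{\nu}$ and Lemma~\ref{intersections of halfspaces} to extract honest chains, and I need Proposition~\ref{all about halfspaces} (the bound $k \le 2\,\mathrm{rank}(X)$ on chains $\mf{h}_1 \supsetneq \cdots \supsetneq \mf{h}_k$ with $\overline{\mf{h}_1^*} \cap \overline{\mf{h}_k} \ne \emptyset$) as the finiteness input that replaces finite-dimensionality. Making the ``a generator cannot deepen a nesting'' step interact cleanly with the fact that a single group element is a product of many generators — so the chain could in principle be built up gradually — is where care is required; I expect one wants to choose $\mf{h}_1$ among the ``innermost'' halfspaces separating $x$ from $gx$ and then apply generators of $g^{-1}$ one at a time, at each stage using the hypothesis to conclude the image halfspace is transverse or facing, and counting how many such moves can occur before the finite-rank bound is violated. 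Once the contradiction is reached, the proposition follows, and the global fixed point statement in the connected case will be handled separately (using Theorem~\ref{CAT(0) metric}) in the proof of Theorem~\ref{Sageev's theorem}.
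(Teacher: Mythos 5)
The decisive step of your plan (your ``third step'') is not yet an argument, and the finiteness input you cite for it does not apply. Assuming, for contradiction, that no element of $\G$ nests a halfspace only constrains pairs $(g\mf{h},\mf{h})$ involving the \emph{same} halfspace $\mf{h}$; it does not prevent a long chain in $\mscr{H}(x|gx)$ from being assembled gradually, with each successive prefix creating fresh halfspaces properly contained in halfspaces created earlier -- precisely the scenario you flag as ``where care is required'' and then leave unresolved. The contradiction you hope for is supposed to come from Proposition~\ref{all about halfspaces}, but that proposition bounds a chain $\mf{h}_1\supsetneq\cdots\supsetneq\mf{h}_k$ only under the hypothesis $\overline{\mf{h}_1^*}\cap\overline{\mf{h}_k}\neq\emptyset$; chains inside $\mscr{H}(x|gx)$ satisfy no such condition and can be arbitrarily long (they must be whenever nesting \emph{does} occur, e.g.\ $\Z$ acting on $\R$), so ``a sufficiently long chain'' yields no contradiction by itself. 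Two further, smaller problems: your trichotomy omits the possibilities $g\mf{h}\cup\mf{h}=X$ and $g\mf{h}=\mf{h}^*$, and Lemma~\ref{Ramsey} concerns infinite subsets of $\s_z\setminus\s_x$, so extracting a long chain needs Dilworth-type reasoning (and, since halfspaces come in continuous families, counting elements of chains is the wrong currency anyway -- the measure $\wh{\nu}$ is).

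What is missing is the quantitative pigeonhole that the paper runs \emph{directly}, with no contradiction hypothesis. Using unboundedness, choose $g=s_{i_1}\cdots s_{i_n}$ with $d(x,gx)>r\cdot\sum_i d(x,s_ix)$, and build a geodesic sequence $y_0=x,\dots,y_n=gx$ by projecting each orbit point $x_{j+1}=g_{j+1}x$ to $I(y_j,gx)$; then $\mscr{H}(y_j|y_{j+1})\cu g_j\mscr{H}(x|s_{i_{j+1}}x)$, so the sets $U_j:=g_j^{-1}\mscr{H}(y_j|y_{j+1})$ all lie in the fixed set $\bigcup_i\mscr{H}(x|s_ix)$ while $\sum_j\wh{\nu}(U_j)=d(x,gx)$ exceeds $r$ times the measure of that set. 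Hence a positive-measure set $\Om$ of ``base'' halfspaces is covered by at least $r+1$ of the $U_j$, and any $\mf{h}\in\Om$ acquires $r+1$ distinct prefix-translates $h_1\mf{h},\dots,h_{r+1}\mf{h}$, all lying in $\s_{gx}\setminus\s_x$. Any two of these are transverse or nested (both contain $gx$, both complements contain $x$), and by finite rank they cannot be pairwise transverse, so $h_i\mf{h}\subsetneq h_j\mf{h}$ for some $i\neq j$, and $h_j^{-1}h_i$ (or its inverse) nests $\mf{h}$. It is exactly this ``same halfspace translated $r+1$ times'' mechanism -- enabled by the measure estimate and the projection-to-interval construction, with finite rank entering only through the bound on pairwise-transverse families -- that your sketch needs and does not supply; without it, generators ``escaping sideways'' is perfectly consistent with your bookkeeping, and no contradiction is reached.
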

\begin{proof}
Let $S=\{s_1,...,s_k\}$ be a generating set for $\G$. Choose a basepoint $x\in X$ and some $g\in \G$ with 
\[d(x,gx)>r\cdot\sum_{i=1}^kd(x,s_ix).\] 
We write $g=s_{i_1}...s_{i_n}$ and set $g_j:=s_{i_1}...s_{i_j}$ and $x_j:=g_jx$. We define inductively the points $y_j$, starting with $y_0=x_0=x$ and declaring $y_{j+1}$ to be the projection of $x_{j+1}$ to $I(y_j,gx)$. In particular $(y_j)_{0\leq j\leq n}$ is a geodesic from $x$ to $gx$ and $\mscr{H}(y_j|y_{j+1})\cu\mscr{H}(x_j|x_{j+1})=g_j\mscr{H}(x|s_{i_{j+1}}x)$. The sets $U_j:=g_j^{-1}\mscr{H}(y_j|y_{j+1})$ all lie in $\bigcup_i\mscr{H}(x|s_ix)$ and, since
\[\sum_{j=0}^{n-1}\wh{\nu}(U_j)=\sum_{j=0}^{n-1}d(y_j,y_{j+1})=d(x,gx)>r\cdot\wh{\nu}\left(\bigcup_{i=1}^k\mscr{H}(x|s_ix)\right),\]
there exist $\tau\in\{1,...,k\}$ and a measurable subset $\Om\cu\mscr{H}(x|s_{\tau}x)$ such that $\wh{\nu}(\Om)>0$ and $\Om\cu U_j$ for $r+1$ indices $j_1<j_2<...<j_{r+1}$. Denote by $h_1,...,h_{r+1}$ the corresponding elements of $\G$ such that $h_i\Om\cu\mscr{H}(y_{j_i}|y_{j_i+1})$.

Pick any $\mf{h}\in\Om$. The halfspaces $h_1\mf{h},...,h_{r+1}\mf{h}$ are pairwise distinct, as $(y_j)_{0\leq j\leq n}$ is a geodesic. Moreover, they all lie in $\s_{gx}\setminus\s_x$ and cannot be pairwise transverse. Hence, there exist $1\leq i<j\leq r+1$ such that either $h_i\mf{h}\subsetneq h_j\mf{h}$ or $h_j\mf{h}\subsetneq h_i\mf{h}$. We conclude by setting $g:=h_j^{-1}h_i$ or ${g:=h_i^{-1}h_j}$.
\end{proof}

\begin{lem}\label{nesting vs fixed points}
If $g\in\text{Isom}~X$ and $\mf{h}\in\mscr{H}$ satisfy $g\mf{h}\subsetneq\mf{h}$, then $g$ is not elliptic.
\end{lem}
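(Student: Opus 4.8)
The plan is to argue by contradiction. Suppose $g$ is elliptic, so that for every $q\in X$ the orbit $\{g^nq\mid n\in\Z\}$ is bounded; I aim to exhibit a $q$ for which $d(q,g^Mq)$ is \emph{un}bounded in $M$. Iterating the hypothesis $g\mf h\subsetneq\mf h$ gives a strictly descending chain $\mf h\supsetneq g\mf h\supsetneq g^2\mf h\supsetneq\cdots$. I would fix a point $q\in\mf h\setminus g\mf h$ (nonempty, since the inclusion is strict). As $q\notin g\mf h\supseteq g^n\mf h$ for every $n\geq 1$, we have $q\in(g^n\mf h)^*$; on the other hand $q\in\mf h$ forces $g^nq\in g^n\mf h$. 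Hence, for every $M\geq 1$,
\[\{g\mf h,\,g^2\mf h,\,\ldots,\,g^M\mf h\}\cu\mscr H(q|g^Mq),\]
and it remains to show that $\wh\nu(\mscr H(q|g^Mq))=d(q,g^Mq)\to\infty$ as $M\to\infty$.

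To do this I would group the separating halfspaces into blocks. Put $r:=\text{rank}(X)$ and $m:=2r+1$, and for $k\geq 0$ set
\[T_k:=\mscr H\big((g^{km+1}\mf h)^*\,\big|\,g^{(k+1)m+1}\mf h\big)=\big\{\mf j\in\mscr H\mid g^{(k+1)m+1}\mf h\cu\mf j\cu g^{km+1}\mf h\big\}.\]
If $(k+1)m+1\leq M$ then $T_k\cu\mscr H(q|g^Mq)$, since any $\mf j\in T_k$ satisfies $g^Mq\in g^{(k+1)m+1}\mf h\cu\mf j$ and $q\notin g^{km+1}\mf h\supseteq\mf j$. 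The crucial input is the positivity of the masses $\wh\nu(T_k)$: the chain $g^{km+1}\mf h\supsetneq\cdots\supsetneq g^{(k+1)m+1}\mf h$ has $m+1=2r+2>2r$ terms, so Proposition~\ref{all about halfspaces} gives $\overline{(g^{km+1}\mf h)^*}\cap\overline{g^{(k+1)m+1}\mf h}=\emptyset$; these are disjoint gate-convex subsets of the complete space $X$, hence at positive distance, and therefore
\[\wh\nu(T_k)=d\big((g^{km+1}\mf h)^*,\,g^{(k+1)m+1}\mf h\big)=:\delta>0,\]
a value independent of $k$ since $g$ preserves $\wh\nu$. Likewise the $\wh\nu$-mass $\alpha$ of a single halfspace $g^j\mf h$ is independent of $j\geq 1$; since $g\mf h$ and $g^{m+1}\mf h$ are two distinct elements of $T_0$, we get $\delta\geq 2\alpha$, so $\delta>\alpha$ in all cases.

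Finally, a direct check gives $T_k\cap T_{k'}=\emptyset$ for $|k-k'|\geq 2$ and $T_k\cap T_{k+1}=\{g^{(k+1)m+1}\mf h\}$, so with $K:=\lfloor (M-1)/m\rfloor$ one obtains
\[d(q,g^Mq)\geq\wh\nu\Big(\bigcup_{k=0}^{K-1}T_k\Big)=K\delta-(K-1)\alpha=\alpha+K(\delta-\alpha),\]
which tends to $+\infty$ as $M\to\infty$ because $\delta>\alpha$. This contradicts the boundedness of the orbit of $q$, so $g$ cannot be elliptic. (Alternatively, once $g$ is elliptic one may pass, via Corollary~\ref{finite orbits}, to a power of $g$ fixing a point $x_0$, which after replacing $(\mf h,g)$ by $(\mf h^*,g^{-1})$ if necessary lies in $\mf h$; the same block estimate then applies to $\mscr H(q|x_0)$.)

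The step I expect to be the main obstacle is precisely the positivity of the distances $\delta$. In a connected median space every halfspace is $\wh\nu$-null, so merely producing infinitely many distinct nested halfspaces separating $q$ from $g^Mq$ carries no metric information; it is Proposition~\ref{all about halfspaces}, bounding the length of nested chains of halfspaces that share a common limit point, that converts ``many nested walls'' into ``large distance''. The bookkeeping of the small overlaps $T_k\cap T_{k+1}$ — hence the need for $\delta>\alpha$ rather than just $\delta>0$ — is a routine but unavoidable wrinkle.
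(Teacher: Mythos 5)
Your argument is correct and follows essentially the same route as the paper's proof: both iterate $g\mf{h}\subsetneq\mf{h}$, invoke Proposition~\ref{all about halfspaces} to get a definite positive distance between a halfspace and the complement of a bounded-depth iterate, and then concatenate $g$-translates of the resulting blocks of separating halfspaces inside $\mscr{H}(q|g^Mq)$ to force linear growth of $d(q,g^Mq)$. Your version is in fact slightly more careful than the paper's (blocks of length $2r+1$ rather than $r$, and explicit handling of the singleton overlaps via $\delta>\alpha$), but it is the same argument, not a different approach.
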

\begin{proof}
For every $k\geq 1$, we have $g^k\mf{h}\subsetneq\mf{h}$. If $g^k\overline{\mf{h}}\cap\overline{\mf{h}^*}\neq\emptyset$, Proposition~\ref{all about halfspaces} implies that $k+1\leq 2r$; hence $d(g^{2r}\mf{h},\mf{h}^*)>0$. Given $x\in g\mf{h}^*\cap\mf{h}$ and $n\geq 1$:
\[\bigsqcup_{k=1}^{n-1}\mscr{H}(g^{2rk}\mf{h}^*|g^{2r(k+1)}\mf{h})\cu\mscr{H}(x|g^{2rn}x).\]
Thus $d(x,g^{2rn}x)\geq (n-1)\cdot d(g^{2r}\mf{h},\mf{h}^*)$ and the $\langle g\rangle$-orbit of $x$ is unbounded.
\end{proof}

\begin{proof}[Proof of Theorem~\ref{Sageev's theorem}]
The action $\G\acts X$ has bounded orbits or Proposition~\ref{key point in Sageev} and Lemma~\ref{nesting vs fixed points} would provide a non-elliptic isometry in $\G$. The conclusion follows from Corollary~\ref{finite orbits}.
\end{proof}

In ${\rm CAT}(0)$ cube complexes, Proposition~\ref{key point in Sageev} above implies that the stabiliser $\G_{\mf{h}}$ of $\mf{h}$ is a codimension one subgroup of $\G$. This fails in general for actions on median spaces. 

For instance, surface groups have free actions on real trees \cite{Morgan-Shalen}; in these, all halfspace-stabilisers are trivial. One can still conclude that $\G_{\mf{h}}$ is codimension one as in \cite{Sageev} if, in addition, for every $x,y\in X$ we have $g\mf{h}\in\mscr{H}(x|y)$ only for finitely many left cosets $g\G_{\mf{h}}$.

\section{Stabilisers of points in the Roller boundary.}\label{stab xi}

Let $X$ be a complete median space of finite rank $r$. Let $\xi\in\partial X$ be a point in the Roller boundary; we denote by $\text{Isom}_{\xi}X$ the subgroup of $\text{Isom}~X$ fixing the point $\xi$. The main result of this section will be the following analogue of a result of P.-E.~Caprace (see the appendix to \cite{CFI}).

\begin{thm}\label{stabiliser of xi}
The group $\text{Isom}_{\xi}X$ contains a subgroup $K_{\xi}$ of index at most $r!$ that fits in an exact sequence
\[1\longrightarrow N_{\xi}\longrightarrow K_{\xi} \longrightarrow\R^r.\]
Every finitely generated subgroup of $N_{\xi}$ has an orbit with at most $2^r$ elements.
\end{thm}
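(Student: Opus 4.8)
The plan is to associate to the fixed point $\xi$ a $\mathrm{rank}(X)$-dimensional "coordinate system" coming from a chain of halfspaces in the canonical ultrafilter $\s_\xi$, and to read the homomorphism to $\R^r$ off the displacement of group elements along these coordinates. First I would analyze the structure of $\s_\xi$: since $\xi\in\partial X$, the ultrafilter $\s_\xi$ is not of the form $\s_x$ for $x\in X$, so (using Lemma~\ref{Ramsey} together with the finite-rank hypothesis) $\s_\xi\setminus\s_x$ decomposes, up to passing to cofinal/inseparable pieces, into finitely many — at most $r$ — maximal descending chains of halfspaces $\mathcal C_1,\dots,\mathcal C_m$ that are pairwise transverse; each $\mathcal C_i$ "escapes to infinity" in the sense that the halfspaces in it are not at uniformly bounded distance from a basepoint (otherwise by Lemma~\ref{intersections of halfspaces} their intersection would be nonempty, contradicting $\xi\notin X$ being forced on that coordinate). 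This is essentially the construction of a UBS alluded to in the introduction; I would set $r' = m \le r$ and note each $\mathcal C_i$ carries a "length" functional.

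The group $\mathrm{Isom}_\xi X$ permutes the set of these maximal chains (since it fixes $\xi$, it preserves $\s_\xi$ and hence preserves the transversality/nesting structure), giving a homomorphism $\mathrm{Isom}_\xi X\to \mathrm{Sym}(m)$; let $K_\xi$ be the kernel of its composition with $\mathrm{Sym}(m)\hookrightarrow\mathrm{Sym}(r)$, so $[\mathrm{Isom}_\xi X:K_\xi]\le r!$. For $g\in K_\xi$ and each $i$, since $g$ fixes the chain $\mathcal C_i$ setwise and acts on it preserving the order, one obtains a well-defined translation number $\tau_i(g)\in\R$ measuring how far $g$ shifts along $\mathcal C_i$ — concretely, $\tau_i(g) = \lim_n \frac1n\,\wh\nu\big(\mscr H(g^{-n}x\,|\,x)\cap \mathcal C_i\big)$ with an appropriate sign, or equivalently the asymptotic $\wh\nu$-measure of the symmetric difference restricted to the $i$-th coordinate; the subadditivity needed for the limit to exist and be a homomorphism comes from the triangle inequality for $\wh\nu$ applied within the inseparable set generated by $\mathcal C_i$, exactly as translation length is handled for semisimple isometries. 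Assembling these gives $\rho\colon K_\xi\to\R^m\subseteq\R^r$, $g\mapsto(\tau_1(g),\dots,\tau_m(g))$, a group homomorphism; set $N_\xi:=\ker\rho$.

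It remains to show every finitely generated $H\le N_\xi$ has an orbit of size at most $2^r$. By Corollary~\ref{finite orbits} it suffices to show $H$ has bounded orbits. The point is that $g\in N_\xi$ means $g$ translates trivially along every coordinate $\mathcal C_i$, so $g$ stabilizes a bounded piece of each chain; more precisely one shows the "core" $Y_\xi$ — obtained by gate-projecting $X$ onto the intersection of appropriate terminal sub-rays, i.e. collapsing the $r$ escaping directions — is an $N_\xi$-invariant median subspace on which $N_\xi$ (hence $H$) acts with bounded orbits, because any unbounded orbit would, via Proposition~\ref{key point in Sageev} applied to $H$ acting on that core (or on its connected model $Y_\xi''$ from Corollary~\ref{X''}), produce $h\in H$ and a halfspace $\mf k$ with $h\mf k\subsetneq\mf k$; such $\mf k$ is forced to lie in one of the chains $\mathcal C_i$ up to finitely much, contradicting $\tau_i(h)=0$. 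For $X$ connected, Theorem~\ref{CAT(0) metric} plus Cartan's fixed point theorem upgrade "bounded orbits" to "fixed point", as in Corollary~\ref{finite orbits}. I expect the main obstacle to be the first step: rigorously extracting from $\s_\xi$ the finitely many transverse escaping chains and showing $\mathrm{Isom}_\xi X$ permutes them in a way compatible with a clean $\R^r$-valued translation-number homomorphism — i.e. making the UBS bookkeeping precise and verifying $\rho$ is genuinely additive — while the bounded-orbit argument for $N_\xi$ should follow the elliptic-isometry machinery of Section~\ref{elliptic isometries} fairly directly once the core $Y_\xi$ is set up.
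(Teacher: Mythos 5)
Your outline reproduces the architecture of the paper's proof (finitely many canonical ``directions'' at $\xi$, a finite-index subgroup preserving them, one real character per direction assembled into $K_\xi\to\R^r$, and local ellipticity of the kernel via Proposition~\ref{key point in Sageev} and Corollary~\ref{finite orbits}), but the two pivotal steps are left unresolved, and as stated they contain genuine gaps. First, the claim that $\sigma_\xi\setminus\sigma_x$ decomposes into at most $r$ \emph{pairwise transverse maximal descending chains} is false in general, and any Dilworth-type chain decomposition is highly non-canonical, so there is no induced permutation action of $\text{Isom}_\xi X$ on ``the chains'' and hence no well-defined $K_\xi$ of index at most $r!$. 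The canonical objects are the equivalence classes of \emph{minimal} UBS's (inseparable closures of diverging chains, up to almost-containment); these are neither chains nor pairwise transverse (only one-sided almost-transversality holds in general, cf.\ the staircase-with-flap example and the directed graph $\mc{G}(\xi)$), and showing that there are at most $r$ of them, that they are permuted by $\text{Isom}_\xi X$, and that they control every UBS up to bounded parts is precisely the content of Lemmata~\ref{almost disjoint}--\ref{no directed cycles} and Proposition~\ref{main prop on UBS's}, which your plan defers. Second, and more seriously, your coordinates $\tau_i$ are defined as limits of normalized displacements, and you appeal to subadditivity to get both existence and the homomorphism property: subadditivity gives the limit (Fekete) but never additivity in $g$ --- translation length on a tree is already not a homomorphism. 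The paper avoids limits altogether: for a UBS $\Om$ whose class is preserved it sets $\chi_{\Om}(g)=\wh{\nu}\left(g^{-1}\Om\setminus\Om\right)-\wh{\nu}\left(\Om\setminus g^{-1}\Om\right)$, which is finite by part~1 of Lemma~\ref{almost disjoint} (itself requiring Dilworth plus Lemma~\ref{intersections of halfspaces}), is additive by the elementary signed-difference identity together with isometry-invariance of $\wh{\nu}$, and depends only on $[\Om]$. You flagged exactly this point as the obstacle; it is the substance of the proof, not a routine verification.

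For the kernel, your detour through a gate-projected ``core'' $Y_\xi$ is both unconstructed and unnecessary: halfspaces of such a core are not obviously halfspaces of $X$, so applying Proposition~\ref{key point in Sageev} there and then relating the resulting halfspace back to your chains would require extra work. The paper argues directly in $X$ (Proposition~\ref{kernel of chi}): if a finitely generated $H\leq\ker\chi_\xi$ had no orbit of size at most $2^r$, all orbits would be unbounded by Corollary~\ref{finite orbits}, Proposition~\ref{key point in Sageev} gives $h\in H$ and $\mf{k}$ with $h\mf{k}\subsetneq\mf{k}$; replacing $(\mf{k},h)$ by $(\mf{k}^*,h^{-1})$ if $\xi\notin\wt{\mf{k}}$ and using $h\xi=\xi$ and $d(h^r\mf{k},\mf{k}^*)>0$, one gets a chain $(h^{nr}\mf{k})$ diverging to $\xi$ whose tail inseparable closure is a \emph{minimal} UBS by part~2 of Proposition~\ref{main prop on UBS's}, hence equivalent to some $\Om_i$; then $r\cdot\chi_{\Om_i}(h)=\chi_{\Om_i}(h^r)\geq\wh{\nu}\left(\mscr{H}(\mf{k}^*|h^r\mf{k})\setminus\{h^r\mf{k}\}\right)>0$, contradicting $h\in\ker\chi_\xi$. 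Your one-line version (``$\mf{k}$ lies in one of the chains up to finitely much, contradicting $\tau_i(h)=0$'') needs exactly these ingredients: invariance of the character under equivalence of UBS's and the strict positivity estimate. So the plan is the right one, but the canonicity of the finitely many directions and the additivity of the character must be supplied along the lines above before the statement is proved.
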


In order to prove this, we will have to extend to median spaces part of the machinery developed in \cite{Hagen}.

\begin{defn}\label{UBS}
Given $\xi\in\partial X$, a \emph{chain of halfspaces diverging to $\xi$} is a sequence $(\mf{h}_n)_{n\geq 0}$ of halfspaces of $X$ with $\mf{h}_n\supsetneq\mf{h}_{n+1}$ and $\xi\in\wt{\mf{h}}_n$ for all $n\geq 0$ and such that $d(x,\mf{h}_n)\ra +\infty$ for $x\in X$. A \emph{undirectional boundary set (UBS)} for $\xi\in\partial X$ and $x\in X$ is an inseparable subset $\Om\cu\s_{\xi}\setminus\s_x$ that contains a chain of halfspaces diverging to $\xi$.
\end{defn}

This is an analogue of Definition~3.4 in \cite{Hagen}, except that we consider sets of halfspaces instead of sets of walls. For cube complexes, our definition is a bit more restrictive than Hagen's, since we assume by default that $\Om$ lie in some $\s_{\xi}\setminus\s_x$. This is enough for our purposes and avoids annoying technicalities.

We denote by $\mc{U}(\xi,x)$ the set of all UBS's for $\xi$ and $x$ and we define a relation $\preceq$:
\begin{align*}
\Om_1\preceq\Om_2 & \xLeftrightarrow{\text{def}^{\underline{\text{n}}}}  \sup_{\mf{h}\in\Om_1\setminus\Om_2} d(x,\mf{h})<+\infty,
\end{align*}
which we read as ``$\Om_1$ is \emph{almost contained} in $\Om_2$''. If $\Om_1\preceq\Om_2$ and $\Om_2\preceq\Om_1$, we write $\Om_1\sim\Om_2$ and say that $\Om_1$ and $\Om_2$ are \emph{equivalent}. The relation $\preceq$ descends to a partial order, also denoted $\preceq$, on the set $\overline{\mc{U}}(\xi,x)$ of $\sim$-e\-quiv\-a\-lence classes. We denote the equivalence class associated to the UBS $\Om$ by $[\Om]$. A UBS is said to be \emph{minimal} if it projects to a minimal element of $\overline{\mc{U}}(\xi,x)$. Two UBS's are \emph{almost disjoint} if their intersection is not a UBS.

We will generally forget about the basepoint $x$ and simply write $\left(\overline{\mc{U}}(\xi),\preceq\right)$. Indeed, if $x,y\in X$, we have a canonical isomorphism $\overline{\mc{U}}(\xi,x)\simeq\overline{\mc{U}}(\xi,y)$ given by intersecting UBS's with $\s_{\xi}\setminus\s_x$ or $\s_{\xi}\setminus\s_y$. 

\begin{lem}\label{almost disjoint} 
Let $\Om_1,\Om_2\cu\s_{\xi}\setminus\s_x$ be UBS's. 
\begin{enumerate}
\item If $\Om_1\preceq\Om_2$, we have $\wh{\nu}(\Om_1\setminus\Om_2)<+\infty$. In particular, if $\Om_1$ and $\Om_2$ are equivalent, we have $\wh{\nu}\left(\Om_1\triangle\Om_2\right)<+\infty$.
\item The UBS's $\Om_1,\Om_2$ are almost disjoint if and only if $\Om_1\cap\Om_2$ consists of halfspaces at uniformly bounded distance from $x$.
\end{enumerate}
\end{lem}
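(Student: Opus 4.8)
The plan is to prove both parts by extracting, from the hypothesis ``$\Om_1\preceq\Om_2$'' (resp. ``$\Om_1\cap\Om_2$ is a UBS''), a totally ordered chain inside $\Om_1\setminus\Om_2$ (resp. $\Om_1\cap\Om_2$) that diverges to $\xi$, and then showing that the \emph{whole} set $\Om_1\setminus\Om_2$ (resp. $\Om_1\cap\Om_2$) is sandwiched between halfspaces of that chain, so its $\wh\nu$-mass is controlled by distances along the chain. Concretely, for part~1, suppose $\Om_1\preceq\Om_2$, i.e. $D:=\sup_{\mf h\in\Om_1\setminus\Om_2}d(x,\mf h)<+\infty$. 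Since every halfspace $\mf h\in\Om_1\setminus\Om_2$ lies in $\s_\xi$ and satisfies $d(x,\mf h)\le D$, the set $\Om_1\setminus\Om_2$ is a collection of halfspaces containing $\xi$ on one side and at uniformly bounded distance from $x$; I would bound $\wh\nu(\Om_1\setminus\Om_2)$ directly. The key point is that $\Om_1\setminus\Om_2\subseteq\s_\xi\setminus\s_x$ and, since it consists of halfspaces with $x\notin\mf h$ and $\xi\in\wt{\mf h}$, for each such $\mf h$ we have $\mf h\in\mscr H(x|\pi_{\overline{\mf h}}(x))$ (a gate-projection argument as in Lemma~\ref{intersections of halfspaces}), and these projection points all lie in the closed ball $\overline B(x,D)$. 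More cleanly: $\Om_1\setminus\Om_2$ is inseparable-like enough that it is contained in $\mscr H(x|y)$ for an appropriate $y$ — but since it need not be inseparable, I would instead argue that $\Om_1\setminus\Om_2\subseteq\s_\xi\cap\s_x^{\,c}$ and split it by finite rank: by Lemma~\ref{Ramsey} applied to $\s_\xi$ and $\s_x$, any infinite subset of $\Om_1\setminus\Om_2$ contains an infinite chain; a Dilworth/Ramsey-type argument in finite rank $r$ shows $\Om_1\setminus\Om_2$ is a union of at most $r$ chains (up to pairwise-transverse families), each of which has total $\wh\nu$-mass at most the diameter of a geodesic, hence at most $2D$ by Proposition~\ref{all about halfspaces}. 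Summing, $\wh\nu(\Om_1\setminus\Om_2)\le 2rD<+\infty$. The ``in particular'' follows since $\Om_1\triangle\Om_2=(\Om_1\setminus\Om_2)\sqcup(\Om_2\setminus\Om_1)$ and both pieces have finite mass when $\Om_1\sim\Om_2$.

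For part~2, I would prove both implications. If $\Om_1\cap\Om_2$ is \emph{not} a UBS: it is automatically inseparable (intersection of inseparable sets) and contained in $\s_\xi\setminus\s_x$, so the only way it fails to be a UBS is that it contains no chain diverging to $\xi$. By Lemma~\ref{Ramsey}, if $\Om_1\cap\Om_2$ were unbounded (i.e. contained halfspaces arbitrarily far from $x$), it would contain an infinite chain $\mf h_0\supsetneq\mf h_1\supsetneq\cdots$; passing to a cofinal subchain as in Lemma~\ref{intersections of halfspaces} (via Lemma~2.27 of \cite{Fioravanti1}), and using that all $\mf h_n$ contain $\xi$, one checks $d(x,\mf h_n)\to+\infty$ — because if $d(x,\mf h_n)$ stayed bounded along the chain, the intersection of all $\mf h_n$ would be a nonempty closed convex set (Lemma~\ref{intersections of halfspaces}) whose halfspace-interval structure, together with $\xi$ being on the $\xi$-side of every $\mf h_n$, would force only finitely many distinct $\mf h_n$, contradicting strict nesting. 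Hence $\Om_1\cap\Om_2$ contains a chain diverging to $\xi$, and since it is inseparable and sits in $\s_\xi\setminus\s_x$, it is a UBS — contradiction. Therefore $\Om_1\cap\Om_2$ is at uniformly bounded distance from $x$. Conversely, if $\Om_1\cap\Om_2$ consists of halfspaces at uniformly bounded distance from $x$, then no chain in it can diverge to $\xi$ (a diverging chain by definition has $d(x,\mf h_n)\to+\infty$), so $\Om_1\cap\Om_2$ is not a UBS.

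The main obstacle I anticipate is part~1: getting the \emph{quantitative} bound $\wh\nu(\Om_1\setminus\Om_2)<+\infty$ from a purely \emph{qualitative} ``$\sup d(x,\cdot)<\infty$'' hypothesis. The subtlety is that a bounded-distance family of halfspaces in a general (non-cube-complex) median space can still have infinite measure if it is ``spread out'' transversally; the finite-rank hypothesis is exactly what rules this out, via the bound ``at most $r$ pairwise-transverse halfspaces'' (Proposition~6.2 in \cite{Bow1}) combined with Proposition~\ref{all about halfspaces}'s bound of $2r$ on chains with $\overline{\mf h_1^*}\cap\overline{\mf h_k}\ne\emptyset$. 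I would organize this as: cover $\Om_1\setminus\Om_2$ by finitely many ``chains'' after a Ramsey reduction, and on each chain use that it is a set of halfspaces separating $x$ from its gate-projection onto the intersection of the chain, hence of $\wh\nu$-mass bounded by a distance in $X$, which is finite by the uniform distance bound. Once part~1 is in hand, part~2 is mostly a repackaging of the Ramsey/Lemma~\ref{intersections of halfspaces} arguments already developed in the excerpt.
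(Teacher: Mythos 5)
Your part~1 is essentially the paper's own argument: decompose $\Om_1\setminus\Om_2$ into at most $r$ chains via Dilworth (incomparable halfspaces of $\s_{\xi}\setminus\s_x$ are transverse, so antichains have size at most $r$), use the uniform bound $D$ and Lemma~\ref{intersections of halfspaces} to see that each chain $\mc{C}_i$ has nonempty intersection, and bound the mass of each chain by a distance. Two small inaccuracies in your write-up: the ``$\le 2D$ by Proposition~\ref{all about halfspaces}'' step is the wrong citation (that proposition bounds lengths of chains with touching closures, not measures); and the gate-projection phrasing has a closure issue, since the projection of $x$ lands only in $\overline{\mf{h}_i}$, so a non-closed chain element need not lie in $\mscr{H}(x|\pi(x))$. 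The clean statement, which is what the paper uses, is that $\mc{C}_i\cu\mscr{H}(\mf{k}_i^*|\mf{h}_i)$ where $\mf{h}_i$ and $\mf{k}_i$ are the intersection and the union of the chain, and $\wh{\nu}\bigl(\mscr{H}(\mf{k}_i^*|\mf{h}_i)\bigr)=d(\mf{k}_i^*,\mf{h}_i)\le d(x,\mf{h}_i)\le D$ since $x\in\mf{k}_i^*$. With that fix, part~1 and the ``in particular'' statement are fine and match the paper.

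In part~2 the backward implication is correct, but the forward implication as written contains a false step. You extract \emph{some} infinite chain in $\Om_1\cap\Om_2$ by Lemma~\ref{Ramsey} and then claim it must diverge because ``bounded distance along the chain would force only finitely many distinct $\mf{h}_n$''; this is not true. An infinite strictly nested chain of halfspaces at uniformly bounded distance from $x$ is perfectly possible (already in $X=\R$: the halfspaces $(-\infty,-1+\tfrac1n)$; or the nested halfspaces in the staircase example discussed right after the lemma), and Lemma~\ref{intersections of halfspaces} gives no such finiteness. The gap is local and repairable with the tools you already invoke: assuming $\Om_1\cap\Om_2$ contains halfspaces arbitrarily far from $x$, choose distinct $\mf{h}_n\in\Om_1\cap\Om_2$ with $d(x,\mf{h}_n)\ge n$ and apply Lemma~\ref{Ramsey} to \emph{this} infinite set; the resulting totally ordered subset still has unbounded distances to $x$, and since for comparable halfspaces not containing $x$ the smaller one is at least as far from $x$, picking elements with strictly increasing distances yields a strictly decreasing chain with $d(x,\cdot)\to+\infty$, i.e.\ a chain diverging to $\xi$ inside $\Om_1\cap\Om_2$ (all its members lie in $\s_{\xi}$). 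Together with inseparability of $\Om_1\cap\Om_2$ this shows the intersection is a UBS, which is exactly the contrapositive the paper obtains from its one-line appeal to Lemma~\ref{Ramsey}.
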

\begin{proof}
We first prove part~$(1)$. By Dilworth's Theorem \cite{Dilworth}, we can decompose $\Om_1\setminus\Om_2$ as a disjoint union $\mc{C}_1\sqcup ...\sqcup\mc{C}_k$, where each $\mc{C}_i$ is totally ordered by inclusion and $k\leq r$. If $\Om_1\preceq\Om_2$, Lemma~\ref{intersections of halfspaces} implies that the intersection and union of all halfspaces in $\mc{C}_i$ are halfspaces $\mf{h}_i$ and $\mf{k}_i$, respectively. Thus, $\Om_1\setminus\Om_2$ is contained in the union of the sets $\mscr{H}(\mf{k}_i^*|\mf{h}_i)$, which all have finite measure. 

We now prove part~$(2)$. Since $\Om_1\cap\Om_2$ is inseparable, $\Om_1$ and $\Om_2$ are almost disjoint if and only if $\Om_1\cap\Om_2$ does not contain a chain of halfspaces diverging to $\xi$. By Lemma~\ref{Ramsey}, this is equivalent to $\Om_1\cap\Om_2$ being at uniformly bounded distance from $x$.
\end{proof}

Part~$(1)$ of Lemma~\ref{almost disjoint} is in general not an ``if and only if'' since UBS's can have finite measure. An example appears in the staircase in Figure~3 of \cite{Fioravanti1}, where the UBS is given by the set of vertical halfspaces containing the bottom of the staircase.

\begin{figure}
\centering
\includegraphics[height=2.5in]{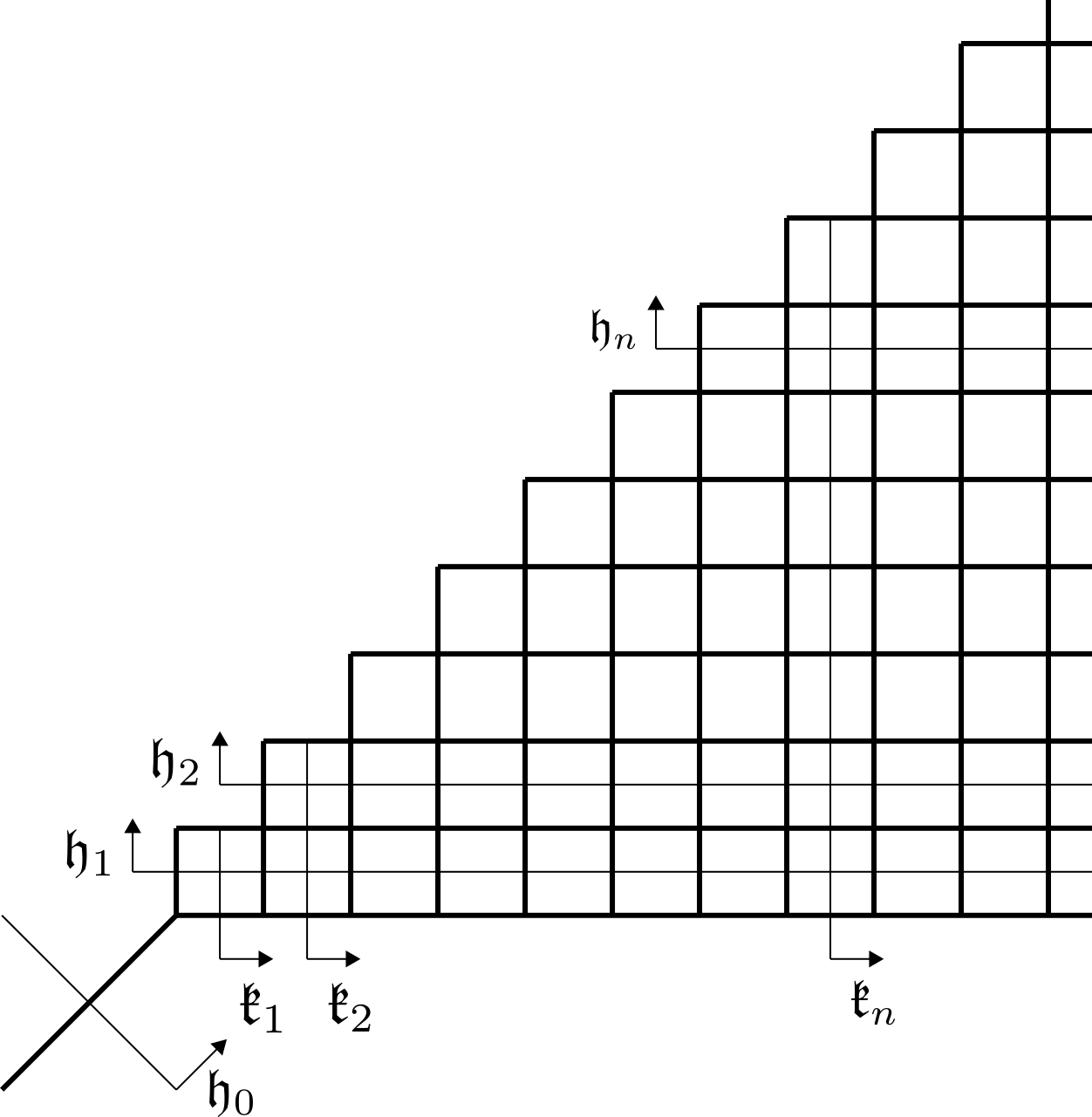}
\caption{}
\label{CSC with a flap}
\end{figure}
The inseparable closure of a chain of halfspaces diverging to $\xi$ is always a UBS and every minimal UBS is equivalent to a UBS of this form. However, not all UBS's of this form are minimal. For instance, consider the ${\rm CAT}(0)$ square complex in Figure~\ref{CSC with a flap}, which is a variation of the usual staircase with a one-dimensional flap. The inseparable closure of $\{\mf{h}_n\}_{n\geq 0}$ is not a minimal UBS, since it also contains all $\mf{k}_n$, while the inseparable closures of $\{\mf{h}_n\}_{n\geq 1}$ and $\{\mf{k}_n\}_{n\geq 1}$ are minimal.

\begin{lem}\label{symmetric almost-transversality}
Let $(\mf{h}_m)_{m\geq 0}$ and $(\mf{k}_n)_{n\geq 0}$ be chains of halfspaces in $\s_{\xi}\setminus\s_x$ that diverge to $\xi$. Suppose that no $\mf{h}_m$ lies in the inseparable closure of $\{\mf{k}_n\}_{n\geq 0}$. Either almost every $\mf{k}_n$ is transverse to almost every $\mf{h}_m$, or almost every $\mf{h}_m$ is transverse to almost every $\mf{k}_n$.
\end{lem}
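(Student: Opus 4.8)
The plan is to analyze the "interaction pattern" between the two chains $(\mf{h}_m)$ and $(\mf{k}_n)$ by recording, for each pair $(m,n)$, whether $\mf{h}_m$ and $\mf{k}_n$ are transverse or nested. Since both sequences are chains, nesting is quite constrained: if $\mf{h}_m \cap \mf{k}_n$, $\mf{h}_m^* \cap \mf{k}_n$, etc., are not all nonempty, then one of $\{\mf{h}_m, \mf{h}_m^*\}$ and one of $\{\mf{k}_n, \mf{k}_n^*\}$ are comparable. First I would rule out the possibility that $\mf{h}_m \subseteq \mf{k}_n$ or $\mf{k}_n \subseteq \mf{h}_m$ or $\mf{h}_m^* \subseteq \mf{k}_n^*$ etc.\ for "too many" pairs, using the hypothesis that no $\mf{h}_m$ lies in the inseparable closure of $\{\mf{k}_n\}_{n \geq 0}$, together with Proposition~\ref{all about halfspaces} (the bound $k \leq 2r$ on lengths of chains with $\overline{\mf{h}_1^*} \cap \overline{\mf{h}_k} \neq \emptyset$) and the fact that both chains diverge to $\xi$, so $\xi \in \wt{\mf{h}}_m \cap \wt{\mf{k}}_n$ for all $m,n$, which forces any comparability to be of a definite type.

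The key monotonicity observation is the following: fix $m$; the set of $n$ with $\mf{k}_n$ transverse to $\mf{h}_m$ is an interval (a "tail-then-no" or "no-then-tail" pattern), because $(\mf{k}_n)$ is a descending chain and if $\mf{k}_n$ is nested with $\mf{h}_m$ in a way compatible with both containing $\xi$, then so is $\mf{k}_{n'}$ for $n' \geq n$. So for each $m$ there is a threshold $T(m) \in \N \cup \{\infty\}$ such that $\mf{k}_n$ is transverse to $\mf{h}_m$ for $n < T(m)$ and nested for $n \geq T(m)$ (or the reverse pattern — I would first pin down, using divergence to $\xi$ and Proposition~\ref{all about halfspaces}, which of the two shapes occurs, and argue it's the same shape for all large $m$). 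Symmetrically there is a threshold $S(n)$ in the other variable. The hypothesis that no $\mf{h}_m$ is in the inseparable closure of $\{\mf{k}_n\}$ should force one of these families of thresholds to be eventually $+\infty$: if infinitely many $T(m)$ were finite, I would extract from the corresponding nestings a subchain exhibiting some $\mf{h}_m$ sandwiched between two $\mf{k}_n$'s (using Lemma~\ref{intersections of halfspaces} to take intersections of the relevant subchains), contradicting the hypothesis. Then "$T(m) = \infty$ for all large $m$" says almost every $\mf{k}_n$ is transverse to almost every $\mf{h}_m$; symmetrically in the other case; and "almost" here means "all but those within a bounded distance of $x$", which is exactly the $\preceq$-language of Lemma~\ref{almost disjoint}.

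The main obstacle I expect is the bookkeeping in the dichotomy step: ruling out the "mixed" case where, say, infinitely many $\mf{h}_m$ are transverse to a tail of the $\mf{k}_n$ while infinitely many others are nested, and conversely. The clean way to handle this is a Ramsey-type argument (Lemma~\ref{Ramsey}, or directly Dilworth as in the proof of Lemma~\ref{almost disjoint}): pass to subsequences so that the interaction pattern is homogeneous, then translate a homogeneous "nested" pattern into a violation of the inseparable-closure hypothesis via Lemma~\ref{intersections of halfspaces}, and a homogeneous "transverse" pattern into one of the two conclusions. I would also need to be careful that "almost every $\mf{h}_m$" in the statement refers to cofinitely many indices in the chain (equivalently, all halfspaces in the chain beyond some bounded distance from $x$), and check that the homogenization does not lose the divergence-to-$\xi$ property — but since we only ever pass to cofinal subchains, divergence and inseparability of the relevant closures are preserved.
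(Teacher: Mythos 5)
Your first reduction is right and matches the paper: since all the halfspaces lie in $\s_{\xi}\setminus\s_x$, the intersections $\mf{h}_m^*\cap\mf{k}_n^*$ (which contain $x$) and $\mf{h}_m\cap\mf{k}_n$ (both halfspaces contain $\xi$ in $\overline X$) are nonempty, so failure of transversality means exactly $\mf{h}_m\cu\mf{k}_n$ or $\mf{k}_n\cu\mf{h}_m$; and divergence does kill, for each fixed index, all but finitely many nestings of the appropriate orientation. The genuine gap is the dichotomy step. You claim that if infinitely many thresholds $T(m)$ are finite --- i.e.\ for infinitely many $m$ there is some $n$ with $\mf{k}_n\cu\mf{h}_m$ --- then some $\mf{h}_m$ gets sandwiched between two $\mf{k}_n$'s, contradicting the hypothesis. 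This is false: nestings of a \emph{single} orientation never place an $\mf{h}_m$ in the inseparable closure of $\{\mf{k}_n\}$; a sandwich needs inclusions of both orientations. Concretely, take the staircase $X=\{(s,t)\in\R^2\mid 0\le t\le s\}$ with the $\ell^1$ metric, $x$ the origin, $\mf{h}_m=\{s\ge m\}\cap X$ and $\mf{k}_n=\{t\ge n\}\cap X$ for $m,n\ge 1$, and $\xi\in\partial X$ a point whose canonical ultrafilter contains all these halfspaces: then $\mf{k}_n\cu\mf{h}_m$ whenever $n\ge m$, so \emph{every} $T(m)$ is finite, yet no $\mf{h}_m$ is contained in any $\mf{k}_n$, the hypothesis holds, and the lemma's conclusion holds only through the other alternative (each $\mf{k}_n$ is transverse to all $\mf{h}_m$ with $m>n$, while no $\mf{h}_m$ is transverse to almost every $\mf{k}_n$). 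For the same reason, a Ramsey-homogenised nested pattern does not by itself violate the hypothesis. Two smaller points: Lemma~\ref{intersections of halfspaces} is unavailable here, since its hypothesis requires the chain to stay at uniformly bounded distance from a basepoint, which diverging chains do not; and ``$T(m)=\infty$ for almost every $m$'' gives the alternative ``almost every $\mf{h}_m$ is transverse to almost every $\mf{k}_n$'', not the one you wrote (a harmless quantifier slip, but worth fixing).

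What is missing is the observation that a sandwich needs only one inclusion of each orientation, combined via the monotonicity of the chains; this yields a clean two-case proof with no homogenisation, and it is exactly what the paper does. If some inclusion $\mf{h}_{\overline m}\cu\mf{k}_{\overline n}$ occurs, truncate both chains so that $\mf{h}_0\cu\mf{k}_0$; then no inclusion $\mf{k}_n\cu\mf{h}_m$ can occur, as it would give $\mf{k}_n\cu\mf{h}_m\cu\mf{h}_0\cu\mf{k}_0$ and place $\mf{h}_m$ in the inseparable closure of $\{\mf{k}_n\}$; hence non-transversality can only mean $\mf{h}_m\cu\mf{k}_n$, which for fixed $m$ is excluded for all large $n$ by divergence of $(\mf{k}_n)$. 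If no inclusion $\mf{h}_m\cu\mf{k}_n$ ever occurs, then for fixed $n$ divergence of $(\mf{h}_m)$ excludes $\mf{k}_n\cu\mf{h}_m$ for all large $m$. If you prefer your contradiction framing, the correct step is: assume both alternatives fail, extract one pair with $\mf{h}_{m'}\cu\mf{k}_{n'}$ and, among the infinitely many $m$ admitting some $n$ with $\mf{k}_n\cu\mf{h}_m$, choose one with $m\ge m'$; then $\mf{k}_n\cu\mf{h}_m\cu\mf{h}_{m'}\cu\mf{k}_{n'}$ is the desired sandwich. No Ramsey, Dilworth, or Proposition~\ref{all about halfspaces} is needed.
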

\begin{proof} 
We first suppose that there exist $\overline n,\overline m\geq 0$ such that $\mf{h}_{\overline m}\cu\mf{k}_{\overline n}$; without loss of generality, $\overline m=\overline n=0$. An inclusion of the type $\mf{k}_n\cu\mf{h}_m$ can never happen, or we would have $\mf{k}_n\cu\mf{h}_m\cu\mf{h}_0\cu\mf{k}_0$ and $\mf{h}_0$ would lie in the inseparable closure of $\{\mf{k}_n\}_{n\geq 0}$. Since the sequence $(\mf{k}_n)_{n\geq 0}$ diverges, for every $m$ there exists $n(m)$ such that $\mf{k}_n$ does not contain $\mf{h}_m$ for $n\geq n(m)$; thus, for $n\geq n(m)$, the halfspaces $\mf{k}_n$ and $\mf{h}_m$ are transverse. 

Now suppose instead that an inclusion of the form $\mf{h}_m\cu\mf{k}_n$ never happens. For every $n$, there exists $m(n)$ such that $\mf{k}_n$ is not contained in $\mf{h}_m$ for ${m\geq m(n)}$; thus, for $m\geq m(n)$, the halfspaces $\mf{k}_n$ and $\mf{h}_m$ are transverse.
\end{proof}

Following \cite{corrigendum}, we construct a directed graph $\mc{G}(\xi)$ as follows. Vertices of $\mc{G}(\xi)$ correspond to minimal elements of $\left(\overline{\mc{U}}(\xi),\preceq\right)$. Given diverging chains $(\mf{h}_m)_{m\geq 0}$ and $(\mf{k}_n)_{n\geq 0}$ in minimal UBS's $\Om$ and $\Om'$, respectively, we draw an oriented edge from $[\Om]$ to $[\Om']$ if almost every $\mf{h}_m$ is transverse to almost every $\mf{k}_n$, but the same does not happen if we exchange $(\mf{h}_m)_{m\geq 0}$ and $(\mf{k}_n)_{n\geq 0}$. This does not depend on which diverging chains we pick, as $\Om$, $\Om'$ are minimal.

By Lemma~\ref{symmetric almost-transversality}, the vertices corresponding to $\Om$ and $\Om'$ are not joined by any edge if and only if almost every $\mf{h}_m$ is transverse to almost every $\mf{k}_n$ and almost every $\mf{k}_n$ is transverse to almost every $\mf{h}_m$. It is clear that there are no directed cycles of length $2$ in $\mc{G}(\xi)$.

\begin{lem}\label{no directed cycles}
If there is a directed path from $[\Om]$ to $[\Xi]$, there is an oriented edge from $[\Om]$ to $[\Xi]$. In particular, $\mc{G}(\xi)$ contains no directed cycles.
\end{lem}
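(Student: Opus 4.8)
The plan is to prove the transitivity-type statement: a directed path from $[\Om]$ to $[\Xi]$ yields a single directed edge from $[\Om]$ to $[\Xi]$. Since $\mc{G}(\xi)$ has no directed $2$-cycles (as already observed), transitivity immediately rules out directed cycles of all lengths: a directed cycle would collapse to an edge from a vertex to itself, contradicting that $\mc{G}(\xi)$ has no loops (no vertex is joined to itself by an edge, since ``almost every $\mf{h}_m$ transverse to almost every $\mf{h}_n$'' holds symmetrically for a chain against itself, or rather the edge-defining asymmetry cannot occur). So the whole content is the one-step implication, and by induction on path length it suffices to handle a path of length $2$: if there is an edge $[\Om]\to[\Om']$ and an edge $[\Om']\to[\Xi]$, there is an edge $[\Om]\to[\Xi]$.

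First I would unwind the definitions using Lemma~\ref{symmetric almost-transversality}. Fix diverging chains $(\mf{h}_m)$ in $\Om$, $(\mf{k}_n)$ in $\Om'$, $(\mf{l}_p)$ in $\Xi$, all inside $\s_{\xi}\setminus\s_x$ for a fixed basepoint $x$. The edge $[\Om]\to[\Om']$ says almost every $\mf{h}_m$ is transverse to almost every $\mf{k}_n$, but \emph{not} almost every $\mf{k}_n$ transverse to almost every $\mf{h}_m$; by Lemma~\ref{symmetric almost-transversality} (applied to $(\mf{k}_n)$ and $(\mf{h}_m)$, which are not in the transverse-both-ways situation), this forces that some $\mf{k}_{\overline n}$ lies in the inseparable closure of $\{\mf{h}_m\}$, equivalently (re-examining the proof of that lemma) that cofinally $\mf{k}_n\subsetneq\mf{h}_m$ fails but eventually $\mf{h}_m\subsetneq\mf{k}_n$ does occur — more precisely, the asymmetry means one of the chains is ``eventually below'' the other. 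The cleanest reformulation: the edge $[\Om]\to[\Om']$ holds precisely when a cofinal subchain of $(\mf{h}_m)$ is, up to inseparable closure, \emph{contained} in the UBS generated by $(\mf{k}_n)$ — i.e. $\Om\preceq$ (inseparable closure of $\{\mf{k}_n\}$), while $\Om'\not\preceq\Om$. I would first establish this characterization carefully, because it is what makes the relation manifestly transitive.

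With that characterization in hand, the induction is routine: if almost every $\mf{h}_m$ is transverse to almost every $\mf{k}_n$, and almost every $\mf{k}_n$ is transverse to almost every $\mf{l}_p$, then I want almost every $\mf{h}_m$ transverse to almost every $\mf{l}_p$. Here I use that $\Om$ is minimal: any diverging chain in $\Om$ is equivalent to $(\mf{h}_m)$, so I may replace chains freely, and likewise in $\Xi$. The key combinatorial input is that in a finite rank median space a halfspace $\mf{h}_m$ can fail to be transverse to $\mf{l}_p$ only by nesting (one inside the other or complements nesting), and nesting relations propagate: if $\mf{h}_m$ is eventually below the $\mf{k}_n$-chain and $\mf{k}_n$ is eventually below the $\mf{l}_p$-chain, I can chain the inclusions. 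Then I must also verify the asymmetry survives: $[\Xi]\not\to[\Om]$, i.e. it is not the case that almost every $\mf{l}_p$ is transverse to almost every $\mf{h}_m$; this follows because if it were, combined with $[\Om']\to[\Xi]$ and a short argument using Lemma~\ref{no directed cycles}'s absence of $2$-cycles and the minimality of the UBS's, one gets a contradiction with $[\Om]\to[\Om']$ being a strict edge. I would phrase this via the partial order: the edges encode a \emph{strict} comparison, and strict comparisons compose.

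The main obstacle I anticipate is handling the ``almost every'' quantifiers cleanly when composing — i.e., making sure the finitely many exceptional halfspaces in each chain do not accumulate into a genuine failure after composition. This is where minimality of the UBS's and Lemma~\ref{almost disjoint}(1) (finite symmetric difference for equivalent UBS's, finite-measure control from Lemma~\ref{intersections of halfspaces} and Dilworth) do the work: one passes to cofinal subchains at each stage, bounded-distance sets stay bounded-distance, and Ramsey/Lemma~\ref{Ramsey} lets one extract totally ordered cofinal subchains to make the nesting arguments literal rather than ``eventual''. A secondary subtlety is that the transversality relation between two \emph{chains} is being promoted to a relation between equivalence classes of UBS's, so I must confirm at the outset (as the paragraph before the lemma already asserts) that the edge relation is well-defined independent of chain choices — for minimal UBS's this is exactly the statement that any two diverging chains in a minimal UBS generate equivalent inseparable closures, which I would cite from the discussion following Lemma~\ref{almost disjoint}.
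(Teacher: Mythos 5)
Your skeleton --- reduce to composing two consecutive edges, then deduce acyclicity from the absence of directed $2$-cycles --- is exactly the paper's route, and the correct germ of the argument (``eventually $\mf{h}_m\subsetneq\mf{k}_n$ occurs, then chain the inclusions'') does appear in your text. But the ``cleanest reformulation'' you plan to establish first, and on which you say the whole transitivity rests, is false. An edge from $[\Om]$ to $[\Om']$ does \emph{not} imply $\Om\preceq(\text{inseparable closure of }\{\mf{k}_n\})$: that closure is a minimal UBS equivalent to $\Om'$, and since $\preceq$ is a partial order on equivalence classes, $[\Om]\preceq[\Om']$ with $[\Om']$ minimal would force $[\Om]=[\Om']$; your characterisation would thus say the graph $\mc{G}(\xi)$ has no edges at all. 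Concretely, in a staircase the horizontal and vertical diverging chains generate two minimal UBS's that are almost disjoint, yet there is an edge between their classes (each vertical halfspace contains a tail of the horizontal chain, not vice versa). Likewise ``some $\mf{k}_{\overline n}$ lies in the inseparable closure of $\{\mf{h}_m\}$'' is not forced. The only correct extraction from the asymmetry is halfspace-level nesting: all halfspaces involved lie in $\s_{\xi}\setminus\s_x$ and the chains diverge, so if $\mf{k}_n$ fails to be transverse to almost every $\mf{h}_m$ for infinitely many $n$, then every $\mf{k}_n$ contains some $\mf{h}_m$ --- no almost-containment of UBS's is available or needed.

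You also misassign what the chained inclusions prove. Composing, each $\mf{l}_p$ contains some $\mf{k}_n$, which contains some $\mf{h}_m$; hence each $\mf{l}_p$ contains a tail of $(\mf{h}_m)$ and is therefore \emph{not} transverse to almost every $\mf{h}_m$. That is precisely the asymmetry half of the edge condition for $[\Om]\to[\Xi]$ --- the half you propose to obtain by a vague contradiction ``using absence of $2$-cycles and minimality'' --- whereas chaining inclusions cannot yield the transversality half, which is what you claim it gives. The transversality half requires no separate work: if there were no oriented edge from $[\Om]$ to $[\Xi]$, then, whether the two vertices carry an edge in the opposite direction or no edge at all (Lemma~\ref{symmetric almost-transversality}), almost every $\mf{l}_p$ would be transverse to almost every $\mf{h}_m$, contradicting the nesting just obtained. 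This is the paper's proof; with these two corrections your outline becomes it. The reduction to paths of length two and the deduction of acyclicity from the absence of $2$-cycles are fine, and the machinery you anticipate needing (Dilworth, Lemma~\ref{Ramsey}, Lemma~\ref{almost disjoint}) plays no role in this particular lemma.
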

\begin{proof}
It suffices to prove that, if there is an edge from $[\Om]$ to $[\Om']$ and from $[\Om']$ to $[\Om'']$, there is also an edge from $[\Om]$ to $[\Om'']$. Pick diverging chains $(\mf{h}_n)_{n\geq 0}$, $(\mf{h}'_n)_{n\geq 0}$, $(\mf{h}''_n)_{n\geq 0}$ in $\Om$, $\Om'$, $\Om''$, respectively. By hypothesis, there are infinitely many $\mf{h}_k''$ that are not transverse to almost every $\mf{h}'_j$; thus, for every $k$ there exists $j$ such that $\mf{h}''_k\supseteq\mf{h}'_j$. A similar argument works for $\Om$ and $\Om'$; hence, for every $k$ there exist $i,j$ such that $\mf{h}''_k\supseteq\mf{h}'_j\supseteq\mf{h}_i$. If no oriented edge from $[\Om]$ to $[\Om'']$ existed, almost every $\mf{h}''_k$ would be transverse to almost every $\mf{h}_i$ and this would contradict the previous statement.
\end{proof}

\begin{rmk}\label{graph for any chains}
A graph like $\mc{G}(\xi)$ above can be constructed whenever we have a family of diverging chains $(\mf{h}_n^i)_{n\geq 0}$, $i\in I$, with the property that, if $i\neq j$, either no $\mf{h}_m^i$ lies in the inseparable closure of $\{\mf{h}_n^j\}_{n\geq 0}$ or vice versa. Lemma~\ref{no directed cycles} and part~$(1)$ of Proposition~\ref{main prop on UBS's} below still hold in this context.
\end{rmk}

We say that a collection of vertices $\mc{V}\cu\mc{G}(\xi)^{(0)}$ is \emph{inseparable} if, for every $v,w\in\mc{V}$, all the vertices on the directed paths from $v$ to $w$ also lie in $\mc{V}$. The following extends Lemma~3.7 and Theorem~3.10 in \cite{Hagen}.

\begin{prop}\label{main prop on UBS's}
\begin{enumerate}
\item The graph $\mc{G}(\xi)$ has at most $r$ vertices.
\item For every UBS $\Om$ there exists a minimal UBS $\Om'\preceq\Om$. If $\Om$ is the inseparable closure of a diverging chain $(\mf{h}_n)_{n\geq 0}$, we can take $\Om'$ to be the inseparable closure of $(\mf{h}_n)_{n\geq N}$, for some $N\geq 0$.
\item Given a UBS $\Om$ and a set $\{\Om_1,...,\Om_k\}$ of representatives of all equivalence classes of minimal UBS's almost contained in $\Om$, we have
\[\sup_{\mf{h}\in\Om\triangle\left(\Om_1\cup ... \cup\Om_k\right)} d(x,\mf{h})<+\infty.\]
\item There is an isomorphism of posets between $\left(\overline{\mc{U}}(\xi),\preceq\right)$ and the collection of inseparable subsets of $\mc{G}(\xi)^{(0)}$, ordered by inclusion. It is given by associating to $[\Om]$ the set $\{[\Om_1],...,[\Om_k]\}$ of minimal equivalence classes of UBS's almost contained in $\Om$.
\end{enumerate}
\end{prop}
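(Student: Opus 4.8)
The plan is to prove the four statements more or less in the order they are listed, since each builds on the previous ones, and to do most of the work with the directed graph $\mc{G}(\xi)$ and Lemmata~\ref{Ramsey}, \ref{almost disjoint}, \ref{symmetric almost-transversality}, \ref{no directed cycles} already available.

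\textbf{Part 1.} For the bound on the number of vertices, I would argue by contradiction: pick $r+1$ pairwise inequivalent minimal UBS's $\Om_0,\dots,\Om_r$ and diverging chains $(\mf{h}_n^i)_{n\geq 0}$ inside them. For each pair $i\neq j$, minimality and Lemma~\ref{symmetric almost-transversality} force exactly one of three mutually exclusive situations (edge $i\to j$, edge $j\to i$, or no edge, i.e.\ mutual almost-transversality). Using Lemma~\ref{no directed cycles}, the relation ``there is a directed edge from $[\Om_i]$ to $[\Om_j]$'' is the strict part of a partial order on the vertex set, so after relabelling we may assume that whenever $i<j$ there is \emph{no} edge $j\to i$; in particular for $i<j$ almost every $\mf{h}_m^i$ is transverse to almost every $\mf{h}_n^j$. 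Now choose $n$ large enough that $\mf{h}_n^0,\dots,\mf{h}_n^r$ are simultaneously transverse to each other (possible since ``almost every'' in each of the finitely many pairs can be realised for a common large index, using that each chain diverges and the inseparable closures are genuinely different so no inclusions intervene): this produces $r+1$ pairwise-transverse halfspaces, contradicting $\text{rank}(X)=r$ via Proposition~6.2 of \cite{Bow1}. The delicate point is making sure that mutual almost-transversality of two chains really lets one pick a single common large index where a transverse pair occurs, and then that this works simultaneously across all $\binom{r+1}{2}$ pairs; this is a diagonal/pigeonhole argument using that each offending ``bad'' set of indices is finite.

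\textbf{Parts 2 and 3.} For part~2, given a UBS $\Om$, pick a diverging chain $(\mf{h}_n)_{n\geq 0}\subseteq\Om$; the inseparable closure $\Xi_N$ of the tail $(\mf{h}_n)_{n\geq N}$ is again a UBS and $\Xi_N\preceq\Om$ for every $N$. The family $\{\Xi_N\}$ is totally ordered by $\preceq$ (indeed $\Xi_{N+1}\preceq\Xi_N$), so it suffices to show that for $N$ large this stabilises to a minimal UBS. If it never stabilised we would get an infinite strictly $\preceq$-decreasing chain of inseparable closures of tails, and I would derive a contradiction from part~1 (a $\preceq$-decreasing chain of pairwise inequivalent minimal-candidate classes injects, after refining, into a strict descending chain in $\mc{G}(\xi)$, which has only $r$ vertices) together with Lemma~\ref{almost disjoint} to control measures. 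Concretely: among the $\Xi_N$ only finitely many equivalence classes appear, so some tail is minimal. For part~3, let $\Om_1,\dots,\Om_k$ represent the minimal classes $\preceq[\Om]$; each $\Om_i\preceq\Om$ means $\sup_{\mf{h}\in\Om_i\setminus\Om}d(x,\mf{h})<\infty$, so $\bigcup\Om_i\setminus\Om$ is a finite-distance set, and it remains to bound $\Om\setminus(\Om_1\cup\dots\cup\Om_k)$. Decompose $\Om$ via Dilworth into $\leq r$ chains; each chain that diverges to $\xi$ contains, by part~2, a tail whose inseparable closure is a minimal UBS, hence equivalent to some $\Om_i$, hence almost contained in $\Om_1\cup\dots\cup\Om_k$; the finitely many non-diverging chains contribute only finite-distance halfspaces. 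Assembling these gives the stated supremum bound. The main obstacle here is bookkeeping: matching the Dilworth chains of $\Om$ to the list $\Om_1,\dots,\Om_k$ without double counting, and controlling the ``interface'' halfspaces that lie in $\Om$ but are separated off by passing to tails --- these are exactly the ones at bounded distance, by Lemma~\ref{intersections of halfspaces}.

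\textbf{Part 4.} Finally, define $\Phi([\Om]):=\{[\Om_i] : [\Om_i]\text{ minimal},\ \Om_i\preceq\Om\}$. This is well-defined on $\sim$-classes and clearly monotone for $\preceq$ against inclusion. Injectivity follows from part~3: two UBS's with the same set of minimal classes below them are equivalent, since each is $\sim$ to the union of those representatives up to a finite-distance set. For the fact that $\Phi([\Om])$ is inseparable in $\mc{G}(\xi)$: if $[\Om_i],[\Om_j]\in\Phi([\Om])$ and $[\Xi]$ lies on a directed path from $[\Om_i]$ to $[\Om_j]$, then by Lemma~\ref{no directed cycles} there is an edge $[\Om_i]\to[\Xi]\to[\Om_j]$, and I would show that the inseparable closure (inside $\s_\xi\setminus\s_x$) of a diverging chain in $\Om_i$ together with one in $\Om_j$ must contain a diverging chain equivalent to one in $\Xi$ --- this is the geometric content of ``$\Xi$ is squeezed between $\Om_i$ and $\Om_j$'' --- whence $\Xi\preceq\Om$. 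Surjectivity onto inseparable vertex sets: given an inseparable $\mc{V}=\{[\Om_{i_1}],\dots,[\Om_{i_m}]\}$, take $\Om:=\Om_{i_1}\cup\dots\cup\Om_{i_m}$ (a finite union of inseparable sets containing a diverging chain, hence a UBS), and check that the minimal classes $\preceq[\Om]$ are exactly those in $\mc{V}$: any minimal $[\Om']\preceq[\Om]$ has a diverging chain almost inside $\bigcup\Om_{i_\ell}$, and a Dilworth-plus-pigeonhole argument as in part~3 forces a tail of it into a single $\Om_{i_\ell}$, so $[\Om']=[\Om_{i_\ell}]\in\mc{V}$; conversely each $[\Om_{i_\ell}]\in\mc{V}$ is visibly $\preceq[\Om]$. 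I expect the genuinely hard step across the whole proposition to be the transitivity/squeezing claim used for inseparability in part~4, together with the simultaneous common-large-index argument in part~1; both hinge on turning statements about ``almost every halfspace in a chain'' into statements about a single concrete large index, which is where one really uses finiteness of rank and divergence of the chains.
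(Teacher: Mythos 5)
Your overall architecture matches the paper's (bound the vertices by producing pairwise-transverse halfspaces, stabilise tails for minimality, Dilworth/Ramsey for part~3, then read part~4 off parts~1--3), but two steps as written do not work. In part~1, the crucial move --- choosing a \emph{single} index $n$ so that $\mf{h}^0_n,\dots,\mf{h}^r_n$ are pairwise transverse --- is not justified, and your justification (``each offending bad set of indices is finite'') is false in the relevant generality. After your relabelling you only have \emph{one-directional} almost-transversality for pairs joined by a directed edge $[\Om_i]\to[\Om_j]$: for all but finitely many $m$ the exceptional set of $n$ is finite, but it depends on $m$ and may grow without bound. A configuration with $\mf{h}^i_{n_k}\cu\mf{h}^j_{n_k}$ along a sparse sequence $n_k\to\infty$ is perfectly compatible with ``almost every $\mf{h}^i_m$ is transverse to almost every $\mf{h}^j_n$'' (only the symmetric statement fails, which is exactly what a directed edge allows), and there the diagonal pairs fail to be transverse infinitely often. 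The argument is repairable --- choose distinct indices $n_0,\dots,n_r$ by a nested selection, each $n_i$ avoiding the finitely many exceptions created by $n_0,\dots,n_{i-1}$ and lying in the cofinite ``good $m$'' sets for the later pairs --- whereas the paper avoids the issue altogether by an induction: it picks one halfspace of $\Om_k$ transverse to entire inseparable closures of chains in the other UBS's and applies the inductive hypothesis to those closures. Relatedly, in part~2 your ``the classes of the tails stabilise, so some tail is minimal'' is a non sequitur: minimality must be tested against \emph{all} UBS's almost contained in $\Om$, not only against other tails of the same chain; the paper needs Lemma~\ref{symmetric almost-transversality} here to show the stable tail is equivalent to the inseparable closure of \emph{any} diverging chain it contains.

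The second genuine gap is the surjectivity step of part~4. First, $\Om_{i_1}\cup\dots\cup\Om_{i_m}$ is in general not inseparable, hence not a UBS; you must pass to an inseparable closure (the paper uses the inseparable closure of far-out tails $\Om^N$). Once you do, your verification collapses: a diverging chain in a minimal UBS almost contained in that closure need \emph{not} be almost contained in $\bigcup\Om_{i_\ell}$ --- it can consist of halfspaces sandwiched between an element of $\Om_{i_\ell}$ and an element of $\Om_{i_{\ell'}}$, and such chains produce precisely the minimal classes lying on directed paths from $[\Om_{i_\ell}]$ to $[\Om_{i_{\ell'}}]$ in $\mc{G}(\xi)$. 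Note that your argument never invokes the inseparability of $\mc{V}$; if it were correct as written it would attach to an \emph{arbitrary} finite set of minimal classes a UBS realising exactly that set, contradicting the (true) fact that the image of the map consists only of inseparable vertex sets. The missing content is exactly the paper's final step: any extra minimal class almost contained in $\Om^N$ would lie on a directed path between two members of $\mc{V}$, contradicting the inseparability of $\mc{V}$. Your treatment of part~3 and of injectivity and well-definedness in part~4 is essentially the paper's and is fine.
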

\begin{proof}
To prove part~$(1)$, we show that every finite subset $\mc{V}\cu\mc{G}(\xi)^{(0)}$ satisfies ${\#\mc{V}\leq r}$. More precisely, we prove by induction on $k$ that, if $\Om_1,...,\Om_k$ are UBS's representing the elements of $\mc{V}$, we can find pairwise-transverse halfspaces $\mf{h}_i\in\Om_i$. The case $k=1$ is trivial; suppose $k\geq 2$. By Lemma~\ref{no directed cycles} we can assume, up to reordering the $\Om_i$, that there is no edge from $[\Om_i]$, $i\leq k-1$, to $[\Om_k]$. There exist $\mf{h}\in\Om_k$ and diverging chains $\{\mf{k}_n^i\}_{n\geq 0}\cu\Om_i$, $i\leq k-1$, that are transverse to $\mf{h}$; in particular, $\mf{h}$ is transverse to every element in the inseparable closure of $\{\mf{k}_n^i\}_{n\geq 0}$, for $i\leq k-1$. By the inductive hypothesis, we can find $\mf{h}_i$ in the inseparable closure of $\{\mf{k}_n^i\}_{n\geq 0}$ so that $\mf{h}_1,...,\mf{h}_{k-1}$ are pairwise transverse. Hence $\mf{h},\mf{h}_1,...,\mf{h}_{k-1}$ are pairwise transverse.

We now prove part~$(2)$. If $\Om_1\prec...\prec\Om_k$ is a chain of non-equivalent UBS's, we have $k\leq r$. Indeed, we can consider diverging chains in $\Om_1$ and in $\Om_i\setminus\Om_{i-1}$ for $2\leq i\leq k$, which exist by Lemma~\ref{Ramsey}, and appeal to Remark~\ref{graph for any chains} to produce $k$ pairwise transverse halfspaces. 

This implies the existence of minimal UBS's almost contained in any UBS. It also shows that, for every diverging chain $(\mf{h}_n)_{n\geq 0}$, there exists $N\geq 0$ such that the inseparable closures $\Om_M$ of $(\mf{h}_n)_{n\geq M}$ are all equivalent for $M\geq N$. In particular, every diverging chain in $\Om_N$ has a cofinite subchain that is contained in $\Om_M$, if ${M\geq N}$. By Lemma~\ref{symmetric almost-transversality}, the UBS $\Om_N$ is equivalent to the inseparable closure of any diverging chain it contains, i.e.~$\Om_N$ is minimal. 

Regarding part~$(3)$, it is clear that the supremum over $\left(\Om_1\cup ... \cup\Om_k\right)\setminus\Om$ is finite. If the supremum over $\Om\setminus\left(\Om_1\cup ... \cup\Om_k\right)$ were infinite, Lemma~\ref{Ramsey} and part~$(2)$ would provide a diverging chain in $\Om\setminus\left(\Om_1\cup ... \cup\Om_k\right)$ whose inseparable closure $\Om'$ is a minimal UBS. Thus $\Om'\preceq\Om$, but $\Om'\not\sim\Om_i$ for all $i$, a contradiction. 

Finally, we prove part~$(4)$. The map $[\Om]\mapsto\{[\Om_1],...,[\Om_k]\}$ is an injective morphism of posets by part~$(3)$. The collection $\{[\Om_1],...,[\Om_k]\}$ is inseparable since the inseparable closure of $(\Om_i\cap\Om)\cup(\Om_j\cap\Om)$ contains all minimal UBS's corresponding to vertices on directed paths from $[\Om_i]$ to $[\Om_j]$ and vice versa; this follows for instance from the proof of Lemma~\ref{no directed cycles}.

Given an inseparable collection $\{[\Om_1],...,[\Om_k]\}$, we construct a UBS $\Om$ such that these are precisely the equivalence classes of minimal UBS's almost contained in $\Om$. Let $(\mf{h}_n^i)_{n\geq 0}$ be a diverging chain in $\Om_i$, for every $i$, and denote by $\Om^N$ the inseparable closure of ${\{\mf{h}_n^1\}_{n\geq N}\cup...\cup\{\mf{h}_n^k\}_{n\geq N}}$. If $N$ is large enough, every minimal UBS almost contained in $\Om^N$ is equivalent to one of the $\Om_i$. Otherwise, by part~$(1)$, we would be able to find a diverging chain $\{\mf{h}_n\}_{n\geq 0}$ such that its inseparable closure $\Xi$ is not equivalent to any of the $\Om_i$ and $\mf{h}_{a_n}^j\cu\mf{h}_n\cu\mf{h}_{b_n}^k$, for some $j,k$, with $a_n,b_n\ra+\infty$. 
This implies that $\Xi$ lies on a directed path from $[\Om_j]$ to $[\Om_k]$ and contradicts inseparability of the collection $\{[\Om_1],...,[\Om_k]\}$.
\end{proof}

Let $\Om\cu\s_{\xi}\setminus\s_x$ be a UBS and let $K_{\Om}\leq\text{Isom}_{\xi}X$ be the subgroup of isometries that preserve the equivalence class $[\Om]$. We introduce the map 
\begin{align*}
\chi_{\Om}\colon K_{\Om} \longrightarrow & \R \nonumber \\
g\longmapsto & \wh{\nu}\left(g^{-1}\Om\setminus\Om\right)- \wh{\nu}\left(\Om\setminus g^{-1}\Om\right) .
\end{align*}
Note that the definition makes sense due to part~$(1)$ of Lemma~\ref{almost disjoint}. We will refer to $\chi_{\Om}$ as the \emph{transfer character} associated to $[\Om]$. The terminology is motivated by the following analogue of (part of) Proposition~4.H.1 in \cite{Cor}.

\begin{lem}
The map $\chi_{\Om}$ is a homomorphism. Moreover, 
\[\chi_{\Om}(g)=\wh{\nu}\left(g^{-1}\Xi\setminus\Xi\right)- \wh{\nu}\left(\Xi\setminus g^{-1}\Xi\right) \]
for every $g\in K_{\Om}$ and every measurable subset $\Xi\cu\mscr{H}$ with $\wh{\nu}(\Om\triangle\Xi)<+\infty$.
\end{lem}
\begin{proof}
Given $A\cu\mscr{H}$, let $\mathds{1}_A$ denote its characteristic function. Given any function $f\colon\mscr{H}\ra\R$ and $g\in\text{Isom}~X$, we employ the standard notation $(g\cdot f)(x)=f(g^{-1}x)$. Observe that $\chi_{\Om}(g)=\int(g^{-1}\cdot\mathds{1}_{\Om}-\mathds{1}_{\Om})d\wh{\nu}$.

Proving that $\chi_{\Om}$ is a homomorphism amounts to the observation that 
\[\chi_{\Om}(gh)=\int\left(h^{-1}g^{-1}\cdot\mathds{1}_{\Om}-\mathds{1}_{\Om}\right)d\wh{\nu}=\]
\[=\int\left(h^{-1}g^{-1}\cdot\mathds{1}_{\Om}-h^{-1}\cdot\mathds{1}_{\Om}\right)d\wh{\nu}+\int\left(h^{-1}\cdot\mathds{1}_{\Om}-\mathds{1}_{\Om}\right)d\wh{\nu}=\]
\[=\int\left(g^{-1}\cdot\mathds{1}_{\Om}-\mathds{1}_{\Om}\right)d\wh{\nu}+\int\left(h^{-1}\cdot\mathds{1}_{\Om}-\mathds{1}_{\Om}\right)d\wh{\nu}=\chi_{\Om}(g)+\chi_{\Om}(h),\]
since $g,h\in K_{\Om}$ and, in particular, $h$ preserves the measure $\wh{\nu}$. Regarding the second statement, it suffices to consider the case $\Xi=\Om\sqcup F$, with $\wh{\nu}(F)<+\infty$. We then have
\[\int\left(g^{-1}\cdot\mathds{1}_{\Xi}-\mathds{1}_{\Xi}\right)d\wh{\nu}=\int\left(g^{-1}\cdot\mathds{1}_{\Om}-\mathds{1}_{\Om}\right)d\wh{\nu}+\int\left(g^{-1}\cdot\mathds{1}_{F}-\mathds{1}_{F}\right)d\wh{\nu}\]
and
\[\int\left(g^{-1}\cdot\mathds{1}_{F}-\mathds{1}_{F}\right)d\wh{\nu}=\int (g^{-1}\cdot\mathds{1}_{F})d\wh{\nu}-\int\mathds{1}_{F}d\wh{\nu}=0.\]
\end{proof}

Thus, the transfer character $\chi_{\Om}$ only depends on the equivalence class $[\Om]$ of the UBS $\Om$. If $\{\Om_1,...,\Om_k\}$ is a set of representatives of all equivalence classes of minimal UBS's almost contained in $\Om$, we have $\chi_{\Om}=\chi_{\Om_1}+...+\chi_{\Om_k}$ by part~$(3)$ of Proposition~\ref{main prop on UBS's}.

Now, let $\Om_1,...,\Om_k$ be UBS's representing all minimal elements of $\overline{\mc{U}}(\xi)$. The group $\text{Isom}_{\xi}X$ permutes the equivalence classes of the $\Om_i$ and a subgroup $K_{\xi}\leq\text{Isom}_{\xi}X$ of index at most $k!\leq r!$ preserves them all. Note that, by part~$(4)$ of Proposition~\ref{main prop on UBS's}, this is precisely the kernel of the action of $\text{Isom}_{\xi}X$ on $\overline{\mc{U}}(\xi)$. We define a homomorphism $\chi_{\xi}:=(\chi_{\Om_1},...,\chi_{\Om_k})\colon K_{\xi}\ra \R^k$.

\begin{prop}\label{kernel of chi}
Every finitely generated subgroup $\G\leq\ker\chi_{\xi}$ has an orbit in $X$ with at most $2^r$ elements.
\end{prop}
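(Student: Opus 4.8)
The plan is to reduce to Corollary~\ref{finite orbits} (equivalently, Theorem~\ref{Sageev's theorem}) by showing that $\G\leq\ker\chi_\xi$ acts on $X$ with bounded orbits. Fix a basepoint $x\in X$ and a finite generating set $S$ of $\G$. Since $\G\leq K_\xi$, each $g\in\G$ preserves the equivalence class of every minimal UBS $\Om_i$, so $\wh\nu(g^{-1}\Om_i\triangle\Om_i)<\infty$ by part~1 of Lemma~\ref{almost disjoint}; being in $\ker\chi_\xi$ means moreover that $\wh\nu(g^{-1}\Om_i\setminus\Om_i)=\wh\nu(\Om_i\setminus g^{-1}\Om_i)$ for all $i$. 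By part~3 of Proposition~\ref{main prop on UBS's}, if $\Om:=\Om_1\cup\dots\cup\Om_k$ then $\Om$ differs from the union of all minimal UBS's by a bounded set; in fact I want to work with a set $\widehat\Om$ which is $\G$-coarsely-invariant and captures "all the directions going to $\xi$''. Concretely: the halfspaces in $\s_\xi\setminus\s_x$ that lie at bounded distance from $x$ have finite total measure only on each chain, so a cleaner object is $\Om$ together with $\s_\xi\setminus\s_x$; but the key point is simply that $\wh\nu(g\Om\triangle\Om)<\infty$ and $\wh\nu(g\Om\setminus\Om)=\wh\nu(\Om\setminus g\Om)$ for every $g\in\G$ — the latter because $\chi_\Om=\chi_{\Om_1}+\dots+\chi_{\Om_k}$ vanishes on $\ker\chi_\xi$.

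Next I estimate $d(x,gx)=\wh\nu(\mscr H(x|gx))=\wh\nu(\s_{gx}\setminus\s_x)$. Split $\mscr H(x|gx)$ according to membership in $\Om$ and in $g\Om$. The piece inside $\Om\cap g\Om$ and the piece outside $\Om\cup g\Om$ I control by the following observation: a halfspace $\mf h\in\mscr H(x|gx)\setminus\Om$ lies in $\s_{gx}$ but since $g^{-1}\mf h\in\s_x\setminus\s_\xi$ sits "below'' $\xi$ in a bounded way, and $\mf h\notin\Om$ means $\mf h$ is not one of the finitely many minimal UBS directions — this is where I need that $\mscr H(x|gx)\cap(\s_\xi\setminus\s_x)$ that avoids $\Om$ is bounded by part~3 of Proposition~\ref{main prop on UBS's}, while $\mscr H(x|gx)$ that avoids $\s_\xi$ altogether is controlled by the fact that $\xi$-fixing forces $\s_{gx}\cap\s_\xi$ to agree with $\s_x\cap\s_\xi$ up to finite measure. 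The only possibly-unbounded contribution is $\mscr H(x|gx)\cap\Om$ versus $\mscr H(x|gx)\cap g\Om$, and here $\chi_\Om(g)=0$ forces
\[
\wh\nu\bigl(\mscr H(x|gx)\cap(\Om\setminus g\Om)\bigr)=\wh\nu\bigl(\mscr H(x|gx)\cap(g\Om\setminus\Om)\bigr),
\]
after unwinding $g^{-1}\mscr H(x|gx)=\mscr H(g^{-1}x|x)=\mscr H(x|g^{-1}x)^*$; combined with a telescoping/quasi-cocycle argument over a word $g=s_{i_1}\cdots s_{i_n}$ this should bound $d(x,gx)$ by $C\sum_{s\in S}\wh\nu(\Om\triangle s\Om)$ plus a constant, uniformly in $g\in\G$.

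The cleanest route, and the one I would actually write, is: show that $g\mapsto \wh\nu(\mscr H(x|gx)\cap\Om)-\wh\nu(\mscr H(x|gx)\cap\Om^*)$ (suitably interpreted, using that these symmetric differences are finite) defines a map whose variation along $S$ is finite and which, because $\chi_\xi(g)=0$, together with the finiteness statements above, certifies that $\{d(x,gx):g\in\G\}$ is bounded; then invoke Corollary~\ref{finite orbits} to get an orbit of size $\le 2^r$, and we are done. The main obstacle is the bookkeeping in the previous paragraph: making precise that all the "error'' pieces — halfspaces in $\mscr H(x|gx)$ lying outside $\s_\xi$, or inside $\s_\xi$ but outside the finitely many minimal UBS's — contribute a uniformly bounded amount. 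For the $\s_\xi$-part this is exactly part~3 of Proposition~\ref{main prop on UBS's} applied to $\Om$ (noting $\mscr H(x|gx)\cap\s_\xi$ is inseparable in $\s_\xi\setminus\s_x$, hence a UBS or bounded), and for the complement of $\s_\xi$ one uses that $g$ fixes $\xi$ so $g\s_\xi=\s_\xi$, whence $\mscr H(x|gx)\setminus\s_\xi=\mscr H(x|gx)\cap(\s_x\triangle\s_\xi)$-type pieces are each of finite measure and the telescoping keeps them bounded along a geodesic word. Once those two finiteness facts are in hand, the vanishing of $\chi_\xi$ kills the only remaining growth term and boundedness follows.
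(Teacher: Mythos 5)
There is a genuine gap, and it lies exactly where you flag ``the main obstacle'': the bookkeeping you defer cannot be carried out with the tools you cite. Two concrete problems. First, the structure results you invoke (part~3 of Proposition~\ref{main prop on UBS's}, Lemma~\ref{almost disjoint}) control halfspaces by their \emph{distance from the basepoint}, not by \emph{measure}; a set of halfspaces at uniformly bounded distance from $x$ can have infinite $\wh{\nu}$-measure (already in a rank-one, non-locally-finite tree), and in any case the ``error'' sets you need to control are subsets of $\mscr{H}(x|gx)$ depending on $g$, so bounding their measure uniformly in $g\in\G$ is essentially the statement you are trying to prove -- the argument is circular. Second, the vanishing of $\chi_\xi$ only constrains the part of the displacement aligned with the minimal UBS's for $\xi$; it says nothing about the halfspaces in $\mscr{H}(x|gx)$ whose behaviour relative to $\xi$ is ``balanced'' (e.g.\ motion transverse to the direction of $\xi$), and your proposed telescoping along a word $g=s_{i_1}\cdots s_{i_n}$ can only give bounds growing linearly in $n$, which is the trivial estimate and does not yield bounded orbits. (Also, the identity $\wh{\nu}\bigl(\mscr{H}(x|gx)\cap(\Om\setminus g\Om)\bigr)=\wh{\nu}\bigl(\mscr{H}(x|gx)\cap(g\Om\setminus\Om)\bigr)$ does not follow from $\chi_{\Om}(g)=0$, which equates the measures of the full differences, not of their intersections with $\mscr{H}(x|gx)$.)

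The paper's proof avoids all of this by arguing by contradiction and using finite generation through Proposition~\ref{key point in Sageev}: if no orbit has at most $2^r$ elements, Corollary~\ref{finite orbits} gives unbounded orbits, hence some $g\in\G$ and $\mf{h}\in\mscr{H}$ with $g\mf{h}\subsetneq\mf{h}$, and $d(g^r\mf{h},\mf{h}^*)>0$ by Proposition~\ref{all about halfspaces}. Since $g$ fixes $\xi$, after possibly replacing $(\mf{h},g)$ by $(\mf{h}^*,g^{-1})$ the chain $(g^{nr}\mf{h})_{n\geq 0}$ diverges to $\xi$; by part~2 of Proposition~\ref{main prop on UBS's} its inseparable closure is eventually a minimal UBS, hence equivalent to some $\Om_i$, and then $r\cdot\chi_{\Om_i}(g)=\chi_{\Om_i}(g^r)\geq\wh{\nu}\bigl(\mscr{H}(\mf{h}^*|g^r\mf{h})\setminus\{g^r\mf{h}\}\bigr)>0$, contradicting $\G\leq\ker\chi_\xi$. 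This is the step your proposal is missing: the transfer character is evaluated on a skewered chain produced by the Sageev-type argument, rather than used to bound displacements directly. To repair your write-up you would need to import that mechanism (or some substitute that rules out unbounded ``transverse'' motion), at which point you recover the paper's proof.
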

\begin{proof}
If $\G$ did not have an orbit with at most $2^r$ elements, all orbits would be unbounded by Corollary~\ref{finite orbits} and Proposition~\ref{key point in Sageev} would provide ${\mf{h}\in\mscr{H}}$ and $g\in\G$ with $g\mf{h}\subsetneq\mf{h}$; hence ${d(g^r\mf{h},\mf{h}^*)>0}$ by Proposition~\ref{all about halfspaces}. If $\xi\in\wt{\mf{h}}^*$, we replace $\mf{h}$ with $\mf{h}^*$ and $g$ with $g^{-1}$. Now $(g^{nr}\mf{h})_{n\geq 0}$ is a sequence of halfspaces diverging to $\xi$ and, by part~$(2)$ of Proposition~\ref{main prop on UBS's}, the inseparable closure $\Om^N$ of $\{g^{nr}\mf{h}\}_{n\geq N}$ is a minimal UBS if $N$ is large enough. Thus, $\Om^N\sim\Om_i$ for some $i$ and we have ${0=\chi_{\Om_i}(g)=\chi_{\Om^N}(g)}$. We obtain a contradiction by observing that
\[r\cdot\chi_{\Om^N}(g)=\chi_{\Om^N}(g^r)\geq\wh{\nu}\left(\mscr{H}(\mf{h}^*|g^r\mf{h})\setminus\{g^r\mf{h}\}\right)> 0.\]
\end{proof}

Theorem~\ref{stabiliser of xi} immediately follows from Proposition~\ref{kernel of chi}. Relying on Theorem~E, we can already provide a proof of Theorem~A. Theorem~E will be proved in Section~\ref{facing triples}.

\begin{proof}[Proof of Theorem~A]
By hypothesis, $\G$ does not have any nonabelian free subgroups; thus, the action $\G\acts X$ is Roller elementary by Theorem~E. Theorem~\ref{stabiliser of xi} yields a finite-index subgroup $\G_0\leq\G$ and a normal subgroup $N\lhd\G_0$ such that $\G_0/N$ is abelian and every finitely generated subgroup of $N$ has an orbit with at most $2^r$ elements. 

$(1)$ If $\G\acts X$ is free, every finitely generated subgroup of $N$ has at most $2^r$ elements. In particular, $N$ is finitely generated and $\#N\leq 2^r$; this shows that $\G_0$ is finite-by-abelian. Finitely generated finite-by-abelian groups are virtually abelian, see e.g.~Lemma~II.7.9 in \cite{BH}; thus, we can conclude that $\G$ is virtually abelian as soon as $\G$ is finitely generated. Alternatively, if $X$ is connected, $N$ must be trivial by Theorem~\ref{CAT(0) metric} and Cartan's fixed point theorem (Theorem~3.74 in \cite{DK}); thus, we obtain that $\G$ is virtually abelian also in this case.

$(2)$ Assume now instead that $\G\acts X$ is proper. Every finitely generated subgroup of $N$ acts with bounded orbits and must therefore be finite. Hence $N$ is locally finite in this case, as required.

$(3)$ Suppose finally that $\G$ acts with amenable point stabilisers. Every finitely generated subgroup of $N$ is virtually a point stabiliser, hence $N$ is locally amenable. Since direct limits preserve amenability and $N$ is the direct limit of its finitely generated subgroups, we conclude that $N$ is amenable. Thus, $\G_0$ and $\G$ are amenable as well.
\end{proof}

\section{Caprace-Sageev machinery.}\label{CS-like machinery}

Let $X$ be a complete median space of finite rank $r$. The goal of this section is extending to median spaces Theorem~4.1 and Proposition~5.1 from \cite{CS}. 

Our techniques provide a different approach also in the case of ${\rm CAT}(0)$ cube complexes, as we use Roller boundaries instead of visual boundaries. This strategy of proof was suggested to us by T. Fern\'os.

Let $\G$ be a group of isometries of $X$. We say that $g\in \G$ \emph{flips} $\mf{h}\in\mscr{H}$ if ${d\left(g\mf{h}^*,\mf{h}^*\right)>0}$ and $g\mf{h}^*\neq\mf{h}$. The halfspace $\mf{h}$ is \emph{$\G$-flippable} if some $g\in \G$ flips it.

\begin{thm}\label{flipping}
Suppose $\G$ acts without wall inversions. For every thick halfspace, exactly one of the following happens:
\begin{enumerate}
\item $\mf{h}$ is $\G$-flippable;
\item the closure of $\wt{\mf{h}}^*$ in $\overline X$ contains a proper, closed, convex, $\G$-invariant subset $C\cu\overline X$.
\end{enumerate}
\end{thm}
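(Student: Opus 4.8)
\textbf{Proof plan for Theorem~\ref{flipping}.}

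The plan is to argue by contradiction: assume that the thick halfspace $\mf{h}$ is \emph{not} $\G$-flippable and produce the invariant subset $C$ inside $\overline{\wt{\mf{h}}^*}$; the fact that (1) and (2) cannot both hold is easy, since a proper closed convex $\G$-invariant subset of $\overline{\wt{\mf{h}}^*}$ obstructs pushing $\mf{h}^*$ off itself. So suppose that for every $g\in\G$ we have either $d(g\mf{h}^*,\mf{h}^*)=0$ or $g\mf{h}^*=\mf{h}$; since the action is without wall inversions, $g\mf{h}^*=\mf{h}$ is impossible, so in fact $d(g\mf{h}^*,\mf{h}^*)=0$ for all $g$. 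The key point is that $d(g\mf{h}^*,\mf{h}^*)=0$ means the convex sets $\overline{g\mf{h}^*}$ and $\overline{\mf{h}^*}$ — equivalently their closures in $\overline X$, or the halfspaces $\wt{g\mf{h}^*},\wt{\mf{h}^*}$ of $\overline X$ — have a common point in $\overline X$, or at least are ``tangent'' there; more precisely, I would use that in $\overline X$ the closures $\overline{\wt{g\mf{h}}^*}$ and $\overline{\wt{\mf{h}}^*}$ intersect. The first main step is therefore to translate the metric hypothesis $d(g\mf{h}^*,\mf{h}^*)=0$ into the statement that the family $\{\,\overline{\wt{g\mf{h}}^*}\mid g\in\G\,\}$ of closed convex subsets of the compact median algebra $\overline X$ is pairwise-intersecting: for $g,g'\in\G$, applying $g'$ to $d(g'^{-1}g\mf{h}^*,\mf{h}^*)=0$ gives $d(g\mf{h}^*,g'\mf{h}^*)=0$, and since $\wh\nu(\mscr{H}(\overline{\wt{g\mf{h}}^*}\mid\overline{\wt{g'\mf{h}}^*}))=d(g\mf{h}^*,g'\mf{h}^*)=0$ while these are distinct thick halfspaces, one deduces $\overline{\wt{g\mf{h}}^*}\cap\overline{\wt{g'\mf{h}}^*}\neq\emptyset$.

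The second step is to extract a nonempty $\G$-invariant intersection. The naive candidate $C_0:=\bigcap_{g\in\G}\overline{\wt{g\mf{h}}^*}$ is closed, convex and $\G$-invariant, and it is contained in $\overline{\wt{\mf{h}}^*}$ (take $g=1$), so the only issues are that it could be empty or could be all of $\overline{\wt{\mf{h}}^*}$. Nonemptiness should follow from compactness of $\overline X$ together with Helly's theorem for the pairwise-intersecting family of closed convex sets $\{\overline{\wt{g\mf{h}}^*}\}$: finite subfamilies have nonempty intersection by Helly, and compactness upgrades this to the full intersection (the sets are closed in the compact space $\overline X$, so the finite intersection property gives $C_0\neq\emptyset$). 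Properness is where thickness of $\mf{h}$ enters: since $\mf{h}$ is thick, $\mf{h}$ has nonempty interior in $X$, so there is a point of $X$ — hence of $\overline X$ — lying in $\wt{\mf{h}}$, i.e.\ \emph{not} in $\overline{\wt{\mf{h}}^*}$ (here one must be slightly careful, using that a thick halfspace $\mf{h}$ is closed or open and in the relevant case $\overline{\mf{h}^*}\cap\mf{h}=\emptyset$, cf.\ Proposition~\ref{all about halfspaces}); such a point witnesses $C_0\subsetneq\overline X$, and since $C_0\subseteq\overline{\wt{\mf{h}}^*}\subsetneq\overline X$ we are done, setting $C:=C_0$.

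The step I expect to be the main obstacle is the passage from the metric statement $d(g\mf{h}^*,g'\mf{h}^*)=0$ to genuine intersection of the closures \emph{inside $\overline X$} — one has to be careful because $X\hookrightarrow\overline X$ need not be a topological embedding, $\partial X$ need not be closed, and ``$d=0$'' a priori only says the distance in the (possibly non-complete-looking) ambient space vanishes. The clean way around this is to phrase everything in terms of halfspaces of $\overline X$: the sets $\wt{g\mf{h}}^*$ are genuine halfspaces of the median algebra $\overline X$, and $\wh\nu\big(\mscr{H}(\overline{\wt{g\mf{h}}^*}\mid\overline{\wt{g'\mf{h}}^*})\big)=d(\overline{\wt{g\mf{h}}^*},\overline{\wt{g'\mf{h}}^*})=d(g\mf{h}^*,g'\mf{h}^*)=0$ using that $\mscr{H}(C\mid C')$ has measure $d(C,C')$ for convex $C,C'$; a measure-zero set of separating halfspaces between two closed convex sets in the compact median algebra $\overline X$ forces them to intersect, because otherwise $\mscr{H}(C\mid C')$ would contain a thick — hence positive-measure — halfspace (two disjoint closed convex subsets are separated by a halfspace, and among the halfspaces of $\overline X$ separating them almost all are thick). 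Once the pairwise-intersection property is secured, Helly plus compactness and the thickness argument finish the proof; I would also double-check the mutual exclusivity of (1) and (2) directly: if $g$ flips $\mf{h}$ then $g\mf{h}^*\subsetneq\mf{h}$ with $d(g\mf{h}^*,\mf{h}^*)>0$, so any $\G$-invariant $C\subseteq\overline{\wt{\mf{h}}^*}$ would satisfy $gC\subseteq\overline{\wt{g\mf{h}}^*}\subseteq\wt{\mf{h}}$ disjoint from $\overline{\wt{\mf{h}}^*}\supseteq C$, contradicting $\G$-invariance and nonemptiness.
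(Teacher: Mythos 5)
Your treatment of the main implication (not flippable $\Rightarrow$ invariant set) has the same architecture as the paper's: no wall inversions rules out $g\mf{h}^*=\mf{h}$, so $d(g\mf{h}^*,\mf{h}^*)=0$ for all $g$; pairwise intersection of the closures; Helly plus compactness of $\overline X$; thickness for properness. However, your justification of the pairwise-intersection step does not work as written: you argue that two disjoint closed convex subsets of $\overline X$ would be separated by a thick, ``hence positive-measure'', halfspace, but a single thick halfspace has positive $\wh{\nu}$-measure only when it is an atom, i.e.\ clopen, so that deduction fails. The correct (and simpler) route stays inside $X$: the closures in $X$ of $g\mf{h}^*$ and $g'\mf{h}^*$ are closed and convex, hence gate-convex since $X$ is complete, and a pair of gates realizes their distance (Lemma~2.9 in \cite{Fioravanti1}, quoted in the preliminaries); distance zero therefore already yields a common point in $X$, and a fortiori the closures in $\overline X$ meet. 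With that substitution, your Helly/compactness/thickness conclusion goes through exactly as in the paper.

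The genuine gap is in the mutual exclusivity, which you dismiss as easy. Your chain $gC\cu\overline{\wt{g\mf{h}}^*}\cu\wt{\mf{h}}$ together with ``$\wt{\mf{h}}$ is disjoint from $\overline{\wt{\mf{h}}^*}$'' implicitly requires the halfspaces $\wt{\mf{h}},\wt{\mf{h}}^*$ of $\overline X$ to be closed: the disjointness statement is literally the assertion that $\wt{\mf{h}}^*$ is closed in $\overline X$, and the inclusion of the \emph{closure} of $\wt{g\mf{h}}^*$ into $\wt{\mf{h}}$ needs $\wt{\mf{h}}$ closed. Neither holds in general; already for $X=\R$ and $\mf{h}=[0,+\infty)$ the point $0\in\wt{\mf{h}}$ lies in the closure of $\wt{\mf{h}}^*$, and for any non-clopen halfspace at least one of the two inclusions you use breaks. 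What you actually need -- and what the paper proves -- is that $d(g\mf{h}^*,\mf{h}^*)>0$ forces the closures in $\overline X$ of $\wt{\mf{h}}^*$ and $g\wt{\mf{h}}^*$ to be disjoint, and this requires precisely the kind of care about $X\hookrightarrow\overline X$ not being an embedding that you demand in the other direction but omit here: take a pair of gates $(x,x')$ for the disjoint closed convex subsets $\overline{\mf{h}^*}$ and $g\overline{\mf{h}^*}$ of $X$, set $I=I(x,x')$, and note that the continuous projection $\pi_I\colon\overline X\ra I$ sends $\wt{\mf{h}}^*$ to $x$ and $g\wt{\mf{h}}^*$ to $x'$ (an ultrafilter computation), hence sends their closures to $x$ and $x'$ as well; the closures are therefore disjoint, indeed separated by a wall of $\overline X$, so no nonempty $\G$-invariant subset of $\overline{\wt{\mf{h}}^*}$ can exist when $\mf{h}$ is flippable. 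With these two repairs your argument coincides with the paper's proof.
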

\begin{proof}
If $\mf{h}$ is $\G$-flippable, $\overline{\mf{h}^*}$ and $g\overline{\mf{h}^*}$ are disjoint subsets of $X$; let $(x,x')$ be a pair of gates and $I:=I(x,x')$. Observe that $\pi_I$ maps the closure of $\wt{\mf{h}}^*$ to $x$ and the closure of $g\wt{\mf{h}}^*$ to $x'$. Hence, any wall of $X$ separating $x$ and $x'$ induces a wall of $\overline X$ separating the closures of $\wt{\mf{h}}^*$ and $g\wt{\mf{h}}^*$. Thus, options~$(1)$ and~$(2)$ are mutually exclusive. If $(1)$ does not hold, we have $g\overline{\mf{h}^*}\cap\overline{\mf{h}^*}\neq\emptyset$ for every $g\in\G$, since the action has no wall inversions. Helly's Theorem implies that the closures of the sets $g\wt{\mf{h}}^*$, $g\in\G$, have the finite intersection property and, since $\overline X$ is compact, their intersection $C$ is nonempty. It is closed, convex and $\G$-invariant; since $\mf{h}$ is thick, we have $C\neq\overline X$. 
\end{proof}

The thickness assumption in Theorem~\ref{flipping} is necessary. Consider the real tree obtained from the ray $[0,+\infty)$ by attaching a real line $\ell_n$ to the point $\frac{1}{n}$ for every $n\geq 1$. Complete this to a real tree $T$ so that there exist isometries $g_n$ with axes $\ell_n$; let $\G$ be the group generated by these. The minimal subtree for $\G$ contains all the lines $\ell_n$; let $X$ be its closure in $T$. The action $\G\acts X$ does not preserve any proper, closed, convex subset of $\overline X$, but the singleton $\{0\}$ inside the original ray is a halfspace that is not flipped by $\G$.

We remark that any action on a connected median space is automatically without wall inversions by Proposition~\ref{all about halfspaces}. When $X$ is connected, we denote by $\partial_{\infty}X$ the visual boundary of the ${\rm CAT}(0)$ space arising from Theorem~\ref{CAT(0) metric}. If no proper, closed, convex subset of $X$ is $\G$-invariant, the following describes the only obstruction to flippability of halfspaces.

\begin{prop}\label{visual boundary vs Roller boundary}
If $X$ is connected, there exists a closed, convex, $\G$-invariant subset $C\cu\partial X$ if and only if $\G$ fixes a point of $\partial_{\infty}X$.
\end{prop}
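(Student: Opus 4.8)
The plan is to exploit the bi-Lipschitz-equivalent ${\rm CAT}(0)$ metric from Theorem~\ref{CAT(0) metric} and translate between the two boundaries using gate-projections and the halfspace structure. For the easy direction, suppose $\G$ fixes $\eta\in\partial_\infty X$. Pick a basepoint $o\in X$ and let $\rho$ be the ${\rm CAT}(0)$ geodesic ray from $o$ to $\eta$; the images $g\rho$ all end at $\eta$, so they stay at bounded Hausdorff distance from $\rho$. Since median-convex sets are ${\rm CAT}(0)$-convex, the halfspaces crossed by $\rho$ form a chain-like family, and I would take the ``limit ultrafilter'' $\s$ consisting of all $\mf{h}\in\mscr{H}$ with $\rho$ eventually inside $\wt{\mf{h}}$ (more precisely, the canonical ultrafilter of the point $\xi\in\overline X$ obtained as a limit of $\rho(t)$ in the compact space $\overline X$); one checks $\xi\in\partial X$ because $d(o,\rho(t))\to\infty$. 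Then $C:=\overline{\G\xi}\cap\partial X$ — or better, the gate-convex hull inside a suitable component — gives a closed, convex, $\G$-invariant subset of $\partial X$; here I would use Helly's Theorem together with compactness of $\overline X$, and the fact that $\overline X$-convexity behaves well under closure, exactly as in the proof of Theorem~\ref{flipping}.

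For the converse, suppose $C\cu\partial X$ is closed, convex and $\G$-invariant. Fix $\xi\in C$ and consider the set $\mscr{H}(o|\xi)=\s_\xi\setminus\s_o$ of halfspaces separating the basepoint $o$ from $\xi$; since $\xi\in\partial X$ this set has infinite $\wh\nu$-measure, so by Lemma~\ref{Ramsey} and Lemma~\ref{intersections of halfspaces} it contains a chain of halfspaces diverging to $\xi$, i.e.\ a UBS. Now I would feed this into the projection picture: let $x_n=\pi_{\overline{\mf{h}_n}}(o)$ for the diverging chain $(\mf{h}_n)$; by Proposition~\ref{all about gates} the sequence $(x_n)$ is a ${\rm CAT}(0)$-``quasi-geodesic'' (it is a geodesic for the median metric, hence lands in each interval, and by Theorem~\ref{CAT(0) metric} its image is ${\rm CAT}(0)$-convex and unbounded), so it determines a point $\eta(\xi)\in\partial_\infty X$. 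The key claim is that $\eta(\xi)$ does not depend on the choice of diverging chain in $\mscr{H}(o|\xi)$, up to the equivalence controlled by the structure in Section~\ref{stab xi}: two diverging chains to the same $\xi$ have inseparable closures that are comparable or transverse, and in either case Proposition~\ref{main prop on UBS's} forces their projection rays to stay at bounded Hausdorff distance, hence converge to the same visual boundary point. One then defines a $\G$-equivariant map $C\to\partial_\infty X$, $\xi\mapsto\eta(\xi)$, and takes the circumcentre (Cartan's Theorem / Bruhat--Tits in the ${\rm CAT}(0)$ setting) of the image of a $\G$-orbit — but the image might be all of $\partial_\infty X$, so instead I would argue that $\eta(\xi)$ is \emph{itself} $\G$-invariant: since $C$ is $\G$-invariant and $\xi$ can be replaced by any point of $C$, and since the map $\xi\mapsto\eta(\xi)$ is canonical, $\G\xi\cu C$ maps into a bounded subset of $\overline{\partial_\infty X}$ whose circumcentre is fixed; more cleanly, one shows $\eta(g\xi)=g\eta(\xi)$ and that $\eta$ is constant on each component-orbit, pinning down a single fixed point.

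The main obstacle I anticipate is the converse direction, specifically showing that the assignment $\xi\mapsto\eta(\xi)\in\partial_\infty X$ is well-defined and $\G$-equivariantly produces an honest \emph{fixed} point rather than merely a bounded invariant subset of $\partial_\infty X$. The well-definedness rests on the UBS machinery of Section~\ref{stab xi} (especially Lemma~\ref{symmetric almost-transversality} and parts~2--3 of Proposition~\ref{main prop on UBS's}): transverse diverging chains and nested diverging chains both yield projection rays at finite Hausdorff distance, but assembling this into a genuine continuity/constancy statement for $\eta$ on all of $C$ — so that one can invoke a centre-of-mass or circumcentre argument in the ${\rm CAT}(0)$ metric to extract a single $\G$-fixed visual boundary point — is where the real work lies. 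A clean way to finish is: the finitely many minimal UBS classes $[\Om_1],\dots,[\Om_k]$ associated to points of $C$ are permuted by $\G$; passing to the finite-index subgroup $K$ fixing each class and using the transfer characters $\chi_{\Om_i}$ from Section~\ref{stab xi} one locates, for $K$, an invariant diverging chain up to equivalence, whose projection ray gives a $K$-fixed point of $\partial_\infty X$, and then a circumcentre argument over the finite $\G$-orbit of that point yields the $\G$-fixed point.
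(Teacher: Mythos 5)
Your proposal for the converse direction (given $C\cu\partial X$, produce a fixed point of $\partial_{\infty}X$) hinges on a claim that is false: that any two diverging chains in $\s_{\xi}\setminus\s_o$ yield projection rays at bounded Hausdorff distance, ``in the transverse case as well''. Already in $X=\R^2$ with the $\ell^1$ metric, the point $\xi\in\partial X$ whose canonical ultrafilter contains all halfspaces $\{x>a\}$ and $\{y>b\}$ admits the two transverse diverging chains $\mf{h}_n=\{x>n\}$ and $\mf{k}_n=\{y>n\}$; the gate-projections of the origin are $(n,0)$ and $(0,n)$, whose rays converge to two \emph{distinct} points of $\partial_{\infty}X$. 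So $\eta(\xi)$ is not well defined, and nothing in Lemma~\ref{symmetric almost-transversality} or Proposition~\ref{main prop on UBS's} repairs this (those results control inseparable closures and measures, not visual directions). The finishing moves are also unjustified: a $\G$-orbit in $\partial_{\infty}X$ is not ``bounded'' in any sense that feeds a circumcentre argument unless you control its Tits radius or invoke finite telescopic dimension; and the transfer characters of Section~\ref{stab xi} are defined on the stabiliser of a \emph{single} point $\xi\in\partial X$, whereas your $\G$ only preserves the set $C$, so there is no reason for ``the finitely many minimal UBS classes associated to points of $C$'' to be a finite $\G$-set, nor does preserving a class $[\Om]$ obviously fix a visual limit point. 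The paper's route avoids all of this: $C$ is gate-convex, so $\s_C=\{\mf{h}\mid C\cu\wt{\mf{h}}\}\neq\emptyset$; by Theorem~\ref{flipping} no $\mf{h}\in\s_C^*$ is flippable, so (no wall inversions, Helly) the family $\{g\overline{\mf{h}}\mid\mf{h}\in\s_C,\ g\in\G\}$ has the finite intersection property; these sets are convex for the canonical ${\rm CAT}(0)$ metric, their total intersection in $X$ is empty, and since the ${\rm CAT}(0)$ metric has telescopic dimension at most $r$, Proposition~3.6 of \cite{CS} (Caprace--Lytchak) produces a canonical, hence $\G$-fixed, point of $\partial_{\infty}X$. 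If you want to keep a UBS-flavoured argument you must replace the single point $\eta(\xi)$ by this kind of filtering-family/finite-dimensionality input; as written the step fails.

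The easy direction is closer to the paper but still has a gap: the closure of $\G\xi$ is not convex, and a convex hull of Roller-boundary points can re-enter $X$ (the hull of the two ends of $\R$ is all of $\overline{\R}$), so ``$C:=\overline{\G\xi}\cap\partial X$, or its hull'' does not yet give a convex subset of $\partial X$. What makes this work in the paper is the verification that \emph{all} the points $\xi(x,\zeta)$, $x\in X$, lie in a single component $Z\cu\partial X$: convexity of the ${\rm CAT}(0)$ metric makes asymptotic rays uniformly close, the bi-Lipschitz comparison with the median metric then bounds $\wh{\nu}\left(\s(x,\zeta)\triangle\s(y,\zeta)\right)$, and $Z$ is $\G$-invariant because $\zeta$ is fixed; one then takes $C=\overline Z$, which is closed, convex, and still contained in $\partial X$. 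Your sketch names the right ingredients (asymptotic rays, ``a suitable component'') but does not carry out this same-component estimate, which is exactly the point where convexity inside $\partial X$ is secured.
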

\begin{proof}
Suppose $C\cu\partial X$ is closed, convex and $\G$-invariant. Lemma~2.6 in \cite{Fioravanti1} implies that $C$ is gate-convex, hence the set $\s_C:=\{\mf{h}\in\mscr{H}\mid C\cu\wt{\mf{h}}\}$ is nonempty as it contains all halfspaces separating $x\in X$ and $\pi_C(x)$. By Theorem~\ref{flipping}, any $\mf{h}\in\s_C^*$ is not $\G$-flippable; thus $\{g\overline{\mf{h}}\mid\mf{h}\in\s_C,~g\in\G\}$ is a collection of subsets of $X$ with the finite intersection property. These subsets are convex also with respect to the ${\rm CAT}(0)$ metric and their intersection is empty. The topological dimension of every compact subset of $X$ is bounded above by the rank of $X$, see Lemma~2.10 in \cite{Fioravanti1} and Theorem~2.2, Lemma~7.6 in \cite{Bow1}; thus, the geometric and telescopic dimensions (see \cite{CL}) of the ${\rm CAT}(0)$ metric are at most $r$. The existence of a fixed point in $\partial_{\infty}X$ now follows from Proposition~3.6 in \cite{CS}.

Conversely, suppose $\zeta\in\partial_{\infty}X$ is fixed by $\G$. The intersection of a halfspace of $X$ and a ray for the ${\rm CAT}(0)$ metric is either empty, bounded or a subray. Hence, given $x\in X$, the subset $\s(x,\zeta)\cu\mscr{H}$ of halfspaces intersecting the ray $x\zeta$ in a subray is an ultrafilter; it represents a point $\xi(x,\zeta)\in\overline X$. If $y\in X$ is another point and $x_n,y_n$ are points diverging along the rays $x\zeta$ and $y\zeta$, we have $\s(x,\zeta)\triangle\s(y,\zeta)\cu\liminf\left(\mscr{H}(x_n|y_n)\cup\mscr{H}(y_n|x_n)\right)$. For every $n$, the points $x_n$ and $y_n$ are at most as far apart as $x$ and $y$ in the ${\rm CAT}(0)$ metric; since the latter is bi-Lipschitz equivalent to the median metric on $X$, we conclude that $\wh{\nu}\left(\s(x,\zeta)\triangle\s(y,\zeta)\right)<+\infty$. Thus, $\xi(x,\zeta)$ and $\xi(y,\zeta)$ lie in the same component $Z\cu\overline X$, which is $\G$-invariant. Moreover, $\xi(x,\zeta)\not\in X$ as $\mscr{H}(x|z)\cu\s(x,\zeta)$ for every $z$ on the ray $x\zeta$; hence $Z\cu\partial X$. Finally, it is easy to show that $\overline Z\cu\partial X$.
\end{proof}

We are interested in studying actions where every thick halfspace is flippable, see Corollary~\ref{double skewering} below. To this end, we introduce the following notions of non-elementarity.

\begin{defn}\label{elementarity notion}
We say that the action $\G\acts X$ is:
\begin{itemize} 
\item \emph{Roller nonelementary} if $\G$ has no finite orbit in $\overline X$; 
\item \emph{Roller minimal} if $X$ is not a single point and $\G$ does not preserve any proper, closed, convex subset of the Roller compactification $\overline X$.
\item \emph{essential} if $\G$ does not preserve any proper, closed, convex subset of the median space $X$.
\end{itemize}
\end{defn}

The action of $\G$ is Roller elementary if and only if a finite-index subgroup of $\G$ fixes a point of $\overline X$; thus, Roller nonelementarity passes to finite index subgroups. This fails for Roller minimality. For instance, consider the action of $\G=\Z^2\rtimes\Z/4\Z$ on the the standard cubulation of $\R^2$; the action of $H:=\Z^2$ is by translations, whereas $\Z/4\Z$ rotates around the origin. The action of $\G$ is Roller minimal, but $H$ has four fixed points in the Roller compactification.

The same example shows that Roller minimal actions might not be Roller nonelementary. Roller nonelementary actions need not be Roller minimal either: Let $T$ be the Cayley graph of a nonabelian free group $F$ and consider the product action of $F\x\Z$ on $T\x\R$. It is Roller nonelementary but leaves invariant two components of the Roller boundary, both isomorphic to $T$.

By Proposition~\ref{visual boundary vs Roller boundary}, an essential action $\G\acts X$ is Roller minimal if and only if no point of the visual boundary $\partial_{\infty}X$ is fixed by $\G$. In particular, an essential action with no finite orbits in $\partial_{\infty}X$ is always Roller minimal and Roller nonelementary. 

The following is immediate from Theorem~\ref{flipping} and the proof of the Double Skewering Lemma in the introduction of \cite{CS}.

\begin{cor}\label{double skewering}
If $\G\acts X$ is Roller minimal and without wall inversions, every thick halfspace is $\G$-flippable. Moreover, if $\mf{h}\cu\mf{k}$ are thick halfspaces, there exists $g\in\G$ such that $g\mf{k}\subsetneq\mf{h}\cu\mf{k}$ and $d(g\mf{k},\mf{h}^*)>0$.
\end{cor}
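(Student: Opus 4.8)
The first claim is an immediate consequence of Theorem~\ref{flipping}. Since the action is without wall inversions (a standing hypothesis of the corollary), for any thick halfspace $\mf{h}$ one of the two alternatives of Theorem~\ref{flipping} must hold; but alternative~(2) would produce a proper, closed, convex, $\G$-invariant subset of $\overline X$, which is forbidden by Roller minimality. Hence alternative~(1) holds and $\mf{h}$ is $\G$-flippable. So the plan for the first sentence is just: quote Theorem~\ref{flipping} and use Roller minimality to discard the bad case.

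For the ``double skewering'' statement, the plan is to bootstrap from flippability by composing two flips, using the nesting $\mf{h}\cu\mf{k}$ to line them up. Note first that $\mf{k}^*$ is thick because $\mf{k}$ is, and that $\mf{k}^*\cu\mf{h}^*$. By the first part of the corollary there is $g_1\in\G$ flipping $\mf{h}$, i.e.\ $g_1\mf{h}^*\subsetneq\mf{h}$ with $d(g_1\mf{h}^*,\mf{h}^*)>0$; applying $g_1$ to $\mf{k}^*\cu\mf{h}^*$ gives $g_1\mf{k}^*\cu g_1\mf{h}^*\subsetneq\mf{h}$, so $g_1$ drags the complement $\mf{k}^*$ strictly inside $\mf{h}$. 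Again by the first part, pick $\phi\in\G$ flipping the thick halfspace $\mf{k}^*$; unwinding the definition of ``flips'' this says $d(\phi\mf{k},\mf{k})>0$ and $\phi\mf{k}\neq\mf{k}^*$, whence $\phi\mf{k}\subsetneq\mf{k}^*$.

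Now set $g:=g_1\phi$. Then $g\mf{k}=g_1(\phi\mf{k})\subsetneq g_1\mf{k}^*\subsetneq\mf{h}\cu\mf{k}$, which is the required chain of inclusions. For the metric statement, observe that $g\mf{k}\cu g_1\mf{k}^*\cu g_1\mf{h}^*$ and that $d(A,B)\geq d(A',B)$ whenever $A\cu A'$; since $d(g_1\mf{h}^*,\mf{h}^*)>0$ we conclude $d(g\mf{k},\mf{h}^*)\geq d(g_1\mf{h}^*,\mf{h}^*)>0$.

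I do not expect a genuine obstacle here: all the nontrivial content is already packaged into Theorem~\ref{flipping} (and, upstream of that, Helly's theorem together with compactness of $\overline X$). The only thing to be careful about is bookkeeping, namely keeping track of which halfspace gets complemented at each of the two steps and of the fact that shrinking the left-hand set cannot decrease a distance, so that the positivity of distances survives both compositions.
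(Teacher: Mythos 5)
Your proposal is correct and follows essentially the same route as the paper, which deduces the corollary from Theorem~\ref{flipping} together with the two-flip composition argument of the Double Skewering Lemma in \cite{CS}; your only (harmless) variation is flipping $\mf{k}^*$ directly and composing as $g_1\phi$ instead of flipping the translate $g_1\mf{k}^*$, which is the same argument up to conjugation. The bookkeeping, including the deduction $d(g\mf{k},\mf{h}^*)\geq d(g_1\mf{h}^*,\mf{h}^*)>0$ from $g\mf{k}\cu g_1\mf{h}^*$, is accurate.
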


One can usually reduce to studying a Roller minimal action by appealing to the following result. 

\begin{prop}\label{Roller elementary vs strongly so}
Either $\G\acts\overline X$ fixes a point or there exist a $\G$-invariant component $Z\cu\overline X$ and a $\G$-invariant, closed, convex subset ${C\cu Z}$ such that $\G\acts C$ is Roller minimal.
\end{prop}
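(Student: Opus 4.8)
The plan is to run a Zorn's Lemma / dimension-reduction argument on the poset of $\G$-invariant, closed, convex subsets of $\overline X$, using at each stage the structure of components from Proposition~\ref{components} to drop rank whenever we are forced into the boundary. First I would let $\mc{S}$ be the collection of all nonempty, closed, convex, $\G$-invariant subsets $C\cu\overline X$ with the property that $C$ is contained in a single component of $\overline X$ (equivalently, the extended metric restricted to $C$ is finite). Since $\G\acts\overline X$ has no global fixed point, $\overline X$ itself is not a single point, and I claim $\mc{S}$ is nonempty: either $X$ (which is a full component) already works as a candidate from which to extract a smaller invariant set, or — if $\G$ has no invariant subset meeting $X$ nontrivially — one uses the following reduction step to pass to the boundary.

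The core inductive step is: given a $\G$-invariant component $Z\cu\overline X$ and a $\G$-invariant, closed, convex $C\cu\overline Z$, suppose $\G\acts C$ is \emph{not} Roller minimal. Then there is a proper, closed, convex, $\G$-invariant $C_1\subsetneq C$ (inside $\overline{C}=$ closure of $C$ in $\overline X$, which by the remarks after Proposition~\ref{components} is canonically $\overline{C}$ as a Roller compactification once $C$ is closed and convex). Now either $C_1$ meets the top component of $\overline{C}$ — i.e. $C_1\cap C\neq\emptyset$ — in which case $C_1\cap C$ is a smaller closed convex $\G$-invariant subset of the \emph{same} rank and we must iterate within rank; or $C_1\cu\partial C$, in which case $C_1$ lies in a union of components of strictly smaller rank (Proposition~\ref{components}). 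To handle the ``iterate within rank'' alternative without an infinite descent, I would instead work directly with the partial order on $\mc{S}$ by reverse inclusion and apply Zorn: any chain $\{C_\alpha\}$ of nonempty closed convex $\G$-invariant subsets has nonempty intersection by compactness of $\overline X$ (Helly's Theorem plus finite intersection property, exactly as in the proof of Theorem~\ref{flipping}), and the intersection is again closed, convex, $\G$-invariant; so there is a minimal element $C_{\min}$. If $C_{\min}$ is a single point we contradict the hypothesis that $\G$ fixes no point of $\overline X$; otherwise $\G\acts C_{\min}$ preserves no proper closed convex subset of $\overline{C_{\min}}$, which is precisely Roller minimality, and setting $Z$ to be the component containing $C_{\min}$ (it is contained in one component because... ) finishes the proof — provided $C_{\min}$ does lie in a single component.

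The main obstacle, and the place where real work is needed, is exactly that last parenthetical: a priori a minimal $\G$-invariant closed convex subset of $\overline X$ could be spread across several components, and ``Roller minimal'' as defined requires $\G\acts C$ with $C$ a subset of a component (note Definition~\ref{elementarity notion} speaks of the median space $C$ and its Roller compactification $\overline X$-analogue). To deal with this I would argue by induction on $\operatorname{rank}(X)$. If $\G$ fixes no point of $\overline X$, first apply the Zorn argument to get a minimal nonempty closed convex $\G$-invariant $C\cu\overline X$; it is not a point. If $C\cu X$ (the top component), then $\G\acts C$ is essential and, being rank $\le r$ with no invariant proper closed convex subset of $\overline C$, is Roller minimal with $Z=C$'s ambient component $=$ the $X$-component — done. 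Otherwise $C$ meets $\partial X$; pick a component $Z_0\cu\partial X$ with $C\cap Z_0\neq\emptyset$, and consider the gate-projection $\pi_{Z_0}\colon\overline X\ra\overline{Z_0}$ from Proposition~\ref{components}. Its image $\pi_{Z_0}(C)$ is closed, convex, and $\operatorname{Stab}_\G(Z_0)$-invariant; a minimality/Helly argument shows a finite-index subgroup, or rather the full $\G$ after noting $Z_0$ must be $\G$-invariant by minimality of $C$ (any $\G$-translate of $Z_0$ also meets $C$, and distinct components are disjoint, forcing $gZ_0=Z_0$), preserves it. Then $\operatorname{rank}(Z_0)\le r-1$, so by the inductive hypothesis applied to the (complete, finite rank) median space $Z_0$ with the action $\G\acts Z_0$: either $\G$ fixes a point of $\overline{Z_0}\cu\overline X$ — but that contradicts our hypothesis — or we get the desired $\G$-invariant component and Roller minimal piece inside $\overline{Z_0}$, hence inside $\overline X$. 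The base case $r=0$ is vacuous. The one genuinely delicate verification is that $Z_0$ can be taken $\G$-invariant and that the projected/restricted action has no fixed point in $\overline{Z_0}$ — both follow from minimality of $C$ together with the fact that $\G$ permutes components and the closure of a component is its own Roller compactification.
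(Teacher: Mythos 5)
There is a genuine gap, and it sits exactly at the point you flag as delicate. Your Zorn argument and the reduction of Roller minimality to minimality of an invariant closed convex set are fine and match the paper's first step. But your mechanism for forcing the chosen component to be $\G$-invariant does not work: you pick \emph{any} component $Z_0$ meeting the minimal set $C$ and argue that, since $gZ_0$ also meets $C=gC$ and distinct components are disjoint, we must have $gZ_0=Z_0$. That inference is a non sequitur: nothing prevents $C$ from meeting many different components, which $\G$ then permutes, so ``$gZ_0$ is a component meeting $C$'' does not yield $gZ_0=Z_0$. This is precisely the problem the paper solves by a canonical choice: by Corollary~4.31 in \cite{Fioravanti1} there is a (unique) component $Z$ of \emph{maximal rank} among those intersecting the minimal set $K$, and it is the canonicity of this choice that makes $Z$ automatically $\G$-invariant. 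With that in hand no induction on rank is needed: minimality of $K$ gives $K\cu\overline Z$ (since $\overline Z\cap K$ is nonempty, closed, convex and invariant), one sets $C:=K\cap Z$, and minimality again gives $K=\overline C$, identified with the Roller compactification of $C$; then either $C$ is a point (contradicting the hypothesis) or $\G\acts C$ is Roller minimal. Without a statement like Corollary~4.31, your proof has no way to produce an invariant component, which is the heart of the proposition.

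A secondary issue: even granting invariance of $Z_0$, your inductive step applies the statement to the median space $Z_0$ and returns a $\G$-invariant component $Z'$ of $\overline{Z_0}$ together with a Roller minimal piece $C'\cu Z'$. To conclude you need $Z'$ to be (contained in) a component of $\overline X$ and $C'$ to be closed and convex there; the compatibility of the component decomposition of $\overline{Z_0}$ with that of $\overline X$ is not among the facts quoted in the preliminaries and would need a separate verification (the identification $\overline{Z_0}\simeq$ closure of $Z_0$ in $\overline X$ from Proposition~\ref{components} identifies the spaces, but you still must check that finiteness of distance in $\overline{Z_0}$ agrees with finiteness of distance in $\overline X$). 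The paper's route avoids this entirely by never leaving $\overline X$: it works with the single invariant maximal-rank component and the minimal set $K$ itself.
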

\begin{proof}
Let $K\cu\overline X$ be a minimal, nonempty, closed, $\G$-invariant, convex subset; it exists by Zorn's Lemma. Corollary~4.31 in \cite{Fioravanti1} provides a component $Z\cu\overline X$ of maximal rank among those that intersect $K$. Since $Z$ must be $\G$-invariant, we have $\overline Z\cap K= K$ by the minimality of $K$, i.e.~$K\cu\overline Z$. The set $C:=K\cap Z$ is nonempty, convex, $\G$-invariant and closed in $Z$, since the inclusion $Z\hookrightarrow\overline X$ is continuous. By minimality of $K$, we have $K=\overline C$ and the latter can be identified with the Roller compactification of $C$ (see Lemma~4.8 in \cite{Fioravanti1}). We conclude that either $\G\acts C$ is Roller minimal or $C$ is a single point.
\end{proof}

\begin{cor}\label{Roller elementary vs strongly so 2}
If $\G\acts X$ is Roller nonelementary, there exist a $\G$-in\-variant component $Z\cu\overline X$ and a $\G$-invariant, closed, convex subset $C\cu Z$ such that $\G\acts C$ is Roller minimal and Roller nonelementary.
\end{cor}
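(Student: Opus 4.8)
The plan is to obtain this as an immediate strengthening of Proposition~\ref{Roller elementary vs strongly so}; the only work is to upgrade the conclusion ``Roller minimal'' to ``Roller minimal and Roller nonelementary''. Since $\G\acts X$ is Roller nonelementary, $\G$ has no finite orbit in $\overline X$, and in particular no fixed point. Hence the first alternative of Proposition~\ref{Roller elementary vs strongly so} is ruled out, and we obtain a $\G$-invariant component $Z\cu\overline X$ together with a $\G$-invariant, closed, convex subset $C\cu Z$ such that $\G\acts C$ is Roller minimal.

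It then remains only to check that $\G\acts C$ is Roller nonelementary. The key observation is that $\overline C$ embeds into $\overline X$ as a \emph{closed}, $\G$-invariant subset. This is already visible in the proof of Proposition~\ref{Roller elementary vs strongly so}: there, $C$ is constructed so that its Roller compactification $\overline C$ coincides with the minimal nonempty closed $\G$-invariant convex subset $K\cu\overline X$ produced by Zorn's Lemma, and $K$ is closed in $\overline X$ by construction. (Alternatively one argues directly: $C$ is closed and convex in the component $Z$, whose closure in $\overline X$ is canonically identified with $\overline Z$ by Proposition~\ref{components}; inside $\overline Z$ the closure of the closed convex set $C$ is canonically $\overline C$, and since $\overline Z$ is closed in $\overline X$, so is $\overline C$.)

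With this in hand, suppose for contradiction that $\G$ had a finite orbit in $\overline C$. Via the closed embedding $\overline C\hookrightarrow\overline X$ above, this would be a finite $\G$-orbit in $\overline X$, contradicting the hypothesis that $\G\acts X$ is Roller nonelementary. Therefore $\G$ has no finite orbit in $\overline C$, i.e.\ $\G\acts C$ is Roller nonelementary; combined with Roller minimality from the previous step, this proves the corollary.

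I do not anticipate a real obstacle here: the statement is essentially a bookkeeping consequence of Proposition~\ref{Roller elementary vs strongly so}. The one point requiring care is ensuring that $\overline C$ sits inside $\overline X$ as a closed subset, so that finite orbits transfer back; but this identification is already recorded in Section~\ref{prelims} (closures of closed convex sets are Roller compactifications) and in the proof of Proposition~\ref{Roller elementary vs strongly so} itself.
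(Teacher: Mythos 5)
Your proof is correct and is essentially the argument the paper intends (the corollary is stated without a written proof, as an immediate consequence of Proposition~\ref{Roller elementary vs strongly so}): rule out the fixed-point alternative using Roller nonelementarity, and transfer any finite orbit in $\overline C$ back to $\overline X$ via the identification of $\overline C$ with the closed, $\G$-invariant set $K$ from the proposition's proof. No gaps.
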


We remark that the following is immediate from part~$(5)$ of Proposition~\ref{properties of X'}:

\begin{lem}\label{RNE for X'}
The action $\G\acts X$ is Roller elementary if and only if the action $\G\acts X'$ is.
\end{lem}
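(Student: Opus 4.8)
The plan is to derive Lemma~\ref{RNE for X'} directly from part~5 of Proposition~\ref{properties of X'}, which gives a canonical monomorphism of median algebras $\overline X\hookrightarrow\overline{X'}$ extending the isometric embedding $X\hookrightarrow X'$, equivariant with respect to the induced inclusion $\text{Isom}~X\hookrightarrow\text{Isom}~X'$. Recall that $\G\acts X$ is Roller elementary iff a finite-index subgroup $\G_0\leq\G$ fixes a point of $\overline X$; the same characterisation applies to $\G\acts X'$ and $\overline{X'}$. Since it suffices to check the statement on a finite-index subgroup (and Roller elementarity depends only on whether \emph{some} finite-index subgroup has a global fixed point), the task reduces to relating fixed points of $\G_0$ on $\overline X$ and on $\overline{X'}$.

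One direction is immediate: if $\G_0$ fixes $\xi\in\overline X$, then, viewing $\overline X$ as a $\G$-invariant subset of $\overline{X'}$ via the monomorphism of part~5, the point $\xi$ is also fixed by $\G_0$ acting on $\overline{X'}$, so $\G\acts X'$ is Roller elementary. For the converse, suppose $\G_0$ fixes a point $\eta\in\overline{X'}$. If $\eta\in\overline X$ we are done, so assume $\eta\in\overline{X'}\setminus\overline X$. Here I would invoke the last clause of part~5 of Proposition~\ref{properties of X'}: there is a canonical cube $\{-1,0,1\}^k\hookrightarrow\overline{X'}$ centred at $\eta$ with $\{-1,1\}^k\hookrightarrow\overline X$, for some $1\leq k\leq\text{rank}(X)$. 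Canonicity forces this cube to be $\G_0$-invariant (since $\eta$ is $\G_0$-fixed and the cube is determined by $\eta$), hence $\G_0$ permutes its $2^k$ vertices, all of which lie in $\overline X$. This is a finite $\G_0$-orbit inside $\overline X$, so a further finite-index subgroup of $\G_0$ — still of finite index in $\G$ — fixes one of these vertices, showing $\G\acts X$ is Roller elementary.

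The only point requiring a little care is the claim that the canonical cube is $\G_0$-invariant; I expect this to be the main (though minor) obstacle. It follows from the word ``canonical'' in the statement of part~5, which means the construction of the cube $\{-1,0,1\}^k\hookrightarrow\overline{X'}$ from $\eta$ commutes with isometries of $X'$ (in particular with those coming from $\text{Isom}~X$). Concretely, the cube is built from $\eta$ exactly as in the proof of Lemma~\ref{cubes in X'} via the set $W(\eta)$ of walls of $X$ whose two sides both ``reappear'' in the canonical ultrafilter of $\eta$ on $\mscr{H}(X')$; since each $g\in\G_0$ fixes $\eta$ and acts on the pocset $\mscr{H}(X')$ compatibly with its action on $\mscr{H}(X)$, it preserves $W(\eta)$ and hence the associated cube and its vertex set. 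This completes the argument; everything else is formal bookkeeping with the definitions of Roller elementarity and the compatibility of the two Roller compactifications supplied by Proposition~\ref{properties of X'}.
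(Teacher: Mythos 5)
Your argument is correct and is exactly the route the paper intends: the paper gives no written proof, stating only that the lemma is immediate from part~5 of Proposition~\ref{properties of X'}, and your write-up simply spells out that intended argument (equivariant embedding $\overline X\hookrightarrow\overline{X'}$ for one direction, and the canonical $\G_0$-invariant cube with vertices in $\overline X$ giving a finite orbit for the other). The justification of the cube's invariance via the construction in Lemma~\ref{cubes in X'} is the right reading of ``canonical,'' so nothing is missing.
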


We now proceed to obtain an analogue of Proposition~5.1 from \cite{CS}, namely Theorem~\ref{strong separation} below. We say that $\mf{h},\mf{k}\in\mscr{H}$ are \emph{strongly separated} if $\overline{\mf{h}}\cap\overline{\mf{k}}=\emptyset$ and no $\mf{j}\in\mscr{H}$ is transverse to both $\mf{h}$ and $\mf{k}$.

\begin{lem}\label{slight reformulation of SS}
Halfspaces with disjoint closures are strongly separated if and only if no thick halfspace is transverse to both.
\end{lem}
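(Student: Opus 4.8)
The statement is a clean reformulation, so the proof should be short. One direction is trivial: if no $\mf{j}\in\mscr{H}$ is transverse to both $\mf{h}$ and $\mf{k}$, then in particular no thick halfspace is. The content is the converse: assuming $\overline{\mf{h}}\cap\overline{\mf{k}}=\emptyset$ and that no \emph{thick} halfspace is transverse to both $\mf{h}$ and $\mf{k}$, I must upgrade this to rule out \emph{all} halfspaces transverse to both. So suppose for contradiction that some $\mf{j}\in\mscr{H}$ is transverse to both $\mf{h}$ and $\mf{k}$; by the previous paragraph $\mf{j}$ is necessarily non-thick, i.e.\ $\mf{j}\in\mscr{H}^{\x}$, so one of $\mf{j},\mf{j}^*$ has empty interior. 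Since transversality is $*$-symmetric, I may freely replace $\mf{j}$ by $\mf{j}^*$, so WLOG $\mf{j}$ itself has empty interior; by Proposition~\ref{all about halfspaces}, a halfspace that is not open is closed, and an empty-interior halfspace is certainly not open, hence $\mf{j}$ is closed. (Actually, what I really want is that $\wh{\nu}(\{\mf{j}\})=0$ fails would be the thick case; here $\mf{j}$ is an atom.)

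\textbf{Main step.} The idea is to replace the non-thick $\mf{j}$ with a nearby thick halfspace that is still transverse to both $\mf{h}$ and $\mf{k}$, contradicting the hypothesis. Since $\wh{\nu}$-almost every halfspace is thick and transversality is an "open" condition in the pocset, I expect the four nonempty intersections $\mf{j}^{(\pm)}\cap\mf{h}^{(\pm)}$ and $\mf{j}^{(\pm)}\cap\mf{k}^{(\pm)}$ to persist under small perturbation. Concretely: transversality of $\mf{j}$ and $\mf{h}$ means all four of $\mf{j}\cap\mf{h},\ \mf{j}\cap\mf{h}^*,\ \mf{j}^*\cap\mf{h},\ \mf{j}^*\cap\mf{h}^*$ are nonempty; pick points witnessing each, and similarly for $\mf{j}$ and $\mf{k}$. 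Now $\mf{j}$ is an atom for $\wh{\nu}$ (clopen halfspace); I want to find a thick halfspace $\mf{j}'$ with the same transversality pattern. The natural candidate is obtained from the barycentric-subdivision machinery of Section~\ref{splitting the atom}: in $X'$ the atom $\mf{j}$ splits into $\mf{j}_+,\mf{j}_-$, but that changes the ambient space. Staying inside $X$: since $\mf{j}$ is clopen, consider halfspaces $\mf{j}'$ with $\overline{\mf{j}}\cap\overline{(\mf{j}')^*}\ne\emptyset$ that "cut just past" $\mf{j}$ on the interior side — more precisely, take a point $z$ in the (nonempty, since $\mf{j}$ non-thick and $X$ has the relevant structure) set realizing transversality and separate it suitably. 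The cleanest route: among halfspaces transverse to both $\mf{h}$ and $\mf{k}$, if there is one at all, use that $\mscr{H}^{\x}$ is $\wh{\nu}$-null together with the fact (from Section~\ref{prelims}) that between any two points there are positive-measure, hence thick, halfspaces, to slide $\mf{j}$ to a thick one sharing its transversality with $\mf{h}$ and $\mf{k}$.

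\textbf{Where the difficulty lies.} The real work is making "slide $\mf{j}$ to a thick halfspace with the same transversality pattern" rigorous. I expect to argue as follows: transversality of $\mf{j}$ with $\mf{h}$ is equivalent to $\mscr{H}(\mf{j}^*\cap\mf{h}^*\mid \mf{j}\cap\mf{h})$ type conditions being witnessed; pick a pair of gates or just points $p\in\mf{j}^*\cap\mf{h}\cap\mf{k}$, $q\in\mf{j}\cap\mf{h}^*\cap\mf{k}^*$ after checking (via Helly's Theorem applied to the four convex sets $\mf{j},\mf{h}$ or $\mf{h}^*$, $\mf{k}$ or $\mf{k}^*$ in the appropriate $2^2$-pattern, using that each pairwise intersection is nonempty because $\mf{j}$ is transverse to each of $\mf{h},\mf{k}$ and $\overline{\mf{h}}\cap\overline{\mf{k}}=\emptyset$ forces $\mf{h},\mf{k}$ themselves not transverse but nested-after-orientation) that the relevant triple intersections are nonempty. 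Then $\mscr{H}(p\mid q)$ has positive measure, so contains a thick halfspace $\mf{j}'$, and one checks $\mf{j}'$ is transverse to both $\mf{h}$ and $\mf{k}$: indeed $p,q$ were chosen in the correct quadrants so that $\mf{j}'\cap\mf{h},\mf{j}'\cap\mf{h}^*,\mf{j}'^*\cap\mf{h},\mf{j}'^*\cap\mf{h}^*$ are all nonempty, and likewise for $\mf{k}$. This contradicts the assumption, completing the proof. The main obstacle is the combinatorial bookkeeping of which quadrants $p$ and $q$ must lie in; Helly's Theorem is the tool that guarantees each required intersection of convex sets is nonempty, using the given pairwise transversalities plus the disjoint-closure hypothesis on $\mf{h},\mf{k}$.
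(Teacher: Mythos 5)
Your reduction is sound up to a point: the forward direction is indeed trivial, and replacing $\mf{j}$ by $\mf{j}^*$ so that $\mf{j}$ has empty interior (hence is closed, by Proposition~\ref{all about halfspaces}) is the right normalisation. (The parenthetical claim that $\mf{j}$ is then an atom is false -- atoms are the \emph{clopen} halfspaces, and a nonempty halfspace with empty interior is closed but not open -- but you do not use it.) The gap is in the main step. First, the witness point $p\in\mf{j}^*\cap\mf{h}\cap\mf{k}$ does not exist: since $\overline{\mf{h}}\cap\overline{\mf{k}}=\emptyset$ we have $\mf{h}\cap\mf{k}=\emptyset$, so Helly cannot produce such triple intersections (the pairwise hypothesis already fails). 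Second, and more seriously, even with correctly placed points, separating \emph{two points} $p,q$ is not enough: a thick halfspace $\mf{j}'\in\mscr{H}(p|q)$ is only guaranteed to have the quadrant containing $p$ meet $\mf{j}'^*$ and the quadrant containing $q$ meet $\mf{j}'$, i.e.\ two of the four required intersections for each of $\mf{h}$ and $\mf{k}$. Nothing prevents $\mscr{H}(p|q)$ from consisting (up to a null set) of halfspaces nested with $\mf{h}$ or $\mf{k}$ -- for instance $\mf{h}^*$ itself typically lies in $\mscr{H}(p|q)$ for your choice of quadrants -- so "one checks $\mf{j}'$ is transverse to both" does not go through.

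The missing idea is to force the new thick halfspace to \emph{contain} $\mf{j}$, so that it inherits $\mf{j}\cap\mf{h}\neq\emptyset$, $\mf{j}\cap\mf{h}^*\neq\emptyset$, $\mf{j}\cap\mf{k}\neq\emptyset$, $\mf{j}\cap\mf{k}^*\neq\emptyset$, while its complement contains a point of $\mf{h}$ and a point of $\mf{k}$ (the latter lies in $\mf{h}^*$, giving the remaining quadrants). This is what the paper does: pick $y_1\in\mf{h}\cap\mf{j}^*$ and $y_2\in\mf{k}\cap\mf{j}^*$, note that $I:=I(y_1,y_2)\cu\mf{j}^*$ by convexity of $\mf{j}^*$, and use that $\mf{j}$ is closed to get $d(I,\mf{j})>0$; a pair of gates $(x_1,x_2)$ for $(I,\mf{j})$ then gives $\mscr{H}(x_1|x_2)=\mscr{H}(I|\mf{j})$ of positive measure, and any thick halfspace in it contains $\mf{j}$ and misses $y_1,y_2$, hence is transverse to both $\mf{h}$ and $\mf{k}$. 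Note also that the analytic point $d(I,\mf{j})>0$ (which your two-point shortcut never confronts, and which is where closedness of $\mf{j}$ is genuinely used) is an essential ingredient: without positive distance the separating family could be null and contain no thick halfspace. As written, your argument does not establish the converse direction.
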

\begin{proof}
Suppose that $\overline{\mf{h}_1}\cap\overline{\mf{h}_2}=\emptyset$ and a nowhere-dense halfspace $\mf{k}$ is transverse to both $\mf{h}_i$. Pick points $y_i\in\mf{h}_i\cap\mf{k}^*$ and observe that $I:=I(y_1,y_2)\cu\mf{k}^*$; since $\mf{k}$ is closed by Proposition~\ref{all about halfspaces}, we have $d(I,\mf{k})>0$. Thus, if $(x_1,x_2)$ is a pair of gates for $(I,\mf{k})$, the set $\mscr{H}(x_1|x_2)$ has positive measure and it contains a thick halfspace $\mf{k}'$. It is easy to see that $\mf{k}'$ is transverse $\mf{h}_1$ and $\mf{h}_2$.
\end{proof}

\begin{thm}\label{strong separation}
If $\G\acts X$ is Roller minimal and without wall inversions, the following are equivalent:
\begin{enumerate}
\item $X$ is irreducible;
\item there exists a pair of strongly separated halfspaces;
\item for every $\mf{h}\in\mscr{H}\setminus\mscr{H}^{\times}$, there exist halfspaces $\mf{h}'\cu\mf{h}\cu\mf{h}''$ so that $\mf{h}'$ and $\mf{h}''^*$ are thick and strongly separated.
\end{enumerate}
\end{thm}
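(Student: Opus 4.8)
The plan is to prove the cycle of implications $(3)\Rightarrow(2)\Rightarrow(1)\Rightarrow(3)$; the implications $(3)\Rightarrow(2)$ is trivial (a strongly separated pair exists by applying (3) to any $\mf{h}\in\mscr{H}\setminus\mscr{H}^\times$, which is nonempty since $X$ is not a point), and $(2)\Rightarrow(1)$ is essentially Theorem~D already stated in the excerpt: if $X$ split nontrivially, Corollary~\ref{products} would give a transverse decomposition $\mscr{H}=\mscr{H}_1\sqcup\mscr{H}_2$ with both parts nonempty, so any $\mf{h}\in\mscr{H}_1$ and any $\mf{k}\in\mscr{H}_2$ would be transverse, and then no pair could be strongly separated (every $\mf{h}\in\mscr{H}_1$ is transverse to every $\mf{k}\in\mscr{H}_2$, and a third halfspace transverse to both a strongly-separated pair is prevented). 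So the real content is $(1)\Rightarrow(3)$.

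For $(1)\Rightarrow(3)$, fix a thick halfspace $\mf{h}$ (by Lemma~\ref{slight reformulation of SS} it suffices to control thick halfspaces). The idea, following Proposition~5.1 of \cite{CS}, is to use double skewering (Corollary~\ref{double skewering}) repeatedly to push deep into $\mf{h}$ and far out of $\mf{h}$, producing nested thick halfspaces $\mf{h}'\subsetneq\mf{h}\subsetneq\mf{h}''$ with $d(\mf{h}',\mf{h}''^*)>0$, and then argue that if the pair $(\mf{h}',\mf{h}''^*)$ fails to be strongly separated we can iterate to get an infinite configuration contradicting either irreducibility or finite rank. Concretely: suppose some thick halfspace $\mf{k}_1$ is transverse to both $\mf{h}'$ and $\mf{h}''^*$. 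Using Roller minimality and Corollary~\ref{double skewering}, skewer $\mf{k}_1$ by an element $g_1\in\G$ so that $g_1\mf{k}_1\subsetneq\mf{k}_1$ at positive distance; the translates $g_1^n\mf{k}_1$ then form a chain, all still transverse to $\mf{h}'$ and $\mf{h}''^*$ (transversality is preserved under the nesting since $\mf{h}',\mf{h}''^*$ are "wide enough"). Now iterate: at each stage, any thick halfspace transverse to \emph{all} the halfspaces produced so far can itself be skewered, and one shows these skewering directions must be "independent" — a new transverse direction cannot be a power of an old one, because otherwise the chain would not remain transverse to the fixed pair. Since pairwise-transverse halfspaces number at most $r=\text{rank}(X)$, after finitely many steps we arrive at a thick halfspace $\mf{j}$ (a deep translate of $\mf{h}$ obtained by composing the skewerings) such that \emph{no} thick halfspace is transverse to both $\mf{j}$ and its complement's analogue — giving the strongly separated pair by Lemma~\ref{slight reformulation of SS}. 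Alternatively, and perhaps more cleanly, one argues by contradiction: if \emph{no} pair of strongly separated halfspaces exists, then every thick halfspace is transverse to arbitrarily nested thick halfspaces, and a Ramsey/Dilworth argument (Lemma~\ref{Ramsey}, bounded rank) lets one build a $*$-invariant transverse partition $\mscr{H}=\mscr{H}_1\sqcup\mscr{H}_2\sqcup\mscr{K}$ with $\mscr{K}$ null, contradicting irreducibility via part~3 of Corollary~\ref{products}. This is the route I would ultimately take, since it directly recovers the product decomposition from Theorem~D.

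In more detail for the contradiction route: assuming no two halfspaces are strongly separated, define an equivalence-type relation on thick halfspaces by "eventual transversality under skewering" and use Roller minimality to show the set of thick halfspaces transverse to a given one is "large" in the measure $\wh\nu$. The key construction: pick any thick $\mf{h}_0$; since it is not strongly separated from any of its deep skewered translates, inductively build a maximal pairwise-transverse family — bounded by $r$ — and take $\mscr{H}_1$ to be the inseparable-closure-type saturation of one skewering direction and $\mscr{H}_2$ the "transverse complement". One checks $\mscr{H}_1,\mscr{H}_2$ are $*$-invariant (skewering and transversality are $*$-invariant notions), measurable (inseparable sets are measurable, and the relevant sets are built from countably many halfspace-intervals), transverse to each other, and cover $\mscr{H}$ up to the null set $\mscr{H}^\times$ of non-thick halfspaces. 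Then Corollary~\ref{products} forces a nontrivial splitting, contradicting (1).

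\textbf{Main obstacle.} The hard part will be the bookkeeping in the iterative skewering argument — specifically, showing that when we skewer a thick halfspace $\mf{k}$ transverse to a fixed nested pair, the \emph{entire chain} $(g^n\mf{k})_n$ stays transverse to that pair, and that genuinely new transverse directions keep appearing until the rank bound is saturated. This requires carefully tracking which halfspaces remain transverse under the nesting relations $\mf{h}'\subsetneq\mf{h}\subsetneq\mf{h}''$ and invoking Proposition~\ref{all about halfspaces} to keep all distances positive (so that the relevant sets of halfspaces have positive measure and the skewering lemma Corollary~\ref{double skewering} applies at each stage). A secondary technical point is verifying measurability and $*$-invariance of the partition $\mscr{H}=\mscr{H}_1\sqcup\mscr{H}_2\sqcup\mscr{K}$ in the contradiction route; this should follow from the fact that inseparable sets are measurable together with the structure of $\mscr{H}^\times$, but it needs to be checked against the precise definitions in \cite{Fioravanti1}.
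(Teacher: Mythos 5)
Your treatment of the easy implications is fine, but the route you say you would ultimately take for the crucial implication does not prove it. Assuming ``no pair of strongly separated halfspaces exists'' and deriving a product decomposition only establishes $\neg(2)\Rightarrow\neg(1)$, i.e.\ $(1)\Rightarrow(2)$. Combined with your cycle $(3)\Rightarrow(2)\Rightarrow(1)$, statement (3) is then never derived from anything: (3) quantifies over \emph{every} $\mf{h}\in\mscr{H}\setminus\mscr{H}^{\times}$ and is genuinely stronger than (2). The contradiction hypothesis one must work with is local: for one fixed thick $\mf{h}$, no nested pair $\mf{h}'\cu\mf{h}\cu\mf{h}''$ with $\mf{h}',\mf{h}''^*$ thick is strongly separated. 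Moreover, your proposed shortcut from ``no strongly separated pairs'' to a $*$-invariant transverse partition via Ramsey/Dilworth is not substantiated; in the actual argument a transverse partition of $\mscr{H}$ only appears in a much more special configuration, namely in the analogue of Lemma~5.2 of \cite{CS}: if $\mf{h},\mf{k}$ are thick and transverse and \emph{no sector} $\mf{h}^{\pm}\cap\mf{k}^{\pm}$ contains a thick halfspace, then the set $\mc{H}'$ of thick halfspaces comparable to some halfspace non-transverse to $\mf{h}$, together with its transverse complement and the null set $\mscr{H}^{\times}$, gives a partition contradicting irreducibility via Corollary~\ref{products}. That sector lemma is where irreducibility enters the proof of $(1)\Rightarrow(3)$, and it is absent from your outline.

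Your first (Caprace--Sageev style) sketch is closer to the correct argument but contains an unjustified step: after skewering $\mf{k}_1$ by $g_1$, there is no reason the translates $g_1^n\mf{k}_1$ stay transverse to the fixed pair $\mf{h}',\mf{h}''^*$ --- nothing constrains $g_1$ relative to that pair, and ``wide enough'' is not an argument. The actual mechanism is different: fixing $\mf{h}$ for which (3) fails, one uses Corollary~\ref{double skewering} to build nested thick halfspaces $\mf{h}_n'\subsetneq\mf{h}\subsetneq\mf{h}_n''$ with $\overline{\mf{h}_n''}\cap g'\overline{\mf{h}_n'^*}=\emptyset$, invokes the failure of (3) at each stage to obtain a thick $\mf{k}_{n+1}$ transverse to $\mf{h}_n'$ and $\mf{h}_n''$ (thickness via Lemma~\ref{slight reformulation of SS}), and arranges, using the analogues of Lemmata~5.2--5.4 of \cite{CS}, that $\mf{k}_{n+1}\in\mscr{H}(\mf{h}_{n+1}''^*|\mf{h}_{n+1}')$. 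Then for $m>n$ the containments $\mf{h}_{m-1}'\cu\mf{h}_n'\cu\mf{k}_n\cu\mf{h}_n''\cu\mf{h}_{m-1}''$ together with transversality of $\mf{k}_m$ to $\mf{h}_{m-1}'$ and $\mf{h}_{m-1}''$ force $\mf{k}_m$ to be transverse to $\mf{k}_n$, so the $\mf{k}_n$ form an infinite pairwise-transverse family, contradicting $\mathrm{rank}(X)=r<\infty$. Your proposal identifies neither this pairwise-transversality contradiction nor the sector lemma, which are the two essential ingredients, so as it stands the proof of $(1)\Rightarrow(3)$ is missing.
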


{\small {\bf Note:} The proof of Theorem~\ref{strong separation} closely follows the proof of Proposition~5.1 in \cite{CS}. Only minor changes are required to address the pathologies that may arise in non-cubical median spaces. We only give what we feel are the relevant bits, referring the reader to Caprace and Sageev's paper for complete proofs. Their arguments can be repeated word by word when we omit them. We advise the reader to make themselves familiar with the entire Section~5 of \cite{CS} before attempting to read what follows.} 

\begin{proof}[Proof of Theorem~\ref{strong separation}]
Observe that $(3)$ clearly implies $(2)$ and $(1)$ follows from $(2)$ using Proposition~\ref{products}. We are left to prove that $(1)$ implies $(3)$. Suppose for the sake of contradiction that, for some $\mf{h}\in\mscr{H}\setminus\mscr{H}^{\times}$, we cannot find $\mf{h}'$ and $\mf{h}''$. We reach a contradiction as in the last paragraph of the proof of Proposition~5.1 in \cite{CS}, once we construct sequences $(\mf{h}_n')_{n\geq 0}$, $(\mf{h}_n'')_{n\geq 0}$ and $(\mf{k}_n)_{n\geq 0}$ of thick halfspaces such that
\begin{enumerate}
\item $\mf{k}_n$ is transverse to $\mf{h}_{n-1}'$ and $\mf{h}_{n-1}''$ for $n\geq 1$;
\item $\mf{k}_n\in\mscr{H}(\mf{h}_n''^*|\mf{h}_n')$ for $n\geq 0$;
\item $\mf{h}_n'\subsetneq\mf{h}_{n-1}'\subsetneq\mf{h}\subsetneq\mf{h}_{n-1}''\subsetneq\mf{h}_n''$ for $n\geq 1$.
\end{enumerate}
By Corollary~\ref{double skewering}, we can find $g\in\G$ such that $g^{-1}\mf{h}\subsetneq\mf{h}\subsetneq g\mf{h}$ and we set $\mf{h}_0':=g^{-1}\mf{h}$ and $\mf{h}_0'':=g\mf{h}$. Now suppose that we have defined $\mf{h}_n'$, $\mf{h}_n''$ and $\mf{k}_{n-1}$. Corollary~\ref{double skewering} yields $g'\in\G$ with $\mf{h}_n'\subsetneq\mf{h}\subsetneq\mf{h}_n''\subsetneq g'\mf{h}_n'\subsetneq g'\mf{h}_n''$ and $\overline{\mf{h}_n''}\cap g'\overline{\mf{h}_n'^*}=\emptyset$. By hypothesis, $\mf{h}_n'$ and $g'\mf{h}_n''^*$ are not strongly separated, but they have disjoint closures; thus there exists $\mf{k}$ transverse to $\mf{h}_n'$ and $g'\mf{h}_n''$. By Lemma~\ref{slight reformulation of SS}, we can assume that $\mf{k}$ is thick. 

The construction of the sequences can be concluded as in \cite{CS} once we obtain analogues of their Lemmata~5.2,~5.3 and~5.4. Lemmata~5.3 and~5.4 can be proved using our Corollary~\ref{double skewering} as in \cite{CS}, with the additional requirement that all input and output halfspaces be thick. 

Lemma~5.2 of \cite{CS} requires more care. The rest of the proof of Theorem~\ref{strong separation} will therefore be devoted to obtaining the following version of it:
\begin{center}
\emph{``If $\mf{h},\mf{k}$ are thick transverse halfspaces, one of the four sectors determined by $\mf{h}$ and $\mf{k}$ contains a thick halfspace.''}
\end{center}
Let $\mc{H}$ be the set of thick halfspaces that are not transverse to $\mf{h}$ and $\mc{K}$ the set of thick halfspaces that are not transverse to $\mf{k}$. As in \cite{CS}, we can assume that every halfspace in $\mc{H}$ is transverse to every halfspace in $\mc{K}$. Let $\mc{H}'$ be the collection of thick halfspaces that either contain or are contained in some halfspace of $\mc{H}$; we define $\mc{K}'$ similarly. 

Observe that $\mf{a}\in\mscr{H}$ lies in $\mc{H}'$ if and only if there exist $\mf{b},\mf{b}'\in\mc{H}$ such that $\mf{b}\cu\mf{a}\cu\mf{b}'$; again, this is proved as in \cite{CS}. Thus, halfspaces in $(\mscr{H}\setminus\mscr{H}^{\times})\setminus\mc{H}'$ must be transverse to all halfspaces in $\mc{H}'$. We conclude that we have a $*$-invariant partition
\[\mscr{H}=\mc{H}'\sqcup\left(\mscr{H}\setminus(\mc{H}'\cup\mscr{H}^{\times})\right)\sqcup\mscr{H}^{\times},\]
where the first two pieces are transverse and the third is null. Since $\mf{h}\in\mc{H}'$ and $\mf{k}\in\mscr{H}\setminus(\mc{H}'\cup\mscr{H}^{\times})$ this partition is nontrivial. Finally, observe that $\mc{H}'$ is inseparable and, thus, measurable. Proposition~\ref{products} now violates the irreducibility of $X$.
\end{proof}

\section{Facing triples.}\label{facing triples}

Let $X$ be a complete median space of finite rank $r$, let $\G$ be a group and let $\G\acts X$ be an isometric action without wall inversions. 

In this section we study certain tree-like behaviours displayed by all median spaces that admit a Roller nonelementary action. These will allow us to construct nonabelian free subgroups of their isometry groups, proving Theorem~E.

We say that the median space $X$ is \emph{lineal} with endpoints $\xi,\eta\in\overline X$ if $X\cu I(\xi,\eta)$. 

\begin{lem}\label{intervals vs elementarity}
Every action on a lineal median space is Roller elementary.
\end{lem}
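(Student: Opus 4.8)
The plan is to show that a lineal median space $X$ with endpoints $\xi,\eta\in\overline{X}$ forces the acting group $\G$ to have a finite orbit in $\overline{X}$, hence the action is Roller elementary. First I would observe that since $X\cu I(\xi,\eta)$, the pair of (at most two) points $\{\xi,\eta\}$ is the natural candidate for a finite orbit: every halfspace $\mf{h}\in\mscr{H}$ must separate $\xi$ from $\eta$, i.e.\ exactly one of $\wt{\mf{h}}, \wt{\mf{h}}^*$ contains $\xi$ and the other contains $\eta$. Indeed, if some $\mf{h}$ had $\xi,\eta$ on the same side, then $I(\xi,\eta)$ — which is gate-convex in $\overline{X}$ (intervals are gate-convex) and contains $X$ — would lie entirely in $\wt{\mf{h}}$ or $\wt{\mf{h}}^*$, forcing $\mf{h}\cap X=X$ or $\mf{h}\cap X=\emptyset$, contradicting that $\mf{h}\in\mscr{H}(X)$ is a genuine halfspace of $X$. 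So the canonical ultrafilters satisfy $\s_\xi = \mscr{H}^* \setminus \s_\eta$ in a suitable sense, and in particular $\mscr{H}=\s_\xi\sqcup\s_\xi^*$ with $\s_\xi^* = \s_\eta$.

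Next I would use this to pin down $\{\xi,\eta\}$ as a $\G$-invariant set. For $g\in\G$, the point $g\xi\in\overline{X}$ is again represented by an ultrafilter; I claim $g\xi\in\{\xi,\eta\}$. The cleanest route: $\xi$ and $\eta$ are the two points of $\overline{X}$ characterized by the property that every halfspace of $X$ (equivalently, every $\wt{\mf{h}}$) contains one of them — more precisely, $\xi$ is represented by an ultrafilter $\s$ such that $\s$ and $\s^*$ are both ``extremal,'' and since $\G$ permutes $\mscr{H}$ preserving the pocset structure and the partition into $\s_\xi$ and $\s_\eta$ is the unique $*$-invariant partition into two ultrafilters, $g$ either fixes both $\xi,\eta$ or swaps them. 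Concretely: $g$ sends the ultrafilter $\s_\xi$ to an ultrafilter $g\s_\xi$; since $\mscr{H}=\s_\xi\sqcup\s_\eta$ and $g$ is a pocset automorphism, $g\s_\xi$ is again an ultrafilter containing a side of every wall and disjoint-from-its-complement, so $g\s_\xi$ is one of the ultrafilters in this two-element ``boundary''; but $\s_\xi,\s_\eta$ are the only two ultrafilters of this extremal type because any other ultrafilter $\s_x$, $x\in X$, differs from $\s_\xi$ in a set of positive (indeed, when $X$ has more than one point, one checks finite or infinite but nonempty) measure, and more to the point $\s_x\triangle\s_\xi = \mscr{H}(x|\xi)$ which has a complement-side also nonempty — so $\s_x$ is not extremal. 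Hence $g\{\xi,\eta\}=\{\xi,\eta\}$, giving a $\G$-orbit of size at most $2$ in $\overline{X}$.

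The main obstacle I anticipate is making precise the claim that $\xi$ and $\eta$ are the \emph{only} ultrafilters for which $\mscr{H}$ splits as that ultrafilter together with its $*$-image and nothing else — i.e.\ that the ``two-sidedness'' of every halfspace with respect to $\{\xi,\eta\}$ genuinely characterizes these two points among all of $\overline{X}$. This is really the statement that if $\mscr{H}=\s\sqcup\s^*$ for an ultrafilter $\s$, then the point of $\overline{X}$ represented by $\s$ is $\xi$ or $\eta$; equivalently, any point $\zeta$ with $\mscr{H}(\zeta|\zeta')=\emptyset$ whenever... — one shows directly that such a $\zeta$ has $I(\xi,\zeta)\cap I(\zeta,\eta)\cap I(\eta,\xi)$ degenerate in a way that forces $\zeta\in\{\xi,\eta\}$, using $m(\xi,\eta,\zeta)=\zeta$ since $X\cu I(\xi,\eta)$ extends to $\overline{X}$ by continuity of $m$ and density of $X$. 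Once $\{\xi,\eta\}$ is identified as $\G$-invariant, Roller elementarity is immediate from the definition (a finite orbit in $\overline{X}$), so I would close with one line to that effect. I would also note the degenerate case $\xi=\eta$ forces $X$ to be a single point, which is trivially Roller elementary.

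\begin{proof}
If $\xi=\eta$ then $X\cu I(\xi,\xi)=\{\xi\}$, so $X$ is a point and the action is Roller elementary; assume $\xi\neq\eta$. The interval $I(\xi,\eta)\cu\overline X$ is gate-convex and contains $X$, which is dense in $\overline X$; by continuity of the median map, $\overline X=\overline{I(\xi,\eta)}\cu I(\xi,\eta)$, so in fact $\overline X=I(\xi,\eta)$ and $m(\xi,\eta,\zeta)=\zeta$ for every $\zeta\in\overline X$.

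We first check that every halfspace of $X$ separates $\xi$ from $\eta$. Let $\mf{h}\in\mscr{H}$ and let $\wt{\mf{h}}$ be the induced halfspace of $\overline X$ with $\wt{\mf{h}}\cap X=\mf{h}$. If $\xi$ and $\eta$ both lay in $\wt{\mf{h}}$, then since $\wt{\mf{h}}$ is convex and $\overline X=I(\xi,\eta)$ we would get $\overline X\cu\wt{\mf{h}}$, hence $X=\wt{\mf{h}}\cap X=\mf{h}$, contradicting $\mf{h}^*\neq\emptyset$. Similarly they cannot both lie in $\wt{\mf{h}}^*$. Thus exactly one of $\xi,\eta$ lies in $\wt{\mf{h}}$; equivalently $\mscr{H}=\s_\xi\sqcup\s_\eta$ with $\s_\eta=\s_\xi^*$, where $\s_\xi,\s_\eta\cu\mscr{H}$ are the canonical ultrafilters.

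Now we claim that $\s_\xi$ and $\s_\eta$ are the only ultrafilters $\s\cu\mscr{H}$ with $\mscr{H}=\s\sqcup\s^*$. Indeed, such an ultrafilter represents a point $\zeta\in\overline X$; for any $\mf{h}\in\mscr{H}$, the point $\zeta$ lies in $\wt{\mf{h}}$ or $\wt{\mf{h}}^*$ and, by hypothesis, on the side not containing $\zeta$ lies the opposite halfspace entirely, forcing the other of $\xi,\eta$ onto the same side as $\zeta$. Hence every wall of $X$ fails to separate $\zeta$ from at least one of $\xi,\eta$ in a coherent way: writing the canonical ultrafilter of $\zeta$ as $\s_\zeta$, we get $\s_\zeta\supseteq\s_\xi$ or $\s_\zeta\supseteq\s_\eta$, and since all three are ultrafilters, $\s_\zeta=\s_\xi$ or $\s_\zeta=\s_\eta$, i.e.\ $\zeta\in\{\xi,\eta\}$.

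Finally, the group $\G$ acts on $\overline X$ by automorphisms of the median algebra, hence permutes the walls of $X$ and preserves the collection of ultrafilters of the form $\s\sqcup\s^*=\mscr{H}$. By the previous paragraph this collection is $\{\s_\xi,\s_\eta\}$, so $g\{\xi,\eta\}=\{\xi,\eta\}$ for every $g\in\G$. Thus $\{\xi,\eta\}$ is a finite $\G$-orbit in $\overline X$, and the action $\G\acts X$ is Roller elementary.
\end{proof}
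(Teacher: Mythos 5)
Your first two paragraphs are fine: $\overline X=I(\xi,\eta)$ and every halfspace separates $\xi$ from $\eta$, so $\mscr{H}=\s_\xi\sqcup\s_\eta$ with $\s_\eta=\s_\xi^*$. The gap is in the third paragraph, and it is exactly the obstacle you flagged in your outline. From the fact that, for each individual wall, $\zeta$ lies on the same side as $\xi$ or on the same side as $\eta$, you cannot conclude ``coherently'' that $\s_\zeta\supseteq\s_\xi$ or $\s_\zeta\supseteq\s_\eta$: the choice of which of $\xi,\eta$ the point $\zeta$ sides with may vary from wall to wall. Indeed the uniqueness claim is false whenever $X$ is reducible. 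Take $X=\R^2$ with the $\ell^1$ metric, which is lineal with $\xi=(+\infty,+\infty)$, $\eta=(-\infty,-\infty)$: the boundary points $(+\infty,-\infty)$ and $(-\infty,+\infty)$ also satisfy $\mscr{H}=\s_\zeta\sqcup\s_\zeta^*$, so there are four ``corners'', not two. Consequently $\{\xi,\eta\}$ need not be $\G$-invariant: in the paper's own example of $\Z^2\rtimes\Z/4\Z$ acting on the standard cubulation of $\R^2$, the rotation swaps the pair $\{(+\infty,+\infty),(-\infty,-\infty)\}$ with $\{(+\infty,-\infty),(-\infty,+\infty)\}$. So your final step, as stated, is wrong, even though the lemma's conclusion of course still holds (in this example the four corners form a finite orbit).

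The missing ingredient is precisely the reducibility dichotomy, which is how the paper argues: if $\{\xi_1,\eta_1\}\neq\{\xi_2,\eta_2\}$ are two pairs witnessing lineality, then $\mscr{H}(\xi_1,\eta_2|\eta_1,\xi_2)$ and $\mscr{H}(\xi_1,\xi_2|\eta_1,\eta_2)$ form a transverse, $*$-invariant partition of $\mscr{H}$, so Corollary~\ref{products} splits $X$ as a nontrivial product. Hence for irreducible $X$ your uniqueness claim does hold and an index-two subgroup of $\G$ fixes $\xi$ and $\eta$. For general $X$ one passes to the canonical decomposition into irreducible factors, uses Proposition~\ref{isometries of products} to get a finite-index subgroup preserving the factors and Lemma~\ref{Roller for products} to see that each factor is again lineal, and then fixes a point $(\xi_1,\dots,\xi_k)\in\overline X$. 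Any correct completion of your argument must either run this product analysis or otherwise prove that the set of all ``corner'' points (equivalently, of all lineal pairs) is finite and $\G$-invariant — which amounts to the same thing.
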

\begin{proof}
The elements of $\mscr{F}:=\left\{\{\xi,\eta\}\cu\overline X\mid X\cu I(\xi,\eta)\right\}\neq\emptyset$ are permuted by each isometry of $X$. If $\{\xi_1,\eta_1\}$, $\{\xi_2,\eta_2\}$ are distinct elements of $\mscr{F}$, the sets $\mscr{H}(\xi_1,\eta_2|\eta_1,\xi_2)$ and $\mscr{H}(\xi_1,\xi_2|\eta_1,\eta_2)$ are transverse and their union is $\mscr{H}(\xi_1|\eta_1)$, which contains a side of every wall of $X$. By Proposition~\ref{products}, $X$ splits as a product $X_1\x X_2$. Thus, if $X$ is irreducible, we have $\#\mscr{F}=1$ and an index-two subgroup of $\G$ fixes two points of $\overline X$.

In general, let $X=X_1\x ... \x X_k$ be the decomposition of $X$ into irreducible factors and $\G$ a group of isometries of $X$. By Proposition~\ref{isometries of products}, a finite-index subgroup $\G_0\leq\G$ leaves this decomposition invariant. Since $\overline X=\overline{X_1}\x ... \x\overline{X_k}$ by Lemma~\ref{Roller for products}, if $X$ is lineal so is each $X_i$. The previous discussion shows that a finite-index subgroup of $\G_0$ fixes points $\xi_i\in\overline{X_i}$, for all $i$; in particular, it fixes the point $(\xi_1,...,\xi_k)\in\overline X$, hence $\G\acts X$ is Roller elementary.
\end{proof}

Halfspaces $\mf{h}_1,\mf{h}_2,\mf{h}_3$ are said to form a \emph{facing triple} if they are pairwise disjoint; if each $\mf{h}_i$ is thick, we speak of a thick facing triple. If $X$ is lineal, $\mscr{H}$ does not contain facing triples. On the other hand, we have the following result; compare with Corollary~2.34 in \cite{CFI} and Theorem~7.2 in \cite{CS}.

\begin{prop}\label{facing 3-ples exist}
\begin{enumerate}
\item If $\G\acts X$ is Roller nonelementary, there exists a thick facing triple.
\item If $X$ is irreducible and $\G\acts X$ is Roller nonelementary and Roller minimal, every thick halfspace is part of a thick facing triple.
\end{enumerate}
\end{prop}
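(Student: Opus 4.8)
The plan is to prove the two parts separately, using the Caprace--Sageev machinery already established (Theorem~\ref{flipping} and Corollary~\ref{double skewering}), and reducing part~2 to part~1 by passing to a suitable halfspace configuration.

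First I would prove part~1. Since $\G\acts X$ is Roller nonelementary, it has no finite orbit in $\overline X$; in particular $X$ is not a single point and $\G$ does not fix a point of $\overline X$, so by Proposition~\ref{Roller elementary vs strongly so} (or rather Corollary~\ref{Roller elementary vs strongly so 2}) we may replace $X$ by a $\G$-invariant, closed, convex subset $C$ of a $\G$-invariant component on which the action is Roller minimal and Roller nonelementary. It suffices to produce a thick facing triple there, since any thick halfspace of $C$ arising as $\wt{\mf{h}}\cap C$ for $\mf{h}\in\mscr{H}(X)$ (Proposition~\ref{halfspaces of components}) pulls back to a halfspace of $X$, and disjointness is preserved. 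So assume $\G\acts X$ is Roller minimal, Roller nonelementary and without wall inversions. Pick any thick halfspace $\mf{h}$. By Corollary~\ref{double skewering} it is $\G$-flippable: there is $g\in\G$ with $d(g\mf{h}^*,\mf{h}^*)>0$ and $g\mf{h}^*\neq\mf{h}$, so $\mf{h}$ and $g\mf{h}^*$ are disjoint thick halfspaces with disjoint closures. Now I need a \emph{third} halfspace disjoint from both. The key is to use Roller nonelementarity: if no such third halfspace existed, then every halfspace would be contained in $\overline{\mf{h}}$ or in $\overline{g\mf{h}^*}$ up to the transverse ones, and I would argue as follows. Let $\mf{a}=\mf{h}$, $\mf{b}=g\mf{h}^*$, with $\mf{a}^*\supsetneq\mf{b}$. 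Flipping $\mf{b}$ again, get $g'$ with $g'\mf{b}^*$ disjoint from $\mf{b}^*$; either $g'\mf{b}^*$ is disjoint from $\mf{a}$ too (done: $\{\mf{a},\mf{b},g'\mf{b}^*\}$ is a facing triple, after checking the third is not transverse to $\mf{a}$ — which one arranges by choosing $g'$ so that the flip is ``deep''), or $g'\mf{b}^*$ meets $\overline{\mf{a}}$, which forces $g'\mf{b}^*\supseteq\mf{a}$ or they are transverse or nested the other way; iterating and using that $\G$ has no finite orbit in $\overline X$ (so the orbit of $\mf{h}$ is infinite and cannot be confined to the ``interval'' configuration between two fixed points) yields a contradiction, essentially because a median space all of whose halfspaces sit between two fixed ideal points is lineal (Lemma~\ref{intervals vs elementarity}). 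The main obstacle here is making the dichotomy airtight: controlling the transverse halfspaces and ensuring the third halfspace produced is genuinely disjoint (not merely with disjoint closure from a nested one). I expect the clean route is: suppose $\mscr{H}$ has no facing triple of thick halfspaces; show the inseparable closure of $\{\mf{h},g\mf{h}^*\}$ together with transversality considerations forces a transverse product decomposition or a pair of $\G$-fixed endpoints, contradicting either Roller minimality (Theorem~\ref{strong separation}, via irreducibility reduction) or Roller nonelementarity.

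Next, part~2. Here $X$ is irreducible, the action is Roller nonelementary and Roller minimal. By Theorem~\ref{strong separation}, since $X$ is irreducible, for the given thick halfspace $\mf{h}$ there exist thick halfspaces $\mf{h}'\subseteq\mf{h}\subseteq\mf{h}''$ with $\mf{h}'$ and $\mf{h}''^*$ strongly separated. Apply part~1 to get \emph{some} thick facing triple $\{\mf{a}_1,\mf{a}_2,\mf{a}_3\}$ somewhere in $X$. Using Corollary~\ref{double skewering} (double skewering) I can push a group element to translate this facing triple so that one of its members lands deep inside $\mf{h}'$, i.e.\ find $f\in\G$ with $f\mf{a}_i\subsetneq\mf{h}'$; then the other two $f\mf{a}_j$, $f\mf{a}_k$, being disjoint from $f\mf{a}_i$, are each either disjoint from $\mf{h}''$ or nested around it — but strong separation of $\mf{h}'$ and $\mf{h}''^*$ prevents any halfspace from being transverse to both, which lets me arrange (after possibly skewering $f\mf{a}_j,f\mf{a}_k$ outward past $\mf{h}''^*$ by Corollary~\ref{double skewering}) that all three of the resulting halfspaces are pairwise disjoint with $\mf{h}$ contained in the sector between them, hence $\mf{h}$ itself together with two appropriate translates forms a thick facing triple. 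Concretely: produce thick halfspaces $\mf{k}_1,\mf{k}_2$ with $\mf{h}'\subseteq\mf{k}_i^*$ and $\mf{k}_1,\mf{k}_2,\mf{h}'$ pairwise disjoint — then $\{\mf{k}_1,\mf{k}_2,\mf{h}''\}$ need not work directly, so instead aim for $\{\mf{k}_1,\mf{k}_2,\mf{h}'\}$ and observe $\mf{h}\supseteq\mf{h}'$ gives a facing triple $\{\mf{k}_1,\mf{k}_2,\mf{h}\}$ once $\mf{k}_i$ are chosen disjoint from $\mf{h}$, which is exactly where the strong separation $\mf{h}'\mid\mf{h}''^*$ and the freedom to flip are used. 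The delicate point is guaranteeing the two auxiliary halfspaces can simultaneously be made disjoint from $\mf{h}$ (not just from $\mf{h}'$); I expect strong separation to be precisely the tool that rules out the obstructing transverse halfspaces.

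The step I expect to be the genuine obstacle is the contradiction argument in part~1: showing that the \emph{absence} of a thick facing triple, in a Roller minimal Roller nonelementary action, is impossible. The natural statement to isolate and prove first is a lemma of the form ``\emph{if $\mscr{H}$ contains no facing triple, then $X$ is lineal}'' — because then Lemma~\ref{intervals vs elementarity} immediately gives Roller elementarity, contradicting the hypothesis. Proving that lemma amounts to: no facing triple means the halfspaces not transverse to a fixed thick $\mf{h}$ are totally ordered into ``two sides'', and chasing this through all of $\mscr{H}$ (handling transverse halfspaces via a product-decomposition argument à la Corollary~\ref{products}, or reducing to the irreducible factors via Proposition~\ref{isometries of products} and Lemma~\ref{Roller for products} as in Lemma~\ref{intervals vs elementarity}) pins down two points $\xi,\eta\in\overline X$ with $X\subseteq I(\xi,\eta)$. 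Everything else is a fairly routine application of double skewering and strong separation.
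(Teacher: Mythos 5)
Your plan names the right tools, but both parts stop short of an actual proof at exactly the decisive steps, and some of the concrete assertions along the way are wrong. In part~1, the heart of the matter --- producing a \emph{third} pairwise-disjoint thick halfspace, or equivalently your proposed lemma ``no thick facing triple implies $X$ is lineal'' --- is precisely what you leave open (you say so yourself), and it is not obviously easier than the proposition. Moreover your starting configuration is off: flipping $\mf{h}$ means $d(g\mf{h}^*,\mf{h}^*)>0$, i.e.\ $g\mf{h}^*\subsetneq\mf{h}$, so the disjoint pair is $\{\mf{h}^*,g\mf{h}^*\}$, not $\{\mf{h},g\mf{h}^*\}$; the ensuing case analysis built on ``$\mf{a}^*\supsetneq\mf{b}$'' inherits this confusion and is never made airtight. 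The paper's argument is genuinely different: it is an induction on the rank. After reducing via Corollary~\ref{Roller elementary vs strongly so 2}, either the invariant set $C$ lies in a boundary component or is a reducible subspace (in both cases the rank drops and the inductive hypothesis applies), or $C$ is irreducible; in that case Theorem~\ref{strong separation} and Corollary~\ref{double skewering} produce a thick $\mf{h}$ and $g\in\G$ with $g\mf{h}$, $\mf{h}^*$ strongly separated, compactness of $\overline C$ gives $\xi$ in the closure of every $g^n\wt{\mf{h}}$ and $\eta$ in the closure of every $g^n\wt{\mf{h}}^*$, Roller nonelementarity together with Lemma~\ref{intervals vs elementarity} gives $x$ with $m=m(x,\xi,\eta)\neq x$, and any thick $\mf{j}\in\mscr{H}(m|x)$ can be transverse to at most one $g^n\mf{h}$ by strong separation, forcing $\mf{j}\cu g^{n+2}\mf{h}\setminus g^n\mf{h}$ and yielding the triple $\mf{j}$, $g^n\mf{h}$, $g^{n+2}\mf{h}^*$. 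Nothing in your sketch substitutes for this construction; note also that the irreducibility needed to invoke strong separation is exactly what your reduction does not arrange, which is why the paper's rank induction (with its reducible and boundary-component cases) cannot be skipped.

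In part~2 the translation step is unjustified: Corollary~\ref{double skewering} applies only to \emph{nested} pairs of halfspaces, so there is no reason one can find $f\in\G$ with $f\mf{a}_i\subsetneq\mf{h}'$ for a member of an arbitrary facing triple (it may well be transverse to $\mf{h}'$), and the final step --- making the two auxiliary halfspaces \emph{simultaneously} disjoint from $\mf{h}$ --- is again the point you flag as delicate and never carry out. The paper's proof of part~2 is a short global argument that avoids moving triples around at all: let $\mscr{T}$ be the set of thick halfspaces belonging to some thick facing triple; it is nonempty by part~1, inseparable and hence measurable, satisfies $\mscr{T}=\mscr{T}^*$ by double skewering (if $\mf{k},\mf{k}',\mf{k}''$ is a facing triple and $g\mf{k}^*\cu\mf{k}$, then $\mf{k}^*,g\mf{k}',g\mf{k}''$ is one too), and absorbs every thick halfspace not transverse to it (if $\mf{h}\cu\mf{k}$ with $\mf{k}\in\mscr{T}$, then $\mf{h}$ lies in a facing triple with the other two members of $\mf{k}$'s triple). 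If $\mscr{T}$ were a proper subset of $\mscr{H}\setminus\mscr{H}^{\times}$, the resulting transverse, $*$-invariant, measurable partition would contradict irreducibility via Corollary~\ref{products}. So the proposal is a reasonable outline of which results to use, but it has genuine gaps at the core of both parts rather than being a complete alternative proof.
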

\begin{proof}
We prove part~$(1)$ by induction on the rank; the rank-zero case is trivial. In general, let $C\cu Z$ be as provided by Corollary~\ref{Roller elementary vs strongly so 2}. If ${Z\cu\partial X}$, we have $\text{rank}(C)\leq\text{rank}(X)-1$ by Proposition~\ref{components}; in this case, we conclude by the inductive hypothesis and Proposition~\ref{halfspaces of components}. Otherwise, we have $C\cu X$; let $C=C_1\x ...\x C_k$ be its decomposition into irreducible factors. By Proposition~\ref{isometries of products}, a finite-index subgroup $\G_0\leq\G$ preserves this decomposition and, since $\overline C=\overline{C_1}\x...\x\overline{C_k}$, there exists $i\leq k$ such that $\G_0\acts C_i$ is Roller nonelementary. If $k\geq 2$, we have $\text{rank}(C_i)\leq\text{rank}(X)-1$ and we conclude again by the inductive hypothesis. If $C$ is irreducible, Corollary~\ref{double skewering} and Theorem~\ref{strong separation} provide $\mf{h}\in\mscr{H}(C)\setminus\mscr{H}^{\x}(C)$ and $g\in\G$ such that $g\mf{h}$ and $\mf{h}^*$ are strongly separated. Since $\overline C$ is compact, there exists a point $\xi\in\overline C$ that lies in the closure of every $g^n\wt{\mf{h}}$, $n\in\Z$; similarly, we can find $\eta\in\overline C$ lying in the closure of every $g^n\wt{\mf{h}}^*$, $n\in\Z$.

By Lemma~\ref{intervals vs elementarity}, there exists $x\in C$ with $m:=m(x,\xi,\eta)\neq x$. Picking $\mf{j}\in\mscr{H}(m|x)\setminus\mscr{H}^{\times}(C)$, neither $\xi$ nor $\eta$ can lie in $\wt{\mf{j}}$. Since $g^n\mf{h}$ and $g^m\mf{h}^*$ are strongly separated for $n<m$, the halfspace $\mf{j}$ can be transverse to $g^n\mf{h}$ for at most one $n\in\Z$. Hence $j\cu g^{n+2}\mf{h}\setminus g^n\mf{h}$ for some $n\in\Z$ and $\mf{j}$, $g^n\mf{h}$, $g^{n+2}\mf{h}^*$ form a thick facing triple in $\mscr{H}(C)\cu\mscr{H}$.

We now prove part~$(2)$. Let $\mscr{T}\cu\mscr{H}\setminus\mscr{H}^{\times}$ be the set of halfspaces that are part of a thick facing triple; it is nonempty by part~$(1)$. It is also inseparable, hence measurable. Observe moreover that ${\mscr{T}=\mscr{T}^*}$ by part~$(1)$ of Corollary~\ref{double skewering}; indeed, if $\mf{k},\mf{k}',\mf{k}''$ form a facing triple and $g\mf{k}^*\cu\mf{k}$, the halfspaces $\mf{k}^*,g\mf{k}',g\mf{k}''$ also form a facing triple. If $\mf{h}\in\mscr{H}\setminus\mscr{H}^{\times}$ and ${\mf{k}\in\mscr{T}}$ are not transverse, then $\mf{h}\in\mscr{T}$; indeed, up to replacing $\mf{h}$ and $\mf{k}$ with their complements, we can assume that $\mf{h}\cu\mf{k}$. Thus, if $\mscr{T}\neq\mscr{H}\setminus\mscr{H}^{\times}$, Proposition~\ref{products} yields a contradiction. 
\end{proof}

We will refer to facing triples of pairwise strongly separated halfspaces as \emph{strongly separated triples}. Strongly separated $n$-tuples are defined similarly for all $n\geq 4$.

\begin{lem}\label{strongly separated n-tuples}
Suppose $\G\acts X$ is Roller nonelementary and Roller minimal. If $X$ is irreducible, for every $n\geq 3$ every thick halfspace is part of a strongly separated, thick $n$-tuple.
\end{lem}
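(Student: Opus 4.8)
The plan is to prove Lemma~\ref{strongly separated n-tuples} by induction on $n$, with Proposition~\ref{facing 3-ples exist}(2) and Theorem~\ref{strong separation}(3) providing the base case $n=3$: indeed, by Theorem~\ref{strong separation}(3), every thick halfspace $\mf{h}$ sits between thick strongly separated halfspaces, and the argument of Proposition~\ref{facing 3-ples exist}(2) (combined with the strong separation produced along the way in the proof of part~1) yields a strongly separated \emph{triple} containing $\mf{h}$. More precisely, I would first establish the base case as a clean statement: if $X$ is irreducible, Roller minimal and Roller nonelementary, then every thick halfspace belongs to a \emph{strongly separated} thick facing triple. This follows by inspecting the proof of Proposition~\ref{facing 3-ples exist}: there one produces $\mf{h}$ and $g\in\G$ with $g\mf{h}$, $\mf{h}^*$ strongly separated, and a halfspace $\mf{j}$ with $\mf{j}\subsetneq g^{n+2}\mf{h}\setminus g^n\mf{h}$; the triple $\mf{j}$, $g^n\mf{h}$, $g^{n+2}\mf{h}^*$ is then not merely facing but pairwise strongly separated, because $g^n\mf{h}$ and $g^{n+2}\mf{h}^*$ are strongly separated by construction and $\mf{j}$ is nested deeply between two elements of the $g$-orbit of $\mf{h}$, each pair of which is strongly separated. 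One then transports this to an arbitrary thick halfspace by the same inseparability/product argument as in part~2 of Proposition~\ref{facing 3-ples exist}: the set of thick halfspaces lying in a strongly separated triple is $*$-invariant, inseparable (hence measurable), and closed under passing to a non-transverse thick halfspace, so by Corollary~\ref{products} and irreducibility it is all of $\mscr{H}\setminus\mscr{H}^{\times}$.

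For the inductive step, suppose every thick halfspace lies in a strongly separated thick $n$-tuple and fix a thick halfspace $\mf{h}$. First apply Corollary~\ref{double skewering} to find $g\in\G$ with $g\mf{h}\subsetneq\mf{h}$ and $d(g\mf{h},\mf{h}^*)>0$; then $\mf{h}^*$ and $g\mf{h}$ are thick with disjoint closures. Using Theorem~\ref{strong separation}, squeeze a strongly separated thick pair $\mf{a}\subsetneq g\mf{h}\subsetneq\mf{h}\subsetneq\mf{a}^*$ — better, produce $\mf{a}$, $\mf{b}$ thick and strongly separated with $\mf{a}\subsetneq\mf{h}\subsetneq\mf{b}$ by Theorem~\ref{strong separation}(3). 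Now I want to insert an $(n-1)$-tuple "on the other side" of one of these halfspaces. The cleanest route: pick a thick halfspace $\mf{k}$ strongly separated from $\mf{h}$ — e.g.\ with $\overline{\mf{k}}\cap\overline{\mf{h}}=\emptyset$ — which exists because $\mf{h}$ is part of a strongly separated triple from the base case, so take $\mf{k}$ to be one of the other two members. Apply the inductive hypothesis to $\mf{k}$ to get a strongly separated thick $n$-tuple $\mf{k}=\mf{k}_1,\mf{k}_2,\dots,\mf{k}_n$. The key point is that $\mf{h}$, being disjoint from $\mf{k}_1$ and strongly separated from it, is in fact disjoint from all $\mf{k}_i$ for $i\geq 2$: each $\mf{k}_i$, $i\geq2$, is disjoint from $\mf{k}_1=\mf{k}$, and one checks that $\mf{h}\cap\mf{k}_i=\emptyset$ using strong separation of the pair $\{\mf{h},\mf{k}\}$ — a halfspace meeting both $\mf{h}$ and $\mf{k}_i^*\cap\mf{k}^*$ nontrivially would, after gate-projecting, manufacture a halfspace transverse to both $\mf{h}$ and $\mf{k}$. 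Then $\{\mf{h},\mf{k}_2,\dots,\mf{k}_n\}$ is a facing $n$-tuple, and to upgrade strong separation one shrinks $\mf{h}$: replace $\mf{h}$ by a thick $\mf{h}'\subsetneq\mf{h}$ obtained from Theorem~\ref{strong separation}(3) that is strongly separated from $\mf{b}^*$ where $\mf{b}\supsetneq\mf{h}$, so $\mf{h}'$ is strongly separated from every $\mf{k}_i$ (as $\mf{k}_i\cap\mf{h}=\emptyset$ forces $\mf{k}_i\subseteq\mf{h}^*\subseteq\mf{b}^{**}$... ), giving a strongly separated $n$-tuple not containing the original $\mf{h}$ — so instead I use double skewering once more to pull a strongly separated tuple back onto $\mf{h}$ itself via a group element, exactly as strong-separation is preserved under the $\G$-action and $\mf{h}$ is $\G$-flippable.

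The expected main obstacle is the bookkeeping in the inductive step: ensuring that the newly adjoined $(n-1)$ halfspaces $\mf{k}_2,\dots,\mf{k}_n$ are \emph{strongly separated from $\mf{h}$ itself}, not just disjoint, and that the final tuple genuinely contains the prescribed $\mf{h}$ rather than some deeply nested descendant. The resolution I would push on is the following: run the base-case machinery to get a strongly separated triple $\mf{h},\mf{p},\mf{q}$; apply the inductive hypothesis to $\mf{p}$ to obtain a strongly separated $n$-tuple $\mf{p}=\mf{p}_1,\dots,\mf{p}_n$; observe (via a transversality/gate argument using strong separation of $\{\mf{h},\mf{p}\}$ and of $\{\mf{h},\mf{q}\}$) that $\mf{h}$ is strongly separated from each $\mf{p}_i$ with $i\geq 2$ for which $\mf{p}_i\subseteq\mf{p}$-side is disjoint from $\mf{h}$; then $\{\mf{h},\mf{q},\mf{p}_2,\dots,\mf{p}_n\}$ is a facing $(n+1)$-tuple whose pairs are all strongly separated, completing the induction with $\mf{h}$ itself as a member. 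The transversality claim — no thick halfspace is transverse to both $\mf{h}$ and $\mf{p}_i$, $i\geq 2$ — is the one genuinely new verification, and it should follow from Lemma~\ref{slight reformulation of SS} together with the observation that a thick halfspace transverse to $\mf{h}$ and to $\mf{p}_i$ (which is nested in the inseparable span of $\mf{p}$-orbit halfspaces) would be transverse to $\mf{h}$ and $\mf{p}$, contradicting strong separation of that pair.
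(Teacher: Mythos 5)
Your inductive step has a genuine gap, and it sits exactly at the point you flag as the ``main obstacle''. When you take a strongly separated thick $n$-tuple $\mf{k}=\mf{k}_1,\dots,\mf{k}_n$ through an auxiliary halfspace $\mf{k}$ (or $\mf{p}$) strongly separated from $\mf{h}$, the only constraint on $\mf{k}_2,\dots,\mf{k}_n$ is that they are disjoint from $\mf{k}_1$, i.e.\ contained in $\mf{k}_1^*$ --- but $\mf{h}\cu\mf{k}_1^*$ as well, so these halfspaces may perfectly well contain $\mf{h}$, be transverse to $\mf{h}$, or overlap it; the claim ``$\mf{h}\cap\mf{k}_i=\emptyset$'' does not follow from strong separation of $\{\mf{h},\mf{k}\}$ (already in a simplicial tree the tuple through $\mf{k}$ can have a member containing $\mf{h}$). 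The proposed repair fails for the same reason: a halfspace transverse to both $\mf{h}$ and $\mf{p}_i$ need not be transverse to $\mf{p}$, since $\mf{p}_i$ is \emph{disjoint} from $\mf{p}$ and the common transversal can live entirely inside $\mf{p}^*$; the nesting argument transfers strong separation only when the new halfspace is contained \emph{in} a halfspace strongly separated from the others, not when it is merely on the other side of one. Finally, ``pulling the tuple back onto $\mf{h}$ by a group element'' is not available: applying $g\in\G$ carries $\mf{h}'$ to $g\mf{h}'$, not to $\mf{h}$. (A smaller inaccuracy: in your base case, $\mf{j}\cu g^{n+2}\mf{h}\setminus g^n\mf{h}$ does not make $\mf{j}$ strongly separated from $g^n\mf{h}$, since a common transversal contained in $g^{n+2}\mf{h}$ is not excluded; one must pass to a deeper/shallower pair. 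The base case itself is fine and is obtained more directly, as your first sentence suggests: take a thick facing triple $\mf{h},\mf{h}_1,\mf{h}_2$ through $\mf{h}$ and shrink $\mf{h}_1,\mf{h}_2$ to $\mf{k}_i\cu\mf{h}_i$ with $\mf{k}_i,\mf{h}_i^*$ strongly separated.)

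The step can be salvaged with a one-line change of your own scheme: apply the inductive hypothesis to $\mf{k}^*$ (which is also thick) rather than to $\mf{k}$. Then the other members of the resulting tuple are contained in $\mf{k}$, and nesting inside $\mf{k}$, which is strongly separated from both $\mf{h}$ and $\mf{q}$, does transfer strong separation; so $\mf{h},\mf{q},\mf{m}_2,\dots,\mf{m}_n$ is a strongly separated thick $(n+1)$-tuple containing $\mf{h}$. The paper's own induction avoids auxiliary halfspaces altogether: given a strongly separated tuple $\mf{h},\mf{h}_1,\dots,\mf{h}_n$ through $\mf{h}$, double skewering yields $g\in\G$ with $g\mf{h}^*\cu\mf{h}_n$, and one replaces $\mf{h}_n$ by $g\mf{h}_1,g\mf{h}_2$: these are strongly separated from each other (being the isometric image of a strongly separated pair) and are nested in $\mf{h}_n$, hence strongly separated from $\mf{h},\mf{h}_1,\dots,\mf{h}_{n-1}$ by the nesting argument. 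Either of these fixes closes the gap; as written, your inductive step does not.
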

\begin{proof}
Each thick halfspace $\mf{h}$ is part of a thick facing triple $\mf{h},\mf{h}_1,\mf{h}_2$ by Proposition~\ref{facing 3-ples exist}. Theorem~\ref{strong separation} and Corollary~\ref{double skewering} yield thick halfspaces $\mf{k}_i\cu\mf{h}_i$ such that $\mf{k}_i$ and $\mf{h}_i^*$ are strongly separated; we obtain a strongly separated triple $\mf{h},\mf{k}_1,\mf{k}_2$. We conclude by showing that, if $n\geq 2$, any $(n+1)$-tuple $\mf{h},\mf{h}_1,...,\mf{h}_n$ can be upgraded to a $(n+2)$-tuple. By Corollary~\ref{double skewering}, there exists $g\in\G$ such that $g\mf{h}^*\cu\mf{h}_n$. It is immediate to check that $\mf{h},\mf{h}_1,...,\mf{h}_{n-1},g\mf{h}_1,g\mf{h}_2$ is a strongly separated, thick $(n+2)$-tuple.
\end{proof}

The following result allows us to construct free subgroups of $\G$; in the case of ${\rm CAT}(0)$ cube complexes, compare with Theorem~6 in \cite{Delzant-Py} and Theorem~F in \cite{CS}. Note that only part~$(1)$ is needed to prove the Tits alternative; we will use part~$(2)$ in \cite{Fioravanti3} to characterise Roller elementarity in terms of the vanishing of a certain cohomology class.

\begin{prop}\label{free subgroups acting freely}
\begin{enumerate}
\item Suppose that $\G\acts X$ is Roller minimal and $\mf{h}$ is part of a thick facing triple. A subgroup of $\G$ is free on two generators and has trivial intersection with the stabiliser of the wall $\{\mf{h},\mf{h}^*\}$.
\item Suppose that $\G\acts X$ is Roller minimal and Roller elementary; assume moreover that $X$ is irreducible. There exists a free subgroup $H=\langle a,b\rangle\leq\G$ and a measurable partition $\mscr{H}=\bigsqcup_{h\in H}\mscr{H}_{h}$, where $\mscr{H}_h^*=\mscr{H}_h$ and $g\mscr{H}_h=\mscr{H}_{gh}$, for all $g,h\in H$. In particular, $H$ acts freely on $\mscr{W}$. 
\end{enumerate}
\end{prop}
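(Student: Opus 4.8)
The plan is to run a ping-pong argument using the strongly separated $n$-tuples produced by Lemma~\ref{strongly separated n-tuples}, following the scheme in Section~7 of \cite{CS} but working inside $\overline X$ rather than in a visual boundary. For part~1, I would start from the thick facing triple $\mf{h},\mf{h}_1,\mf{h}_2$ containing $\mf{h}$ and apply Corollary~\ref{double skewering} to flip each of the three halfspaces: choose $a\in\G$ with $a\mf{h}_1^*\subsetneq\mf{h}_1$ and $d(a\mf{h}_1^*,\mf{h}_1^*)>0$, and $b\in\G$ similarly for $\mf{h}_2$. The positivity of distances guarantees the strict nesting $a^n\mf{h}_1\subsetneq\mf{h}_1$, $a^{-n}\mf{h}_1^*\subsetneq\mf{h}_1^*$ for all $n\geq 1$ (and likewise for $b$), so that, since $\mf{h}_1,\mf{h}_2$ are disjoint, $a^{\pm n}(\mf{h}_2\cup\mf{h}_2^*)$ stays inside $\mf{h}_1$ and symmetrically. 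Then the standard table-tennis lemma shows $\langle a,b\rangle$ is free of rank two: a nonempty reduced word $w$ in $a,b$ maps the complement of (say) $\mf{h}_1\cup\mf{h}_1^*$, i.e.\ $\mf{h}_2^{(*)}$-region, strictly into $\mf{h}_1$ or $\mf{h}_1^*$, hence cannot fix it, so $w\neq 1$. To get trivial intersection with the wall-stabiliser of $\{\mf{h},\mf{h}^*\}$, I would arrange the nesting so that every nontrivial reduced word also moves $\mf{h}$ off itself: each generator can be taken to satisfy $a\mf{h}^*\subsetneq\mf{h}_1\subsetneq\mf{h}$ (shrinking $\mf{h}_1$ inside $\mf{h}$ using Corollary~\ref{double skewering} and thickness), so any $w\neq 1$ sends $\mf{h}$ or $\mf{h}^*$ strictly inside one of the $\mf{h}_i$, hence $w\mf{h}\notin\{\mf{h},\mf{h}^*\}$.

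For part~2, the goal is a ``Schottky-type'' partition indexed by a free group $H$. Using Lemma~\ref{strongly separated n-tuples}, fix a strongly separated, thick $4$-tuple (or larger, as needed) and produce, via double skewering, generators $a,b$ and halfspaces $\mf{p}_a,\mf{p}_{a^{-1}},\mf{p}_b,\mf{p}_{b^{-1}}$ that are pairwise strongly separated, thick, and satisfy the ping-pong nesting: for $s\in\{a^{\pm1},b^{\pm1}\}$, $s$ carries the union of the $\mf{p}_t$ with $t\neq s^{-1}$ into $\mf{p}_s$. Strong separation is the crucial new ingredient: because no halfspace is transverse to two of the $\mf{p}_s$, the ``walls attached to'' different sectors partition cleanly. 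Concretely, for each reduced word $h=s_1\cdots s_n\in H$ I would define $\mscr{H}_h$ to be the set of walls $\{\mf{j},\mf{j}^*\}$ with $\mf{j}$ separating the sectors indexed by the cylinder of $h$; more precisely, partition $\mscr{H}$ by declaring $\mf{j}\in\mscr{H}_h$ when the ``address'' of $\mf{j}$ — determined by which of the nested regions $s_1\mf{p}_{s_2}$, $s_1s_2\mf{p}_{s_3},\dots$ it lies in, using strong separation to see that $\mf{j}$ has a well-defined first place where it is no longer transverse to the relevant sector-halfspaces — equals the address of $h$. One also needs to sweep up a null set: halfspaces in $\mscr{H}^{\times}$, and halfspaces transverse to all the $\mf{p}_s$ simultaneously, and assign the ``unlabeled'' halfspaces to, say, $\mscr{H}_1$; this is harmless since we only need a \emph{measurable} partition with $\mscr{H}_h^*=\mscr{H}_h$ and $g\mscr{H}_h=\mscr{H}_{gh}$. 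Equivariance $g\mscr{H}_h=\mscr{H}_{gh}$ is then formal from the definition of addresses, $*$-invariance is built in because the wall, not the halfspace, is what is labeled, and freeness of the $H$-action on $\mscr{W}$ follows because for $h\neq 1$ the sets $\mscr{H}_h$ and $\mscr{H}_1$ are disjoint (and nonempty: each $\mf{p}_s$ contributes walls with a nontrivial address), so no wall can be fixed.

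I expect the main obstacle to be part~2, specifically making the ``address'' assignment $\mf{j}\mapsto h(\mf{j})$ well-defined and \emph{measurable}, and checking that the resulting pieces genuinely carry the $H$-action correctly on walls. The measurability is not automatic: I would want to show each $\mscr{H}_h$ is a Boolean combination of inseparable sets of the form $\mscr{H}(\mf{k}^*|\mf{k}')$ for the various translates of the $\mf{p}_s$, which are measurable by the results recalled in Section~\ref{prelims}, together with the null sets $\mscr{H}^{\times}$ and the (null) set of halfspaces transverse to all four base halfspaces at once — this last being null because being transverse to a fixed thick halfspace forces positive ``cross-section'' and four pairwise strongly separated such constraints are incompatible on a positive-measure set, or more cleanly because the relevant sectors are disjoint convex sets of positive inradius. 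Strong separation is exactly what makes the address of a non-null halfspace \emph{unique}: a halfspace cannot straddle two sectors through transversality, so it is eventually ``committed'' to one branch of the tree $H$. Once well-definedness and measurability are in hand, the equivariance and freeness statements are bookkeeping. The rest — the ping-pong nesting inequalities and the rank-two freeness — is the same argument as part~1 and is routine given Corollary~\ref{double skewering} and Lemma~\ref{strongly separated n-tuples}.
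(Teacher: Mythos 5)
Your overall strategy (ping-pong built from Corollary~\ref{double skewering}, with strong separation giving unique ``addresses'' in part~2) is the right one, but the concrete dynamical claims in part~1 are false. You choose $a$ \emph{flipping} $\mf{h}_1$, i.e.\ $a\mf{h}_1^*\subsetneq\mf{h}_1$, and then assert the nesting $a^n\mf{h}_1\subsetneq\mf{h}_1$ and $a^{-n}\mf{h}_1^*\subsetneq\mf{h}_1^*$ for all $n\geq 1$. This cannot hold: $a\mf{h}_1^*\cu\mf{h}_1$ forces $\mf{h}_1^*\cu a\mf{h}_1$, so $a\mf{h}_1\subsetneq\mf{h}_1$ is impossible, and nothing controls higher powers --- a flipping element may be elliptic, even of order two (rotate a tripod in a tree: $a\mf{h}_1^*\subsetneq\mf{h}_1$ but $a^2=\id$), so the power-based table-tennis collapses. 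Likewise ``$a\mf{h}^*\subsetneq\mf{h}_1\subsetneq\mf{h}$'' is incompatible with $\mf{h},\mf{h}_1$ belonging to a facing triple (they are disjoint). The paper's proof avoids iterating generators altogether: it first upgrades the facing triple to a facing $4$-tuple $\mf{h},\mf{h}',\mf{k},\mf{k}'$, uses double skewering to get $a\mf{h}^*\subsetneq\mf{h}'$ and $b\mf{k}^*\subsetneq\mf{k}'$ (so $\mf{h},a\mf{h}^*,\mf{k},b\mf{k}^*$ is again a facing $4$-tuple), and then proves by induction on word length that each reduced word sends $\Om:=\mf{h}^*\cap a\mf{h}\cap\mf{k}^*\cap b\mf{k}$ into one of the four regions, one letter at a time; the trivial intersection with the wall stabiliser then comes from this claim together with the no-wall-inversion hypothesis, not from any nesting of powers.

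In part~2 your skeleton (four pairwise strongly separated halfspaces with the letter-by-letter nesting, addresses determined by strong separation) is essentially the paper's, but the step you dismiss as harmless is precisely where the content lies. Assigning ``unlabeled'' or null halfspaces to $\mscr{H}_1$ destroys the exact equivariance $g\mscr{H}_h=\mscr{H}_{gh}$ (or the disjointness of the pieces): if a wall with no proper address had its whole $H$-orbit dumped into $\mscr{H}_1$, equivariance would fail, and it is exactly this equivariant, genuine partition of \emph{all} of $\mscr{H}$ that yields freeness of $H\acts\mscr{W}$. There is in fact nothing to dump: by strong separation a halfspace transverse to some translate of the four walls is transverse to exactly one such wall, giving a unique address, and a halfspace transverse to no translate determines a DCC ultrafilter on the pocset $H\cdot\{\mf{h},\mf{h}^*,\mf{k},\mf{k}^*\}$, whence --- identifying the intersection pattern of the regions $h\Om$ with the Cayley graph of $H$ --- its wall is contained in a unique $h\Om$. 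This covering-and-uniqueness argument is the substantive missing piece that your ``eventually committed to one branch'' gestures at; by contrast the measurability you worry about is the easy part, since the relevant sets are built from inseparable sets, which are measurable. (Also note that the set of halfspaces transverse to all four $\mf{p}_s$ is empty, not merely null, by strong separation.)
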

\begin{proof}
Any thick facing triple including $\mf{h}$ can be upgraded to a thick facing $4$-tuple $\mf{h},\mf{h}',\mf{k},\mf{k}'$ as in the proof of Lemma~\ref{strongly separated n-tuples}. Corollary~\ref{double skewering} gives us $a,b\in\G$ with $a\mf{h}^*\subsetneq\mf{h}'$ and $b\mf{k}^*\subsetneq\mf{k}'$; in particular, $\mf{h}, a\mf{h}^*, \mf{k}, b\mf{k}^*$ form a facing $4$-tuple. Set $\mf{w}:=\{\mf{h},\mf{h}^*\}\in\mscr{W}$ and $\Om:=\mf{h}^*\cap a\mf{h}\cap\mf{k}^*\cap b\mf{k}\cu X$. 

{\bf Claim.} \emph{For every nontrivial, reduced word $u$ in $a$ and $b$,
\begin{itemize}
\item if $u=au'$, we have $u\Om\cu a\mf{h}^*$;
\item if $u=a^{-1}u'$, we have $u\Om\cu\mf{h}$;
\item if $u=bu'$, we have $u\Om\cu b\mf{k}^*$;
\item if $u=b^{-1}u'$, we have $u\Om\cu\mf{k} $.
\end{itemize}}
\begin{proof}[Proof of claim]
We proceed by induction on the length $|u|$ of the word $u$. If $|u|=1$, the statement is obvious. In the inductive step, we can assume that the statement holds for $u'$. Moreover, in cases (a) and (b), the word $u'$ cannot begin with $a^{-1}$ or $a$, respectively, and in cases (c) and (d), the word $u'$ cannot begin with $b^{-1}$ or $b$, respectively. We conclude by observing that 
\[a\left(a\mf{h}^*\cup b\mf{k}^*\cup \mf{k}\right)\cu a\mf{h}^*,\]
\[a^{-1}\left(\mf{h}\cup b\mf{k}^*\cup \mf{k}\right)\cu \mf{h},\]
\[b\left(a\mf{h}^*\cup\mf{h}\cup b\mf{k}^*\right)\cu b\mf{k}^* ,\]
\[b^{-1}\left(a\mf{h}^*\cup\mf{h}\cup\mf{k}\right)\cu \mf{k} .\]
\end{proof}
As a consequence of the claim, if there existed a nontrivial, reduced word $u$ in $a$ and $b$ with $u\mf{w}=\mf{w}$, then we would have $u=a^{-1}u'$ and ${u\mf{h}^{*}=\mf{h}}$. However, this would violate the assumption that $\G$ act without wall inversions. This concludes the proof of part~$(1)$.
\begin{figure}
\centering
\includegraphics[height=2.5in]{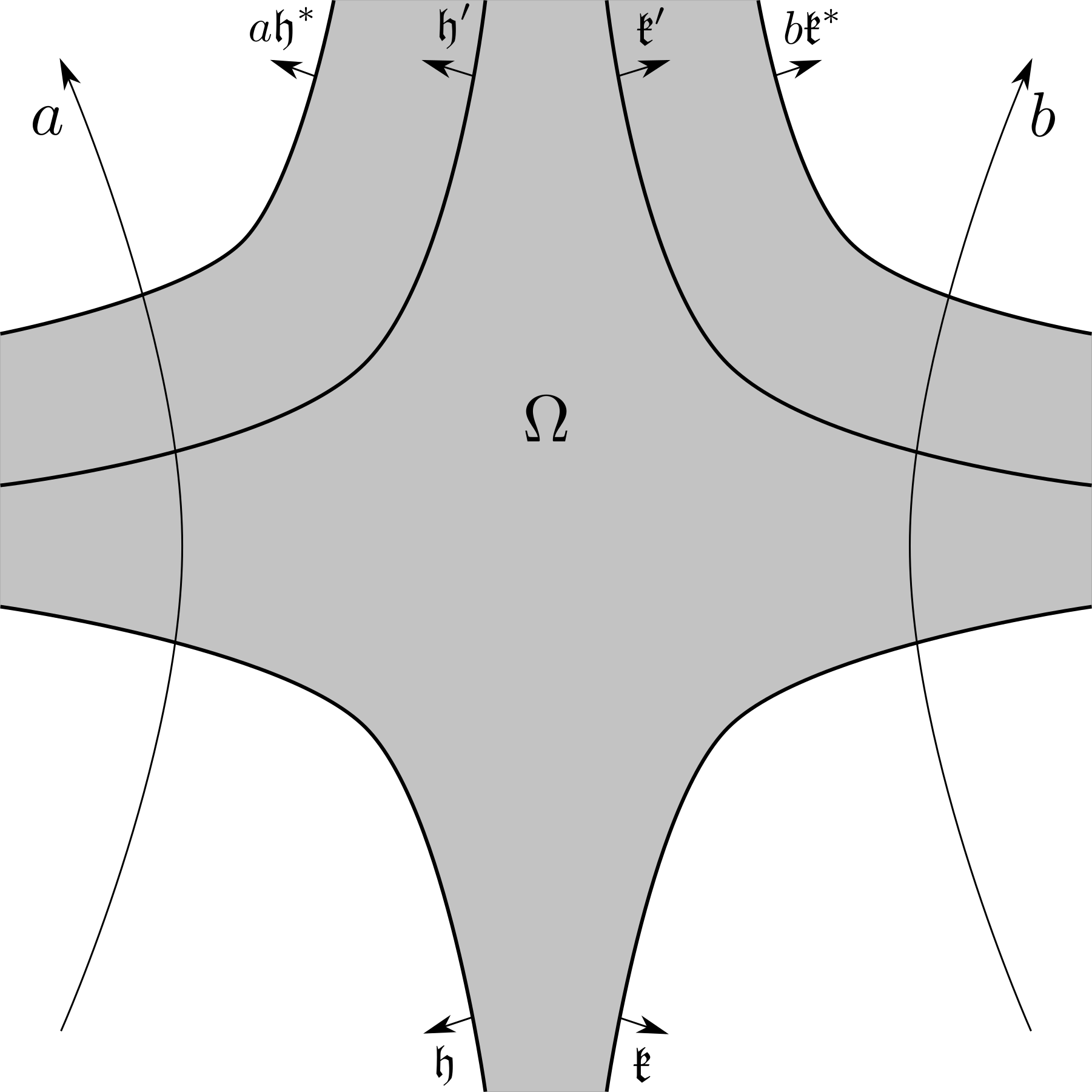}
\caption{}
\label{constructing a free group}
\end{figure}

Under the hypotheses of part~$(2)$, Lemma~\ref{strongly separated n-tuples} provides a strongly separated $4$-tuple $\mf{h},\mf{h}',\mf{k},\mf{k}'$ and we can again construct $a,b\in\G$ and $\Om\cu X$ as above; in particular, $\mf{h}, a\mf{h}^*, \mf{k}, b\mf{k}^*$ form a strongly separated $4$-tuple. Let $\mscr{H}_1$ the set of halfspaces $\mf{j}\in\mscr{H}$ satisfying one of the following two conditions:
\begin{itemize}
\item $\mf{j}$ is transverse to $\mf{h}$ or $\mf{k}$, or $\mf{j}\in\{\mf{h},\mf{h}^*,\mf{k},\mf{k}^*\}$;
\item the wall $\{\mf{j},\mf{j}^*\}$ is contained in $\Om$ and $\mf{j}\not\in H\cdot\{\mf{h},\mf{h}^*,\mf{k},\mf{k}^*\}$.
\end{itemize}
We set $\mscr{H}_h:=h\cdot\mscr{H}_1$ for every $h\in H$. From part~$(1)$, we have that ${h\Om\cap h'\Om=\emptyset}$ whenever $h\neq h'$. Moreover, $H$ is free on $a$ and $b$ and intersects trivially the stabilisers of the walls $\{\mf{h},\mf{h}^*\}$ and $\{\mf{k},\mf{k}^*\}$. Any two distinct walls in $H\cdot\{\mf{h},\mf{h}^*\}\cup H\cdot\{\mf{k},\mf{k}^*\}$ correspond to a strongly separated pair of halfspaces, hence $\mscr{H}_h\cap\mscr{H}_{h'}=\emptyset$, whenever $h\neq h'$

We are left to prove that every $\mf{j}\in\mscr{H}$ lies in some $\mscr{H}_h$. The intersection graph of the regions $h\overline\Om$, $h\in H$, is canonically isomorphic to the Cayley graph of $\left(H,\{a,b\}\right)$. In particular, DCC ultrafilters (in the sense of \cite{Sageev-notes}) on the pocset $H\cdot\{\mf{h},\mf{h}^*,\mf{k},\mf{k}^*\}$ are in one-to-one correspondence with regions $h\Om$. If $\mf{j}$ is not transverse to any element of $H\cdot\{\mf{h},\mf{h}^*,\mf{k},\mf{k}^*\}$, then the set of halfspaces in $H\cdot\{\mf{h},\mf{h}^*,\mf{k},\mf{k}^*\}$ that contain either $\mf{j}$ or $\mf{j}^*$ is a DCC ultrafilter and, by the previous discussion, it corresponds to some $h\Om$, $h\in H$. We conclude that the wall $\{\mf{j},\mf{j}^*\}$ is contained in $h\Om$.
\end{proof}

\begin{cor}\label{pre Tits alternative}
Either $\G$ has a nonabelian free subgroup or ${\G\acts X}$ is Roller elementary. In the latter case, $\G$ is virtually locally-elliptic-by-abelian.
\end{cor}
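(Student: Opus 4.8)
The plan is to split the dichotomy into its two directions and then assemble machinery that is already in place. I will work under the standing hypothesis of this section that $\G\acts X$ is without wall inversions; to obtain the statement for an arbitrary action one first replaces $X$ by its barycentric subdivision $X'$, which carries a canonical $\G$-action without wall inversions, has the same rank, and is Roller elementary exactly when $X$ is (Lemma~\ref{RNE for X'} and part~3 of Proposition~\ref{properties of X'}), and a nonabelian free subgroup for the action on $X'$ is a subgroup of $\G$.

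Suppose first that $\G\acts X$ is \emph{not} Roller elementary, i.e.\ Roller nonelementary; I claim $\G$ has a nonabelian free subgroup. By Corollary~\ref{Roller elementary vs strongly so 2} there are a $\G$-invariant component $Z\cu\overline X$ and a $\G$-invariant, closed, convex $C\cu Z$ with $\G\acts C$ Roller minimal and Roller nonelementary. Granting that this action is still without wall inversions (see the obstacle flagged below), part~1 of Proposition~\ref{facing 3-ples exist} produces a thick facing triple in $\mscr{H}(C)$, so some thick $\mf{h}\in\mscr{H}(C)$ belongs to one; then part~1 of Proposition~\ref{free subgroups acting freely}, applied to the Roller minimal action $\G\acts C$ and this $\mf{h}$, yields a subgroup of $\G$ free on two generators. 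This establishes the dichotomy.

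Now suppose $\G\acts X$ is Roller elementary, so a finite-index subgroup $\G_1\leq\G$ fixes some $\xi\in\overline X$. If $\xi\in X$, every finitely generated subgroup of $\G_1$ fixes $\xi$, hence has bounded orbits, hence by Corollary~\ref{finite orbits} an orbit of at most $2^r$ points; so $\G_1$ is locally elliptic and $\G$ is virtually locally elliptic, in particular virtually locally-elliptic-by-abelian. If instead $\xi\in\partial X$, Theorem~\ref{stabiliser of xi} supplies $K_\xi\leq\text{Isom}_{\xi}X$ of index $\leq r!$ and an exact sequence $1\to N_\xi\to K_\xi\to\R^r$ in which every finitely generated subgroup of $N_\xi$ has an orbit of at most $2^r$ points. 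Setting $\G_0:=K_\xi\cap\G_1$ and $N:=N_\xi\cap\G_1$, the subgroup $\G_0$ has finite index in $\G$, $N\lhd\G_0$ with $\G_0/N\hookrightarrow\R^r$ abelian, and every finitely generated subgroup of $N$ is one of $N_\xi$, hence has an orbit of at most $2^r$ points and is elliptic. Thus $\G$ is virtually locally-elliptic-by-abelian.

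The one delicate point is keeping track of the ``without wall inversions'' hypothesis after passing to the $\G$-invariant subspace $C$ (and, if one also wishes to drop that hypothesis from the statement, after first passing to $X'$). When $C\cu X$ this is the same situation already handled inside the proof of Proposition~\ref{facing 3-ples exist}; when $C\cu\partial X$ one can instead run the whole dichotomy by induction on $\text{rank}(X)$, since then $\text{rank}(C)<\text{rank}(X)$ while $\G\acts C$ remains Roller nonelementary, the rank-zero case being vacuous. Apart from this bookkeeping, the argument is a direct concatenation of Corollary~\ref{Roller elementary vs strongly so 2}, Propositions~\ref{facing 3-ples exist} and~\ref{free subgroups acting freely}, Corollary~\ref{finite orbits} and Theorem~\ref{stabiliser of xi}; no fresh ping-pong or unidirectional-boundary-set analysis is required.
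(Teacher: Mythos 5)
Your argument is correct and is essentially the paper's own proof: Corollary~\ref{Roller elementary vs strongly so 2} followed by part~1 of Proposition~\ref{facing 3-ples exist} and part~1 of Proposition~\ref{free subgroups acting freely} in the Roller nonelementary case, and a fixed point in $X$ or Theorem~\ref{stabiliser of xi} in the Roller elementary case, with the wall-inversions hypothesis removed via the barycentric subdivision exactly as in the paper's closing remark. Your extra bookkeeping about preserving ``without wall inversions'' when passing to $C$ (handled by induction on rank, or by subdividing) is a reasonable elaboration of a point the paper leaves implicit, not a departure from its method.
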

\begin{proof}
If $\G\acts X$ is Roller nonelementary, Corollary~\ref{Roller elementary vs strongly so 2} yields $C\cu\overline X$ such that $\G\acts C$ is Roller minimal and Roller nonelementary. Part~$(1)$ of Proposition~\ref{facing 3-ples exist} and part~$(1)$ of Proposition~\ref{free subgroups acting freely} yield a nonabelian free subgroup.

If, instead, $\G\acts X$ is Roller elementary, $\G$ has a finite-index subgroup fixing a point $\xi\in\overline X$. If $\xi\in X$, the group $\G$ is elliptic. Otherwise $\xi\in\partial X$ and Theorem~\ref{stabiliser of xi} yields a further finite-index subgroup $\G_0\leq\G$ that is locally-elliptic-by-abelian.
\end{proof}

One can remove the assumption that $\G$ act without wall inversions by passing to the barycentric subdivision $X'$ and appealing to Lemma~\ref{RNE for X'} and part~$(3)$ of Proposition~\ref{properties of X'}. This yields Theorem~E.

\bibliography{mybib}
\bibliographystyle{alpha}

\end{document}